\documentclass[11pt]{amsart}
\usepackage{amsmath,amsthm, amscd, amssymb, amsfonts, pifont, mathrsfs,mathtools,comment,tikz}
\usepackage[all]{xy}
\usepackage{color}
\usepackage[inline]{enumitem}
\usepackage{bm}
\usepackage{graphicx}
\usepackage{hyperref}

\usepackage{mathrsfs}
\usepackage{multicol}
\usepackage{hhline} 
\usepackage[active]{srcltx}
 \hypersetup{colorlinks=true,citecolor=cyan,linkcolor=blue,linktocpage=true}

\newcommand{\aut}{\mathfrak{t}}
\newcommand{\tipo}{\varOmega}

\newcommand{\gp}{\mathsf G}
\newcommand{\subgp}{\mathsf M}
\newcommand{\normsubgp}{\mathsf T}
\newcommand{\subgpH}{\mathsf H}
\newcommand{\subgpK}{\mathsf K}

\newcommand{\PSL}{\mathbf{PSL}}
\newcommand{\Gb}{\boldsymbol{G}}
\newcommand{\PSp}{\mathbf{PSp}}
\newcommand{\bZ}{\mathbf{Z}}
\newcommand{\Pom}{\mathbf{P\Omega}}
\newcommand{\PSU}{\mathbf{PSU}}

\newcommand{\SU}{\mathbf{SU}}

\newcommand{\nsbgp}{\lhd}

\newcommand{\letra}{\mathsf r}
\newcommand{\letrados}{\mathsf s}
\newcommand{\letratres}{\mathsf y}
\newcommand{\letracuatro}{\mathsf x}

\newcommand{\bq}{\mathfrak{q}}
\newcommand{\bp}{\mathfrak{p}}

\newcommand\NN{\mathbb N}

\newcommand\WO{W[\mathfrak O]}
\newcommand\lb{\lambda}

\newcommand{\Aff}{\operatorname{Aff}}
\newcommand{\diag}{\operatorname{diag}}
\newcommand{\Cent}{\operatorname{Cent}}

\newcommand{\Imm}{\operatorname{Im}}

\newcommand{\supp}{\operatorname{supp}}

\newcommand{\Inn}{\operatorname{Inn}}

\newcommand\toba{\mathscr B}

\newcommand{\trid}{\triangleright}
\newcommand{\tridd}{\triangleright}

\newcommand{\Ha}{{\mathbb H}}
\newcommand{\Me}{{\mathbb M}}
\newcommand{\Fc}{{\mathcal F}}

\newcommand{\kk}{\mathbb F}

\newcommand{\ku}{\Bbbk}

\newcommand{\N}{{\mathbb N}}
\newcommand{\I}{{\mathbb I}}

\newcommand{\G}{{\mathbb G}}
\newcommand{\B}{{\mathbb{B}}}
\newcommand{\T}{{\mathbb{T}}}
\newcommand{\U}{\mathbb{U}}

\newcommand{\Pa}{\mathbb{P}}

\newcommand{\M}{{\mathbb M}}

\newcommand{\F}{{\mathbb F}}
\newcommand{\C}{{\mathcal C}}

\newcommand{\GL}{\mathrm{GL}}

\newcommand{\PGL}{\mathbf{PGL}}
\newcommand{\SL}{\mathbf{SL}}
\newcommand{\Sp}{\mathbf{Sp}}

\newcommand{\Fr}{\operatorname{Fr}}

\newcommand{\Le}{{\mathbb L}}
\newcommand{\Vu}{{\mathbb V}}
\newcommand{\Oc}{{\mathcal O}}

\newcommand{\yd}[1]{{}^{#1}_{#1}\mathcal{YD}}

\newcommand{\End}{\operatorname{End}}
\newcommand{\Aut}{\operatorname{Aut}}

\numberwithin{equation}{section}

\theoremstyle{plain}

\newtheorem{lema}{Lemma}[section]
\newtheorem{theorem}[lema]{Theorem}

\newtheorem{cor}[lema]{Corollary}
\newtheorem{conjecture}[lema]{Conjecture}
\newtheorem{prop}[lema]{Proposition}

\newtheorem{question}[lema]{Question}

\newtheorem{question-app}{Question}

\theoremstyle{definition}
\newtheorem{definition}[lema]{Definition}
\newtheorem{exa}[lema]{Example}

\theoremstyle{remark}
\newtheorem{obs}[lema]{Remark}
\newtheorem{remark}[lema]{Remark}
\newtheorem{step-intro}{Step}

\newcommand\id{\operatorname{id}}

\newcommand\an{\mathbb A_n}

\newcommand\aco{\mathbb A_5}
\newcommand\as{\mathbb A_6}

\newcommand\s{\mathbb S}

\renewcommand{\subjclassname}

\def\pf{\begin{proof}}
\def\epf{\end{proof}}

\newcounter{tabla}\stepcounter{tabla}

\begin{document}

\renewcommand{\baselinestretch}{1.2}

\thispagestyle{empty}

\title[Hopf algebras over Chevalley groups]
{Hopf algebras over Chevalley groups}

\author[Andruskiewitsch]{Nicol\'as Andruskiewitsch}
\address[Andruskiewitsch]{Profesor Em\'erito de la Facultad de Matem\'atica, F\'isica,
Astronom\'\i a y Computaci\'on, Universidad Nacional de C\'ordoba. 
\newline
CIEM-CONICET. 
Medina Allende s/n (5000) Ciudad Universitaria, C\'ordoba, Argentina.
\newline
Department of Mathematics and Data
Science, Vrije Universiteit Brussel, Pleinlaan 2, 1050 Brussels, Belgium
\newline 
Shenzhen International Center for Mathematics, Southern University of Science and Technology, Shenzhen 518055, China}
\email{nicolas.andruskiewitsch@unc.edu.ar}

\author[Carnovale]{Giovanna Carnovale}
\address[Carnovale]{Dipartimento di Matematica Tullio Levi-Civita, 
Universit\`a degli Studi di Padova, 
via Trieste 63, 3512,1 Padova, Italia.} 
\email{carnoval@math.unipd.it, +39-049-8271354}

\thanks{2010 Mathematics Subject Classification: 16T05, 20D06.\\
\textit{Keywords:} Nichols algebra; Hopf algebra; rack; finite group of Lie type; conjugacy class.}

\begin{abstract}
We show that every finite-dimensional pointed Hopf algebra over a finite simple Chevalley group, different from $\PSL_2(q)$ with $q\equiv 3 \mod 4$ (and from $\PSL_3(2)\simeq\PSL_2(7)$), is isomorphic to the corresponding group algebra.
To do this, we complete the analysis of the Nichols algebras of 
 Yetter-Drinfeld modules over such groups whose support is a semisimple orbit, begun
in \cite{ACG-III,ACG-VII}. In addition to the techniques used in loc. cit., we introduce a general procedure to determine when a semisimple conjugacy class in a Chevalley or Steinberg group is of type C and a new criterion based on the results of \cite{AHV} that applies to arbitrary racks. Throughout the process, we obtain results on Nichols algebras over racks beyond the framework of Chevalley groups.
\end{abstract}

\maketitle

\newpage
\setcounter{tocdepth}{2}
\tableofcontents

\section{Introduction}
\subsection{Pointed Hopf algebras}

\

The method proposed in \cite{A-Schneider} for the classification 
of the finite-dimensional pointed Hopf algebras over an algebraically closed field
$\ku$ of characteristic $0$ contains as a fundamental step the following question:

\begin{question}\label{question:groups}
Determine the Yetter-Drinfeld modules $V$ over a finite group $G$ such that
the Nichols algebra $\toba(V)$ is finite-dimensional.
\end{question}
The question must be addressed by considering different types of groups.

\medbreak
\begin{itemize}[leftmargin=*]\renewcommand{\labelitemi}{$\circ$}
\item 
For $G$ abelian, a complete answer is given in \cite{Heckenberger-advances}. 
The list is rich: it contains the positive parts of small quantum groups and supergroups at roots of $1$
plus quantum analogues of 
restricted enveloping algebras
of some Lie algebras and superalgebras in positive characteristic.

 \medbreak
\item For $G$ solvable, the question was answered in \cite{AHV} 
 when $V$ is simple (there are essentially 10 possible Nichols algebras)
 and in \cite{HV2,HV1} when $V$ is decomposable (the Nichols algebras arise from the abelian case
 up to a small number of exceptions).
\end{itemize}
For simple non-abelian groups the situation is quite different and the extreme case is encoded in the following notion. 
We say that a finite group $G$ \emph{collapses} if any of the following 
three equivalent statements holds,
see \cite[Lemma 1.4]{AFGV-ampa}:

\medbreak
\begin{itemize}[leftmargin=*]
\item If $H$ is a finite-dimensional
pointed Hopf algebra  with group of grouplikes $G(H) \simeq G$, then $H$ is isomorphic to $\ku G$.

 \medbreak
\item If $0 \neq V \in \yd{\ku G}$, the category of Yetter-Drinfeld modules over $G$, then $\dim \toba(V) = \infty$.

\medbreak
\item If $V \in \yd{\ku G}$ is irreducible, then $\dim \toba(V) = \infty$.
\end{itemize}

For finite simple groups, the evidence gathered over the past 17 years 
supports the following conjecture, a guiding principle on this question: 

\begin{conjecture}\label{conj:groups}
If $G$ is a non-abelian finite simple group, then $G$ collapses.   
\end{conjecture}

In fact, the Conjecture, implicitly suggested in talks and articles, first appeared in print in \cite{ACG-VII}, modestly attributed to ``folklore''.  Currently, we know that the following groups collapse:

\begin{itemize}[leftmargin=*]\renewcommand{\labelitemi}{$\circ$}
\item \cite{AFGV-ampa} The alternating groups $\an$, $n \geq 5$.

\medbreak
\item \cite{AFGV-II} The sporadic groups, including the Tits group, other than the Fischer group $Fi_{22}$, the Baby Monster $B$ and the Monster $M$. 

\medbreak
\item \cite{CC-VI} The Suzuki--Ree groups.

\medbreak
\item \cite{ACG-VII}
The groups $\PSL_{n+1}(q)$, ($n \geq 3$), $\PSL_3(q)$ ($q > 2$), and $\PSp_{2n}(q)$ ($n \geq 3$).

\medbreak
\item
\cite{ACG-III}, \cite{fgvI}
The groups  $\PSL_2(q)$, where either $q\equiv 1\mod 4$ or $q >2$ is even.
\end{itemize}

\medbreak
Other finite groups of Lie type were shown to collapse in \cite{ACG-IV}.
In the present paper we prove:

\begin{theorem}\label{thm:intro-groups}
The finite simple Chevalley groups, other than $\PSL_2(q)$
with $q \equiv 3 \mod 4$ (and than $\PSL_3(2)\simeq\PSL_2(7)$), collapse.
\end{theorem}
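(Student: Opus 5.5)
We reduce, as usual, to racks. Since $G$ is simple non-abelian, an irreducible $V\in\yd{\ku G}$ is $M(\Oc,\rho)$, with $\Oc$ a conjugacy class and $\rho$ an irreducible representation of the centralizer of a point of $\Oc$. By the third equivalent formulation of collapsing, it suffices to show that $\dim\toba(M(\Oc,\rho))=\infty$ for every non-trivial class $\Oc$ and every such $\rho$. The standard sufficient condition is that $\Oc$ collapses as a rack. For that we will use the rack-theoretic criteria: type D, type F, type C, and the new criterion derived from \cite{AHV}.

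For a finite simple Chevalley group $G=\Gb^F/Z$ we split by Jordan decomposition. Every class that is not semisimple contains a non-trivial unipotent part. We expect these classes to be already handled, for all Chevalley groups, by the earlier papers: \cite{ACG-III} and the series of works on unipotent classes (types D/C via subgroups of $\SL_2$ or parabolics), together with \cite{ACG-IV} and \cite{ACG-VII} for mixed classes. We will quote these and, where a family is missing, reduce a mixed class $su$ to the unipotent class of $u$ in the centralizer $C_G(s)$, using the standard fact that a subrack of type D or C makes the whole rack collapse. The small cases $\PSL_2(q)$ with $q\equiv1\bmod 4$ or $q$ even, $\PSL_{n+1}(q)$ and $\PSp_{2n}(q)$ are already known by \cite{ACG-VII,fgvI}. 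The remaining work is therefore the semisimple classes in the other Chevalley types: $B_n$, $D_n$, $G_2$, $F_4$, $E_6$, $E_7$, $E_8$, and the small-rank $\PSp_4(q)$.

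For semisimple $s$ in a maximal torus $\T$, we first look for type D. Following \cite{ACG-III}, we pick a Weyl group element $w$, lifted to $\dot w\in N(\T)$, with $w(s)\neq s$ and $w(s)$ not conjugate to $s$ by an involutive relation. If $s$ and $\dot w s\dot w^{-1}$ generate a non-commuting pair $r,t$ with $(rt)^2\neq(tr)^2$, the class is of type D. Computationally this is a root-system check: we look for a root $\alpha$ with $\alpha(s)\neq\pm1$ relevant to a reflection, or a short product of reflections. The classes surviving this test should be those where $s$ is fixed up to sign by the relevant reflections, for instance when $\alpha(s)=-1$ for many roots. These are the classes with a large centralizer, namely the involutions and their close relatives.

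For the survivors we apply the paper's general procedure for type C. We pick two subracks $\Oc\cap(\text{Levi of type }A_1)$ pieces, i.e.\ elements $r,t\in\Oc$ lying in a subgroup $\SL_2(q)$ or $\SL_2(q)\times\SL_2(q)$ attached to a pair of roots. Their commutation pattern should realize the type C condition: $r\trid t\neq t$, $H=\langle r,t\rangle$ has non-trivial orbits splitting in the way type C requires, and $q>2$ appropriately. The remaining low-rank and small-$q$ cases, together with the exotic involution classes, we handle with the new criterion from \cite{AHV}. Via the classification of finite-dimensional Nichols algebras over solvable groups, we find a solvable subgroup $K\le G$ generated by elements of $\Oc$. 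The restriction of $M(\Oc,\rho)$ to the subrack $\Oc\cap K$ then yields a Yetter-Drinfeld module over $K$ whose braiding is not in the list of \cite{AHV}, and hence has infinite-dimensional Nichols algebra.

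The main obstacle will be the involution-like semisimple classes, especially for $q\equiv3\bmod4$ and for small $q$ such as $2,3,4,5$. There type D fails generically, which is exactly why $\PSL_2(q)$ with $q\equiv 3\bmod 4$ remains open. For these classes we will first try to embed a non-split $\PSL_2$ or an $\SL_2\times\SL_2$ sitting over two orthogonal roots. In rank at least $2$ the extra room should produce a pair realizing type C or a solvable subgroup of type $\Aff$ or dihedral excluded by \cite{AHV}. Finally we will check the few small exceptional groups, such as $G_2(3)$, $G_2(4)$, $F_4(2)$ and $\PSp_4(3)$, by direct computation or GAP.
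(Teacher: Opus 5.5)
There is a genuine gap: your plan is to show that every non-trivial class \emph{collapses as a rack} (types C, D, F, or the \cite{AHV}-based criterion). The paper does not prove this, and for some classes no such argument is available. It needs a separate step in which a class is only shown to \emph{fall down}, by working with a trivial subrack and Nichols algebras of diagonal type.

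The decisive case is $E_8(q)$. Take regular semisimple elements in tori $\T_w^F$ with $w$ in the Weyl classes $E_8$ or $E_8(a_5)$, whose characteristic polynomials are $\varphi_{30}$ and $\varphi_{15}$. Such $w$ stabilizes no proper root subsystem, so there is no $w$-suitable subgroup $\Me$ to which Theorem~\ref{thm:potente-revised} could be applied. These classes are listed in Table~\ref{tab:ss-psl2} with no criterion of type C, D, F or $\tipo$ established. The paper argues instead that, since $w_0=-\id$, the class is real. Since $|\T_w^F|\in\{\varphi_{15}(q),\varphi_{30}(q)\}$ is odd, its elements have odd order. Lemma~\ref{lema:real}, applied to the subrack $\{s,s^{-1}\}$, then shows that the class falls down (Theorem~\ref{thm:e8-completo}).

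Likewise, your assumption that all non-semisimple classes are covered by earlier work fails for $\PSp_4(q)$. The kthulhu unipotent classes of type $(2,1^2)$, for $q$ odd not a square, are only shown to fall down in Theorem~\ref{thm:cn-completo} (for $p>3$ via \cite[Lemma 1.9]{AF}). The kthulhu classes $W(2)$ and $V(2)\oplus W(1)$, for $q$ even, need the type $\tipo$ argument of Lemma~\ref{lem:kthulhu-Sp-are-tipo}.

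Your diagnosis of where the difficulty lies is also off. The classes that resist the type C method are not only involution-like ones with large centralizer; those are indeed handled by type $\tipo$ in Section~\ref{sec:notC-Omega}. The hardest ones are regular elements whose centralizer is a Coxeter-type torus. There, neither a quasi-simple subgroup normalized by the torus nor a suitable solvable subgroup is available, and checks in GAP cannot cover $E_8(q)$ for all $q$.

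Finally, your type C step (pairs of elements in $\SL_2(q)$ or $\SL_2(q)\times\SL_2(q)$ with the right commutation pattern) is too vague to carry the bulk of the argument. What makes it work in the paper is Theorem~\ref{thm:potente-abstracta}, which needs three ingredients:
(a) a quasi-simple $\pi([M,M])$ coming from a $w$-suitable $\Me\supseteq\T$;
(b) normalization of it by the abelian group $\pi(\T^{F_w})$;
(c) an element of $\Oc\cap\pi(\T^{F_w})$ lying outside the $\pi([M,M])$-orbit and not centralizing $\pi([M,M])$.
Condition (c) reduces the orbit condition to a computation in $\Cent_W(w)$.
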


\medbreak
In other words, if $G$ is a finite simple Chevalley group as in the theorem
and $H$ is a finite-dimensional pointed Hopf algebra  with group of grouplikes $G(H) \simeq G$, then $H$ is isomorphic to $\ku G$.

\medbreak
Because of the Classification Theorem of Finite Simple Groups, 
we see that Conjecture \ref{conj:groups} remains open only
for $\PSL_2(q)$ with $q \equiv 3 \mod 4$, 
the Steinberg groups, 
the Fischer group $Fi_{22}$, the Baby Monster $B$ and the Monster $M$.
The techniques introduced in this work will contribute to the treatment of these groups, but it seems that they will not be sufficient and new ones will be needed.

\medbreak
The proof of Theorem \ref{thm:intro-groups} relies on results
presented in \cite{ACG-I,ACG-II,ACG-III,ACG-IV,ACG-V,ACG-VII, fgvI,fgvII}. 
Actually, we have to show that $\dim \toba(V) = \infty$ whenever 
$V \in \yd{\ku \Gb}$ is irreducible, for any finite Chevalley group $\Gb$.
Recall that the support of an irreducible Yetter-Drinfeld module $V$ over $\Gb$ 
is a conjugacy class of $\Gb$. 
It was shown in the mentioned series that $\dim \toba(V) = \infty$ 
when $\supp V$ is a non-semisimple conjugacy class, up to a very short list of cases. 

\medbreak
For semisimple conjugacy classes the analysis is much more complicated because of the arithmetic
nature of such classes. In this paper we take care of the Nichols algebras associated to
semisimple classes using two new major techniques:  

\begin{itemize}[leftmargin=*]\renewcommand{\labelitemi}{$\circ$}
\item  The new criterion of type $\Omega$, valid for general groups, that allows to decide that
a Yetter-Drinfeld module has an infinite-dimensional Nichols algebra. 
This criterion is based on the main result of \cite{AHV} and depends on the support of the module only. See \S\ref{subsec:new-criterion}.

\medbreak
\item  A general criterion, based on three Lie theoretic conditions, ensuring that a semisimple conjugacy class in a Chevalley or Steinberg group satisfies the powerful criterion of type C
introduced in \cite{ACG-III}.  See Theorem \ref{thm:potente-revised}.
\end{itemize}

Note that the Theorem \ref{thm:potente-revised} follows from 
Theorem \ref{thm:potente-abstracta}, a tool for detecting conjugacy classes of type C 
in a finite group through quasi-simple subgroups. Theorem \ref{thm:potente-abstracta} may likely have applications to other families of finite groups.

\medbreak
Let us also mention that most of the methods used here 
do not allow the calculation of Nichols algebras, 
but rather provide sufficient conditions for their dimension to be infinite.

\begin{table}[t]
\caption{Kthulhu semisimple classes in $\Gb \neq \Pom^+_{N}(q)$.}\label{tab:ss-psl2}
\begin{center}
\begin{tabular}{|c|c|p{4.7cm}|c|}
\hline $\Gb$ & $q$  & class &  Remark  \\
\hline  \hline
$\PSL_{n+1}(q)$,& all & irreducible  & kthulhu   \\
$n+1$ odd prime &&& none of type $\tipo$ \\
\hline
$\PSp_{4}(q)$ & $3$, $5$, $7$  & split involution &
$q=3,5$ kthulhu\\
&&&all of type $\tipo$\\
\hline
$E_8(q)$ & all & regular, contained in a torus indexed by $w$ in class $E_8$ or $E_8(a_5)$.&
\\
\hline
\end{tabular}
\end{center}
\end{table}

\subsection{Nichols algebras over racks}

\

As explained in \cite{AG}, it saves efforts to deal with Nichols algebras of the type
$\toba(X, \bq)$, corresponding to a braided vector space $(\ku^n \otimes \ku X, c^{\bq})$. 
Here 

\medbreak
\begin{itemize} [leftmargin=*]\renewcommand{\labelitemi}{$\circ$} 
\item $X$ is a rack, i.e., $X = (X, \tridd)$ where $X \neq \emptyset$ and 
$\tridd: X \times X \to X$ is a
self-distributive operation such that $y \mapsto x \tridd y$ is bijective for any $x\in X$; 

\medbreak
\item $\bq: X \times X \to \GL(n,\ku)$ is a non-abelian 2-cocycle, i.e., 
it satisfies 
\begin{align*}
\bq_{x,y \tridd z} \bq_{y,z} &= \bq_{x \tridd y, x \tridd z} \bq_{x,z}, &
\text{for all } x,y,z &\in X; 
\end{align*}

\medbreak
\item the braiding $c^{\bq} \in  \GL((\ku^n \otimes \ku X)^{\otimes 2})$
is given with respect to the basis $(e_x)_{x\in X}$ of $\ku X$ by
\begin{align*}
c^{\bq} (v e_x \otimes w e_y) &= \bq_{x,y}(w) e_{x \tridd y} \otimes v e_x,
& x,y &\in X, \quad v,w \in \ku^n.
\end{align*}
\end{itemize}

For example,  a non-empty union of conjugacy classes in a finite group $G$ 
is a rack--these are the only racks we consider. Here the rack operation is conjugation
and is denoted by $\trid$, i.e.,
$x \trid y = xyx^{-1}$.

\medbreak
If $V\in \yd{\ku G}$ is irreducible
with support $X$, then there exists a cocycle $\bq$ such that 
$\toba(V) \simeq \toba(X, \bq)$; but the same pair $(X, \bq)$ may arise
from different Yetter-Drinfeld modules, from different groups. 
Therefore, it is convenient to deal with Question \ref{question:groups} addressing 
the following Question:

\begin{question}\label{question:racks}
\cite[Question 2]{AFGV-ampa}
Determine all pairs $(X, \bq)$, where $X$ is a finite rack
and $\bq$ is a 2-cocycle, such that $\dim \toba(X, \bq) < \infty$.
\end{question}

Parallel to the notion of  the collapse of a finite group, 
 it is convenient to introduce the following terminology.

\begin{itemize}[leftmargin=*]\renewcommand{\labelitemi}{$\circ$}

\item A conjugacy class $X$ of a finite group $G$ \emph{falls down} if $\dim \toba(V) = \infty$
for any $V \in \yd{\ku G}$ whose support is $X$.

 \medbreak
\item \cite[2.2]{AFGV-ampa}
A finite rack $X$ \emph{collapses} if for any 2-cocycle
$\bq: X\times X \to \GL(n, \ku)$  that admits a finite group $\Upsilon$ and 
$W \in \yd{\ku \Upsilon}$ with
$W \simeq (\ku X, c^{\bq})$ as braided vector spaces, 
necessarily $\dim \toba(X, \bq) = \infty$.
\end{itemize}

\medbreak
If a conjugacy class collapses, then it falls down but not conversely.

\medbreak
Now any finite rack projects onto a simple rack, i.e., a rack with at least two elements
that does not admit a projection to a smaller rack with at least two elements. 
It is natural to ask:

\begin{question}\label{question:simple-racks}
Which finite  simple racks collapse? 
\end{question}

As we will see later, we have the criteria of types C, D or F, 
ensuring that a rack collapses.
If a rack meets any of them, 
so does any rack that contains it (we say an over-rack) or projects onto it (a cover-rack
or simply a cover).
Thus, the characterization of racks that are of type C, D or F acquires greater significance. We say that a rack is \emph{kthulhu}  when it is
neither of these types. See \S \ref{subsec:Kthulhu}.

\medbreak
The classification of all simple racks is known \cite{AG}; non-trivial
conjugacy classes of finite simple groups are simple racks.
Contributing to Question \ref{question:simple-racks}, we prove
in this paper:

\begin{theorem} \label{thm:intro-racks}
A  semisimple non-trivial
conjugacy class $X$ of a Chevalley group, other than $\PSL_2(q)$, 
is 
\begin{itemize} [leftmargin=*]\renewcommand{\labelitemi}{$\circ$}
\item of type C, D or F, hence it collapses; or

\medbreak
\item 
it appears in Tables \ref{tab:ss-psl2}, \ref{tab:ss-open-POM-odd}, 
or \ref{tab:ss-open-POM-even}. If $X$ is of type $\tipo$, or not,
then it is indicated in the Table. In the first case, it collapses too. 
\end{itemize}
\end{theorem}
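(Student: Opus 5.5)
The proof will be a case-by-case analysis over the Lie type of $\Gb$, using the fact that types C, D and F pass to over-racks and covers. It suffices to find, inside each semisimple class $X$ (or inside a subrack of a cover), a subrack of type C, D or F. Whenever this fails, we place $X$ in one of the Tables and test it against the criterion of type $\tipo$ from \S\ref{subsec:new-criterion}.

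\textbf{Step 1 (reduction to the unsettled classes).} We first collect what \cite{ACG-III,ACG-VII} already prove. In those papers most semisimple classes in $\PSL_{n+1}(q)$ ($n\geq 2$), in $\PSp_{2n}(q)$, and many in the other types are shown to be of type C or D. What is left is a finite list of families, essentially one per Lie type (including $\Pom_N(q)$ and the exceptional groups). These families are either regular classes whose centralizer is a small torus, or classes for which the earlier subgroup arguments could not produce a suitable $r$ and $s$.

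\textbf{Step 2 (type C via Theorem \ref{thm:potente-revised}).} For a semisimple $x\in X$, write $x$ in a maximal torus $T_w$ indexed by a conjugacy class of $w$ in the Weyl group. We then look for a root subsystem $\Phi'$ that satisfies the three Lie theoretic conditions of Theorem \ref{thm:potente-revised}. These amount roughly to the following:
\begin{itemize}
\item $x$ lies in a subgroup of rank one (or a quasi-simple subgroup of small rank) attached to $\Phi'$;
\item $x$ is not central there;
\item the relevant root values of $x$ satisfy a non-degeneracy condition.
\end{itemize}
Theorem \ref{thm:potente-abstracta} then yields type C. In practice we check these conditions using the cyclic structure of $w$ (for classical types, the decomposition of the characteristic polynomial of $x$ into irreducible factors over $\F_q$) and a choice of root $\alpha$ with $\alpha(x)\neq \pm1$, or a Levi subgroup of type $A_1$ or $A_2$ whose projection contains $x$. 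This handles all classes with a reducible enough characteristic polynomial, that is, all those for which $T_w$ is not close to a Coxeter torus.

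\textbf{Step 3 (residual cases).} Step 2 fails for the remaining classes: those irreducible in $\PSL_{n+1}(q)$ with $n+1$ prime; split involutions in $\PSp_4(q)$ for small $q$; the regular classes of $E_8$ in the tori $E_8$ and $E_8(a_5)$; and the orthogonal cases. For these we try type D or F directly. We search for pairs $r,s\in X$ with $(rs)^2\neq(sr)^2$ that are not conjugate in the subgroup they generate, using small subgroups such as dihedral subgroups of the normalizer of the torus, together with counting arguments in $N(T_w)/T_w$.

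When everything fails, we record $X$ in a Table and verify whether it is of type $\tipo$. This property depends only on the support, so it is a finite check on the rack, for example via the subracks arising from the results of \cite{AHV}. If $X$ is of type $\tipo$, it collapses. For $\PSp_4(q)$ with $q=3,5$ and for the irreducible classes in $\PSL_{n+1}(q)$ we would show kthulhu-ness by exhibiting that every pair of elements generates a subrack of a forbidden shape, for instance through the structure of abelian subracks and the commuting graph.

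\textbf{Main obstacle.} The hardest part is Step 2 for the exceptional groups and the orthogonal groups. There the number of Weyl group classes is large, and verifying the three conditions requires controlling the root values of $x$ in each twisted torus. I would handle this by going through Carter's classification of the classes of $W$, and for each class exhibit an explicit $A_1$ subsystem stable under $w$ on which $x$ acts non-trivially.
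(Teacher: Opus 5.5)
Your skeleton (case-by-case by Lie type, Theorem \ref{thm:potente-revised} for type C, leftover classes tabulated and tested for type $\tipo$) is the paper's. The gap is in Step 2: you have dropped the hypothesis that carries the whole argument, namely the orbit condition \ref{W}.

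In Theorem \ref{thm:potente-revised} the element $t\in\T^{F_w}$ need not lie in $[M,M]$ at all; it only normalizes it. Besides \ref{S} and \ref{R}, one needs some $\sigma\in\Cent_W(w\theta)$ with $\sigma\cdot t\notin\bZ\,\Cent_{W_{\Me}}(w\theta)\cdot t$ and $\sigma\cdot t\notin\Cent_{\T}([M,M]_p)$. This is what produces the two distinct orbits required in Theorem \ref{thm:potente-abstracta}, and no non-degeneracy condition on root values replaces it. For instance, for $t$ as in \eqref{eq:bn-discarded-case1} one has $\pi(t)=\diag(-\id_n,1,-\id_n)$, which is fixed by all of $W$. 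Hence $\Cent_W(w)\cdot t\subseteq\bZ t$, and \ref{W} fails for every choice of $\Phi_{\Me}$; that is why this class sits in Table \ref{tab:ss-open-POM-odd}.

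To verify \ref{W}, the paper first takes $w$ minimal in $W[\mathfrak O]$, so that $t$ is regular in $\Le_{J_w}$ (Lemma \ref{lem:Jw}). It then compares centralizer orders (Lemma \ref{lem:index}) or uses $w_0=-\id$ (Remark \ref{obs:w0}). When \ref{W} can fail, this typically forces some coefficient $\zeta_j$ to equal $1$, and $t$ drops into a smaller group of Lie type already treated (Remark \ref{obs:zeta=1}): $\SL_6(q)$ in $E_6$, $D_6$ in $E_7$, $\Sp_4(q)$ in $B_n$. None of these reductions is in your plan, so you have no means of deciding which classes end up in the Tables.

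Your concrete plan for the exceptional groups, an explicit $w$-stable $A_1$ subsystem for each class of $W$, also fails on its own terms. A subsystem $\{\pm\alpha\}$ is $w$-stable only if $w\alpha=\pm\alpha$, and many cuspidal classes have no eigenvalue $-1$. Examples are the Coxeter class of $E_6$ ($\varphi_3\varphi_{12}$), $E_6(a_1)$ ($\varphi_9$), $E_6(a_2)$, $3A_2$, and in $E_8$ the classes $E_8(a_1)$, $E_8(a_3)$, $E_8(a_6)$. The statement asserts all of these are of type C. Rank one groups are moreover not quasi-simple for $q\le 3$.

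The paper instead uses $w$-stable subsystems whose components are cyclically permuted by $w$, checking \ref{S} on the twisted group $\widetilde{\Me}_1^{\mathbf F_w^d}$ (Lemma \ref{lem:cyclically-permuted}). The choices include $D_4$ with a triality twist for the Coxeter class of $E_6$, and $3A_2$, $2A_2$, $4A_2$, $2A_4$, $7A_1$. It shows that $E_8$ and $E_8(a_5)$ are exactly the classes admitting no proper $w$-stable subsystem, which is why only they go into Table \ref{tab:ss-psl2}.

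Finally, checking type $\tipo$ is not a finite verification, because the Tables contain infinite families (all $d$, all $q\equiv 3 \bmod 4$); it needs uniform constructions. The paper uses the root subgroup $V=\U^{F_w}_{\alpha_1+\cdots+\alpha_n}$, inverted by $t$, which gives $\Aff(\F_q,-1)$, outside the list \eqref{eq:list-affine-simple-racks} for $q>3$ (Corollary \ref{cor:normal-abelian-order}). For $q=3$ it uses the group $\langle t,\U_{\gamma_1},\U_{\gamma_2}\rangle^{F_w}$ of order $54$, in which the orbit of $t$ has $9$ elements and generates (Corollary \ref{cor:solvable-order}). Your Step 3 search for type D or F pairs, and the proofs of kthulhu-ness, are not needed for the statement.
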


In the Tables, the letter $w$ refers to an element of the Weyl group
that is the parameter of the torus intersecting the conjugacy class,
see \S \ref{subsec:ss-cc}.

\pf For  $\PSL_{n+1}(q)$,   see \cite[Theorem 5.4]{ACG-VII} and Remark \ref{obs:SLp-irreducible}.
For 
$\PSp_{n}(q)$,  see \cite[Theorem 7.1]{ACG-VII}.
For $\Pom_{2n+1}(q)$ or $\Pom_{2n}(q)$,
see \S \ref{subsec:Pom-impar}, \S \ref{subsec:Pom-par} and \S \ref{sec:notC-Omega}.
For $E_6(q)$, $E_7(q)$, $E_8(q)$, $F_4(q)$, $G_2(q)$, see \S \ref{subsec:E6}, 
\S \ref{subsec:E7}, \S \ref{subsec:E8},
\S \ref{subsec:F4} and \S \ref{subsec:G2}.
\epf

\begin{remark}\label{rem:overlap in tables}
The first line in Table \ref{tab:ss-open-POM-odd}, when $d=1$, appears also
in the second line of Table \ref{tab:ss-psl2}; for recursive arguments it is
convenient to keep this overlap.
\end{remark}
\subsection{The strategy}\label{subsec:strategy}

\

Let $X$ be a rack.
As said above, we have the criteria of types C, D or F to ensure
that $X$ collapses, cf. \S \ref{subsec:Kthulhu}.
We provide here the new criterion of type $\tipo$, see \S \ref{subsec:new-criterion},
which complements but does not replace the criteria of types C,  D or F: 
on one hand,
there are racks that are kthulhu, but  of type $\tipo$, see Section \ref{sec:notC-Omega}. On the other hand
type $\tipo$ propagates to over-racks, but it is unclear,
in the current state of the art,  whether it propagates to covers.  

\begin{table}[t]
\caption{  Kthulhu semisimple classes in $\Pom^+_{2n+1}(q)$. 
\newline {\small Here }  $\varTheta \coloneqq\prod_{i=1}^{d}\alpha^{\vee}_{2i-1} (-1)$ and $d\geq1$.}\label{tab:ss-open-POM-odd}
\begin{center}
\begin{tabular}{|c|c|c|c|c|c|}
\hline 
$n$ & $w$  &  $q$  & $t$&  {\small parameter} & {\small type $\tipo$} \\
\hline  \hline
$\begin{matrix} 2d\end{matrix}$ 
&$e$& $3,5,7$  & 
$\varTheta\alpha^{\vee}_{n} (\zeta)$ 
&$\zeta= \pm1$ & {\small yes}
\\ 
\hline &$e$   & $5$  &$\varTheta \alpha^{\vee}_{n} (\omega)$ &
$\F_5^\times=\langle\omega\rangle$ & {\small yes}
\\ 
\cline{2-6}
$2d + 1$ & $s_n$  & $\equiv 3 \mod4$  &$\varTheta\alpha_n^\vee(\zeta)$ & $\zeta^{2}=-1$ & {\small yes}
\\ 
\cline{2-6}
& $s_ns_{\alpha_{n-1} + \alpha_n}$& $3$  &
$\varTheta\alpha_{n-1}^\vee(-\omega^2)\alpha_n^\vee(\omega)$ & $\F_9^\times=\langle\omega\rangle$ & {\small yes}
\\ 
\hline
\end{tabular}
\end{center}
\end{table}

\begin{table}[t]
\caption{  Kthulhu semisimple classes in $\Pom^+_{2n}(q)$.}\label{tab:ss-open-POM-even}
\begin{center}
\begin{tabular}{|c|c|c|c|c|c|}
\hline 
$n$ & $w$  &  $q$  & $t$&  {\small parameter} & {\small type $\tipo$} \\
\hline  \hline
$4$ & $s_1s_3s_4$ & $\equiv 3 \mod4$ & $\alpha^\vee_1(\omega)\alpha^\vee_3(\omega)\alpha^\vee_4(\omega)$ &$
\omega^2=-1$ & {\small yes}\\
\hline
$4$ & $s_1s_3s_4$ & $2$ & $\alpha^\vee_1(\omega)\alpha^\vee_3(\omega)\alpha^\vee_4(\omega)$ &$
\F_4=\langle\omega\rangle$ & {\small yes}\\
\hline
\end{tabular}
\end{center}
\end{table}

\medbreak
For these reasons, we follow a three-step strategy. Let $X$ be 
a semisimple conjugacy class in a simple Chevalley group $\Gb$.
The first step is based on the presentation in Theorem \ref{thm:potente-revised} of

\begin{itemize} [leftmargin=*] 
\item Sufficient Lie-theoretic conditions for $X$  to be of type C. 
\end{itemize}

\medbreak
These conditions can be rephrased in combinatorial terms and drastically reduce the case-by-case analysis. Indeed, instead of looking at conjugacy classes individually, we consider classes that are represented in a stable torus corresponding to a Weyl group element $w$, and focus on the minimal parabolic subgroup of the Weyl group containing $w$, rather than $w$. This way we move from considerations on semisimple elements in $\Gb$ to considerations at the level of root subsystems and their Weyl groups.

\medbreak
Here is the strategy:

\medbreak
\begin{step-intro} \label{step-one}
Carry out the verification of  the hypotheses in Theorem \ref{thm:potente-revised}, for generic representatives of semisimple classes running in all conjugacy classes of tori. If the hypotheses hold, then
the class is of type C, so by Theorem \ref{thm:CDF} the class, its over-racks and its covers collapse. This is carried out case-by-case in Subsections \ref{subsec:Pom-impar}, \ref{subsec:Pom-par}, \ref{subsec:E6}, \ref{subsec:E7}, \ref{subsec:E8}, \ref{subsec:F4} and \ref{subsec:G2}. This step requires a thorough analysis of conjugacy classes in Weyl groups. In addition, some classes are treated with inductive arguments based on the results of our previous articles. 
\end{step-intro}

\medbreak
\begin{step-intro} 
Detect the classes for which the previous method fails that are of type $\tipo$. In this case the class and its over-racks collapse. This is carried out in Section \ref{sec:notC-Omega}, completing the proof of Theorem \ref{thm:intro-racks}.
\end{step-intro}

\medbreak
\begin{step-intro} 
For those classes for which the above methods fail,
use knowledge on  trivial subracks of the class and show that it does not support Yetter-Drinfeld modules whose Nichols algebra is finite-dimensional. In this case the class might not collapse as a rack, and the result  does not propagate to over-racks nor covers in general.  This is carried out in Section \ref{sec:abelian-techniques}.
\end{step-intro}

\medbreak
In conclusion, Theorem \ref{thm:intro-groups} is proved going on through
the list of finite simple Chevalley groups:

\begin{itemize} [leftmargin=*]
\item The groups $\PSL_{n+1}(q)$, where either $n \geq 3$ and  $q$ is  arbitrary or else $n=2$ and 
$q > 2$, collapse by \cite[Theorem III]{ACG-VII}.

\medbreak
\item The groups $\PSL_{2}(q)$ where either $q\equiv 1 \hspace{-5pt} \mod 4$ or $q >2$ is even,
 collapse by \cite{ACG-III}, \cite{fgvI}; see Theorem \ref{thm:psl2-completo}.
 
 \medbreak
\item The groups $\Pom_{2n+1}^+(q)$, $q$ odd and $n\geq 2$, collapse
 by Theorem \ref{thm:bn-completo}.

\medbreak
 \item The groups $\PSp_{2n}(q)$, $n\geq 2$ and $(n,q)\neq(2,2)$, collapse
 by Theorem \ref{thm:cn-completo}.

\medbreak
\item  The groups $\Pom_{2n}^+(q)$, any $q$ and $n\geq 4$, collapse
 by Theorem \ref{thm:dn-completo}.

\medbreak
\item The groups $E_6(q)$, any $q$, collapse
 by Theorem \ref{thm:e6-collapses}.

\medbreak
\item  The groups $E_7(q)$, any $q$, collapse
 by Theorem \ref{thm:e7-collapses}.

\medbreak
\item  The groups $E_8(q)$, any $q$, collapse
 by Theorem \ref{thm:e8-completo}.

\medbreak
\item The groups $F_4(q)$, any $q$, collapse
 by Theorem \ref{thm:f4-collapses}. 
 
\medbreak
\item The groups $G_2(q)$, any $q > 2$, collapse
 by Theorem \ref{thm:g2-collapses}. 
\end{itemize}

 \medbreak
The theorems in the list above, as well as Theorem \ref{thm:intro-racks},
have been proved using results from 
\cite{ACG-I,ACG-II,ACG-III,ACG-IV,ACG-V,ACG-VII,fgvII}, as explained in each case.

\medbreak
Theorem \ref{thm:potente-revised} and the verification of type $\tipo$ require a good understanding of families of subgroups in groups of Lie type containing semisimple elements. On this issue, our work has been very much inspired by the results in \cite[Section 3]{GLRY} and \cite{seitz}. 

\medbreak
We close the introduction with two remarks.

\begin{remark}
The existence of kthulhu classes in some Chevalley groups, for special values 
of $q$ or special families of conjugacy classes, 
suggests that a case-by-case analysis is somehow unavoidable 
and is related to the arithmetic aspects of semisimple classes.
\end{remark}

\begin{remark}
The irreducible semisimple classes in $\PSL_2(q)$ for $q\equiv 3\mod 4$
and in some unitary groups seem to elude the strategy, 
thus requiring different methods. 
We expect that an enhancement of the deformation technique developed in \cite{HMV,AHV} would lead to advances on these cases.
\end{remark}

\subsection{Notation}\label{subsec:notations}

\

For $a,\,b\in \N$, with $a<b$ the set $\{a,\,\ldots,\,b\}$ is denoted by $\I_{a,b}$. We write $\I_b$ for $\I_{1,b}$. If $X$ and $Y$ are subsets of a set $Z$, then the complement of $Y$
in $X$ is denoted by $X \setminus Y$.

\medbreak
The group of $n$-th roots of unity in $\mathbb C$ is denoted by $\mu_n$ and $\mu_\infty=\bigcup_{n\geq0}\mu_n$. The characteristic polynomial of an operator $A$ is denoted by $c_A(X)$ and the $n$-th cyclotomic polynomial is denoted by $\varphi_n(X)$.

\medbreak
Let $G$ be a  group; its group of automorphisms is denoted by $\Aut G$ and its center
by $Z(G)$.
We write $N \leqslant  G$ to express that $N$ is a subgroup
or $G$; $N\nsbgp$ means that $N$ is normal. 

\medbreak
If $p$ is a prime number, then $G_p$ is the set of $p$-elements of $G$. 

\medbreak
Let $X\subset G$. 
The centralizer, respectively, the normalizer,
of $X$ in $H \leqslant  G$ is denoted by $\Cent_H(X)$, respectively $N_H(X)$;
while the subgroup generated by $X$ is denoted by $\langle X\rangle$.

\medbreak
A rack is \emph{trivial}\footnote{Sometimes called abelian, but trivial is preferable
to avoid confusions with other notions.} 
if  $x \tridd y = y$ for any $x,y \in X$ (but the trivial conjugacy class is the class
of the neutral element).
If $x,y\in G$, then $x \trid y\coloneqq xyx^{-1}$; this makes $G$ a rack; 
any subset of $G$ stable under conjugation is a subrack.
The conjugacy class
of $g \in G$ is denoted by $\Oc_g^G$. 
We extend this notation to
the conjugacy action of $H \leqslant G$ on $G$, 
for which the orbit of $g\in G$ is denoted by $\Oc_g^H$. 
If $g\in N_G(H)$ then  $\Oc_g^H$ is a subrack of $\Oc_x^G$. 
For the basics of racks, see \cite{AG}.

\medbreak
We collect notations and conventions on finite groups of Lie type  in 
\S\,\ref{subsec:notation-lie}. 

For background on Hopf algebras and Nichols algebras, and undefined notions mentioned in
the present paper, see \cite{ACG-I,ACG-II,ACG-III,ACG-IV,ACG-V,ACG-VII,andrus-leyva,A-Schneider}.

\section{Nichols algebras over racks}\label{sec:Nichols-racks}
\subsection{Kthulhu conjugacy classes} \label{subsec:Kthulhu}

\

To start with, we recall that a conjugacy class $\Oc$ of a finite group $G$
is of type C, D, or F when the corresponding property below holds. 

\medbreak
\begin{enumerate}[leftmargin=*,label=\rm{(\Alph*)}]\setcounter{enumi}{2}
\item\label{item:typeC-group} There are $H \leqslant G$ and $r,s\in H\cap\Oc$ such that
$rs\neq sr$, $H=\langle \Oc_r^H,\Oc_s^H\rangle$, $\Oc_r^H\neq \Oc_s^H$ and
$\min \{\vert\Oc_r^H\vert, \,\vert\Oc_s^H\vert\}>2$ or $\max \{\vert\Oc_r^H\vert, \,\vert\Oc_s^H\vert\}>4$. 

\medbreak
\item\label{item:typeD-group} There exist $r$, $s\in \Oc$
such that $\Oc_r^{\langle r, s\rangle}\neq \Oc_s^{\langle r, s\rangle}$ and $(rs)^2\neq(sr)^2$.

\medbreak \setcounter{enumi}{5}
\item\label{item:typeF-group}
There are $r_a\in \Oc$, $a\in \I_4 = \{1,2,3,4\}$,
such that $\Oc_{r_a}^{\langle r_a: a\in \I_4\rangle}\neq \Oc_{r_b}^{\langle r_a: a\in \I_4\rangle}$  and
$r_a r_b \neq r_br_a$ for $a  \neq b\in \I_4$.
\end{enumerate}

\medbreak
Being of type C, D or F are properties of the underlying rack
and not of the particular incarnation as a conjugacy class, see \cite{ACG-I,ACG-III,AFGV-ampa}.

\medbreak
A conjugacy class which is neither of type C, D, nor F, is \emph{kthulhu}.
The next Theorem is a consequence of \cite{HS,HV1}.

\begin{theorem}\label{thm:CDF} \cite[Theorem  3.6]{AFGV-ampa},  
\cite[Theorem 2.8]{ACG-I},  \cite[Theorem 2.9]{ACG-III}.
If a conjugacy class $\Oc$ is type C, D or F, then $\Oc$ \emph{collapses}. 
\qed
\end{theorem}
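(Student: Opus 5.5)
The plan is to reduce to the known structural results on Nichols algebras over decomposable or non-trivially structured braided vector spaces, namely \cite{HS,HV1}, by way of the braided subspaces attached to a subrack. In each of the three cases we extract a small subrack $Y\subseteq \Oc$ on which the braiding of any Yetter-Drinfeld module $V$ with support $\Oc$ restricts to a braided subspace $V_Y$. Since $\toba(V_Y)\hookrightarrow \toba(V)$, it suffices to show $\dim\toba(V_Y)=\infty$. Moreover, the rack-theoretic nature of the conditions, noted before the statement, lets us replace $G$ by the subgroup generated by $Y$. That subgroup is a finite group $\Upsilon$ with $V_Y$ realized in $\yd{\ku\Upsilon}$. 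This same argument gives collapse for any 2-cocycle $\bq$ realizable over a finite group, not only for $G$ itself.

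\textbf{Type D.} Take $Y=\Oc_r^{H}\sqcup\Oc_s^{H}$ with $H=\langle r,s\rangle$; this is a decomposable rack. Then $V_Y=V_1\oplus V_2$ is a direct sum of two Yetter-Drinfeld submodules over $H$. The condition $(rs)^2\neq(sr)^2$ means the braiding between the two summands is not of the simplest ``almost commuting'' kind. By \cite{HS} (the classification of finite-dimensional Nichols algebras of sums of two simple modules over non-abelian groups), finiteness would force $(rs)^2=(sr)^2$. Hence $\dim\toba(V_Y)=\infty$.

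\textbf{Type C.} Again $V_Y=V_1\oplus V_2$, now over $H$ with $Y=\Oc_r^H\sqcup\Oc_s^H$ generating $H$ and $rs\neq sr$. Suppose $\dim\toba(V_Y)<\infty$. By \cite{HV1}, the pair of simple modules then has finite root system and its Weyl groupoid is of a short list of types. Those types force each of the two supports to have size at most $2$ and at most $4$, which is exactly the sizes excluded by the hypothesis on $\min$ and $\max$; noncommutativity of $r,s$ excludes the trivial case where $c^2=\id$ on $V_1\otimes V_2$. This contradicts the hypothesis, so $\dim\toba(V_Y)=\infty$.

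\textbf{Type F.} Take the four distinct $H$-orbits, $H=\langle r_a\rangle$. This gives $V_Y=\bigoplus_{a\in\I_4}V_a$, and each pair $V_a,V_b$ braids non-trivially because $r_ar_b\neq r_br_a$. Finite dimensionality would give a finite Cartan graph of rank $4$ whose Dynkin diagram is a complete graph on $4$ vertices. No finite Weyl groupoid has such a diagram \cite{HV1}, so $\dim\toba(V_Y)=\infty$.

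The main obstacle is the type C step: one must match the precise size constraints in the classification of \cite{HV1} of rank-two Nichols algebras over non-abelian groups with the numerical conditions $\min>2$ or $\max>4$. One must also check that $H=\langle\Oc_r^H,\Oc_s^H\rangle$ is exactly the generation hypothesis needed there. I would first reduce to the case where $H$ acts faithfully and then read off the orbit sizes from the list, for example the $2$ and $4$ occurring in the examples of \cite{HV1}.
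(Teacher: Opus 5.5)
Your proposal is correct and is essentially the argument behind the paper's citation: the paper gives no proof of its own but refers to \cite{AFGV-ampa,ACG-I,ACG-III}, where types D, F and C are handled as you do, by restricting to the braided subspace $V_Y$ of a decomposable subrack and then invoking \cite{HS} and \cite{HV1}. The one detail to make explicit is that $V_R$ and $V_S$ need not be simple over $\langle Y\rangle$, so before applying those results you should pass to simple Yetter--Drinfeld submodules; these still have supports $R$ and $S$ because these are single orbits, and their Nichols algebra embeds in $\toba(V_Y)$.
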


Let $Y$ be a finite rack of type C. Being of type C spreads easily:

\medbreak
 \begin{enumerate}[leftmargin=*,label=\rm{(\roman*)}]
\item\label{item:prop-inclusion}
If $Y \hookrightarrow Z$ is an injective morphism of racks, then $Z$ is of type C.

\medbreak
\item\label{item:prop-projection}
If $X \twoheadrightarrow Y$ is a surjective morphism of racks, then $X$ is of type C.
\end{enumerate}

\medbreak
The properties \ref{item:prop-inclusion} and \ref{item:prop-projection} are also valid for types D and F, see \cite{AFGV-ampa,ACG-I}.

\medbreak
Here is a way to detect conjugacy classes of type C.

\begin{lema}\label{lema:normal-simple}
 Let $G$ be a group and let $N \lhd G$. Assume $N$ is simple non-abelian and let $r\in N$. 
 If  $\Oc_r^G\cap N\neq \Oc_r^N$, then $\Oc_r^G$ is of type C.
\end{lema}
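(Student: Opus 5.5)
The plan is to take $H = N$ in the definition of type C (condition \ref{item:typeC-group}). The hypothesis $\Oc_r^G\cap N\neq \Oc_r^N$ gives some $s\in \Oc_r^G\cap N$ with $s\notin \Oc_r^N$, so $\Oc_r^N\neq\Oc_s^N$ and both $r,s$ lie in $N\cap \Oc_r^G$. Note $r\neq e$, since otherwise $\Oc_r^G=\{e\}=\Oc_r^N$. Since $s = g r g^{-1}$ for some $g\in G$, and $g$ acts on $N$ by an automorphism, the classes $\Oc_r^N$ and $\Oc_s^N$ are images of each other under this automorphism and so have the same size.

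First I check generation. The subgroup $\langle \Oc_r^N\rangle$ is normal in $N$ and nontrivial, hence equals $N$ by simplicity; in particular $N=\langle \Oc_r^N,\Oc_s^N\rangle$. Next I check the size condition. A nontrivial conjugacy class of a non-abelian simple group has more than $2$ elements. Indeed, if $|\Oc_r^N|=k\le 2$, then $N$ acts on $\Oc_r^N$ through a homomorphism $N\to \s_k$, which has image of order at most $2$. Its kernel is then a proper normal subgroup, because $\Oc_r^N$ generates $N$ and $N$ is not central. A proper normal subgroup of the simple group $N$ is trivial, so $N$ embeds in $\s_2$ and is abelian, a contradiction. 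Thus $\min\{|\Oc_r^N|,|\Oc_s^N|\}>2$.

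It remains to find $r',s'$ in these two classes that do not commute. This is the step I expect to be the main obstacle, since $r$ and $s$ themselves might commute. The condition for type C only asks for some $r,s\in H\cap\Oc$ with $rs\neq sr$ and the stated orbit conditions, so I may replace $s$ by any $N$-conjugate. Suppose, for contradiction, that every element of $\Oc_r^N$ commutes with every element of $\Oc_s^N$. Then $\langle\Oc_s^N\rangle=N$ centralizes $r$. So $r\in Z(N)$, which is trivial because $N$ is simple non-abelian, contradicting $r\neq e$.

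Hence there are $r'\in\Oc_r^N$ and $s'\in\Oc_s^N$ with $r's'\neq s'r'$. Both lie in $\Oc_r^G\cap N$, they generate $N$ via their $N$-classes, these classes are distinct, and the classes have size greater than $2$. So $\Oc_r^G$ is of type C.
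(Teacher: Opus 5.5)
Your proof is correct and follows essentially the same route as the paper's. Both take $H=N$, get generation from simplicity of $N$, obtain a non-commuting pair from $Z(N)=e$, and get class sizes $>2$ from non-abelianness. The only minor difference is in justifying $|\Oc_r^N|>2$: you use the conjugation action on the class, while the paper notes that a class with at most two elements generates an abelian subgroup.
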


 \pf Evidently, $r \neq e$.
 Pick $y\in \Oc_r^G\cap N\setminus\Oc_r^N$. Clearly, $\Oc_r^N \cap \Oc_y^N = \emptyset$.
 The groups $\langle \Oc_r^N\rangle$ and $\langle \Oc_y^N\rangle$ are non-trivial normal subgroups of $N$, hence $N= \langle \Oc_r^N\rangle= \langle \Oc_y^N\rangle =\langle \Oc_r^N, \Oc_y^N\rangle$. 
In addition, since $Z(N)=e$, there is $s\in \Oc_y^N$ such that $s\trid r\neq r$. Finally, if $\left| \Oc_r^N\right|\leq 2$, then $N=\langle\Oc_r^N\rangle$ would be abelian, and similarly for $\left| \Oc_r^N\right|\leq 2$. 
\epf

An application of Lemma \ref{lema:normal-simple}
is given in Lemma \ref{lem:normal-simple-psu}, \S\ref{subsec:G2}.

\subsection{Detecting type C via quasi-simple subgroups} 
 
 \ 
 
 Here we provide a technique to show that some conjugacy classes 
 in a finite group with a quasi-simple subgroup are of type C.
 We fix the following data:  
 
 \begin{itemize} [leftmargin=*]
\item  $\gp$ is a finite group, 

\medbreak
\item $\subgp \leqslant\gp$, 

\medbreak
\item $\normsubgp \leqslant N_{\gp} (\subgp)$ is an abelian subgroup and

\medbreak
\item  $\letra \in  \normsubgp$.
 \end{itemize}

\begin{theorem} \label{thm:potente-abstracta}
Assume that 
\begin{align} 
\tag*{\fbox{\textsf q}} \label{item:quasi-simple}
&\subgp \leqslant\gp \text{ is quasi-simple,}
\\ 
\tag*{\fbox{\textsf n}} \label{item:non-central} 
& \letra \notin  \Cent_{\gp} (\subgp),
\\
\tag*{\fbox{\textsf w}} \label{item:dos-orbitas-hyp} 
&(\Oc_{\letra}^{\gp} \cap \normsubgp) 
\,\backslash \left(\left(\Oc_{\letra}^{\subgp}  \cap \normsubgp\right) \cup \Cent_{\gp} (\subgp) \right) \neq \emptyset.
\end{align}
Then $\Oc^{\gp}_{\letra}$ is of type C.
\end{theorem}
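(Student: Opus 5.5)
Pick $\letrados \in (\Oc_{\letra}^{\gp} \cap \normsubgp) \setminus \bigl((\Oc_{\letra}^{\subgp}\cap\normsubgp)\cup \Cent_{\gp}(\subgp)\bigr)$, which exists by \ref{item:dos-orbitas-hyp}. The plan is to verify condition \ref{item:typeC-group} for a suitable subgroup $H$, namely $H=\langle \subgp, \letra, \letrados\rangle$. This is an honest subgroup, and $\subgp \lhd H$ because $\letra,\letrados\in \normsubgp\leqslant N_{\gp}(\subgp)$. The orbits to consider are $\Oc_{\letra}^H$ and $\Oc_{\letrados}^H$. Since $\letra,\letrados\in\normsubgp$ and $\normsubgp$ is abelian, we have $\letra\letrados=\letrados\letra$. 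So the non-commuting pair required by type C cannot be $(\letra,\letrados)$ itself. We instead take $\letra$ and a suitable $\subgp$-conjugate of $\letrados$.

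First, the orbits are distinct. Since $\subgp\lhd H$ and $H=\subgp\langle\letra,\letrados\rangle$ with $\langle \letra,\letrados\rangle$ abelian, we get $\Oc_{\letra}^H = \Oc_{\letra}^{\subgp\langle \letrados\rangle}$. I would like to show $\letrados\notin\Oc_{\letra}^H$. The subtlety is that $\letrados$-conjugation may move $\letra$ out of its $\subgp$-orbit. To handle this, I would refine the choice of $H$ to $H=\langle \Oc_{\letra}^{\subgp}, \Oc_{\letrados}^{\subgp}\rangle$, the subgroup generated by the two $\subgp$-orbits. This subgroup is normalized by $\subgp$. By \ref{item:non-central}, $\letra$ acts non-trivially on $\subgp$, so the commutators $[m,\letra]= (m\trid\letra)\letra^{-1}$ lie in $H$ and generate a nontrivial subgroup normalized by $\subgp$. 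Quasi-simplicity \ref{item:quasi-simple} then forces this subgroup to contain $\subgp$: a non-central normal subgroup of a quasi-simple group is the whole group. Here one checks that $[\subgp,\letra]$ is not central in $\subgp$, which holds since otherwise $\letra$ would centralize the perfect group $\subgp=[\subgp,\subgp]$, by the three subgroups lemma. Hence $\subgp\leqslant H$, so $H=\subgp\langle\letra,\letrados\rangle$. Moreover, $\Oc^H_{\letra}=\Oc^{\subgp}_{\letra}$ because $\letra,\letrados$ commute with $\letra$, and the $H$-conjugates are $\subgp$-conjugates of $\letra$ twisted by elements of the abelian group $\langle\letra,\letrados\rangle$, which fix $\letra$. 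The same argument gives $\Oc^H_{\letrados}=\Oc^{\subgp}_{\letrados}$. Since $\letrados\notin \Oc^{\subgp}_{\letra}$, the two orbits differ, and $H=\langle\Oc_{\letra}^H,\Oc_{\letrados}^H\rangle$ holds by construction.

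Next comes non-commutation. I need some $m\in\subgp$ with $(m\trid\letrados)\letra\neq\letra(m\trid\letrados)$. If every conjugate of $\letrados$ commuted with $\letra$, then the group $\langle\Oc_{\letrados}^{\subgp}\rangle\supseteq[\subgp,\letrados]$ would centralize $\letra$. By the argument above applied to $\letrados\notin\Cent_{\gp}(\subgp)$, this group contains $\subgp$, contradicting \ref{item:non-central}. So we obtain $r=\letra$ and $s=m\trid\letrados$, both in $H\cap\Oc^{\gp}_{\letra}$ and not commuting.

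Finally, the size bound: if $|\Oc_{\letra}^{\subgp}|\le 2$, then a subgroup of index at most $2$ in $\subgp$ centralizes $\letra$. Since $\subgp$ is perfect, that subgroup is all of $\subgp$, contradicting \ref{item:non-central}. The same holds for $\letrados$. Thus both orbits have size greater than $2$. The main obstacle I anticipate is the step showing $\subgp\leqslant H$, i.e.\ that $[\subgp,\letra]$ generates a normal non-central subgroup of the quasi-simple group. This relies on the three subgroups lemma and on the fact that subgroups normalized by $\subgp$ either contain $\subgp$ or centralize it, which needs care when $Z(\subgp)\neq 1$.
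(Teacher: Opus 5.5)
Your argument is correct and is essentially the paper's proof: the same group $\subgp\langle\letra,\letrados\rangle$, the same orbit separation $\Oc_{\letra}^{H}=\Oc_{\letra}^{\subgp}$ coming from the commutativity of $\normsubgp$, the same use of quasi-simplicity to show that the $\subgp$-orbit of an element of $N_{\gp}(\subgp)\setminus\Cent_{\gp}(\subgp)$ generates a group containing $\subgp$, and the same non-commutation step. The paper gets that quasi-simplicity step from the homomorphism $m\mapsto[m,y]$ into $Z(\subgp)$ where you use the three subgroups lemma, and the ``subtlety'' you raise is vacuous because $\letrados$ commutes with $\letra$, so your refined $H=\langle\Oc_{\letra}^{\subgp},\Oc_{\letrados}^{\subgp}\rangle$ is just $\langle\subgp,\letra,\letrados\rangle$ again.
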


 Here $\Oc_{\letra}^{\subgp}$ denotes the orbit of $\letra$ by
 the conjugation of $\subgp$ on $\gp$, see \S\, \ref{subsec:notations}.
The notations \ref{item:quasi-simple}, \ref{item:non-central} and \ref{item:dos-orbitas-hyp}
arise from their counterparts in Theorem \ref{thm:potente-revised}.

 \medbreak
 We need a group $\subgpH$ and $\letrados \in \subgpH$ satisfying the conditions in \ref{item:typeC-group},  see page \pageref{item:typeC-group}. 
 The next Lemma provides candidates for $\subgpH$ and $\letrados$ and 
 the verification of one condition. We retain the previous notation, but 
 we still assume neither \ref{item:quasi-simple}, 
 nor \ref{item:non-central} nor \ref{item:dos-orbitas-hyp}.

 \begin{lema}\label{lem:dos-orbitas} 
Assume that  there exists $\letrados  \in  (\Oc_{\letra}^{\gp} \cap  \normsubgp) \,  \backslash \,  (\Oc_{\letra}^{\subgp}  \cap \normsubgp)$  and
set $\subgpH\coloneqq \langle \letra,\letrados, \subgp\rangle = \langle \letra,\letrados\rangle \subgp
=\subgp \langle \letra,\letrados\rangle$. Then 
\begin{align*}
\Oc_{\letrados}^{\subgpH} &\neq \Oc_{\letra}^{\subgpH}.
\end{align*} 
\end{lema}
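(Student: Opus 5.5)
The plan is to show directly that every element of $\subgpH$ acts on $\letra$ by conjugation exactly as some element of $\subgp$ does, so that $\Oc_{\letra}^{\subgpH} = \Oc_{\letra}^{\subgp}$. The choice of $\letrados$ then finishes the argument.

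First I would justify the description of $\subgpH$. Since $\letra,\letrados\in\normsubgp\leqslant N_{\gp}(\subgp)$, the subgroup $\langle \letra,\letrados\rangle$ normalizes $\subgp$. Hence $\langle \letra,\letrados\rangle\subgp=\subgp\langle \letra,\letrados\rangle$ is a subgroup of $\gp$. It contains $\letra$, $\letrados$ and $\subgp$, and it is contained in any subgroup containing them. Therefore it equals $\subgpH=\langle \letra,\letrados,\subgp\rangle$.

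Next, let $h\in\subgpH$ and write $h=mt$ with $m\in\subgp$ and $t\in\langle \letra,\letrados\rangle$. Because $\normsubgp$ is abelian and $\langle \letra,\letrados\rangle\leqslant\normsubgp$, the element $t$ commutes with $\letra$. Hence
\begin{align*}
h\trid\letra = m\,(t\letra t^{-1})\,m^{-1} = m\trid\letra .
\end{align*}
This gives $\Oc_{\letra}^{\subgpH}=\Oc_{\letra}^{\subgp}$.

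Finally, suppose for contradiction that $\Oc_{\letrados}^{\subgpH}=\Oc_{\letra}^{\subgpH}$. Then $\letrados\in\Oc_{\letra}^{\subgpH}=\Oc_{\letra}^{\subgp}$. Since also $\letrados\in\normsubgp$, we get $\letrados\in\Oc_{\letra}^{\subgp}\cap\normsubgp$, contradicting the choice of $\letrados$. Hence $\Oc_{\letrados}^{\subgpH}\neq\Oc_{\letra}^{\subgpH}$. There is no real obstacle here; the only point needing care is that $t$ commutes with $\letra$, which rests precisely on $\letra,\letrados$ lying in the abelian group $\normsubgp$.
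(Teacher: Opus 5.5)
Your proof is correct and follows the paper's argument. Both observe that $\langle \letra,\letrados\rangle\leqslant\normsubgp$ is abelian, so conjugation by elements of $\subgpH=\subgp\langle\letra,\letrados\rangle$ moves $\letra$ exactly as conjugation by $\subgp$ does; hence $\letrados\in\Oc_{\letra}^{\subgpH}$ would force $\letrados\in\Oc_{\letra}^{\subgp}\cap\normsubgp$, a contradiction. Your extra check that $\langle\letra,\letrados\rangle\subgp$ is a subgroup equal to $\subgpH$ is a detail the paper builds into the statement.
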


\begin{proof}
Suppose that $\Oc_{\letrados}^{\subgpH} = \Oc_{\letra}^{\subgpH}$, i.e.,  
$\letrados \in  \Oc_{\letra}^{\subgpH}$. This says that
\begin{align*}
\letrados \in  \subgpH \trid r=  \subgp \trid (\langle \letra,\letrados\rangle \trid \letra) 
\overset{\star}{=}  \subgp \trid \letra
= \Oc_{\letra}^{\subgp},
\end{align*}
contradicting the assumption. Here $\star$ is valid because $\normsubgp$ is abelian.
\end{proof}
 
The following lemma is useful for verifying the other conditions.
 
\begin{lema}\label{lem:generate-size0} 
Assume that  \ref{item:quasi-simple}  holds. 
If there exists $\letratres \in N_{\gp} (\subgp) \, \backslash \,  \Cent_{\gp} (\subgp)$, then 
\begin{align*}
\langle \Oc_{\letratres}^{\subgpK}\rangle &=\subgpK 
&&\textrm{and}
&\left|\Oc_{\letratres}^{\subgpK}\right| & \geq 3.
\end{align*}
where we set 
\begin{align*}
\subgpK &\coloneqq \langle \letratres,\subgp \rangle = \langle \letratres\rangle \, \subgp.
\end{align*}
\end{lema}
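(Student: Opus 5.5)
The plan is to use the two standard facts about a quasi-simple group $\subgp$: it is perfect, and every proper normal subgroup of $\subgp$ is contained in $Z(\subgp)$, because $\subgp/Z(\subgp)$ is simple. Throughout, write $\subgpL \coloneqq \langle \Oc_{\letratres}^{\subgpK}\rangle$. This is a normal subgroup of $\subgpK$ and it contains $\letratres$. Since $\letratres\in N_{\gp}(\subgp)$, we have $\subgpK=\langle\letratres\rangle\subgp$.

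\emph{Generation.} Consider $\subgpN\coloneqq \subgpL\cap\subgp$, which is normal in $\subgp$.

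For $m\in\subgp$, the commutator $(m\trid\letratres)\,\letratres^{-1}$ lies in $\subgpL$. It also lies in $\subgp$, since $\letratres$ normalizes $\subgp$. So all these commutators lie in $\subgpN$.

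Suppose $\subgpN\neq\subgp$. Then $\subgpN\subseteq Z(\subgp)$, so every commutator $m\,\letratres m^{-1}\letratres^{-1}$ is central in $\subgp$. Hence the map
\begin{align*}
\phi\colon \subgp\to Z(\subgp), \qquad \phi(m)=m^{-1}\,(\letratres\trid m)
\end{align*}
is well defined. Because its values are central, a direct check gives $\phi(mn)=n^{-1}\phi(m)\,(\letratres\trid n)=\phi(m)\phi(n)$, so $\phi$ is a group homomorphism. Its target is abelian and $\subgp$ is perfect, so $\phi$ is trivial. That means $\letratres$ centralizes $\subgp$, contradicting $\letratres\notin\Cent_{\gp}(\subgp)$.

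Therefore $\subgp\subseteq\subgpL$. Together with $\letratres\in\subgpL$, this gives $\subgpL=\subgpK$.

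\emph{Size.} If $|\Oc_{\letratres}^{\subgpK}|=1$, then $\letratres$ is central in $\subgpK\supseteq\subgp$, which is a contradiction.

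If $|\Oc_{\letratres}^{\subgpK}|=2$, then $\subgpK$ acts on this two-element set through a homomorphism to $\s_2$. The perfect group $\subgp$ lies in the kernel, since its image in the abelian group $\s_2$ is trivial. Hence $\subgp$ fixes $\letratres$, i.e.\ centralizes it, again a contradiction.

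So $|\Oc_{\letratres}^{\subgpK}|\geq 3$.

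The only delicate step is the homomorphism argument in the generation part: showing that an automorphism of a perfect group which is trivial modulo the center must itself be trivial. Everything else is formal.
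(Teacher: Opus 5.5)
Your proof is correct and follows the same route as the paper. You intersect $\langle \Oc_{\letratres}^{\subgpK}\rangle$ with $\subgp$, use quasi-simplicity to reduce to the case where this intersection is central, and show that the resulting commutator homomorphism $\subgp\to Z(\subgp)$ must be trivial, contradicting $\letratres\notin\Cent_{\gp}(\subgp)$.

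The differences are minor. You kill that homomorphism directly by perfectness of $\subgp$, whereas the paper argues via its kernel and the non-abelian simplicity of $\subgp/Z(\subgp)$. You also get $|\Oc_{\letratres}^{\subgpK}|\geq 3$ from the action of $\subgp$ on a two-element orbit, whereas the paper deduces it from the generation statement, which would otherwise force $\subgpK$ (hence $\subgp$) to be abelian.
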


\begin{proof}
Consider the subgroups 
\begin{align*}
N_1 &\coloneqq \langle \Oc_{\letratres}^{\subgpK}\rangle \geq N_2\coloneqq N_1\cap \subgp.
\end{align*}
 Since $\letratres$ normalizes $\subgp$, we see that $\subgp\lhd\subgpK$. Also,
$N_1 \lhd \subgpK$ and contains $\langle \letratres\rangle$. 
Now $\letratres \in N_{\gp} (N_2)$, whence $N_1=\langle \letratres\rangle N_2$ and $N_2\lhd \subgp$. 
Since $\subgp$ is quasi-simple, we see  that either $N_2=\subgp$ or else  $N_2\subseteq Z(\subgp)$. 
If $N_2=\subgp$, then  $N_1  =\langle \letratres\rangle N_2 = \langle \letratres\rangle \subgp =
\subgpK$ hence $\subgpK = \langle \Oc_{\letratres}^{\subgpK}\rangle$ as desired.

We show that the second scenario cannot occur. Assume for a contradiction that $N_2\subseteq Z(\subgp)$.
Observe that $[\mathbf m,\letratres]\in N_2$ for any $\mathbf m\in \subgp$. 
The assignment $\mathbf m\mapsto [\mathbf m,\letratres]$ determines then a group homomorphism  
$\psi: \subgp\to Z(\subgp)$. 
Indeed, given $\mathbf m, \mathbf n \in \subgp$, we have
\begin{align*}
[\mathbf m,\letratres] [\mathbf n, \letratres] = \mathbf m(\letratres \mathbf m^{-1} \letratres^{-1}\!)\!( \mathbf n\letratres \mathbf n^{-1}\letratres ^{-1}\!)
=  \mathbf m (\mathbf n\letratres \mathbf n^{-1}\letratres ^{-1}\!)\!(\letratres \mathbf m^{-1} \letratres^{-1}\!)=
[\mathbf m \mathbf n, \letratres].
\end{align*}

Again, either $\ker \psi \subset Z(\subgp)$ or $\ker \psi = \subgp$. 
In the first case, we have a canonical surjective map $\subgp/\ker \psi  \to \subgp/Z(\subgp)$; but 
$\subgp/\ker \psi$ is abelian while
$\subgp/Z(\subgp)$ is non-abelian simple, a contradiction. Thus
 $\ker \psi = \subgp$, that is, $[\subgp,\letratres]=e$, in other words,
$\letratres \in \Cent_{\gp} (\subgp) $, contradicting the hypothesis. 

\medbreak
Finally, $\left|\Oc_{\letratres}^{\subgpK}\right|\geq 3$:  otherwise $\subgpK$, 
and a fortiori $\subgp$, would be abelian.
\end{proof}

\noindent \emph{Proof of Theorem}  \ref{thm:potente-abstracta}.
The hypothesis \ref{item:dos-orbitas-hyp} states that there exists $\letracuatro \in \Oc_{\letra}^{\gp} \cap \normsubgp$ such that
\begin{align*}
 \letracuatro &\notin \Oc_{\letra}^{\subgp}  \cap \normsubgp, &  \letracuatro &\notin \Cent_{\gp} (\subgp).
\end{align*}
Set $\subgpH\coloneqq \langle \letra,\letracuatro, \subgp\rangle = \langle \letra,\letracuatro\rangle \subgp
=\subgp \langle \letra,\letracuatro\rangle$. By Lemma \ref{lem:dos-orbitas} we have 
\begin{align*}
	\Oc_{\letracuatro}^{\subgpH} &\neq \Oc_{\letra}^{\subgpH}.
\end{align*} 
Clearly, $\letracuatro \in N_{\gp} (\subgp) \, \backslash \,  \Cent_{\gp} (\subgp)$;
indeed, $\letracuatro \in \normsubgp \subset N_{\gp} (\subgp)$ by assumption. 
Then Lemma \ref{lem:generate-size0} applied to $\letratres = \letra$
and $\letratres = \letracuatro$ implies  that
\begin{align*}
	\langle \Oc_{\letra}^{\langle \letra \rangle \, \subgp}\rangle &=\langle \letra \rangle \, \subgp 
&	&\text{and}&
		\langle \Oc_{\letracuatro}^{\langle \letracuatro \rangle \, \subgp}\rangle &=\langle \letracuatro \rangle \, \subgp. 
\end{align*}
Therefore
\begin{align*}
\subgpH =   \langle \letra,\letracuatro\rangle \subgp =  \langle \letra\rangle \, \langle\letracuatro\rangle \subgp
= \langle \letra\rangle  \, \langle \Oc_{\letracuatro}^{\langle \letracuatro \rangle \, \subgp}\rangle
= \langle \letra\rangle  \, \langle \Oc_{\letracuatro}^{\subgpH}\rangle
\end{align*}
because $[\letra,\letracuatro]=e$. Swapping the roles of $\letra$ and $\letracuatro$ gives $\subgpH =\langle \letracuatro\rangle \, \langle \Oc_{\letra}^{\subgpH}\rangle$. Thus
\begin{align*}
\subgpH \geqslant \langle \Oc_{\letra}^{\subgpH},\Oc_{\letracuatro}^{\subgpH}\rangle
\geqslant \langle \Oc_{\letra }^{\subgpH},\letracuatro\rangle=\subgpH.
\end{align*}
Hence, $\subgpH = \langle \Oc_{\letra}^{\subgpH},\Oc_{\letracuatro}^{\subgpH}\rangle$.
Lemma \ref{lem:generate-size0} also implies that $\left|\Oc_{\letra }^{\subgpH}\right|,\,\left|\Oc_{\letracuatro}^{\subgpH}\right|\geq 3$. 

\medbreak
Finally, we claim that there exists $\letrados\in \Oc_{\letracuatro}^H$ satisfying $[\letra,\letrados]\neq e$. Indeed, if 
$[\letra, \letratres]=e $ for all $\letratres\in \Oc_{\letracuatro}^H$, then $[\letra, \letratres]=e $ for all 
$\letratres\in \langle \Oc_{\letracuatro}^H\rangle=\langle \letrados \rangle \subgp$, but this contradicts \ref{item:non-central}. \qed

\subsection{Solvable racks} 

\

In this subsection we introduce the notion of \emph{solvable rack}, which is
needed for the new criterion of type $\tipo$ that is discussed in Subsection \ref{subsec:new-criterion}.

\medbreak We start with some notation; see \cite{AG} for undefined terminology.
Given a set $I$, the group of 
bijections $I \to I$ is denoted by $\mathbb S_I$.
Let $(X, \tridd)$ be a rack and let  $\phi:X \to \mathbb S_X$ be 
given by $\phi_x(y) = x \tridd y$.
The inner group $\Inn X$ of $X$ is the subgroup of $\mathbb S_X$  
generated by the image of $\phi$. 

\begin{lema}\label{lema:innX} \cite[Lemma 1.9]{AG}
Let $X$ be a rack such that there exist a group $G$ 
and an injective morphism of racks $\varphi: X \to G$
such that $\Imm \varphi$ is a union of conjugacy classes of $G$.
If $\varphi(X)$ generates $G$,  then $\Inn X \simeq G/ Z(G)$. 
\end{lema}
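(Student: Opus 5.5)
The plan is to build a group homomorphism from $G$ onto $\Inn X$ and identify its kernel with $Z(G)$. Identify $X$ with its image $\varphi(X)\subseteq G$. This is harmless because $\varphi$ is an injective rack morphism, so the rack structure on $X$ is conjugation restricted to $\varphi(X)$. Since $\varphi(X)$ is a union of conjugacy classes, every $g\in G$ permutes $\varphi(X)$ by conjugation. This gives a homomorphism
\begin{align*}
\Psi: G\to \mathbb S_X, \qquad \Psi(g)(x)=\varphi^{-1}\bigl(g\,\varphi(x)\,g^{-1}\bigr).
\end{align*}
Multiplicativity of $\Psi$ is immediate from $(gh)\trid y = g\trid(h\trid y)$.

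Next we check that $\Imm\Psi=\Inn X$. For $x\in X$ we have $\Psi(\varphi(x))=\phi_x$, because $\varphi(x\tridd y)=\varphi(x)\trid\varphi(y)$. Hence $\Psi(\varphi(X))=\phi(X)$. Since $\varphi(X)$ generates $G$, the image $\Psi(G)$ is generated by $\phi(X)$, which by definition means $\Psi(G)=\Inn X$.

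Finally we compute the kernel. An element $g$ lies in $\ker\Psi$ exactly when $g$ commutes with every element of $\varphi(X)$. Because $\varphi(X)$ generates $G$, this is equivalent to $g$ commuting with all of $G$, that is, $g\in Z(G)$. Thus $\ker\Psi=Z(G)$, and the first isomorphism theorem gives $\Inn X\simeq G/Z(G)$.

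The only point needing care is that conjugation by an arbitrary $g\in G$ preserves $\varphi(X)$, so that $\Psi$ is well defined on all of $G$. This is exactly where the hypothesis that $\Imm\varphi$ is a union of conjugacy classes is used. Everything else is routine.
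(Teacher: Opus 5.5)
Your proof is correct. The paper gives no argument of its own, only the citation \cite[Lemma 1.9]{AG}, and your route is the standard argument behind it: the conjugation action $\Psi\colon G\to\mathbb S_X$ sends $\varphi(x)$ to $\phi_x$, so its image is $\Inn X$ by generation, and its kernel is the centralizer of $\varphi(X)$, which is $Z(G)$.

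As a minor aside, the union-of-classes hypothesis is not actually needed to make $\Psi$ well defined. Since $\varphi(x)\trid\varphi(X)=\varphi(\phi_x(X))=\varphi(X)$ for each $x\in X$, the set $\varphi(X)$ is automatically stable under conjugation by the group it generates.
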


\begin{definition} \label{def:solvable rack}
A non-trivial rack $X$ is \emph{solvable} 
if there exists a finite solvable  group $G$ and 
a conjugacy class $\Oc$ of $G$ such that 
\begin{align*}
X &\simeq \Oc & &\text{and} & \Oc &\text{ generates } G.
\end{align*}
\end{definition}

\begin{obs}\label{obs:simplify-definition}
The hypothesis $\langle \Oc\rangle = G$ is quite strong; for instance, it
implies that $\Oc$ is indecomposable, see \cite[Lemma 1.15]{AG} (here indecomposable
means that it is not the disjoint union of two subracks).
Also, since $X$ is required to be non-trivial, the group $G$  is necessarily non-abelian
and a fortiori non-cyclic. 
\end{obs}
 
\begin{obs} \label{rem:solvable-rack}
Let $X$ be a solvable rack and let $H$ 
be a finite group with a conjugacy class $\Oc \simeq X$.
Then the subgroup $L$ generated by $\Oc$ is solvable. 
\end{obs}

\pf Let $G$ be as in Definition \ref{def:solvable rack}.
By Lemma \ref{lema:innX}, $\Inn X$ is solvable.
Since $\Inn X \simeq L / Z(L)$, $L$ is also solvable.
\epf

Let $\varGamma$ be a finite abelian group, 
$\langle t\rangle$ a cyclic group acting on $\varGamma$ by group automorphisms and 
$\aut  \in \Aut \varGamma$, $\aut (a)  = t \cdot a$, $a \in \varGamma$. 
The \emph{affine rack} $\Aff(\varGamma,\aut)$ has underlying set
$\varGamma$ and rack multiplication
\begin{align*}
a \tridd b &= \aut(b) + (\id - \aut)(a), & a,b &\in \varGamma.
\end{align*}
In terms of the conjugation  in the semidirect product 
$\varGamma \rtimes \langle t\rangle$, we have
\begin{align*}
(a \tridd b ,t) &= (a, t)(b,t)(a, t)^{-1}, & a,b &\in \varGamma.
\end{align*}
Thus $\Aff(\varGamma, \aut)$ can be identified with the subrack $\varGamma \times \{t\}$ of $\varGamma \rtimes \langle t\rangle$ (considered as rack with the conjugation), 
and $\Oc_{(0,t)}^{\varGamma \rtimes \langle t\rangle}$ 
can be identified with a subrack of $\Aff(\varGamma, \aut)$.

 \begin{lema} \label{lema:twisted-homogeneous}
The following are equivalent: 
\begin{enumerate}[leftmargin=*,label=\rm{(\alph*)}]
\item\label{item:affine-a} 
$\Aff(\varGamma,\aut)$ is indecomposable,

\item\label{item:affine-b}   
$\Aff(\varGamma,\aut)\simeq \Oc_{(0,t)}^{\varGamma \rtimes \langle t\rangle}$,

\item\label{item:affine-c} 
 $\id-\aut \in \End \varGamma$ is surjective,

\item\label{item:affine-d}  $\id-\aut \in \End \varGamma$ is injective,

\item\label{item:affine-e}  the centralizer $\Cent_\varGamma(t)$  is trivial.
\end{enumerate}
Furthermore, an indecomposable affine rack $\Aff(\varGamma, \aut)$ 
is solvable.
\end{lema}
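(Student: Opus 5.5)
We prove the equivalences \ref{item:affine-c}$\iff$\ref{item:affine-d}$\iff$\ref{item:affine-e} first, then connect them to \ref{item:affine-a} and \ref{item:affine-b}. Since $\varGamma$ is finite, an endomorphism of $\varGamma$ is injective iff it is surjective, which gives \ref{item:affine-c}$\iff$\ref{item:affine-d}. Moreover $\ker(\id-\aut)=\{a\in\varGamma: t\cdot a=a\}$. Inside the semidirect product $\varGamma\rtimes\langle t\rangle$ this is exactly $\Cent_\varGamma(t)$, because $(0,t)(a,1)(0,t)^{-1}=(t\cdot a,1)$. Hence \ref{item:affine-d}$\iff$\ref{item:affine-e}.

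Next we compute the conjugacy class of $(0,t)$. Conjugating by $(a,1)$ gives $(a,1)(0,t)(a,1)^{-1}=(a-\aut(a),t)=((\id-\aut)(a),t)$, and conjugating by powers of $(0,t)$ keeps $(b,t)$ of the same shape with $b\mapsto\aut(b)$. Consequently $\Oc_{(0,t)}^{\varGamma\rtimes\langle t\rangle}=\Imm(\id-\aut)\times\{t\}$, since $\Imm(\id-\aut)$ is $\aut$-stable. Under the identification $\Aff(\varGamma,\aut)=\varGamma\times\{t\}$, this class is all of $\Aff(\varGamma,\aut)$ iff $\id-\aut$ is surjective. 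This gives \ref{item:affine-c}$\Rightarrow$\ref{item:affine-b}, with the isomorphism being the identity map.

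For \ref{item:affine-b}$\Rightarrow$\ref{item:affine-a} we cannot simply cite \cite[Lemma 1.15]{AG}, since a single conjugacy class need not be indecomposable in general. Instead we argue directly, and in doing so we also get \ref{item:affine-a}$\iff$\ref{item:affine-c}. Take $I\coloneqq\Imm(\id-\aut)$. For any $a,b$ we have $a\tridd b-\aut(b)\in I$, and $\aut(b)-b\in I$, so $a\tridd b\equiv b \pmod I$; the same holds for the inverse operations. Therefore each coset $b+I$ is a subrack, and $\varGamma$ is the disjoint union of these cosets. Thus \ref{item:affine-a} forces $I=\varGamma$, i.e. \ref{item:affine-c}.

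Conversely, assume $I=\varGamma$. The maps $\phi_a$ are $b\mapsto \aut(b)+(\id-\aut)(a)$. Composing $\phi_a\phi_0^{-1}$ gives translation by $(\id-\aut)(a)$, and these translations cover all of $\varGamma$. So $\Inn$ acts transitively, and the rack is indecomposable. This proves \ref{item:affine-c}$\Rightarrow$\ref{item:affine-a}. For \ref{item:affine-b}$\Rightarrow$\ref{item:affine-c}, the isomorphism forces the cardinalities to agree, $|I|=|\varGamma|$, so $I=\varGamma$.

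Finally, solvability. Set $G\coloneqq\langle\varGamma\times\{t\}\rangle\leqslant\varGamma\rtimes\langle t\rangle$. This group is solvable, since $\varGamma\rtimes\langle t\rangle$ is abelian-by-cyclic. By \ref{item:affine-b}, $\varGamma\times\{t\}$ is a single $\varGamma\rtimes\langle t\rangle$-class. It is still a single $G$-class, because $G$ contains $\varGamma\times\{1\}$: the differences $(a,t)(0,t)^{-1}=(a,1)$ lie in $G$, and the $\varGamma$-conjugates already give the whole orbit by the computation above. So it is a conjugacy class of $G$ generating $G$.

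The rack must also be non-trivial, and this needs a convention. If $\varGamma=0$ the rack has one element and is trivial. Otherwise $\aut\neq\id$, since $\id-\aut$ is bijective, so $a\tridd b\neq b$ for suitable $a,b$ and the rack is non-trivial. The main subtle point is therefore the degenerate case $\varGamma=0$, which must be excluded (or implicitly assumed away) in the final statement.
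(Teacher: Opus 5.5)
Your proof is correct. The paper gives no proof of this lemma (it says ``Left to the reader''), so there is no argument to compare against. Your verification is the natural one:
\begin{itemize}
\item (c)$\Leftrightarrow$(d) follows from finiteness of $\varGamma$.
\item (d)$\Leftrightarrow$(e) follows from $(0,t)(a,1)(0,t)^{-1}=(\aut(a),1)$.
\item You compute $\Oc_{(0,t)}^{\varGamma\rtimes\langle t\rangle}=\Imm(\id-\aut)\times\{t\}$.
\item For (a)$\Leftrightarrow$(c), you use the coset and translation arguments: the cosets of $\Imm(\id-\aut)$ are $\Inn$-stable, and $\phi_a\phi_0^{-1}$ is translation by $(\id-\aut)(a)$.
\end{itemize}
All of these steps check out.

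Two small remarks on the last part.
\begin{itemize}
\item Your caveat is justified. A one-point rack is indecomposable and trivial, so the ``Furthermore'' claim, read against the paper's definition of solvable rack (which requires non-triviality), needs $\varGamma\neq 0$. The paper implicitly assumes this, as its later phrase ``non-trivial indecomposable affine subrack'' shows.
\item The definition of solvable rack asks for a \emph{finite} group. Your $G=\langle\varGamma\times\{t\}\rangle$ is in fact all of $\varGamma\rtimes\langle t\rangle$, since it contains $\varGamma$ and $(0,t)$. This group is infinite if the cyclic group $\langle t\rangle$ is allowed to be infinite. Since $\Aff(\varGamma,\aut)$ depends only on $\aut$, you may replace $\langle t\rangle$ by $\langle\aut\rangle\leqslant\Aut\varGamma$, which is finite, and your argument then goes through verbatim.
\end{itemize}
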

\pf Left to the reader.
\epf

\begin{prop}\label{prop:characterization-affine}
Let $G$ be a finite group, $t \in G$ and $\varGamma \leqslant G$  abelian 
such that the following  conditions hold:
\begin{enumerate}[leftmargin=*,label=\rm{(\alph*)}]
\item\label{item:characterization-affine1}   
$t$ normalizes $\varGamma$; set $\aut  \in \Aut \varGamma$, $\aut (a)  = t \trid a$, $a \in \varGamma$.

\item\label{item:characterization-affine3}  The centralizer $\Cent_\varGamma(t)$ is trivial. 
\end{enumerate}
Then $\Aff(\varGamma,\aut)$ can be identified with a subrack of $\Oc_{t}^{G}$ of size $|\varGamma|$.
\end{prop}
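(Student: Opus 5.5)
The natural map to use is $\varphi:\varGamma\to G$, $\varphi(a)= a\,t$ written multiplicatively. More precisely, writing $\varGamma$ additively inside the affine rack but multiplicatively inside $G$, set $\varphi(a)\coloneqq a t$ for $a\in\varGamma$. The plan is to show three things in turn:
\begin{itemize}
\item $\varphi$ is injective;
\item its image lies in $\Oc_t^G$;
\item $\varphi$ is a morphism of racks.
\end{itemize}

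Injectivity is immediate from cancellation, so the image has $|\varGamma|$ elements.

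For the image to lie in the conjugacy class, I use hypothesis \ref{item:characterization-affine3}. Since $\Cent_\varGamma(t)$ is trivial, Lemma \ref{lema:twisted-homogeneous} (the equivalence \ref{item:affine-d}$\Leftrightarrow$\ref{item:affine-c}) shows that the map $b\mapsto b\,\aut(b)^{-1}$ is surjective onto $\varGamma$. For $b\in\varGamma$ we have
\begin{align*}
b t b^{-1} = b\,(t b^{-1} t^{-1})\, t = b\,\aut(b)^{-1} t.
\end{align*}
Hence every $a\in\varGamma$ can be written as $b\,\aut(b)^{-1}$, and then $at = btb^{-1}\in\Oc_t^G$. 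This is the only step where the hypothesis on the centralizer is used, and I expect it to be the (mild) crux.

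For the morphism property, take $a,b\in\varGamma$ and compute, using that $\varGamma$ is abelian and normalized by $t$:
\begin{align*}
(at)(bt)(at)^{-1} = a\,(t b t^{-1})\, t\, a^{-1} = a\,\aut(b)\,\aut(a)^{-1}\, t.
\end{align*}
In additive notation this is $\aut(b)+(\id-\aut)(a) = a\tridd b$, so $\varphi(a\tridd b)=\varphi(a)\trid\varphi(b)$. Since $\varphi$ is an injective rack morphism, $\Aff(\varGamma,\aut)$ is identified with the subrack $\varGamma t$ of $\Oc_t^G$.

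Apart from the surjectivity of $\id-\aut$, which Lemma \ref{lema:twisted-homogeneous} supplies and which is what forces the image into a single class, everything else is routine bookkeeping.
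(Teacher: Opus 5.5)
Your proof is correct and is essentially the argument the paper intends: the paper leaves this to the reader, and the implicit route is to map $\varGamma\times\{t\}\subset\varGamma\rtimes\langle t\rangle$ into $G$ by $(a,t)\mapsto at$ and apply Lemma \ref{lema:twisted-homogeneous}, (e)$\Rightarrow$(b), which amounts to your direct computation in $G$. The one step you leave implicit is (e)$\Rightarrow$(d); it is immediate, because $b\mapsto b\,\aut(b)^{-1}$ is an endomorphism of the finite abelian group $\varGamma$ whose kernel is exactly $\Cent_\varGamma(t)$.
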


\pf Left to the reader.
\epf

\subsection{A new criterion on Nichols algebras}\label{subsec:new-criterion}

\

Finite-dimensional Nichols algebras on solvable racks are treated with the following result.

\begin{theorem}\label{thm:racks-solvable-collapse} \cite[Theorem 6.14, Corollary 6.15]{AHV}
Let $G$ be a finite solvable non-cyclic group, and 
$V \in \yd{\ku G}$ such that 
$\supp V$ is a conjugacy class of $G$ generating $G$ and $\dim\toba(V)<\infty$. 
Then the rack $\supp V$ is isomorphic to either of the following ones:
\begin{enumerate}[leftmargin=*,label=\rm{(\roman*)}]
\item\label{item:solvable-affine} 
$\Aff(\varGamma, \aut )$ where $\varGamma = \kk _q$, 
$\aut $ is multiplication by $d \in \kk _q^{\times}$ and
\begin{align}\label{eq:list-affine-simple-racks}
(\kk _q,d)\in\{(\kk _3,2), (\kk _4,\omega), (\kk _5,2),
(\kk _5,3),(\kk _7,3),(\kk _7,5)\}, 
\end{align}
where $\omega \in \kk_4$ is different from 0 and 1.

\medbreak
\item\label{item:solvable-non-affine}   
The conjugacy class $\Oc^4_j$ of $j$-cycles in $\mathbb S_4$, where $j = 2$ or $4$. \qed
\end{enumerate}
\end{theorem}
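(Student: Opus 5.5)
The plan is not to reprove this from scratch. It is essentially a restatement of \cite[Theorem 6.14, Corollary 6.15]{AHV}, so the work is to check that our hypotheses match theirs and to translate their list of finite-dimensional Nichols algebras into a list of supports. First I reduce to the situation treated in \cite{AHV}. Let $X = \supp V$, a conjugacy class generating $G$. By Lemma \ref{lema:innX}, $\Inn X \simeq G/Z(G)$, and this group is solvable. Since $G$ is non-cyclic and generated by one conjugacy class, $X$ is a non-trivial indecomposable rack (Remark \ref{obs:simplify-definition}). Because $V$ has support a single class, it may be taken irreducible, or at least has an irreducible component with the same support whose Nichols algebra is a subquotient. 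So $\dim\toba(V)<\infty$ puts us in the setting of \cite{AHV}: finite-dimensional Nichols algebras of simple Yetter-Drinfeld modules over a solvable group whose support generates the group.

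The second step recalls the mechanism behind \cite{AHV}, to confirm that the statement applies in the generality we need. Since $G$ is solvable, one takes a non-trivial abelian normal subgroup $N$, for instance the last non-trivial term of the derived series. One then restricts $V$ to the smaller group generated by a suitable subrack, or passes to a quotient, so that $V$ becomes a decomposable Yetter-Drinfeld module over a proper subgroup. The classification of finite-dimensional Nichols algebras of decomposable modules over groups with abelian or solvable structure \cite{HV1,HV2} then constrains the pieces. Induction on $|G|$ forces the rack to be a small simple affine rack, or the class of transpositions or $4$-cycles in $\mathbb S_4$. The deformation arguments of \cite{HMV,AHV} are used to exclude the remaining candidates, namely cocycles for which the Nichols algebra would be finite-dimensional only in positive characteristic.

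The third step reads off supports from the ten Nichols algebras listed in \cite{AHV}. These are the racks of order $3$ (transpositions in $\mathbb S_3$, which is $\Aff(\kk_3,2)$), the tetrahedral rack $\Aff(\kk_4,\omega)$, and the affine racks $\Aff(\kk_5,2)$, $\Aff(\kk_5,3)$, $\Aff(\kk_7,3)$ and $\Aff(\kk_7,5)$. The remaining ones are the classes $\Oc^4_2$ and $\Oc^4_4$ in $\mathbb S_4$. For the affine racks I use Lemma \ref{lema:twisted-homogeneous} to identify each one with the class of $(0,t)$ in $\kk_q\rtimes\langle d\rangle$. Different Nichols algebras in the list of \cite{AHV} sharing the same support account for the repetitions, such as the transpositions and $4$-cycles in $\mathbb S_4$ or the Nichols algebras of dimension $12$ and $432$ over $\mathbb S_3$ and the affine rack of order $3$.

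The main obstacle is entirely inside \cite{AHV}: the inductive step through decomposable modules and the deformation argument excluding characteristic-$p$ phenomena. I would not redo that. The only genuine checking needed here is the matching of hypotheses ("solvable, non-cyclic, support a generating class") and the correct translation of their list into racks up to isomorphism.
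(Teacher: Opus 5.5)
Your proposal takes the same approach as the paper, which gives no argument beyond quoting \cite[Theorem 6.14, Corollary 6.15]{AHV}. The bridging step you supply is the right one: by semisimplicity in characteristic $0$, some irreducible Yetter--Drinfeld summand of $V$ has the same generating support, and its Nichols algebra embeds in $\toba(V)$, so it is finite-dimensional. One slip in your side remarks: the $432$-dimensional Nichols algebra over the transpositions of $\mathbb S_3$ is, as far as I recall, a characteristic-$2$ example. In any case it cannot be among the ten characteristic-zero algebras, which have dimensions $12$, $72$, $1280$ twice, $326592$ twice and $576$ four times; so the only supports carrying two of them are $\Oc^4_2$ and $\Oc^4_4$.
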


 \ref{item:solvable-affine} covers the case when $\supp V$ is an affine indecomposable non-trivial rack.
Also neither $\Oc^4_2$ nor $\Oc^4_4$ are affine, 
see the proof of \cite[Corollary 6.15]{AHV}.

\medbreak 
Conversely, any  rack in \ref{item:solvable-affine} or \ref{item:solvable-non-affine} is the support of a finite-dimensional Nichols algebra, or more than one. See
\cite[Examples 2.19, 2.20, 2.21, 2.22]{AHV}.

\medbreak 
Theorem \ref{thm:racks-solvable-collapse} provides a criterion for detecting infinite-dimensional Ni\-chols algebras.

\begin{definition}\label{def:typeZ}
A finite rack $Y$ is of \emph{type $\tipo$} if it has a
solvable subrack $X$ that neither belongs to the list \eqref{eq:list-affine-simple-racks}
nor is it isomorphic to $\Oc^4_2$ or $\Oc^4_4$.
\end{definition}

For example, a finite rack that has a non-trivial indecomposable  affine subrack 
that does not belong to the list \eqref{eq:list-affine-simple-racks}
is of type $\tipo$.

\begin{theorem}\label{thm:typeZ-1}
Let $Y$ be a finite rack of type $\tipo$. Then $Y$ collapses. 
\end{theorem}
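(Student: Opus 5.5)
To prove Theorem \ref{thm:typeZ-1}, I would unwind the definition of collapse. Let $\bq$ be a $2$-cocycle on $Y$ such that $(\ku Y, c^{\bq})$ is realized as some $W\in\yd{\ku\Upsilon}$ for a finite group $\Upsilon$. We must show $\dim\toba(Y,\bq)=\infty$.

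The first step is to pass to the subrack. Let $X\subseteq Y$ be a solvable subrack as in Definition \ref{def:typeZ}. The subspace $W_X=\ku^n\otimes\ku X$ is a braided subspace of $(\ku^n\otimes\ku Y,c^{\bq})$, with braiding $c^{\bq|_{X\times X}}$. Since $\toba(W_X)\hookrightarrow\toba(W)$, it suffices to show $\dim\toba(X,\bq|_X)=\infty$.

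The second step is to realize $W_X$ inside a Yetter--Drinfeld category over a solvable group. Via the realization $W\in\yd{\ku\Upsilon}$, the rack $Y$ maps into $\Upsilon$ as a union of conjugacy classes, or at least the support map $Y\to\Upsilon$ is a rack morphism. Take $L=\langle X\rangle\leqslant\Upsilon$; the image of $X$ is stable under conjugation by its own elements, hence by $L$, so $W_X$ is an object of $\yd{\ku L}$.

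There is a subtlety: the support map need not be injective, and $X$ need not be a single $L$-class. I would handle this as follows. Since $X$ is indecomposable (Remark \ref{obs:simplify-definition}), its image is a single $L$-conjugacy class $\Oc$ generating $L$. The rack $\Oc$ is a quotient of $X$, so $\Inn\Oc$ is a quotient of $\Inn X$. Then $L/Z(L)\simeq\Inn\Oc$ is solvable by Lemma \ref{lema:innX} and Remark \ref{rem:solvable-rack}, hence $L$ is solvable. If $\Oc$ were trivial, $L$ would be abelian. In that case I would need a separate argument, which I expect to exclude using the faithfulness forced by $c^{\bq}$ encoding $\tridd$ on $X$ together with the non-triviality of $X$. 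More simply, I would strengthen the realization: since the braiding on basis supports encodes $x\tridd y$, distinct points with distinct actions have distinct degrees, so after decomposing $W_X$ the relevant degrees reproduce $X$.

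The third step applies Theorem \ref{thm:racks-solvable-collapse}. Pick an irreducible $L$-Yetter--Drinfeld submodule $U$ of $W_X$ whose support is a class generating $L$; $L$ is non-cyclic because $\Oc$ is non-trivial. If $\dim\toba(U)<\infty$, then $\supp U$ is in the list \eqref{eq:list-affine-simple-racks} or is $\Oc^4_2,\Oc^4_4$. I must then transport this back to $X$. The cleanest route is to ensure that $\supp U\simeq X$ as racks, i.e.\ that the degree map restricted to $X$ is injective.

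I expect this injectivity to be the main obstacle. My first attempt would be to argue that the degree map $X\to\Upsilon$ is a rack morphism whose fibers are subsets on which the braiding acts trivially between points. From this, $\Inn X$ acting faithfully forces injectivity for indecomposable solvable $X$ not in the list. Alternatively, if injectivity fails, I would note that $\toba(X,\bq)$ contains the Nichols algebra over the image class, which is again a solvable rack. The image either is off the list, giving infinite dimension, or is on the list; in the latter case the fibers contribute the extra trivial-braiding directions, which yield a Nichols subalgebra of a tensor product type known to be infinite-dimensional.
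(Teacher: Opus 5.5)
Your Steps 1--3 are the paper's argument. You restrict to $X$, set $L=\langle X\rangle$, get $L$ solvable and non-cyclic from Remarks \ref{obs:simplify-definition} and \ref{rem:solvable-rack}, and apply Theorem \ref{thm:racks-solvable-collapse} to $V=\bigoplus_{g\in X}W_g\in\yd{\ku L}$. As written, however, the proposal is not a proof. You leave open whether the degree map $X\to\Upsilon$ is injective, and neither of your fixes works.

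The missing point is that this issue never arises. In the definition of a collapsing rack, the realization is understood to be compatible with $Y$: $Y$ is a subrack of $\Upsilon$, and $W=\bigoplus_{g\in Y}W_g$ with $W_g$ corresponding to $\ku^n\otimes e_g$. This is exactly the ``By assumption, $W=\bigoplus_{g\in Y}W_g$'' in the paper's proof. With that, $X$ is itself a conjugacy class of $L$, since it is $L$-stable and indecomposable, and $\supp V=X$, so the argument closes in three lines. Your sub-case of a trivial image and the choice of an irreducible $U$ are then unnecessary; Theorem \ref{thm:racks-solvable-collapse} does not require $V$ irreducible.

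Under your weaker reading, where the degree map may be non-injective, the proposed fixes fail.
\begin{itemize}
\item A coincidence $\deg x=\deg x'$ only forces $\phi_x=\phi_{x'}$. Since $\Inn X\leqslant\mathbb S_X$ always acts faithfully, what you actually need is injectivity of $x\mapsto\phi_x|_X$, and solvable racks need not have it.
\item For example, the class of reflections in $\GL_2(3)$ generates $\GL_2(3)$ and has $12$ elements, so it is not on the list. It satisfies $\phi_x=\phi_{-x}$ and maps $2\!:\!1$ onto the class $\Oc^4_2$ of transpositions in $\mathbb S_4\simeq\PGL_2(3)$.
\item In such a situation the image class is on the list. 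Theorem \ref{thm:racks-solvable-collapse} only constrains $\supp V$, so it gives no contradiction.
\item Your ``tensor product type'' argument for the fibres is asserted, not proved.
\end{itemize}
The correct move is to take the compatible realization as part of the hypothesis rather than try to derive it.
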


 \begin{proof} Assume that there exist a 2-cocycle $\bq: Y\times Y \to \GL(n, \ku)$, 
a finite group $\Upsilon$  and  $W \in \yd{\ku \Upsilon}$ such that
$W \simeq (\ku Y, c^{\bq})$ as braided vector spaces. 
By assumption, $W = \bigoplus_{g \in Y} W_g$.
Let $X$ be a solvable subrack of $Y$ that is neither in \eqref{eq:list-affine-simple-racks} nor isomorphic to $\Oc^4_2$ or $\Oc^4_4$.
 Let $G = \langle X \rangle \leqslant  \Upsilon$; then
$V \coloneqq \bigoplus_{g \in X} W_g  \in \yd{\ku G}$.
By Remarks \ref{obs:simplify-definition} and  \ref{rem:solvable-rack}, $G$ is solvable but not cyclic.
By Theorem \ref{thm:racks-solvable-collapse},  $\dim \toba(V) = \infty$, 
hence $\dim \toba(W) = \infty$.
\end{proof}

\begin{cor}\label{cor:solvable-order} Let $G$ be a finite group, $\mathbf S \leq G$ solvable
and $t\in \mathbf S \backslash Z(\mathbf S)$ such that $\mathbf S=\langle \Oc_t^{\mathbf S}\rangle$ and 
$|\Oc_t^{\mathbf S}|\notin \{3,4,5,6,7\}$. Then $\Oc_t^G$ is of type $\tipo$, 
hence collapses.
\end{cor}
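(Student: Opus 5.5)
We take $X \coloneqq \Oc_t^{\mathbf S}$ as the candidate subrack of $\Oc_t^G$ and check that it meets Definition \ref{def:typeZ}. Once this is done, Theorem \ref{thm:typeZ-1} gives that $\Oc_t^G$ collapses. The check has three parts.

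First, $X$ is a subrack of $\Oc_t^G$. It is a single $\mathbf S$-orbit inside the $G$-orbit of $t$, and it is stable under conjugation by its own elements because $X \subseteq \mathbf S$.

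Second, $X$ is a solvable rack in the sense of Definition \ref{def:solvable rack}. We take the solvable group to be $\mathbf S$ itself and the conjugacy class to be $\Oc_t^{\mathbf S}$; by hypothesis this class generates $\mathbf S$. We also need $X$ to be non-trivial. If $X$ were trivial, its elements would pairwise commute. Since they generate $\mathbf S$, the group $\mathbf S$ would be abelian, contradicting $t \notin Z(\mathbf S)$.

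Third, $X$ must avoid the excluded list. Each rack in \eqref{eq:list-affine-simple-racks} is $\Aff(\kk_q,\aut)$, which has $q \in \{3,4,5,7\}$ elements. The racks $\Oc^4_2$ and $\Oc^4_4$ have $6$ elements each. Since rack isomorphisms preserve cardinality and $|X| \notin \{3,4,5,6,7\}$, the rack $X$ is isomorphic to none of them. This cardinality comparison is the only point needing care, and it is immediate.
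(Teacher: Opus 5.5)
Your proof is correct and follows the paper's argument. You take $\Oc_t^{\mathbf S}$ as the solvable subrack, use $t\notin Z(\mathbf S)$ together with $\langle \Oc_t^{\mathbf S}\rangle=\mathbf S$ to rule out triviality, and exclude the forbidden racks by cardinality; you even state the non-triviality for the relevant rack $\Oc_t^{\mathbf S}$, where the paper's text speaks of $\Oc_t^G$.
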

\begin{proof} The rack $\Oc_t^G$ is non-trivial; otherwise $\mathbf S$ is abelian
 contradicting that $t\notin  Z(\mathbf S)$.
Hence, $\Oc_t^{\mathbf S}$ is a solvable subrack of $\Oc_t^G$. By size reasons it is neither in \eqref{eq:list-affine-simple-racks} nor isomorphic to $\Oc^4_2$ or $\Oc^4_4$.
\end{proof}

In a favorable situation one may use the following result, whose conditions are rather restrictive but might be easy to verify.

\begin{cor}\label{cor:normal-abelian-order}
Let  $G$ be a finite group, $t \in G$ and $\varGamma \leqslant  G$ abelian satisfying  conditions 
\ref{item:characterization-affine1} and  \ref{item:characterization-affine3} in Proposition
\emph{\ref{prop:characterization-affine}}.  If $(\varGamma, \aut)$
is not in  \eqref{eq:list-affine-simple-racks}, in particular if
$|\varGamma|\notin \{2,3,4,5,7\}$, then $\Oc_t^G$ is of type $\tipo$, 
hence it collapses. \qed
\end{cor}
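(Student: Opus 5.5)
The claim to prove is Corollary \ref{cor:normal-abelian-order}. The plan is to reduce it to Definition \ref{def:typeZ} and Theorem \ref{thm:typeZ-1}, with Proposition \ref{prop:characterization-affine} and Lemma \ref{lema:twisted-homogeneous} as the bridge. That is, we exhibit inside $\Oc_t^G$ a solvable subrack not on the excluded list.

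First, by Proposition \ref{prop:characterization-affine}, hypotheses \ref{item:characterization-affine1} and \ref{item:characterization-affine3} identify $\Aff(\varGamma,\aut)$ with a subrack of $\Oc_t^G$ of size $|\varGamma|$. Concretely, the map $a\mapsto a t a^{-1}$ is injective on $\varGamma$ because $\Cent_\varGamma(t)$ is trivial. Since $\Cent_\varGamma(t)$ is trivial, condition \ref{item:affine-e} of Lemma \ref{lema:twisted-homogeneous} holds. Hence $\Aff(\varGamma,\aut)$ is indecomposable, and by the last assertion of that lemma it is solvable.

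One point needs care: Definition \ref{def:solvable rack} requires the rack to be non-trivial. Suppose $\Aff(\varGamma,\aut)$ is trivial, so that $a\tridd b=b$ for all $a,b\in\varGamma$. Then $(\id-\aut)(a)=(\id-\aut)(b)$ for all $a,b$, so $(\id-\aut)(a)=(\id-\aut)(0)=0$ for every $a$. Injectivity of $\id-\aut$ then forces $\varGamma=0$. So for $|\varGamma|>1$ the rack is non-trivial; this is automatic when $|\varGamma|\notin\{2,3,4,5,7\}$ unless $|\varGamma|=1$. Presumably $|\varGamma|=1$ is excluded, or at least tacitly understood, since a one-point rack would make the statement meaningless. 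I would add the hypothesis $\varGamma\neq e$ if necessary. Note also that $|\varGamma|=2$ gives a trivial rack, since $\Aut$ of $\mathbb Z/2$ is trivial and then $\Cent_\varGamma(t)=\varGamma$. So $|\varGamma|=2$ cannot even occur under hypothesis \ref{item:characterization-affine3}, which is consistent with its exclusion.

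Finally, we check that $X=\Aff(\varGamma,\aut)$ is neither in \eqref{eq:list-affine-simple-racks} nor isomorphic to $\Oc^4_2$ or $\Oc^4_4$. The list \eqref{eq:list-affine-simple-racks} is read up to rack isomorphism, and $(\varGamma,\aut)$ is assumed not to be on it. The remaining issue is to exclude $\Oc^4_2$ and $\Oc^4_4$, which have size $6$. These are not affine, as recalled after Theorem \ref{thm:racks-solvable-collapse}, following \cite[Corollary 6.15]{AHV}, so $X$ is not isomorphic to them. This is the only mildly delicate step, since it imports non-affineness from \cite{AHV}.

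For the particular case $|\varGamma|\notin\{2,3,4,5,7\}$, sizes differ from those of all the racks in the list, so $X$ is not on the list. Thus $\Oc_t^G$ is of type $\tipo$, and Theorem \ref{thm:typeZ-1} gives that it collapses.
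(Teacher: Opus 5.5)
Your proposal is correct and is exactly the argument the paper leaves implicit (the corollary is marked \qed): you embed $\Aff(\varGamma,\aut)$ in $\Oc_t^G$ via Proposition \ref{prop:characterization-affine}, obtain indecomposability and solvability from Lemma \ref{lema:twisted-homogeneous}, and conclude with Definition \ref{def:typeZ} and Theorem \ref{thm:typeZ-1}. Your caveat that $\varGamma\neq e$ must be tacitly assumed is right, since for $t$ central $\Oc_t^G$ is a one-point rack and the statement fails; also, you could exclude $\Oc^4_2$ and $\Oc^4_4$ by size alone without importing non-affineness from \cite{AHV}, because hypothesis \ref{item:characterization-affine3} forces $|\varGamma|\not\equiv 2 \bmod 4$ (the $t$-stable Sylow $2$-subgroup cannot be $\mathbb Z/2$), so $|\varGamma|\neq 6$.
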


\begin{exa}\label{exa:SL2-unipotent} Let $G=\SL_2(2^m)\simeq\PSL_2(2^m)$ with $m>1$.  The (unique) class $\Oc$ of non-trivial involutions (i.e., the class of non-trivial unipotent elements) in $G$ is of type $\tipo$, therefore it collapses. 
\end{exa}

\pf
Indeed, $G$ contains a dihedral subgroup  $D$ of order $2(2^m+1)$ (i.e., the normaliser of the non-split torus), so any reflection in $D$ is a representative of $\Oc$.  The group $\varGamma$ of rotations in  $D$ is normalised by $t$, and, as $t$ acts by inversion on it, it centralises only $e$ because $2^m+1$ is odd. If $m\neq 2$, then $|\varGamma|\notin \{2,3,4,5,7\}$. If $m=2$, then  $|\varGamma|=5$ but $(\varGamma, \aut)$ is not in  \eqref{eq:list-affine-simple-racks}. 
\epf

 \subsection{Propagation}
 
\

We compare the criterion given in Theorem \ref{thm:typeZ-1} with the criteria
of types C,  D or F. 

\begin{itemize} [leftmargin=*]\renewcommand{\labelitemi}{$\circ$} 
\item There are  racks that are kthulhu
but of type $\tipo$, see Example \ref{exa:SL2-unipotent} and Section \ref{sec:notC-Omega}. 
\end{itemize}

\medbreak 
Indeed, racks of type C, D, or F have a decomposable subrack 
by definition, while for type $\tipo$ we need indecomposable (solvable) subracks. 

\medbreak 
\begin{itemize} [leftmargin=*]\renewcommand{\labelitemi}{$\circ$} 
\item Type $\tipo$ propagates less than  types C, D or F.
\end{itemize}

\medbreak 
Let $Y$ be a finite rack of type $\tipo$.
If $Y \hookrightarrow Z$ is an injective morphism of racks, then $Z$ is of type 
$\tipo$ too. But if $X \twoheadrightarrow Y$ is a surjective morphism of racks, then it is unclear whether $X$ will be of type $\tipo$.
We ask:

\begin{question}
If $X \twoheadrightarrow Y$ is a surjective morphism of racks and $Y$ is
of type $\tipo$, is necessarily  $X$ of type $\tipo$?
\end{question}

One possible way to address this issue is to focus on cocycles. Namely, 
let $X$ be a finite rack  and let $\bq: X\times X \to \GL(n, \ku)$ be a 2-cocycle.
 We say that $\bq$ is \emph{liftable from a rack $Y$} if
 there exists a surjective morphism of racks 
  $\varpi: X \to Y$ 
 and a 2-cocycle $\bp: Y\times Y \to \GL(n, \ku)$ such that
 $\bq = \bp (\varpi \times \varpi)$. Then 
 $\varpi$ induces a surjective morphism of braided vector spaces 
 $(\ku X, \bq) \twoheadrightarrow (\ku Y, \bp)$, hence a surjective morphism of 
 Nichols algebras
 $\toba(X, \bq) \twoheadrightarrow \toba(Y, \bp)$.

\begin{question}
Let $X$ be a finite rack. Is any 2-cocycle $\bq: X\times X \to \GL(n, \ku)$ 
liftable from a simple rack $Y$?
\end{question}
 
 Constant cocycles are liftable along any surjective morphism of racks 
  $\varpi: Y \to X$. We conclude from Theorem \ref{thm:typeZ-1}:

\begin{lema}
Let $X$ be a finite  rack and let $\bq: X\times X \to \GL(n, \ku)$ 
be a constant 2-cocycle. If there exists a surjective morphism of racks 
$\varpi: X \to Y$ where $Y$ is of type $\tipo$, then 
$\dim \toba(\ku Y, \bq) = \infty$. \qed
\end{lema}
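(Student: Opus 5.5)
The plan is to reduce the statement to Theorem \ref{thm:racks-solvable-collapse}, applied to a Yetter-Drinfeld module realization that we construct by hand. Going through Theorem \ref{thm:typeZ-1} directly would require the braided vector space $(\ku Y,c^{\bp})$ to be realizable over some finite group, and that is not given a priori. (I read the conclusion as $\dim\toba(X,\bq)=\infty$, which is what the preceding discussion concerns.)

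\emph{Step 1: push down to $Y$.} Write $\bq_{x,y}=A\in\GL(n,\ku)$ for all $x,y$. Set $\bp_{u,v}=A$ on $Y$; this is a constant, hence valid, 2-cocycle, and $\bq=\bp(\varpi\times\varpi)$. As explained before the statement, we get a surjection $\toba(X,\bq)\twoheadrightarrow\toba(Y,\bp)$. It therefore suffices to show $\dim\toba(Y,\bp)=\infty$.

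\emph{Step 2: restrict to a solvable subrack and to an eigenline.} Since $Y$ is of type $\tipo$, it has a solvable subrack $Z$ not in \eqref{eq:list-affine-simple-racks} and not isomorphic to $\Oc^4_2$ or $\Oc^4_4$. Fix a finite solvable group $G$ and a conjugacy class $\Oc\simeq Z$ with $\langle\Oc\rangle=G$. Since $\ku$ is algebraically closed, choose an eigenvector $v$ of $A$ with eigenvalue $\lambda$. Then $U\coloneqq v\otimes\ku Z$ is a braided subspace of $(\ku^n\otimes\ku Y,c^{\bp})$, with braiding $c(e_x\otimes e_y)=\lambda e_{x\trid y}\otimes e_x$. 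A braided subspace gives an inclusion $\toba(U)\hookrightarrow\toba(Y,\bp)$, so it suffices to show $\dim\toba(U)=\infty$.

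If $\lambda$ is not a root of unity, or $\lambda=1$, then $e_x$ spans a one-dimensional braided subspace with braiding $\lambda$ or $1$. Its Nichols algebra is a polynomial algebra (we are in characteristic $0$), so $\dim\toba(U)=\infty$. Hence we may assume $\lambda\in\mu_m$ with $m>1$.

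\emph{Step 3: realize $U$ over a solvable group.} Let $\Upsilon\leqslant G\times\mu_m$ be the subgroup generated by $\tilde\Oc\coloneqq\{(x,\lambda):x\in\Oc\}$. Then $\tilde\Oc$ is a conjugacy class of $\Upsilon$, since $\mu_m$ is central and conjugation by $(g,\mu)$ acts as $G$-conjugation on the first coordinate. The projection gives a rack isomorphism $\tilde\Oc\simeq\Oc\simeq Z$. The group $\Upsilon$ is finite and solvable, and it is non-cyclic because $Z$ is non-trivial (Remark \ref{obs:simplify-definition}). Define $V=\bigoplus_{x}\ku e_{(x,\lambda)}$ with degree $\deg e_{(x,\lambda)}=(x,\lambda)$ and action $(g,\mu)\cdot e_{(x,\lambda)}=\mu\, e_{(gxg^{-1},\lambda)}$. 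The main point to verify is that this is a Yetter-Drinfeld module, which reduces to compatibility of the degree with conjugation; this is routine. Its braiding is $c(e_x\otimes e_y)=(x,\lambda)\cdot e_y\otimes e_x=\lambda e_{x\trid y}\otimes e_x$, so $V\simeq U$ as braided vector spaces. Now $\supp V=\tilde\Oc$ generates $\Upsilon$, and $\tilde\Oc$ is not among the racks of Theorem \ref{thm:racks-solvable-collapse}. That theorem therefore gives $\dim\toba(V)=\infty$, hence $\dim\toba(U)=\infty$, and the statement follows.

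The main obstacle is Step 3, namely producing a finite-group realization when the cocycle is an arbitrary matrix $A$. The eigenline reduction combined with the central twist by $\mu_m$ is how I would handle it.
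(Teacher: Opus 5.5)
Your proof is correct, but it takes a different route from the paper's.

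\textbf{The paper's route.} The paper's argument is the one-line remark before the statement: the constant cocycle descends along $\varpi$, which gives $\toba(X,\bq)\twoheadrightarrow\toba(Y,\bp)$. Then Theorem~\ref{thm:typeZ-1} is invoked (``$Y$ collapses''). You share the first step.

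\textbf{Your route.} Instead of invoking Theorem~\ref{thm:typeZ-1}, you cut down to the solvable subrack $Z$ and to an eigenline of $A$. You build an explicit Yetter--Drinfeld realization over $\Upsilon\leqslant G\times\mu_m$, and apply Theorem~\ref{thm:racks-solvable-collapse} directly.

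\textbf{What each buys.} By definition, ``$Y$ collapses'' only constrains cocycles for which $(\ku Y,c^{\bp})$ is realizable as a Yetter--Drinfeld module over a finite group. The paper leaves this realizability implicit. For a constant cocycle it can only hold when $A$ has finite order, since the braiding of a Yetter--Drinfeld module over a finite group has finite order. Your eigenline reduction covers every constant cocycle, including non-semisimple $A$ or $A$ of infinite order. This works because you treat $\lambda=1$ and non-roots of unity separately, and realize only the small braided subspace $v\otimes\ku Z$. The paper's route is shorter, and it suffices in the situation relevant to Hopf algebras, where the braided vector space already comes from a Yetter--Drinfeld module over a finite group.

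\textbf{One detail to make explicit.} The one-dimensional braided subspace $\ku\, v e_x$ relies on $x\trid x=x$. This holds because $Z$ is isomorphic to a conjugacy class, whereas $Y$ itself is only a rack.

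\textbf{The conclusion.} You prove $\dim\toba(Y,\bp)=\infty$ and hence $\dim\toba(X,\bq)=\infty$. So your argument covers either reading of the misprinted conclusion $\toba(\ku Y,\bq)$.
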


The following remark highlights the interplay of the different criteria.

\begin{obs}
Let  $G$ be a finite group and $t \in G$. 
Assume that  $t\neq t^j\in\Oc_t^G$, for some $j\in\NN$, and that $\varGamma \leqslant  G$ is abelian and satisfies  conditions 
\ref{item:characterization-affine1} and  \ref{item:characterization-affine3} in Proposition
\ref{prop:characterization-affine}. 

If $|\varGamma|>4$, or $|\varGamma|=3$ and $\Cent_{\varGamma}(t^j)\neq \varGamma$, or $|\varGamma|=4$ and $|\Cent_{\varGamma}(t^j)|\neq 2$, then $\Oc_t^G$ is of type C, with $r\coloneqq t^j$, $s\coloneqq tx$ for some $x\in\varGamma$ and $H=\langle t,\varGamma\rangle$.  

\medbreak
In this situation, due to better propagation, it is more convenient to use the type C criterion rather than Corollary \ref{cor:normal-abelian-order}. However, type $\tipo$ can be more effective when dealing with conjugacy classes of involutions,
see Example \ref{exa:SL2-unipotent}. 
\end{obs}

\section{Preliminaries on finite groups of Lie type}
\subsection{Groups of Lie type}\label{subsec:notation-lie}

\

We fix $q=p^m$, where $p$ is a prime number and  $m\geq 1$. 
The Galois field with $q$ elements is denoted by $\kk_q$.

\medbreak
Let $\G$ be a simple algebraic group over $\ku \coloneqq\overline{\mathbb F_p}$ 
of rank $n$ as in \cite{malle-testerman}.
We shall use the following notation:

\medbreak
\begin{itemize} [leftmargin=*]\renewcommand{\labelitemi}{$\circ$} 
\item $\G_{sc}$ and $\G_{ad}$ are the simply-connected cover and the adjoint quotient  of $\G$, respectively;

\medbreak
\item $\pi\colon \G\to \G_{ad}$ is the natural isogeny; 

\medbreak
\item $\T$ is a fixed maximal torus in $\G$, $W=N_{\G}(\T)/\T$ is the Weyl group of $\G$;

\medbreak\item $\Phi$ is the root system of $\G$ with respect to $\T$ and $\Delta=\{\alpha_1,\,\ldots,\,\alpha_n\}$ is a fixed set of simple roots with corresponding set of positive roots $\Phi^+$, their numbering follows \cite{hu-la};

\medbreak
\item $\alpha_0$ is the highest root in $\Phi$ with respect to $\Delta$;

\medbreak\item For $\gamma\in \Phi$,  $\gamma^\vee\colon \ku\to \T$ is the cocharacter satisfying 
\begin{align*}
\chi(\gamma^\vee(\zeta))=\zeta^{\frac{2(\chi,\gamma)}{(\gamma,\gamma)}}
\end{align*}
for $\zeta\in\ku^\times$ and $\chi$ in the weight lattice of $\G$. For any $t\in \T$ there exist $\zeta_1,\,\ldots,\,\zeta_n\in\ku^\times$ such that $t=\prod_{j=1}^n\alpha_j^\vee(\zeta_j)$, with uniqueness if $\G=\G_{sc}$.
 \end{itemize}

\medbreak
\begin{itemize} [leftmargin=*]\renewcommand{\labelitemi}{$\circ$} 
\item $\U_{\alpha}$ is the root subgroup of $\G$ corresponding to $\alpha$ in $\Phi$ and $x_\alpha\colon \ku\to \U_{\alpha}$ is a fixed isomorphism of algebraic groups;

\medbreak
\item $\B\coloneqq \langle\T,\,\U_{\alpha},\;\;\alpha\in\Phi^+\rangle$
is the Borel subgroup associated with $\Delta$;

\medbreak
\item $s_i\coloneqq s_{\alpha_i}$, $i\in \I_{0,n}$; so $S=\{s_1,\,\ldots,\,s_n\}$  is  a set of Coxeter generators for $W$ and $w_0$ is the corresponding longest element;

\medbreak\item If $J\subseteq S$ then $\Delta_J\subseteq \Delta$ is the corresponding set of simple roots, $\Phi_J=\Phi\cap {\mathbb Z}\Delta_J$, and  $\Le_J$, respectively $\Pa_J$ are the corresponding standard Levi and parabolic subgroups in $\G$.

\medbreak
\item $F$ denotes either a Frobenius or a Steinberg  endomorphism, that is, $F=\vartheta\circ \Fr_q$ where $\vartheta$ is a (possibly trivial) automorphism of $\G$ preserving $\T$ and $\B$, and $\Fr_q$ is a Frobenius endomorphism acting on $\T$ by raising each element to the $q$-th power; 

\medbreak
\item $\G^F$, $\B^F$, etc., are the subgroups of points fixed by $F$;

\medbreak
\item  $\theta$ is the automorphism of ${\mathbb R}\Phi$ 
induced by $\vartheta$, and, by abuse of notation, 
the automorphism of $W\leqslant  {\GL}({\mathbb R}\Phi)$ 
induced by conjugation by $\theta$;

\medbreak
\item   $\mathcal F$ is the following list of finite groups: 
 \begin{equation*}\mathcal F\coloneqq \{\mathrm{SL}_2(2), \mathrm{SL}_2(3), \mathrm{SU}_3(2),\mathrm{Sp}_4(2), G_2(2), \,^2\!B_2(2),\,\,^2\!G_2(3),\,^2\!F_4(2)\}.\end{equation*}
 \end{itemize}

Recall that a non-trivial finite group $H$ is \emph{quasi-simple} if it is perfect and $H/Z(H)$ is simple. Then $H/Z(H)$ is non-abelian, every proper normal subgroup of $H$ is central and every non-trivial homomorphic image of $H$ is quasi-simple, \cite[Chapter 6]{suzuki2}.  

If $\G_{sc}^F\not\in\mathcal F$ then it is quasi-simple and the restriction of the canonical projection $\pi$ to $\G_{sc}^F$ is a group homomorphism
$\pi: \G_{sc}^F \to \G_{ad}^F$ with image $[\G_{ad}^F,\G_{ad}^F]$ and kernel $\bZ:=Z(\G_{sc})^F=Z(\G_{sc}^F)$, so
\begin{align}\label{eq:defn-G-finite-simple}
\Gb:=\G_{sc}^F/Z(\G_{sc})^F=[\G_{sc}^F,\G_{sc}^F]/Z(\G_{sc}^F)\simeq [\G_{ad}^F,\G_{ad}^F]
\end{align} is a  non-abelian finite simple group.

\medbreak
If $\Phi$ is classical, by abuse of notation we will sometimes 
indicate an element in $\G$ or $\G^F$ by a matrix with 
the understanding that it corresponds to the matrix through the usual isogeny.

\medbreak 
We shall say that a root subsystem in a doubly-laced root system 
is of type $\widetilde{A}_n$, respectively $A_n$, if it is of type $A$, 
rank $n$ and consists of short, respectively long roots.

\subsection{Semisimple conjugacy classes}\label{subsec:ss-cc}

\

The main focus of this paper is on semisimple conjugacy classes, i.e., 
conjugacy classes of semisimple elements. If $x \in \G$ is semisimple, 
then $\Oc^{\G}_{x}$
intersects the maximal torus $\T$, since all maximal tori are conjugated in $\G$.
However, for $x \in \G^F$ semisimple,  $\Oc^{\G^F}_{x}$ does not necessarily intersect
$\T^F$; since $x$ belongs to some maximal torus of $\G$, 
we are led to consider the $F$-stable maximal tori of $\G$. 
We summarize the necessary information:

\begin{itemize} [leftmargin=*]\renewcommand{\labelitemi}{$\circ$} 
\item It is known and easy to see that an $F$-stable maximal
torus of $\G$ is  a conjugate $g\T g^{-1}$ 
where $g\in \G$ is such that $g^{-1}F(g)\in N_{\G}(\T)$. 
Hence, we will say that the maximal tori in $\G^F$ are
the subgroups of $F$-fixed elements in $F$-stable maximal tori.

\medbreak
\item Two maximal tori $(g\T g^{-1})^F$ and $(h\T h^{-1})^F$ in $\G^F$ are $\G^F$-conjugate if and only if $g^{-1}F(g)$ and $h^{-1}F(h)$ represent $\theta$-conjugate elements in $W$, see \cite[Proposition 25.1]{malle-testerman}, where $\theta$-conjugation is given by $\sigma\cdot_\theta \tau=\sigma\tau\theta(\sigma)^{-1}$.
The ($\theta$-class of the) element $g^{-1}F(g)\in N_{\G}(\T)/\T=W$ is called the associated element in the Weyl group.

\medbreak
\item For $w\in W$ we denote by $\T_w^F$ a representative of the $\G^F$-conjugacy class of maximal tori in $\G^F$  with associated element $w$. That is, 
\begin{align*}
\T_w&= g\T g^{-1}, & &\text{for some $g$ such that}& \overline{g^{-1}F(g)} &= w  \in N_{\G}(\T)/\T.
\end{align*}

\medbreak
\item Let $n_w\in N_{\G^F}(\T)$, let $w\coloneqq n_w\T\in W$ and let $g\in \G^F$ be such that $g^{-1}F(g)=n_w$. We set 
\begin{align}\label{eq:def-Fw}
F_w\coloneqq {\rm Ad}(n_w)\circ F.
\end{align}
Evidently, $\T$ is $F_w$-stable. Then $\T_w^F=g\T^{F_w}g^{-1}$. We set 
\begin{align}\label{eq:def-Gw}
G_w \coloneqq &\G^{F_w}\simeq \G^F &&\text{so}&\bZ &=Z(\G^F)=Z(\G)^F=Z(\G^{F_w}).
\end{align}
\end{itemize}

\medbreak

Table \ref{tab:center} collects information on the groups $\bZ$ for all groups $\G$ we are going to deal with. 

\begin{table}
\caption{Center of $\G_{\rm sc}^F$}\label{tab:center}
\begin{center}
\begin{tabular}{|c|c|c|c|}
\hline 
$\Phi$ & $n$  &  $q$  & $\bZ$ \\
\hline  \hline
$B_n$ &any &even & $e$\\
\cline{2-4}
& any &odd & $\langle \alpha_n^\vee(-1)\rangle$\\
\hline
$D_n$ &any &even & $e$\\
\cline{2-4}
& even &odd & $\langle  \prod_{i=1}^n\alpha_i^\vee((-1)^i),\alpha_{n-1}^\vee(-1)\alpha_n^\vee(-1)  \rangle$\\
\cline{2-4}
& odd &$\equiv1\mod 4$ & $\langle   \prod_{i=1}^{n-1}\alpha_i^\vee((-1)^i)\alpha_{n-1}^\vee(\omega)\alpha_n^\vee(\omega)  \rangle$\\
&&& $\omega^4=e$, $\omega^2=-1$\\
\cline{3-4}
& odd &$\equiv3\mod 4$ & $\langle  \alpha_{n-1}^\vee(-1)\alpha_n^\vee(-1)  \rangle$\\
\hline
$E_6$ && $\equiv0,2\mod 3$ & $e$\\
\cline{3-4}
& &$\equiv1\mod 3$& $\langle \alpha^\vee_1(\omega)\alpha^\vee_3(\omega^2)\alpha^\vee_5(\omega)\alpha^\vee_6(\omega^2)\rangle$\\
&&&$\omega\neq e$, $\omega^3=e$\\
\hline
$E_7$ && even & $e$\\
\cline{3-4}
& &odd & $\langle \alpha_2^\vee(-1)\alpha_5^\vee(-1)\alpha_7^\vee(-1)\rangle$\\
\hline
$E_8,F_4,G_2$&&any&$e$\\
\hline
\end{tabular}
\end{center}
\end{table}

\medbreak
We next define cuspidal elements:

\medbreak
\begin{itemize}[leftmargin=*]
\item An element  $w\in W$ is \emph{cuspidal} (or elliptic) 
if no conjugate of $w$ lies in $\langle J\rangle$ for any $J\subsetneq  S$ 
such that $\theta(J)=J$. 
\end{itemize}

\medbreak
Equivalently, $w$ is cuspidal if the characteristic polynomial of $w\theta$ in the geometric representation of $W$ is not divisible by $X-1$, \cite[Exercise 3.16]{geck-pfeiffer}, \cite[Lemma 7.2]{He}, \cite[Proposition 2.2.1]{Ciubotaru-He}.

\medbreak
\begin{itemize}[leftmargin=*]
\item 
A semisimple element in $\G^F$ is \emph{cuspidal} if it lies only in 
$F$-stable maximal tori $\T_w$ with $w$ cuspidal. 
A cuspidal  element $s$ in $\G^F$ is regular, i.e., the 
identity connected component $\Cent_{\G}(s)^\circ$ of $\Cent_{\G}(s)$ 
is the unique maximal torus containing $s$, see \cite[Proposition 3.13]{ACG-VII}. 
\end{itemize}

Let  $s\in\G^F$ be a semisimple element. 
Then there exist $w\in W$ and  $g\in\G$ satisfying $g^{-1}F(g)\T=w\in N_{\G}(\T)/\T=W$
such that $s\in \T_w^F=g\T^{F_w}g^{-1}$. That is, there exist a finite torus
$\T^{F_w}$, $t\in \T^{F_w}$
and  $g\in\G$ such that
\begin{align}\label{eq:ss-rep-torus}
s &= gtg^{-1}.
\end{align}
Conjugation by $g$ induces a rack isomorphism, henceforth called  the \emph{standard} one, 
between $\Oc_t^{G_w}$ and $\Oc_s^{\G^F}$.

\medbreak
\begin{definition}
For an $F$-stable semisimple conjugacy class $\Oc_s^{\G^F}$, we set 
\begin{equation}\label{eq:W_O}
W[\Oc_s^{\G^F}] \coloneqq \{w\in W~|~\Oc_s^{\G^F}\cap \T_w^F\neq\emptyset\}=\{w\in W~|~\Oc_t^{\G}\cap \T^{F_w}\neq\emptyset\}.\end{equation} 
In words, the set $W[\Oc_s^{\G^F}]$ keeps the record of the maximal tori that intersect the 
conjugacy class $\Oc_s^{\G^F}$; it 
is a union of $\theta$-twisted conjugacy classes in $W$. 
\end{definition}

\begin{obs}\label{obs:not-independent}Let ${\boldsymbol{\pi}}\colon \G\to \G_1$ be an isogeny of simple algebraic groups, with 
$F$-stable kernel. We denote by $F$ the induced morphism on $\G_1$, so  $W[\Oc_{{\boldsymbol{\pi}}(s)}^{\G_1^F}]$ is also defined. Similar to \eqref{eq:W_O}, 
we define
\begin{equation*}
W[\Oc_{{\boldsymbol{\pi}}(s)}^{{\boldsymbol{\pi}}(\G^F)}] \coloneqq  \{w\in W~|~\Oc_{{{\boldsymbol{\pi}}(s)}}^{{\boldsymbol{\pi}}(\G^F)}\cap ({\boldsymbol{\pi}}(\T_w))^F\neq\emptyset \}. 
\end{equation*}
We have a chain of inclusions  
\begin{align}\label{eq:incl-wo}W[\Oc_s^{\G^F}]\subseteq 
W[\Oc_{{\boldsymbol{\pi}}(s)}^{\boldsymbol{\pi}(\G^F)}]
\subseteq W[\Oc_{{\boldsymbol{\pi}}(s)}^{\G_1^F}].\end{align}
Indeed, we have
\begin{align*}
{\boldsymbol{\pi}}(\Oc_s^{\G^F}\cap \T_w^F)\subseteq  {\boldsymbol{\pi}}(\Oc_s^{\G^F})\cap {{\boldsymbol{\pi}}}( \T_w^F)&= \Oc_{{\boldsymbol{\pi}}(s)}^{{\boldsymbol{\pi}}(\G^F)}\cap {\boldsymbol{\pi}}( \T_w^F)
\\
&\subseteq
\Oc_{{\boldsymbol{\pi}}(s)}^{\G_1^F}\cap ({\boldsymbol{\pi}}(\T_w))^F. 
\end{align*}

\begin{itemize} [leftmargin=*]\renewcommand{\labelitemi}{$\circ$} 
\item 
 The first inclusion in \eqref{eq:incl-wo} is an equality. 
 \end{itemize}

\medbreak
 For, if $\mathbf x\in   {{\boldsymbol{\pi}}}(\Oc_s^{\G^F})\cap {{\boldsymbol{\pi}}}( \T_w^F)$ then $\mathbf x={\boldsymbol{\pi}}(x)$ for some $ x\in \Oc_s^{\G^F}$ and $\mathbf x={\boldsymbol{\pi}}(x')$ for some $x'\in \T_w^F\cap Z(\G)^F x\subset \T_w^F\cap \bZ x$.  But then $x\in \T_w^F$. 

\medbreak
\begin{itemize} [leftmargin=*]\renewcommand{\labelitemi}{$\circ$} 
\item The second inclusion in \eqref{eq:incl-wo} need not be an equality because the restriction of ${\boldsymbol{\pi}}$ to $\G^F$ in general is not surjective on $\G_1^F$.
\end{itemize}
\end{obs}

\medbreak
\begin{exa}
Let  $\G=\SL_2(\ku)$, $\G_1=\PSL_2(\ku)=\PGL_2(\ku)$, $F=\Fr_q$ where $q$ is odd
and let $\T_1$ be the maximal torus of diagonal matrices in $\G$. Then $\G^F=\SL_2(q)$ and ${\boldsymbol{\pi}} (\G^F)=\PSL_2(q)\subsetneq \PGL_2(q)=\G_1^F$.  
If $a^q=-a$ for $a\in \ku^\times$ and $t\coloneqq \diag (a,a^{-1})$, 
then  $W[\Oc_s^{\G^F}]$ contains only the non-trivial element in $W$, but ${\boldsymbol{\pi}}(s)\in {\boldsymbol{\pi}}(\T_1)^F$, so $W[\Oc_{{\boldsymbol{\pi}}(s)}^{\G_1^F}]=W$. 
\end{exa} 

The racks $\Oc_t^{G_w}$ and $\Oc_s^{\G^F}$ are isomorphic (via the standard isomorphism).
For any $w\in W[\Oc_s^{\G^F}]$ we will work with the former. 

\subsection{Cuspidal classes in Weyl groups of type \texorpdfstring{$B_n$}{} and \texorpdfstring{$D_n$}{}}\label{sec:cuspidal-classical}

\

For later purposes, we recall the description of cuspidal conjugacy classes in 
Weyl groups of type $B_n$ and $D_n$ from \cite{geck-pfeiffer}.

\medbreak
\subsubsection{Cuspidal classes, type \texorpdfstring{$B_n$}{}}

Cuspidal conjugacy classes in Weyl groups of type $B_n$ are parametrized by the set of partitions $\lambda=(\lambda_1,\,\ldots,\,\lambda_r)$ of $n$, \cite[Proposition 3.4.6]{geck-pfeiffer}. More precisely, given a partition $\lambda$ we set 
\begin{align*}
\Lambda_1\coloneqq 0, &&\Lambda_{j}\coloneqq \sum_{i=1}^{j-1}\lambda_i, \textrm{ for }j\in\I_{2,r},&&\gamma_j\coloneqq \sum_{i=0}^{\Lambda_j}\alpha_{n-i}, \textrm{ for }j\in\I_{r}.
\end{align*} 
Consider 
\begin{align*}
 w_{\lambda_j}
 &=s_{\gamma_j}s_{n-\Lambda_j-1}s_{n-\Lambda_j-2}\cdots s_{n-\Lambda_{j}-(\lambda_j-1)},&
 j &\in\I_{r}.
\end{align*}
Observe that $w_{\lambda_j}$ is a Coxeter element in the Weyl group $W_{\Phi_{\lambda,j}}$ of the root subsystem $\Phi_{\lambda,j}$ of type $B_{\lambda_j}$ with base 
\begin{align*}\{\alpha_{n-\Lambda_{j}-\lambda_j+1},\,\alpha_{n-\Lambda_{j}-\lambda_j+2},\,\ldots,\, \alpha_{n-\Lambda_j-1},\,\gamma_j\}.\end{align*}
If we embed the Weyl group of type $B_{\lambda_j}$ into $\mathbb S_{2\lambda_j+1}$ as in \cite[Section 3.4]{BL}, Coxeter elements correspond to $2\lambda_j$-cycles. Hence 
\begin{align*}
\Cent_{W_{\Phi_{\lambda,j}}}(w_{\lambda,j})=\Cent_{\mathbb S_{2\lambda_j+1}}(w_{\lambda_j})\cap W_{\Phi_{\lambda,j}}= \langle w_{\lambda,j}\rangle \text{ and }  |w_{\lambda_j}|=2\lambda_j.
\end{align*}
In addition, if $i\neq j$ then $\Phi_{\lambda,j}\perp\Phi_{\lambda,i}$, whence $w_{\lambda_j}w_{\lambda_i}=w_{\lambda_i} w_{\lambda_j}$, so
\begin{align*}
|\Cent_{\langle J_{w_\lambda}\rangle}(w_\lambda)|\geq 2^r\prod_{i=1}^r\lambda_i.
\end{align*}
By \cite[Proposition 3.4.6]{geck-pfeiffer}, a representative of the 
cuspidal class corresponding to $\lambda$ is given by 
\begin{align}\label{eq:cuspidal-bn}
 w_{\lambda}=\prod_{j=1}^r w_{\lambda_j}.
\end{align}

Thus, the cuspidal classes in $\langle s_{n-l+1},\,\ldots,\,s_n \rangle$ 
where $1 \leq l \leq n$
are parametrized by partitions $\lambda$ of $l$ rather than $n$, and represented by the $w_\lambda$ described  above.

\medbreak
\subsubsection{Cuspidal classes, type \texorpdfstring{$D_n$}{}}
The classification of cuspidal conjugacy classes in a Weyl group $W_D$ of type $D_n$ is given in \cite[Proposition 3.4.11]{geck-pfeiffer} in terms of the inclusion of the root system $\Phi_D$ of type $D_n$ into the root system $\Phi_B$ of type $B_n$ as the set of long roots. 

\medbreak
We denote by $\alpha_n'$, for the remainder of this section only, the $n$-th simple root in $\Phi_D$,
to distinguish it from the $n$-th simple root $\alpha_n\in\Phi_B$, so $\alpha_n'=\alpha_{n-1}+2\alpha_n$. 
Through the inclusion of $W_D$ into the Weyl group $W_B$ of $\Phi_B$, the cuspidal elements in $W_D$ are precisely the cuspidal elements in $W_B$ that are contained in $W_D$. Retaining notation from \eqref{eq:cuspidal-bn},
the cuspidal elements in $W_D$ are the elements $w_\lambda$ corresponding to a partition $\lambda$ of $n$ with an even number $r=2c$ of terms. 

\medbreak
Observe that $w_{\lambda_i}\in W_B\setminus W_D$ for any $i\in\I_l$ but since $[W_B:W_D]=2$, there holds $w_{\lambda_i}w_{\lambda_j}\in W_D$ for any $i,j$.  
In detail, the cuspidal element in $W_D$ associated with $\lambda=(\lambda_1,\,\ldots,\,\lambda_{2c})$  is  
\begin{align*}
w_\lambda &= \prod_{i=1}^c(w_{\lambda_{2i-1}}w_{\lambda_{2i}}),
\end{align*}
where $w_{\lambda_{2i-1}}w_{\lambda_{2i}}$ lies in the Weyl group of the root subsystem $\Phi_{\lambda,2i-1,2i}$ of type $D_{\lambda_{2i-1}+\lambda_{2i}}$ consisting of the long roots in the root system 
\[(\mathbb Z\Phi_{\lambda,2i-1}+\mathbb Z\Phi_{\lambda,2i})\cap \Phi_B\] 
of type $B_{\lambda_{2i-1}+\lambda_{2i}}$.  The order of $w_{\lambda_{2i-1}}w_{\lambda_{2i}}$ is
\begin{align*}
\vert w_{\lambda_{2i-1}}w_{\lambda_{2i}} \vert &= 2 \textrm{lcm}(\lambda_{2i},\lambda_{2i-1}),
& i &\in \I_{c}.
\end{align*}

Observe that for any $\lambda_{2i-1}\geq 3$, the element $w_{\lambda_{2i-1}}w_{\lambda_{2i}}$ preserves the root system $\Phi'_{\lambda,{2i-1}}\coloneqq \Phi_{\lambda,2i-1}\cap\Phi_D$, which is of type $D_{\lambda_{2i-1}}$. For $i=1$, this is the root system with base $\{\alpha'_n,\alpha_{n-1},\,\ldots,\,\alpha_{n-\lambda_1+1}\}$. 

\medbreak
The centraliser of $w_\lambda$ in $W_D$ contains $w_{\lambda_i}w_{\lambda_j}$ for any $i,j \in  \I_{2c}$. If $\lambda_i=\lambda_{i+1}$, then it also contains the involution,
which swaps $w_{\lambda_{i}}$ and $w_{\lambda_{i+1}}$, given by
\[
\prod_{d=0}^{\lambda_i-1}s_{\alpha_{n-\Lambda_{i+1}-d}+\cdots+\alpha_{n-\Lambda_i-1-d}}.
\]

\medbreak
The discussion for type $D_n$ is still valid for type $D_3=A_3$. 
In this case, there is only one cuspidal class, 
namely the class containing  Coxeter elements, corresponding to the partition $(2,1)$.

\section{Detecting semisimple classes of type C}\label{sec:notation-M}

\subsection{Overview}\label{subsec-overview}

\ 

In this Section we assume that $\G=\G_{sc}$ is a 
simple, simply connected, algebraic group and that $F$ is a Frobenius or Steinberg endomorphism of $\G$. We assume that $\G_{sc}^F$ is not in the family $\Fc$.
Let $\Gb= \G_{sc}^F/Z(\G^F_{sc})$ as in \eqref{eq:defn-G-finite-simple}. 

\medbreak 
We are looking for a procedure that allows us to decide 
if a semisimple orbit $\Oc^{\Gb}_x$ of $\Gb$ is of type C, so
that any Nichols algebra over it is infinite-dimensional. 
Now $\Oc^{\Gb}_x$ intersects
at least one of the maximal tori $\T^F_w$, $w \in W$. Previously,
we have worked out the conjugacy classes intersecting
some special tori $\T^F_w$, for instance:

\medbreak
\begin{itemize}[leftmargin=*]
\item \cite[Theorem 1.1]{ACG-III}
If $\Gb = \PSL_{n+1}(q)$, $n \geq 2$, and $x$ is not irreducible, then $\Oc^{\Gb}_x$ collapses.
Here `$x$ is irreducible' is equivalent to `$\Oc^{\Gb}_x$ intersects only 
maximal tori $\T^F_w$, where $w \in W$ lies in the conjugacy class of a Coxeter element'.

\medbreak
\item \cite[Theorem 4.1]{ACG-VII}
If $\Gb \neq \PSL_{n+1}(q)$,  plus a minor technical assumption, 
and $\Oc^{\Gb}_x$ intersects the split torus $\T^F$, then it collapses.
\end{itemize}

\noindent
For more information on previous results about collapsing semisimple classes, see page \pageref{sec:typeC}.

\medbreak
We focus on the classes that intersect a given maximal torus. Concretely,

\begin{center}
\emph{we fix until further notice an element $w \in W$}.
\end{center}
 
Our objects of interest are the conjugacy classes $\Oc_{\pi(s)}^{\Gb}$,
where $s = gtg^{-1}$ as in \eqref{eq:ss-rep-torus} for $t \in \T^{F_w}\setminus e$.

\medbreak
In other words, let $\mathfrak O=\Oc_t^{G_w}$, where $G_w =\G^{F_w}$ as in \eqref{eq:def-Gw}
and $F_w$ is as in \eqref{eq:def-Fw}. Then $\Oc:=\pi(\mathfrak O)=\Oc_{\pi(t)}^{\pi(G_w)}$ is an orbit in 
\[G_w/Z(G_w)=G_w/\bZ\simeq \G^F/Z(\G)^F= \Gb.\]
Observe that $\Oc\simeq \Oc_{\pi(s)}^{\Gb}$. 
For simplicity, we will deal with $\mathfrak O$ and $\Oc$ 
rather than $\Oc_s^{\G^F}$ and $\Oc_{\pi(s)}^{\Gb}$.

\medbreak
In this Section we provide sufficient conditions for semisimple classes 
to be of type C by means of the following notion.
Recall that the maximal torus $\T$ of $\G$ is both $F$-stable and $F_w$-stable.

\begin{definition} A connected reductive subgroup of  $\G$ is \emph{$w$-suitable} if it
is  $F_w$-stable  and contains  $\T$.
\end{definition}

Our goal is to produce, from a $w$-suitable subgroup $\Me$, a group $H$ 
as in the definition of type C, cf. page \pageref{item:typeC-group}.
Now, there are many $w$-suitable subgroups  (e.g., $\G$ itself), 
but to make this work, we need some strong conditions. 
Indeed, we prove that if a $w$-suitable subgroup $\Me$ and $t \in \T^{F_w}\setminus e$
satisfy the conditions \ref{S}, \ref{R} and \ref{W} below, then 
$\Oc \simeq \Oc_{\pi(s)}^{\Gb}$ (see above)
is of type C, cf. Theorem \ref{thm:potente-revised} in \S \ref{subsec:key-theorem}. 

We then analyze these conditions in \S\ref{subsec:notlist} and 
describe in \S \ref{subsec:applications} some situations
where Theorem \ref{thm:potente-revised} applies.

\subsection{The key Theorem}\label{subsec:key-theorem}

\

We need the following notation. 
Let $\Me$ be a $w$-suitable subgroup of $\G$.

\medbreak
\begin{itemize} [leftmargin=*]\renewcommand{\labelitemi}{$\circ$} 

\item $\Phi_{\Me}$ is the $w\theta$-stable root subsystem of $\Phi$ consisting of  the roots of the semisimple group $[\Me,\Me]$, and $\Phi_{\Me}^+\coloneqq \Phi^+\cap\Phi_{\Me}$, so 
\begin{align*}\Me = \langle \T,\U_{\pm\gamma}:\,\gamma\in \Phi_{\Me}\rangle,&&[\Me,\Me]=\langle\U_{\pm\gamma}:\,\gamma\in \Phi_{\Me}\rangle;\end{align*}
\end{itemize}

That is, $w$-suitable subgroups of $\G$ are in bijective correspondence 
with $w\theta$-stable root subsystems of $\Phi$.

\medbreak
\begin{itemize} [leftmargin=*]\renewcommand{\labelitemi}{$\circ$} 
\item $\Delta_{\Me}$ is the base of $\Phi_{\Me}$ corresponding to the positive system $\Phi^+_{\Me}$;

\medbreak
\item $W_{\Me} \coloneq N_{\M}(\T)/\T$ is the Weyl group of $\M$, hence also of $[\Me,\Me]$;

\medbreak
\item $M\coloneqq [\Me,\Me]^{F_w}$, a finite group of Lie type;

\end{itemize}

\begin{obs}\label{obs:normalize}
Since $\T$ normalizes $\Me$, it normalizes $[\Me,\Me]$ and so $\T^{F_w}$ normalizes $M$ and $[M,M]$. 
\end{obs}

\begin{theorem}\label{thm:potente-revised}Let $\G=\G_{sc}$. 
	Let $\Me$ be a $w$-suitable subgroup of $\G$
	and $t \in \T^{F_w}\setminus e$. 
	Assume that the following conditions hold:
\begin{align}&\pi([M,M])\textrm{ is quasi-simple  (quasi-simplicity);}
\label{S}\tag*{\fbox{\footnotesize Q}}\\
&[t,[M,M]_p]\neq e \textrm{ (non-centrality); }\label{R}\tag*{\fbox{\footnotesize N}}
\\
&\Cent_W(w\theta)\cdot t\setminus \left(\left(\bZ \Cent_{W_{\Me}}(w\theta)\cdot t\right)\cup \Cent_{\T}([M,M]_p)\right)\neq \emptyset \label{W}\tag*{\fbox{\footnotesize W}}
\\
&\textrm{ (orbit condition).} \notag
\end{align}
	Then, $\Oc$ is of type C. 
 \end{theorem}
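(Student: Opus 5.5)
We plan to deduce Theorem \ref{thm:potente-revised} from Theorem \ref{thm:potente-abstracta}, applied in the finite group $\gp\coloneqq \pi(G_w)=G_w/\bZ\simeq\Gb$. We take $\subgp\coloneqq \pi([M,M])$, $\normsubgp\coloneqq \pi(\T^{F_w})$ and $\letra\coloneqq \pi(t)$. The rack $\Oc=\Oc^{\gp}_{\pi(t)}$ is then exactly $\Oc_{\letra}^{\gp}$, so type C follows once \ref{item:quasi-simple}, \ref{item:non-central} and \ref{item:dos-orbitas-hyp} hold. Two of the standing data are immediate. First, $\normsubgp$ is abelian, being the image of a torus. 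Second, $\normsubgp\leqslant N_{\gp}(\subgp)$ by Remark \ref{obs:normalize}. Hypothesis \ref{item:quasi-simple} is literally \ref{S}.

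For \ref{item:non-central} we must show $\pi(t)\notin \Cent_{\gp}(\pi([M,M]))$. The key observation is that $[M,M]$ is generated by its $p$-elements $[M,M]_p$, or at least that these generate a normal subgroup, which is all of $[M,M]$ by quasi-simplicity. We would argue as follows. Suppose $\pi(t)$ centralizes $\pi([M,M])$. Then for each unipotent $u\in[M,M]_p$ we have $tut^{-1}u^{-1}\in\bZ$. Since $tut^{-1}$ is again unipotent and $\bZ$ is central of order prime to $p$, the element $tut^{-1}=zu$ is a Jordan decomposition, which forces $z=e$. Hence $t$ centralizes $[M,M]_p$, contradicting \ref{R}.

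For \ref{item:dos-orbitas-hyp} we translate \ref{W}. The group $N_{G_w}(\T)$ acts on $\T^{F_w}$ through $\Cent_W(w\theta)$, via the $F_w$-fixed points of $N_\G(\T)/\T$ (Lang's theorem). So $\Cent_W(w\theta)\cdot t\subseteq \Oc^{G_w}_t\cap\T^{F_w}$. Pick $x\in\Cent_W(w\theta)\cdot t$ as in \ref{W}, and put $\letracuatro\coloneqq\pi(x)\in \Oc_{\letra}^{\gp}\cap\normsubgp$. Since $x\notin \Cent_{\T}([M,M]_p)$, the argument of the previous paragraph shows $\letracuatro\notin\Cent_{\gp}(\subgp)$. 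It remains to prove $\letracuatro\notin\Oc^{\subgp}_{\letra}$. If $\pi(x)=\pi(m t m^{-1})$ with $m\in[M,M]$, then $x=z\,mtm^{-1}$ for some $z\in\bZ$. In particular $mtm^{-1}\in\T$, since both $t$ and $z^{-1}x$ lie in $\T$.

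This is the step we expect to be the main obstacle: we must show that $[M,M]$-conjugacy between two elements of $\T^{F_w}$ is realized by $\Cent_{W_{\Me}}(w\theta)$. The natural route is the standard argument. Both $\T$ and $m^{-1}\T m$ are maximal tori of $\Cent_{\Me}(t)^\circ$ containing $t$. Conjugating within $\Cent_{\Me}(t)^{\circ}$, which is $F_w$-stable, and using Lang's theorem, we replace $m$ by an element of $N_{\Me^{F_w}}(\T)$. Its image in $W_{\Me}$ is then $F_w$-fixed, that is, it lies in $\Cent_{W_{\Me}}(w\theta)$. Some care is needed because $m$ lies in $\Me^{F_w}$ rather than in $[M,M]$; we will check that the argument only needs $m\in \Me^{F_w}$. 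In the end we obtain $x\in \bZ\,\Cent_{W_{\Me}}(w\theta)\cdot t$, which contradicts the choice of $x$. With all hypotheses verified, Theorem \ref{thm:potente-abstracta} gives that $\Oc$ is of type C.
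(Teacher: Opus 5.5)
Your reduction to Theorem~\ref{thm:potente-abstracta}, with $\gp=\pi(G_w)$, $\subgp=\pi([M,M])$, $\normsubgp=\pi(\T^{F_w})$ and $\letra=\pi(t)$, is the paper's. Your checks of \ref{item:quasi-simple}, of \ref{item:non-central}, and of the first two requirements on $\letracuatro$ are fine. (Your Jordan-decomposition argument is the paper's $[M,M]_p\cap\bZ x=\{x\}$; the aside on generation by $[M,M]_p$ is not needed.)

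The gap is the step you flagged, and the Lang argument does not close it. $\T$ and $m^{-1}\T m$ are $F_w$-stable maximal tori of $C=\Cent_{\Me}(t)^\circ$. Lang's theorem only gives some $c\in C$ with $c\T c^{-1}=m^{-1}\T m$. The element $mc\in N_{\Me}(\T)$ has $F_w$-fixed image in $W_{\Me}$ only if the two tori are $C^{F_w}$-conjugate, i.e.\ have the same $F_w$-type in $W(C,\T)$, and this can fail. In fact the assertion is false in general. Take $\Me=\G=\SL_4$, $w=s_1$, and $a\neq b$ in $\F_q^\times$ with $(ab)^2=1$. Then $t=\diag(a,a,b,b)$ and $t'=\diag(b,b,a,a)$ lie in $\T^{F_w}$ and are conjugate in $M=G_w\simeq\SL_4(q)$, since centralizers are connected. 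Yet $\Cent_W(w)=\langle s_1,s_3\rangle$ fixes $t$.

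Importing the paper's justification does not repair this. The paper settles the same step with \eqref{eq:orbit-W} and the analogous identity $\Oc_t^{G_w}\cap\T^{F_w}=\Cent_W(w\theta)\cdot t$, derived from the Bruhat decomposition. That decomposition is only available relative to a maximally split torus, and the example above contradicts both identities.

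Nor does it help that your $x$ lies in $\Cent_W(w\theta)\cdot t$. Take $q$ odd, $\G=\SL_8$, $w=s_1s_5s_7$, $\Me=\Le_{\{s_1,s_2,s_3\}}$ (so $M\simeq\SL_4(q)$) and $t=\diag(1,1,-1,-1,1,1,-1,-1)$. Then $\Cent_W(w)\cdot t=\{t,t_2,t_3\}$, where $t_2=\diag(1,1,-1,-1,-1,-1,1,1)$ and $t_3=\diag(-1,-1,-1,-1,1,1,1,1)$. Conditions \ref{S} and \ref{R} hold, and \ref{W} holds with witness $t_2$. However:
\begin{itemize}
\item $-t_2\in\Oc^{M}_t$: it agrees with $t$ off the first block, and its first-block part $\diag(-1,-1,1,1)$ is $M$-conjugate to $\diag(1,1,-1,-1)$ by Lang's theorem;
\item $t_3$ centralizes $M$;
\item $\Oc_t^{G_w}\cap\T^{F_w}=\{\pm t,\pm t_2,\pm t_3\}$.
\end{itemize}
So hypothesis \ref{item:dos-orbitas-hyp} of Theorem~\ref{thm:potente-abstracta} fails for these data. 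No argument along this route proves the statement in the stated generality, although the conclusion there may still hold for other reasons.

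The step does go through when $\gamma(t)\neq 1$ for all $\gamma\in\Phi_{\Me}$, which is the situation Lemma~\ref{lem:Jw} produces in most applications. Then $\Cent_{\Me}(t)^\circ=\T$, so $m^{-1}\T m\subseteq\Cent_{\Me}(t)^\circ$ forces $m\in N_{\Me}(\T)^{F_w}$. Its image lies in $\Cent_{W_{\Me}}(w\theta)$, and your proof is complete. Alternatively, one can state \ref{W} with $\bZ\,(\Oc_t^{[M,M]}\cap\T^{F_w})$ in place of $\bZ\,\Cent_{W_{\Me}}(w\theta)\cdot t$.
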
 
 
 Here \ref{S} stands for quasi-simplicity and \ref{R} for non-central, while
 \ref{W} refers to the fact that we are reducing to a problem on the Weyl group.

\begin{proof}
This is a consequence of Theorem \ref{thm:potente-abstracta} applied to the following data:
	
	\begin{itemize}
		\item  $\gp = \pi(G_w)\simeq G_w/\bZ$, 
		
		\medbreak
		\item $\subgp = \pi([M,M])$, 
		
		\medbreak
		\item $\normsubgp = \pi(\T^{F_w})$ and
		
		\medbreak
		\item  $\letra= \pi(t)$.
	\end{itemize}
	
We already observed that $\pi(\T^{F_w})$ normalizes $\pi([M,M])$.

We need to verify that the hypotheses of  
Theorem \ref{thm:potente-abstracta} are valid.
Hypothesis \ref{item:quasi-simple} is just \ref{S} while hypothesis \ref{item:non-central}
follows from \ref{R}. Indeed, if $[\pi(t),\pi([M,M])]=e$, then 
$t\trid x\in [M,M]_p\cap \bZ x=\{x\}$ for any $x\in [M,M]_p$, contradicting \ref{R}. 

It remains to check that \ref{item:dos-orbitas-hyp} holds, i.e., that
\begin{align*}
\left(\Oc_{\letra}^{\gp} \cap \normsubgp \right)
\,\backslash \left(\left(\Oc_{\letra}^{\subgp}  \cap \normsubgp\right) \cup \Cent_{\gp} (\subgp) \right) \neq \emptyset.
\end{align*}
We show that it follows from \ref{W}.

First we claim that
\begin{align*}
&\Oc_t^{G_w}\cap\T^{F_w} = \Oc^{N_{G_w}(\T)}_t =\Cent_W(w\theta)\cdot t.
\end{align*}
The first equality follows combining  \cite[Exercise 30.13]{malle-testerman} with the Bruhat decomposition in $G_w=\G^{F_w}$ \cite[Theorem 24.1]{malle-testerman} as we now show. Assume that $t_1=x_1\trid t\in \T^{F_w}$ for some $x_1\in G_w$.
Uniqueness of the factor in $N_{G_w}(\T)$ in the Bruhat decomposition applied to $x_1$ and to $x_1t=t_1 x_1\in G_w$
ensures that $x_1$ may be replaced by an element in $N_{G_w}(\T)$.
The second equality follows from the isomorphism $\Cent_W(w\theta)\simeq N_{G_w}(\T)/\T^{F_w}$,  \cite[Proposition 25.3 (a)]{malle-testerman}.

In a similar fashion we prove that 
\begin{align}\label{eq:orbit-W}
\Oc_t^{[M,M]}\cap \T^{F_w} &\subseteq\Oc_t^M\cap\T^{F_w}=\Oc_t^{N_{M}(\T)}=\Cent_{W_{\Me}}(w\theta)\cdot t.
\end{align}
Hence, \ref{W} and the inclusion $\T^{F_w}\leq G_w$ give
\begin{align*}\left(\Oc_t^{G_w}\cap\T^{F_w}\right)\setminus \left(\bZ\left( \Oc_t^{[M,M]}\cap \T^{F_w}\right)\cup \Cent_{G_w}([M,M]_p)\right)
\neq \emptyset.\end{align*} 
Let $y\in \Oc_t^{G_w}\cap\T^{F_w}$ with $y\not\in \bZ\left(\Oc_t^{[M,M]}\cap \T^{F_w}\right)$ and $y\not\in \Cent_{G_w}([M,M]_p)$. 
We claim that
\begin{align*}
\pi(y)\in \left(\Oc_{\letra}^{\gp} \cap \normsubgp \right)
\,\backslash \left(\left(\Oc_{\letra}^{\subgp}  \cap \normsubgp\right) \cup \Cent_{\gp} (\subgp) \right).
\end{align*}
The inclusion $\pi(y)\in \Oc_{\letra}^{\gp}\cap\normsubgp$ is immediate. 
If $\pi(y)\in \Oc_{\letra}^{\subgp}\cap \normsubgp=\pi( \Oc_t^{[M,M]})\cap \pi(\T^{F_w})$, the inclusion $\bZ\subset \T^{F_w}$ gives  $y\in \bZ(\Oc_t^{[M,M]}\cap \T^{F_w})$, a contradiction. Finally, if $[\pi(y),\subgp]=e$, then $[\pi(y),\subgp_p]=e$, contradicting $y\not\in \Cent_{G_w}([M,M]_p)$. 
\end{proof}

\subsection{Implementation of Theorem \ref{thm:potente-revised}} \label{subsec:notlist}

\

To implement the strategy of Step \ref{step-one}, page \pageref{step-one},
we first verify whether, for a given $t\in \T^{F_w}\subset\G_{sc}$ we can exhibit 
a $w$-suitable group $\Me$, corresponding to a $w$-stable root subsystem $\Phi_{\Me}$ of $\Phi$, satisfying conditions \ref{S}, \ref{R} and \ref{W}, 
so that Theorem \ref{thm:potente-revised} applies.

\medbreak
We provide next a few arguments that are useful to produce such subgroups 
and to verify the desired conditions, retaining notation from  
\S\, \ref{subsec:key-theorem}.

\subsubsection{Quasi-simplicity condition}\label{subsubsec:notlist}

\

The simplest situation where to find groups that satisfy \ref{S} is when $\Phi_{\Me}$ is irreducible.

\begin{lema}\label{lem:perfect-S} If $[\Me,\Me]$ is simple and $M$ is perfect, then \ref{S} holds. In particular, \ref{S} holds if $\Me$ is a Levi subgroup with $w\theta$-stable irreducible root system $\Phi_{\Me}$ satisfying
\begin{align}\label{eq:not-list}&(\Phi_{\Me},q)\not\in\{(A_1,2),\, (A_1,3),\,(B_2,2)\}, \mbox{ and }\\
\nonumber&\mbox{if }(\Phi_{\Me},q)=(A_2,2), \mbox{ then }
w\theta \mbox{ acts on }\Phi_{\Me} \mbox{ as an element of $W_{\Me}$.}\end{align}
\end{lema}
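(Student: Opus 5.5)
The plan has two parts: first the general statement, then the Levi case as a special instance of it.

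For the general statement, suppose $[\Me,\Me]$ is simple and $M=[\Me,\Me]^{F_w}$ is perfect. Then $[M,M]=M$. We must show that $\pi(M)$ is quasi-simple. Perfectness passes to quotients, so $\pi(M)$ is perfect and non-trivial; non-triviality is clear because a perfect finite group of Lie type attached to a non-empty root system is non-trivial. It remains to see that $\pi(M)/Z(\pi(M))$ is simple. For this I would invoke the standard fact (e.g.\ \cite{malle-testerman}, Theorem 24.17 and its surrounding discussion) that when $[\Me,\Me]$ is simple and $M$ is perfect, $M$ is quasi-simple with $M/Z(M)$ simple. This uses the $BN$-pair of $M$ together with Tits' simplicity criterion: $M$ is generated by its unipotent radicals and is perfect. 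A non-trivial homomorphic image of a quasi-simple group is quasi-simple, as recalled in \S\ref{subsec:notation-lie}, so $\pi(M)$ is quasi-simple. One subtlety is that $[\Me,\Me]$ need not be simply connected, and Tits' criterion applies to groups generated by root subgroups. I expect this to be the main point needing care. To handle it, I would note that perfectness of $M$ forces $M=O^{p'}(M)$, which is generated by the $F_w$-fixed points of the root subgroups. Tits' argument then applies directly.

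For the Levi case, let $\Me$ be a Levi subgroup with irreducible $w\theta$-stable root system $\Phi_\Me$. Since $\G$ is simply connected, the derived subgroup $[\Me,\Me]$ of a Levi subgroup is simply connected, and it is simple because $\Phi_\Me$ is irreducible. The Frobenius map $F_w$ restricts to a Steinberg endomorphism of $[\Me,\Me]$. This endomorphism acts on $\Phi_\Me$ through $w\theta$ composed with a $q$-power, so $[\Me,\Me]^{F_w}$ is a finite simply connected group of Lie type with root system $\Phi_\Me$. It is untwisted or twisted according to whether $w\theta$ acts on $\Phi_\Me$ as an element of $W_\Me$ or through a non-trivial graph automorphism composed with such an element.

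By the known list (Steinberg; see \cite{malle-testerman}, Theorem 24.17), the simply connected finite groups of Lie type are perfect except for those in $\mathcal F$. I would check which members of $\mathcal F$ can arise here. Those with root system $A_1$ and $q=2,3$, and $B_2$ with $q=2$, are excluded by \eqref{eq:not-list}. $\SU_3(2)$ is the twisted $A_2$ case with $q=2$, excluded by the extra condition. $G_2(2)$ would need $\Phi_\Me=G_2$, which forces $\Me=\G$; such a case is excluded because $\G^F\notin\Fc$. The same reasoning, or the incompatibility of twisted Suzuki–Ree structures with the parameters, handles $^2B_2(2)$, $^2G_2(3)$ and $^2F_4(2)$. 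These can only occur when $\Phi_\Me$ is all of $\Phi$ with special $p$, and then $\Me=\G$ with $G_w\simeq\G^F\notin\Fc$. Hence $M$ is perfect and the first part applies.
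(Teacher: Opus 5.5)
Your proposal is correct and follows the paper's argument. The paper's proof consists exactly of invoking \cite[Corollary 24.14]{malle-testerman}, a consequence of Tits' simplicity criterion, to see that $M$ is quasi-simple, and then passing to the homomorphic image $\pi(M)=\pi([M,M])$; your treatment of the Levi case (simple connectedness of $[\Me,\Me]$, Steinberg's perfectness list, and the check against $\Fc$ using the standing hypothesis $\G^F\notin\Fc$) spells out what the paper leaves implicit. One minor imprecision in your aside on the non-simply-connected case: $O^{p'}(M)$ is generated by the fixed points of the unipotent radicals of $F_w$-stable Borel subgroups of $[\Me,\Me]$, not by the groups $\U_\gamma^{F_w}$ with $\gamma\in\Phi_{\Me}$, which are trivial whenever $w\theta(\gamma)=-\gamma$ — but the cited corollary already takes care of this.
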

\begin{proof} By \cite[Corollary 24.14]{malle-testerman} the group $M$ is  quasi-simple, so its homomorphic image $\pi(M)=\pi([M,M])$ is quasi-simple. 
\end{proof}

Observe that \eqref{eq:not-list} requires $\Phi_{\Me}$  and $q$ not to be too small, but a too large $\Phi_{\Me}$  may cause \ref{W} to fail.

\medbreak

The condition on $M$ to be perfect is not necessary, as we see now.
\begin{lema}\label{lem:S-b2}If $q=2$ and $\Phi_{\Me}$ is of type $B_2$, then \ref{S} holds. 
\end{lema}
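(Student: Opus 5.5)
We have to show that $\pi([M,M])$ is quasi-simple when $q=2$ and $\Phi_{\Me}$ is of type $B_2$, so $[\Me,\Me]$ is of type $B_2=C_2$ in characteristic $2$. In characteristic $2$ the centre of a simply connected group of type $B_2$ is trivial, so $[\Me,\Me]$ is simple, both simply connected and adjoint. Hence $M=[\Me,\Me]^{F_w}$ is a finite group of Lie type $B_2$ over $\F_2$: either $\Sp_4(2)\simeq\s_6$ (split form) or ${}^2B_2(2)$, which has order $20$ and is solvable. The second case cannot occur, because the Suzuki twist requires the exceptional graph automorphism composed with a Frobenius at an odd power of $\sqrt2$, whereas $F_w=\mathrm{Ad}(n_w)\circ F$ with $F=\vartheta\circ\Fr_2$. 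So we should check that $F_w$ induces on $[\Me,\Me]$ a standard Frobenius $\Fr_2$ twisted by an inner (Weyl) element. This is where I expect the main obstacle: if $\G$ is itself of type $B_n$, $C_n$ or $F_4$ and $\vartheta$ is a graph automorphism, then $F$ would be a Steinberg endomorphism of Suzuki/Ree type. I would handle this by noting that $w\theta$ preserves root lengths in the relevant cases (ordinary Frobenius, or $\theta$ acting isometrically), so it acts on the $B_2$ subsystem through its Weyl group or trivially. If needed, I would restrict the statement to the contexts in which the lemma is used, namely Chevalley groups.

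Granting this, $M\simeq\Sp_4(2)\simeq\s_6$. Its derived subgroup is $[M,M]\simeq\mathbb A_6$, which is simple, hence perfect with trivial centre, so it is quasi-simple. Every non-trivial homomorphic image of a quasi-simple group is quasi-simple, as recalled in \S\ref{subsec:notation-lie}. Therefore $\pi([M,M])$ is quasi-simple as soon as it is non-trivial. Moreover $\bZ=e$ in characteristic $2$ for types $B$, $D$, $E_8$, $F_4$ and $G_2$ (Table \ref{tab:center}), and in any case $\ker\pi$ is central; so $\pi$ is injective on the simple group $[M,M]$, and $\pi([M,M])\simeq\mathbb A_6$.

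Thus the proof reduces to three steps in this order: identify $[\Me,\Me]$ as simple of type $B_2$ in characteristic $2$; show that $F_w$ restricts to a split Frobenius, which is the key step; and invoke the exceptional isomorphism $\Sp_4(2)'\simeq\mathbb A_6$ together with the simplicity of $\mathbb A_6$.
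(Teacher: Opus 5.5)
Your proposal is correct and follows essentially the paper's route. The paper observes that $\Phi$ is doubly laced with $q=2$, so $\theta=\id$ and $\G=\G_{sc}=\G_{ad}$ as abstract groups, hence $\pi$ is the identity; then $[\Me,\Me]\simeq\Sp_4(\ku)$ gives $M\simeq\Sp_4(2)\simeq\mathbb S_6$ and $\pi([M,M])\simeq\mathbb A_6$. Your check that $w$ acts on the $B_2$ subsystem through $W(B_2)$, so that no Suzuki twist arises, just makes explicit what the paper leaves implicit in ``$\theta=\id$''. One minor correction: $[\Me,\Me]$ is simultaneously simply connected and adjoint only as an abstract group, not as an algebraic group.
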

\begin{proof}
In this case, $\Phi$ is doubly-laced, $\theta=\id$ and $\G=\G_{sc}=\G_{ad}$ as abstract groups, so $Z(\G)=e$ and $\pi$ is the identity. In addition, $[\Me,\Me]\simeq \Sp_4(\ku)\simeq\PSp_4(\ku)$ as abstract groups, whence $M\simeq \Sp_4(2)\simeq\mathbb S_6$ and $\pi([M,M])=[M,M]\simeq\mathbb A_6$, a non-abelian simple group. 
\end{proof}

\medbreak
The following Lemma addresses the case in which there is no natural candidate for a root system $\Phi_{\Me}$ satisfying the hypothesis of Lemma \ref{lem:perfect-S}.

\begin{lema}\label{lem:cyclically-permuted}Let $d\geq 1$ and let $\Phi_1,\,\ldots,\,\Phi_d$ be the irreducible components of $\Phi_{\Me}$ and assume $w\Phi_i=\Phi_{i+1\mod d}$ for all $i\in\I_{d}$. Let $\widetilde{\Me}$ be the simply connected cover of $[\Me,\Me]$, with decomposition into simple factors $\widetilde{\Me}=\prod_{i=1}^d\widetilde{\Me}_i$ and let $\mathbf F_w$ be the lift of $F_w$ to $\widetilde{\Me}$. If 
$\widetilde{\Me}_1^{\mathbf F^d_w}$ is quasi-simple, then \ref{S} holds.
\end{lema}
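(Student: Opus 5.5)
The plan is to identify $[\Me,\Me]^{F_w}$ with a twisted diagonal copy of $\widetilde{\Me}_1^{\mathbf F_w^d}$, up to central quotients, and then use that quasi-simplicity passes to perfect images.

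\emph{Step 1: the structure of $\widetilde{\Me}^{\mathbf F_w}$.} Since $F_w$ permutes the components cyclically, so does $\mathbf F_w$ on the simple factors: $\mathbf F_w(\widetilde{\Me}_i)=\widetilde{\Me}_{i+1}$, indices mod $d$. The standard argument then gives an isomorphism
\begin{align*}
\widetilde{\Me}_1^{\mathbf F_w^d}\to \widetilde{\Me}^{\mathbf F_w}, \qquad x\mapsto x\,\mathbf F_w(x)\cdots \mathbf F_w^{d-1}(x).
\end{align*}
The image is $\mathbf F_w$-fixed because $\mathbf F_w^d(x)=x$ and distinct factors commute. The map is a homomorphism for the same reason. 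Conversely, any fixed element $(x_1,\dots,x_d)$ satisfies $x_{i+1}=\mathbf F_w(x_i)$, so it is determined by $x_1\in\widetilde{\Me}_1^{\mathbf F_w^d}$. Hence $\widetilde{\Me}^{\mathbf F_w}$ is quasi-simple by hypothesis.

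\emph{Step 2: pass to $[M,M]$.} Let $\rho\colon\widetilde{\Me}\to[\Me,\Me]$ be the central isogeny, which commutes with the Frobenius maps. The image $\rho(\widetilde{\Me}^{\mathbf F_w})$ is a subgroup of $M=[\Me,\Me]^{F_w}$. By Lang–Steinberg theory \cite[Proposition 24.21 / Corollary 24.?]{malle-testerman}, it is normal in $M$ with abelian quotient; the quotient embeds in $(\ker\rho)_{F_w}$. Therefore $[M,M]\subseteq\rho(\widetilde{\Me}^{\mathbf F_w})$. Since $\rho(\widetilde{\Me}^{\mathbf F_w})$ is a homomorphic image of a quasi-simple group, it is quasi-simple, hence perfect, and so it is contained in $[M,M]$. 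We get
\begin{align*}
[M,M]=\rho(\widetilde{\Me}^{\mathbf F_w}),
\end{align*}
which is quasi-simple.

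\emph{Step 3: apply $\pi$.} $\pi([M,M])$ is a nontrivial homomorphic image of the quasi-simple group $[M,M]$. Nontriviality holds because $\ker\pi=\bZ$ is central and abelian, while $[M,M]$ is nonabelian. As recalled in \S\ref{subsec:notation-lie}, nontrivial homomorphic images of quasi-simple groups are quasi-simple. This gives \ref{S}.

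The main obstacle is Step 2, specifically justifying that $\rho(\widetilde{\Me}^{\mathbf F_w})$ has abelian cokernel in $[\Me,\Me]^{F_w}$. I would first try the exact sequence
\begin{align*}
1\to(\ker\rho)^{F_w}\to\widetilde{\Me}^{\mathbf F_w}\to[\Me,\Me]^{F_w}\to H^1(F_w,\ker\rho),
\end{align*}
where the last term is abelian because $\ker\rho$ is central and finite. If that is awkward to justify, the fallback is to cite the standard fact that the image of the simply connected group of fixed points contains the derived subgroup.
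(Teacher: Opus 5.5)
Your proof is correct and has the same structure as the paper's. You identify $\widetilde{\Me}^{\mathbf F_w}\simeq\widetilde{\Me}_1^{\mathbf F_w^d}$ by the twisted-diagonal map, obtain $[M,M]=\rho(\widetilde{\Me}^{\mathbf F_w})$ with one inclusion coming from perfectness of the image, and then pass to the image under $\pi$. For the inclusion $[M,M]\subseteq\rho(\widetilde{\Me}^{\mathbf F_w})$, the paper avoids the Lang--Steinberg sequence and instead carries out your fallback directly. If $\mathbf F_w(\tilde x)\in\tilde xZ$ and $\mathbf F_w(\tilde y)\in\tilde yZ$ with $Z=\ker\rho$ central, then $[\tilde x,\tilde y]$ is $\mathbf F_w$-fixed. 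This is the same centrality computation that makes your connecting map $M\to(\ker\rho)_{F_w}$ a homomorphism.
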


\begin{proof}The existence of $\mathbf F_w$ is guaranteed by \cite[Proposition 9.18]{malle-testerman}, and \cite[Exercises 30.2, 30.4]{malle-testerman} give $\widetilde{\Me}\simeq \widetilde{\Me}_1^{\mathbf F_w^d}$. 

Let $Z$ be the kernel of the projection $\widetilde{\Me}\to [\Me,\Me]$. By construction, it is a central $\mathbf F_w$-stable subgroup of $\widetilde{\Me}$ and $[\Me,\Me]\simeq \prod_{i=1}^d\widetilde{\Me}_i/Z$. We claim that $[M,M]\simeq \widetilde{\Me}_1^{\mathbf F_w^d}/Z^{\mathbf F_w}$.

On the one hand, the inclusion $\widetilde{\Me}^{\mathbf F_w}\hookrightarrow\widetilde{\Me}$ gives rise to an inclusion $\widetilde{\Me}^{\mathbf F_w}/Z^{\mathbf F_w}\hookrightarrow \widetilde{\Me}/Z\simeq[\Me,\Me]$, whence an inclusion
$\widetilde{\Me}_1^{\mathbf F^d_w}/Z^{\mathbf F_w}\hookrightarrow M$, hence a canonical inclusion  $\iota\coloneqq\widetilde{\Me}_1^{\mathbf F^d_w}/Z^{\mathbf F_w}\hookrightarrow [M,M]$ by quasi-simplicity of ${\Me}_1^{\mathbf F^d_w}$. 

On the other hand, let $x,\,y\in M$. Then, there are $x_i, y_i\in \widetilde{\Me}_i$ for $i\in\I_{d}$ such that
\begin{align*}
&x=(x_1,\,\ldots,\,x_d)Z,&& y:=(y_1,\,\ldots,\,y_d)Z,&&\\
&\mathbf F_w(x_i)\in  x_{i+1\,{\rm mod}\, d}Z,&&\mathbf F_w(y_i)\in y_{i+1\,{\rm mod}\, d}Z&&\mbox{ for any  }i\in\I_{d}.
\end{align*}
Passing to  their commutator gives
\begin{align*}[x,y]=([x_1,y_1],\,\ldots,\,[x_d,y_d])Z, &&\mathbf F_w([x_i,y_i])=  [x_{i+1\,{\rm mod}\, d},y_{i+1\,{\rm mod}\, d}].\end{align*} So, any generator of $[M,M]$ lies in a coset for $Z$ represented by an element  in 
$[\widetilde \Me,\widetilde\Me]^{\mathbf F_w}=\widetilde \Me^{\mathbf F_w}\simeq \widetilde\Me_1^{\mathbf F^d_w}$, showing that $\iota$ is also onto.

Quasi-simplicity of $\Me_1^{\mathbf F^d_w}$ implies quasi-simplicity of $[M,M]$ and of all its homomorphic images, giving \ref{S}.
\end{proof}

The case $d=1$, i.e., when $\Phi_{\Me}$ is irreducible, can also be deduced from \cite[Proposition 24.21, Theorem 24.17]{malle-testerman}.

\subsubsection{Non-centrality condition}\label{subsec:non-centrality}

\

Let $x\in\T^{F_w}$. To estimate $\Cent_{[M,M]}(x)$ or $\Cent_M(x)$ we use knowledge of $\Cent_{[\Me,\Me]}(x)$.
Even if $x$ might not necessarily lie in $[\Me,\Me]$, conjugation by $x$ is a semisimple endomorphism of $[\Me,\Me]$, hence, if the latter is simply-connected, \cite[Theorem 8.1]{steinberg-endo} guarantees that $\Cent_{[\Me,\Me]}(x)$ is connected. 

\medbreak
The simple connectedness of $[\Me,\Me]$ always holds if $\Me$ is a Levi subgroup of a parabolic subgroup of $\G$, or if $\Phi_{\Me}$ consists of all the long roots in $\Phi$, \cite[Corollary II.5.4]{sp-st}. 

\medbreak
If $\Cent_{[\Me,\Me]}(x)$ is connected, then by \cite[Theorem 2.2]{hu-cc} we have
\begin{equation}\label{eq:centraliser}\Cent_{[\Me,\Me]}(x)=\langle \T\cap[\Me,\Me],\U_{\gamma}~|~\gamma\in \Phi_{\Me},\;\gamma(x)=1\rangle.\end{equation}

\begin{lema}\label{lem:non-centrality}Assume that $[\Me,\Me]$ is simply-connected and that $n_w$ acts on $\Me$ as an inner automorphism. If $\gamma(x)\neq1$ for some $\gamma\in\Phi_{\Me}$, then $[M_p,x]\neq e$.
\end{lema}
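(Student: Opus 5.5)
We argue by contradiction: suppose $x$ commutes with every element of $M_p$. Since $\gamma(x)\neq 1$ for some $\gamma\in\Phi_{\Me}$, the root subgroup $\U_\gamma$ is not contained in $\Cent_{[\Me,\Me]}(x)$. The plan is to show that $M_p$ is "large" in $[\Me,\Me]$, so that anything centralizing $M_p$ must centralize the whole algebraic group $[\Me,\Me]$, and in particular all root subgroups.

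First, since $[\Me,\Me]$ is simply connected and conjugation by $x$ is a semisimple automorphism, \cite[Theorem 8.1]{steinberg-endo} makes $\Cent_{[\Me,\Me]}(x)$ connected, and \eqref{eq:centraliser} describes it. In particular, $\Cent_{[\Me,\Me]}(x)$ is a proper closed connected reductive subgroup of $[\Me,\Me]$, because $\U_\gamma$ is missing.

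Next, I use the hypothesis that $n_w$ acts on $\Me$ as an inner automorphism. This lets me replace $F_w$ on $[\Me,\Me]$ by a Frobenius map for which a suitable $F_w$-stable Borel subgroup of $[\Me,\Me]$ exists; by Lang's theorem such a Borel always exists. Its unipotent radical $\U'$ and the opposite unipotent radical $\U'^-$ are then $F_w$-stable, and the finite groups $\U'^{F_w}$ and $(\U'^-)^{F_w}$ are $p$-subgroups of $M$, hence contained in $M_p$.

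I then show that $\langle \U'^{F_w},(\U'^-)^{F_w}\rangle$ is Zariski dense in $[\Me,\Me]$, or at least that its centralizer in $[\Me,\Me]$ is central. The cleanest route seems to be the following. The closure of $\U'^{F_w}$ is stable under conjugation by $\T'^{F_w}$ for the $F_w$-stable torus $\T'$. Alternatively, I can argue inside the algebraic group: if $x$ centralizes $M_p$, then $\Cent_{[\Me,\Me]}(x)$ is an $F_w$-stable closed connected subgroup containing all $F_w$-fixed unipotent elements. Since the unipotent variety is irreducible and its $F_w^k$-points for all $k$ would be needed, I instead use $x$ commuting with $\Fr$-twists only at level one. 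The safer path is to work with $\U'$: the subgroup $\Cent_{\U'}(x)$ is a closed connected subgroup of $\U'$, generated by the root subgroups $\U_\beta$ with $\beta(x)=1$. It contains $\U'^{F_w}$, and by Lang--Steinberg the $F_w$-points of a proper $F_w$-stable closed connected subgroup of $\U'$ form a strictly smaller set, as one sees by counting points with $|\U'^{F_w}|=q^{\dim\U'}$-type formulas. Hence $\Cent_{\U'}(x)=\U'$, and likewise $\Cent_{\U'^-}(x)=\U'^-$. These two groups generate $[\Me,\Me]$, so $x$ centralizes $[\Me,\Me]$. Then $\gamma(x)=1$ for all $\gamma$, which is the desired contradiction.

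The main obstacle is the point-counting step: I must justify that a proper $F_w$-stable connected subgroup of $\U'$ has fewer rational points. I would do this via the fact that for a connected unipotent $F$-stable group $\mathbb V$, the number $|\mathbb V^{F}|$ equals $q^{\dim\mathbb V}$, appropriately weighted in the twisted case. This is where the inner-action hypothesis matters, since it guarantees that the relevant Frobenius on $[\Me,\Me]$ is standard enough for these counts to apply.
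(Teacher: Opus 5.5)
Your argument breaks at the description of $\Cent_{\U'}(x)$. Your $\U'$ is the unipotent radical of an $F_w$-stable Borel subgroup $\B'$ of $[\Me,\Me]$, while $x$ lies in $\T$. In general, $\T$ is not contained in any $F_w$-stable Borel subgroup of $[\Me,\Me]$.

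To see this, note that the Borel subgroups of $[\Me,\Me]$ normalized by $\T$ are the groups $\langle \T\cap[\Me,\Me],\U_\beta:\beta\in\Psi\rangle$, for $\Psi$ a positive system of $\Phi_{\Me}$. In the Chevalley case, $F_w$ sends the one attached to $\Psi$ to the one attached to $w(\Psi)$. Your hypothesis makes $w$ act on $\Phi_{\Me}$ as an element of $W_{\Me}$, and $W_{\Me}$ acts simply transitively on positive systems. So this Borel is $F_w$-stable only if $w$ acts trivially on $\Phi_{\Me}$. That is not the situation in which the lemma is used: in Lemma~\ref{lem:bn-lambda1=1-l=3}, for instance, $w$ acts as $-\id$ on $\Phi_{\Me}$ of type $B_2$.

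Hence $x$ need not normalize $\U'$. If $x$ is regular in $[\Me,\Me]$, it normalizes only the Borel subgroups normalized by $\T$, none of which is $F_w$-stable. Consequently $\Cent_{\U'}(x)=\U'\cap\Cent_{[\Me,\Me]}(x)$ is not generated by root subgroups $\U_\beta$ with $\beta(x)=1$. Relative to $\T'$ the expression $\beta(x)$ is meaningless, and relative to $\T$ the root subgroups do not lie in $\U'$.

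So you have given no valid reason for $\Cent_{\U'}(x)$ to be connected, and your count rests entirely on connectedness. Without it, the principle that a proper $F_w$-stable closed subgroup of $\U'$ has fewer $F_w$-points is false: the finite group $(\U')^{F_w}$ is itself such a subgroup. As written, the argument does not close.

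You already had the right object in your second paragraph. Count in $C\coloneqq\Cent_{[\Me,\Me]}(x)$ instead of in $\U'$; this is the paper's proof. The group $C$ is connected by \cite[Theorem 8.1]{steinberg-endo}, and it is $F_w$-stable because $F_w(x)=x$. It is reductive, with maximal torus $\T\cap[\Me,\Me]$ and root system $\Phi_x=\{\beta\in\Phi_{\Me}:\beta(x)=1\}\subsetneq\Phi_{\Me}$, and $\Cent_M(x)=C^{F_w}$.

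By \cite[Corollary 24.6]{malle-testerman}, the Sylow $p$-subgroups of $C^{F_w}$ have order $q^{|\Phi_x^+|}$. This is strictly smaller than the order $q^{|\Phi_{\Me}^+|}$ of the Sylow $p$-subgroups of $M$. Therefore no Sylow $p$-subgroup of $M$ is contained in $\Cent_M(x)$, that is, $[M_p,x]\neq e$.

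In this route connectedness comes from Steinberg's theorem, and no Borel subgroup normalized by $x$ is needed.
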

\begin{proof}
Under the above assumption $\Cent_{[\Me,\Me]}(x)$ is an $F_w$-stable reductive group with root system $\Phi_x=\{\gamma\in\Phi_{\Me}~|~\gamma(x)=1\}$, and $\Phi_x\subsetneq\Phi_{\Me}$. 
By \cite[Corollary 24.6]{malle-testerman}, any Sylow $p$-subgroup of $M$ has order $q^{|\Phi_{\Me}^+|}$ and any Sylow $p$-subgroup of $\Cent_M(x)=\Cent_{[\Me,\Me]}(x)^{F_w}\simeq \Cent_{[\Me,\Me]}(x)^{F}$ has order $q^{|\Phi_x^+|}$. Hence, there are $p$-elements in $M$ that are not contained in $\Cent_M(x)$.  \end{proof}

\begin{obs}\label{obs:non-commute}
\begin{enumerate}[leftmargin=*,label=\rm{(\roman*)}]
\item If $[M,M]=\langle [M,M]_p\rangle$, then $[t,[M,M]_p]\neq e$ if and only if  $[t,[M,M]]\neq e$. If $[\Me,\Me]$ is simply connected or if $[M,M]$ is simple,
then $[M,M]=\langle [M,M]_p\rangle$ always holds, \cite[Theorem 24.15]{malle-testerman}.

\medbreak
\item If $[\Me,\Me]$ is simply-connected, $[M,M]$ is non-abelian,  and $\gamma(t)\neq1$ for any $\gamma\in \Phi_{\Me}$, then $\Cent_{[\Me,\Me]}(t)=\T\cap[\Me,\Me]$, that is, $t$ is regular in $[\Me,\Me]$, so $[M,M]_p\cap \Cent_{[M,M]}(t)=e$.  
\end{enumerate}
\end{obs}

We end this subsection by describing a construction of an $F_w$-stable standard Levi subgroup  satisfying a stronger condition than \ref{R}.

\medbreak
Any $\sigma\in\WO$ is cuspidal in some $\langle J_\sigma\rangle$ for some $\theta$-stable $J_\sigma\subseteq S$.  For $\sigma \in \WO$ we set $E_{J_\sigma}\coloneqq \mathbb R \Delta_{J_\sigma}$. Then, $\sigma$ acts on $E_{J_\sigma}$ and by cuspidality, ${\rm rk}(\id-\sigma\theta)=\dim E_{J_\sigma}=|J_\sigma|$. 

\medbreak

An element $\sigma $ in $\WO$ is called \emph{minimal} if $|J_\sigma|\leq |J_{\sigma'}|$ for any $\sigma'\in \WO$. In particular, $J_\sigma$ is minimal with respect to inclusion. If $\mathfrak O$ is cuspidal then $J_\sigma=S$, if $\id\in \WO$, then $\sigma=\id$ and $J_\sigma= \emptyset$. Conversely, if  $J_\sigma=S$ for some \emph{minimal} $\sigma\in\WO$, then $\mathfrak O$ is cuspidal.

\begin{lema}\label{lem:Jw}
Assume $\id\not\in  \WO$ and let $w\in \WO$ be minimal. Then, 
$\gamma(t)\neq 1$ for any $\gamma\in \Phi_{J_w}$. Thus, $\Cent_{\Me}(t)=\T$ for any $\Me\subseteq \Le_{J_w}$.
\end{lema}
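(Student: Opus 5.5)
Argue by contradiction: suppose $\gamma(t)=1$ for some $\gamma\in\Phi_{J_w}$ and produce an element $\sigma'\in\WO$ with $|J_{\sigma'}|<|J_w|$, contradicting the minimality of $w$.

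First, $\gamma(t)=1$ means $t$ commutes with the root subgroups $\U_{\pm\gamma}$. The $F_w$-orbit of $\gamma$ is the set $w\theta$-orbit $\{\gamma, w\theta\gamma,\dots\}$, which is finite. Since $t\in\T^{F_w}$ and $\gamma(t)=1$, every root in this orbit also satisfies $\gamma'(t)=1$: indeed $(w\theta\gamma)(t)=\gamma(F_w^{-1}\text{-twist of }t)$ is a power of $\gamma(t)$ up to $q$-powers. Let $\Psi\subseteq\Phi_{J_w}$ be the root subsystem $\{\beta\in\Phi_{J_w}:\beta(t)=1\}$. It is nonempty and $w\theta$-stable. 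So $\Cent_{\G}(t)^\circ\supseteq\langle\T,\U_\beta:\beta\in\Psi\rangle=:\mathbb L'$, which is an $F_w$-stable connected reductive group containing $\T$ with Weyl group $W_\Psi\neq 1$.

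Next, change the torus inside $\mathbb L'$. By Lang--Steinberg applied to $\mathbb L'$ with $F_w$, the $F_w$-stable maximal tori of $\mathbb L'$ are parametrised by $w\theta$-twisted classes of $W_\Psi$. Because $t$ is central in $\mathbb L'$, it lies in every such torus. Taking the torus of $\mathbb L'$ of type $u\in W_\Psi$ corresponds, in $\G$, to the torus of type $uw$ (with respect to $F$). Hence $uw\in\WO$ for every $u\in W_\Psi$.

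Now choose $u$ to drop rank. Since $w$ is cuspidal on $E_{J_w}$, the operator $w\theta$ has no eigenvalue $1$ there. Pick $u\in W_\Psi$ such that $uw\theta$ has a fixed vector in $E_{J_w}$. A natural first try is a reflection $u=s_\beta$ with $\beta\in\Psi$. Then $\ker(\id-s_\beta w\theta)\neq0$ is equivalent to $\beta$ being suitably non-orthogonal to $(\id-w\theta)^{-1}\beta^\vee$, and such $\beta$ should exist. If not, try $u$ equal to the $w\theta$-twisted "Coxeter-like" element of $W_\Psi$ that makes $uw\theta$ act trivially on the span of $\Psi$ modulo lower terms. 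More cleanly: in the twisted class of $W_\Psi w$, pick an element with a fixed vector, using that $w\theta$ restricted to $\mathbb R\Psi$ is $\theta'$-type and that some element of the coset $W_\Psi w\theta|_{\mathbb R\Psi}$ is quasi-split, hence fixes a vector (e.g. a $w\theta$-stable dominant direction).

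Then $\sigma'=uw$ fixes a nonzero vector of $E_{J_w}$ and acts trivially on $E_{J_w}^\perp$. So $\mathrm{rk}(\id-\sigma'\theta)<|J_w|$, and $\sigma'$ is conjugate into a smaller parabolic, i.e. $|J_{\sigma'}|<|J_w|$. This contradicts minimality.

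The main obstacle is exactly this rank-drop step, i.e. showing that some element of $W_\Psi w$ has eigenvalue $1$ on $E_{J_w}$. I would handle it via the maximally split torus of the $F_w$-stable group $\mathbb L'$, whose associated element has a fixed vector in $\mathbb R\Psi$.

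The final assertion follows quickly from this. For $\Me\subseteq\Le_{J_w}$ we have $\Phi_{\Me}\subseteq\Phi_{J_w}$, so no root of $\Me$ is trivial on $t$. Since $\Le_{J_w}$ has simply connected derived group (it is a Levi subgroup of $\G_{sc}$), $\Cent_{\Le_{J_w}}(t)$ is connected, and by \eqref{eq:centraliser} it equals $\T$. Hence $\Cent_{\Me}(t)=\T$.
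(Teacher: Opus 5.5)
\textbf{Gap: the rank-drop step is asserted, not proved.} Your reduction is sound. If $\gamma(t)=1$, then every $u\in W_\Psi$ fixes $t$, so $t\in\T^{F_{uw}}$ and $uw\in\WO$. It then remains to choose $u$ so that $uw\theta$ has eigenvalue $1$ on $E_{J_w}$. That rank-drop step carries the whole content of the lemma, and your proposal does not prove it:
\begin{itemize}
\item the reflection attempt ends with ``such $\beta$ should exist'';
\item the ``Coxeter-like'' fallback is never made precise;
\item the quasi-split alternative is only asserted.
\end{itemize}
The missing idea is the one the paper uses. The criterion you allude to, namely $2\bigl((\id-w\theta)^{-1}\beta,\beta\bigr)=(\beta,\beta)$ with the inverse taken on $E_{J_w}$, holds automatically for \emph{every} $\beta\in\Phi_{J_w}$, because $w\theta$ is an isometry. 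Put $v\coloneqq(\id-w\theta)^{-1}\gamma\in E_{J_w}$, so that $w\theta v=v-\gamma$. Then $(v,v)=(w\theta v,w\theta v)=(v-\gamma,v-\gamma)$, hence $2(v,\gamma)=(\gamma,\gamma)$. It follows that $s_\gamma v=v-\gamma=w\theta v$, and so $s_\gamma w\theta v=v\neq 0$. Thus $u=s_\gamma$, for the given root $\gamma$, already does the job. The subsystem $\Psi$, its $w\theta$-stability, the group $\mathbb L'$ and Lang--Steinberg are not needed.

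Your quasi-split route can also be closed, but it needs an actual argument. Since $w\theta\Psi=\Psi$, the set $w\theta(\Psi^+)$ is a positive system of $\Psi$, so there is $u\in W_\Psi$ with $uw\theta(\Psi^+)=\Psi^+$. Then $uw\theta$ permutes $\Psi^+$, so it fixes $\rho_\Psi=\frac12\sum_{\beta\in\Psi^+}\beta$. This vector is nonzero because $\Psi\neq\emptyset$, and it lies in $\mathbb R\Psi\subseteq E_{J_w}$. As written, ``hence fixes a vector (e.g.\ a $w\theta$-stable dominant direction)'' does not supply this, and the relevant element is $uw\theta$, not $w\theta$. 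With either fix in place, the rest of your argument agrees with the paper. That covers passing from a non-cuspidal element of $\langle J_w\rangle$ lying in $\WO$ to a contradiction with minimality. It also covers deducing $\Cent_{\Me}(t)=\T$ from connectedness of $\Cent_{\Le_{J_w}}(t)$ and \eqref{eq:centraliser}.
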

\noindent \emph{Proof.} 
First of all we borrow an argument from \cite[Lemma 2]{carter-cc} to prove that $s_\alpha w$ is not cuspidal in $\langle J_w\rangle$ for any $\alpha\in\Phi_{J_w}=\Phi\cap E_{J_w}$.

Since $w$ is cuspidal in $\langle J_w\rangle$, the endomorphism $(\id-w\theta)$ has maximal rank and so there is $v\in E_{J_w}$ such that $(\id-w\theta)v=\alpha$, that is,  $w\theta v=v-\alpha$. Then, by orthogonality of $w\in W$ and $\theta$ we have 
\begin{align*}(v,v)=(w\theta v,w\theta v)=(v-\alpha,v-\alpha)=(v,v)-2(v,\alpha)+(\alpha,\alpha).\end{align*}
Therefore, $s_\alpha v=v-2\frac{(v,\alpha)}{(\alpha,\alpha)}\alpha=v-\alpha$. But then, $v\in{\rm Ker}(\id-s_\alpha w\theta)$, giving the claim.

Now, if $\alpha(t)=1$ for some $\alpha\in \Phi_{J_w}$, then, $s_\alpha w\theta\cdot t^q=s_\alpha \cdot t=t$, whence $t\in \T^{F_{s_\alpha w}}$. Since $s_\alpha w$ is not cuspidal in $J_w$, there is a conjugate of $s_\alpha w$ that is contained in $\langle J\rangle\subsetneq \langle J_w\rangle$, contradicting minimality of $w$.

\medbreak

The last statement follows from \eqref{eq:centraliser} because 
\begin{align*}
\T\subseteq \Cent_{\Me}(t)\subseteq \Cent_{\Le_{J_w}}(t)
&=\T\Cent_{[\Le_{J_w},\Le_{J_w}]}(t)=\T\Cent_{[\Le_{J_w},\Le_{J_w}]}(t)^\circ
\\
&=\langle \T, \U_{\pm\gamma},\,\gamma\in\Phi_{J_w},\,\gamma(t)=1\rangle=\T. \qed
\end{align*}

When dealing with Chevalley groups, we will often take $\Me\subseteq\Le_{J_w}$ for a minimal $w$  in $\WO$.

\subsubsection{Orbit condition}\label{subsec:orbit-condition}

\

Condition \ref{W} holds if there exists  $\sigma \in \Cent_W(w\theta)$ satisfying two requirements:
\begin{align}&\sigma\cdot t\not\in \bZ \Cent_{W_{\Me}}(w\theta )\cdot t,&\textrm{ (disjoint orbit condition),}\label{W1}\tag*{\fbox{\footnotesize W1}}\\
&\sigma \cdot t\not\in \Cent_{\T}([M,M]_p)&\textrm{ (non centrality condition).}
\label{W2}\tag*{\fbox{\footnotesize W2}}\end{align} 

\begin{obs}\label{obs:M-in-LJ} If $t$ satisfies \ref{R}, then \ref{W2} holds for any $\sigma$ normalising $\Phi_{\Me}$. In particular, \ref{W2} holds if $\Me\subsetneq\Le_{J_w}$ and $\sigma\in \langle J_w\rangle$.
\end{obs}

\begin{lema}\label{lem:index}If $\Phi_{\Me}\subset \Phi_{J_w}$ and $[\Cent_{\langle J_w\rangle}(w\theta):\Cent_{W_{\Me}}(w\theta)]>|\bZ|$, then \ref{W} holds. 
\end{lema}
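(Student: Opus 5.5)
We find $\sigma\in\Cent_{\langle J_w\rangle}(w\theta)$ satisfying \ref{W1} and \ref{W2}, and the cosets of $\Cent_{W_{\Me}}(w\theta)$ supply it. The lemma is applied in the setting of the preceding discussion: $w\in\WO$ is minimal and $\id\notin\WO$. By Lemma \ref{lem:Jw}, $\gamma(t)\neq 1$ for every $\gamma\in\Phi_{J_w}$, so $\Cent_{\Le_{J_w}}(t)=\T$.

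The key step is that the stabilizer of $t$ in $\langle J_w\rangle$ is trivial. Since $\Cent_{\Le_{J_w}}(t)=\T$, no element of $N_{\Le_{J_w}}(\T)\setminus\T$ centralizes $t$, so $\langle J_w\rangle$ acts freely on $t$. Consequently the map $\Cent_{\langle J_w\rangle}(w\theta)\to \Cent_{\langle J_w\rangle}(w\theta)\cdot t$ is injective. The same holds on $\Cent_{W_{\Me}}(w\theta)\le\Cent_{\langle J_w\rangle}(w\theta)$, using $\Phi_{\Me}\subset\Phi_{J_w}$ and hence $W_{\Me}\le\langle J_w\rangle$.

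Next we count. The index hypothesis gives $|\Cent_{\langle J_w\rangle}(w\theta)\cdot t|>|\bZ|\cdot|\Cent_{W_{\Me}}(w\theta)\cdot t|$. The right-hand side bounds the size of the set $\bZ\,\Cent_{W_{\Me}}(w\theta)\cdot t$, so some $\sigma\in\Cent_{\langle J_w\rangle}(w\theta)\subseteq\Cent_W(w\theta)$ has $\sigma\cdot t$ outside this set. This is \ref{W1}. Only cardinalities enter here, and we never need to know how $\bZ$ interacts with the orbit.

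It remains to check \ref{W2}, and this is the step I expect to be the main obstacle. The difficulty is that $\sigma$ need not normalize $\Phi_{\Me}$, so Remark \ref{obs:M-in-LJ} does not apply directly. Instead I would use that $\sigma\cdot t$ is again regular in $\Le_{J_w}$: we have $\gamma(\sigma\cdot t)=(\sigma^{-1}\gamma)(t)\neq1$ for $\gamma\in\Phi_{J_w}$, since $\sigma$ permutes $\Phi_{J_w}$. In particular $\gamma(\sigma\cdot t)\neq 1$ for all $\gamma\in\Phi_{\Me}$. Assuming, as in the intended applications, that $[\Me,\Me]$ is simply connected and $n_w$ acts on it by an inner automorphism, Lemma \ref{lem:non-centrality} then gives $[M_p,\sigma\cdot t]\neq e$. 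When $[M,M]=\langle [M,M]_p\rangle$, Remark \ref{obs:non-commute} converts this into $\sigma\cdot t\notin\Cent_{\T}([M,M]_p)$. Otherwise I would argue directly through \eqref{eq:centraliser}: the centralizer of $\sigma\cdot t$ in $[\Me,\Me]$ is the torus $\T\cap[\Me,\Me]$, so it contains no nontrivial $p$-elements of the non-abelian group $[M,M]$. Hence \ref{W} holds.
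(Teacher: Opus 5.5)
Your proposal is correct and follows the paper's proof: regularity of $t$ in $\Le_{J_w}$ from Lemma \ref{lem:Jw}, free action of $\langle J_w\rangle$ on $t$, the cardinality count giving \ref{W1}, and regularity of $\sigma\cdot t$ in $\Le_{J_w}$ giving \ref{W2}. For \ref{W2} the paper needs none of your extra hypotheses on $[\Me,\Me]$: for $\sigma\in\langle J_w\rangle$ it uses directly $\Cent_{\Me}(\sigma\cdot t)\subseteq\Cent_{\Le_{J_w}}(\sigma\cdot t)=\dot\sigma\,\Cent_{\Le_{J_w}}(t)\,\dot\sigma^{-1}=\T$, which is your ``direct'' fallback with \eqref{eq:centraliser} applied in the simply connected $[\Le_{J_w},\Le_{J_w}]$ rather than in $[\Me,\Me]$, so you can drop the detour through Lemma \ref{lem:non-centrality} and Remark \ref{obs:non-commute}.
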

\begin{proof}
Lemma \ref{lem:Jw} gives 
\begin{align}\label{eq:reg}&\Cent_{\Me}(\sigma\cdot t)\subseteq \Cent_{\Le_{J_w}}(\sigma\cdot t)\subseteq \sigma \cdot \T=\T, &&\forall\sigma\in \langle J_w\rangle.
\end{align}
Setting $\sigma=\id$ gives $\Cent_{\Me}(t)\cap N_{\G}(\T)\subseteq \Cent_{\Le_{J_w}}(t)\cap N_{\G}(\T)=\T$, whence
\begin{align*}
&|\Cent_{W_{\Me}}(w)\cdot t|=|\Cent_{W_{\Me}}(w)|,&&|\Cent_{W_{\langle J_w\rangle}}(w)\cdot t|=|\Cent_{W_{\langle J_w\rangle}}(w)|.
\end{align*} 
Since $[\Cent_{\langle J_w\rangle}(w\theta):\Cent_{W_{\Me}}(w\theta)]>|\bZ|$ we have
\begin{align*}|C_{W_{\langle J_w\rangle}}(w\theta)\cdot t|>|\bZ||C_{W_{\Me}}(w\theta)\cdot t|\end{align*}
ensuring \ref{W1} for some $\sigma\in\langle J_w\rangle$, so \ref{W2} follows from \eqref{eq:reg}.  \end{proof}

 In general, our strategy to deal with \ref{W1} is to look for one or more candidates for $\sigma$ and look at the conditions under which \ref{W1} fails. This usually leads to strong restrictions on $t$ that either contradict \ref{R}, or force $t$ to lie in a subgroup of Lie type that we have already dealt with.

\medbreak
 
When the longest element $w_0$ in $W$ acts as $-\id$, it is a natural candidate for $\sigma$.

\begin{obs}\label{obs:w0}Assume that  $w_0$ in $W$ acts as $-\id$. Then $w_0\cdot t=t^{-1}$ and $w_0\in \Cent_W(w\theta)$ for any $w\in W$.
\begin{enumerate}[leftmargin=*,label=\rm{(\roman*)}]
\item\label{item:w0-uno} If condition \ref{R} holds, then $[[M,M]_p,t^{-1}]\neq e$ because $[[M,M]_p,t]\neq e$, so \ref{W2} holds for $\sigma=w_0$. Thus, in order to verify \ref{W} it is enough to show that $t^{-1}\not\in \bZ \Cent_{W_{\Me}}(w\theta)\cdot t$. 

\medbreak
\item\label{item:w0-due} Assume that $\theta=\id$, i.e., $\G^F$ is Chevalley, and that $\id\not\in \WO$, and let  $w\in \WO$ be minimal. If $w_0\cdot t=zt$ for some $z\in \bZ$, then $w^2=\id$. 
\end{enumerate}

Indeed, if $w_0\cdot t= t^{-1}=zt$, then $z^2=e$, so $t^4=e$. Since $\id\not\in\WO$, then necessarily $|z|=2$ and $q\equiv 3\mod4$. But then, $F(t)=t^q=t^{-1}$, so $t=F_w(t)=w t^{-1}$ and Lemma \ref{lem:Jw} forces $w^2=\id$. 
\end{obs}

The following observation is useful in verifying \ref{W1}.
\begin{obs}\label{obs:coefficient}
If $\Me\subseteq\Le_J$ for some $J\subsetneq S$, then since $s_\alpha\cdot t\in t\alpha^\vee(\ku^\times)$ for any $\alpha\in\Delta$, it follows that $W_{\Me}(w)\cdot t\subseteq t\prod_{\alpha\in\Delta_J}\alpha^\vee(\ku^\times)$. So for any $\beta\in\Delta\setminus\Delta_J$, the coefficient of $\beta^\vee$ in the expression of  any element of $\Cent_{W_{\Me}}(w)\cdot t$ coincides with the coefficient of  $\beta^\vee$ in the expression of  $t$. In particular, if the coefficient of $\beta^\vee$ in $z\in \bZ$ is non-trivial for some $\beta\in\Delta\setminus \Delta_J$, then $\sigma\cdot t\not\in z \Cent_{\Me}(w)\cdot t$. 
\end{obs}

\subsection{{Some situations where Theorem \ref{thm:potente-revised} applies}}\label{subsec:applications}

\

Here we have compiled several situations where \ref{S}, \ref{R}, and \ref{W} hold, and therefore Theorem \ref{thm:potente-revised}  applies. This facilitates case-by-case analysis in the following section.

\begin{lema}\label{lem:levi}Let $t\in \T^{F_w}\setminus \bZ$. Assume there is $J\subsetneq S$ such that
\begin{enumerate}[leftmargin=*,label=\rm{(\roman*)}]
\item\label{item:sub} $\Delta_J=\Delta_{J_1}\coprod \Delta_{J_2}$ where $\emptyset \neq\Delta_{J_i}\subseteq \Delta$ and $w(\Phi_{J_i})=\Phi_{J_i}$, for $i=1,2$;

\medbreak
\item\label{item:perp} $\Delta_{J_1}$ is irreducible and $\Delta_{J_1}\perp\Delta_{J_2}$; 

\medbreak
\item\label{item:reg} $\gamma(t)\neq 1$ for any $\gamma\in \Phi_{J}$;

\medbreak
\item\label{item:not-list} $(\Phi_{J_1},q)$ satisfies the conditions in \eqref{eq:not-list} for $F_w$;

\medbreak
\item $|\bZ|< |\Cent_{\langle J_2\rangle}(w\theta)|$.
\end{enumerate}
Then $\Oc$ is of type C.
\end{lema}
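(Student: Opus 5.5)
We apply Theorem \ref{thm:potente-revised} with $\Me\coloneqq\Le_{J_1}$, the standard Levi subgroup attached to $J_1$. By \ref{item:sub}, $\Phi_{J_1}$ is $w$-stable (and $\theta$-stable in the relevant sense, as the hypothesis is phrased for $F_w$). Hence $\Le_{J_1}$ is $F_w$-stable, contains $\T$, and is $w$-suitable. Then $\Phi_{\Me}=\Phi_{J_1}$ is irreducible by \ref{item:perp}, and condition \ref{item:not-list} is exactly \eqref{eq:not-list}. Lemma \ref{lem:perfect-S} therefore gives \ref{S}.

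For \ref{R}, note that $[\Me,\Me]$ is simply connected since $\Me$ is a Levi subgroup. By \ref{item:reg}, $\gamma(t)\neq1$ for every $\gamma\in\Phi_{J_1}$. Remark \ref{obs:non-commute}(ii) then shows that $t$ is regular in $[\Me,\Me]$, so $[M,M]_p\cap\Cent_{[M,M]}(t)=e$. Since $[M,M]$ is non-abelian (it is quasi-simple, or at least $M$ is), $[M,M]_p\neq e$, and so $[t,[M,M]_p]\neq e$. Alternatively, Lemma \ref{lem:non-centrality} gives the same conclusion.

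The main work is \ref{W}, which we obtain from \ref{W1} and \ref{W2} with $\sigma$ ranging over $\Cent_{\langle J_2\rangle}(w\theta)$. Every such $\sigma$ is a product of reflections in roots of $\Phi_{J_2}\perp\Phi_{J_1}$, so it commutes with $W_{\Me}$ and fixes $\Phi_{J_1}$ pointwise. Consequently $\gamma(\sigma\cdot t)=\gamma(t)\neq1$ for all $\gamma\in\Phi_{J_1}$, and the argument for \ref{R} applied to $\sigma\cdot t$ gives \ref{W2}.

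For \ref{W1}, we count. By \ref{item:reg}, \eqref{eq:centraliser} applied in $\Le_J$ (whose derived group is simply connected) gives $\Cent_{\Le_J}(t)=\T$. Hence $\langle J\rangle=\langle J_1\rangle\times\langle J_2\rangle$ acts freely on the orbit of $t$, as in the proof of Lemma \ref{lem:index}. The elements $\sigma\tau\cdot t$, with $\sigma\in\Cent_{\langle J_2\rangle}(w\theta)$ and $\tau\in\Cent_{W_{\Me}}(w\theta)$, are therefore pairwise distinct. We now argue by contradiction: suppose every $\sigma\cdot t$ lies in $\bZ\Cent_{W_{\Me}}(w\theta)\cdot t$, and write $\sigma\cdot t=z_\sigma\tau_\sigma\cdot t$. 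The map $\sigma\mapsto z_\sigma$ is injective. Indeed, if $z_\sigma=z_{\sigma'}$, then $\sigma\cdot t=\tau_\sigma\tau_{\sigma'}^{-1}\sigma'\cdot t$, and freeness together with $\langle J_1\rangle\cap\langle J_2\rangle=e$ forces $\sigma=\sigma'$. This gives $|\Cent_{\langle J_2\rangle}(w\theta)|\leq|\bZ|$, contradicting (v).

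The subtle point is well-definedness of $z_\sigma$: we need each $z\in\bZ$ to determine $\tau$ uniquely, which holds by freeness. This yields \ref{W1} for some $\sigma$, so \ref{W} holds and Theorem \ref{thm:potente-revised} shows that $\Oc$ is of type C. The step I expect to be delicate is the compatibility of $w\theta$-stability with $\Phi_{J_i}$ when $\theta\neq\id$; I would assume that (i) is read as $w\theta(\Phi_{J_i})=\Phi_{J_i}$.
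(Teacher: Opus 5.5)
Your proof is correct and follows essentially the same route as the paper. You take $\Me=\Le_{J_1}$, get \ref{S} from Lemma \ref{lem:perfect-S}, get \ref{R} and \ref{W2} from regularity ($\Cent_{\Le_J}(t)=\T$, equivalently $\gamma(\sigma\cdot t)=\gamma(t)\neq 1$ on $\Phi_{J_1}$), and get \ref{W1} by counting with the free action of $\langle J_1\rangle\times\langle J_2\rangle$ on the orbit of $t$ together with hypothesis (v).

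Your injection $\sigma\mapsto z_\sigma$ into $\bZ$ is just a repackaging of the paper's inequality $|\Cent_{\langle J_2\rangle}(w\theta)|\,|\Cent_{\langle J_1\rangle}(w\theta)\cdot t|>|\bZ\,\Cent_{\langle J_1\rangle}(w\theta)\cdot t|$, and no uniqueness of $z_\sigma$ is needed since any choice works. Your reading of (i) as $w\theta$-stability is harmless, as the lemma is only applied to Chevalley groups.
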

\begin{proof}
We show that \ref{S}, \ref{R} and \ref{W} hold for the simple group $\Me=\Le_{J_1}$ in $\G=\G_{sc}$. By \cite[Corollary II.5.4]{sp-st}  we have $[\Me,\Me]=[\Me,\Me]_{sc}$, a simple algebraic group so \ref{item:not-list} ensures that $M$ is perfect, giving \ref{S} in virtue of Lemma \ref{lem:perfect-S}.

\medbreak
By \ref{item:reg} and \eqref{eq:centraliser} we obtain
$\Cent_{\Me}(t)=\Cent_{\Le_J}(t)=\T \Cent_{[\Le_J,\Le_J]}(t)=\T$, so \ref{R} follows from $[M,M]_p\cap \Cent_{M}(t)=e$.

\medbreak
We verify  \ref{W}. If $\tau\cdot t=\sigma\cdot t$ for some $\tau\in \Cent_{\langle J_2\rangle}(w\theta)\cdot t$ and some $\sigma\in \Cent_{\langle J_1\rangle}(w\theta)\cdot t$, then for any representative $\dot{\sigma}$ of $\sigma$, respectively  $\dot{\tau}$ of  $\tau$, in $N_{\G}(\T)$, there holds $\dot{\sigma}^{-1}\dot{\tau}\in \Cent_{\Le_J}(t)=\T$, so $\sigma=\tau$, which is possible only if $\sigma=\tau=\id$. Hence, 
\begin{equation*}|\Cent_{\langle J\rangle}(w\theta)\cdot t|=|\Cent_{\langle J_2\rangle}(w\theta)||\Cent_{\langle J_1\rangle}(w\theta)\cdot t|>|\bZ \Cent_{\langle J_1\rangle}(w\theta)\cdot t|.\end{equation*}

In addition, conjugation by $\dot{\tau}$ preserves $\Le_J$ and $\T$, so $\Cent_{\Le_J}(\sigma\cdot t)=\Cent_{\Le_J}(t)=\T$, whilst $M_p\not\subset \T$, confirming  \ref{W}.
\end{proof}

\begin{cor}\label{cor:cuspidal-Ze}
Assume that $\bZ=e$ and that $w$ is cuspidal. If there is an $\Me$ satisfying \ref{S} and 
\begin{align}\label{eq:cuspidal-Ze}
\Cent_{W_{\Me}}(w)\subsetneq \Cent_W(w)
\end{align}
then $\Oc$ is of type C.
\end{cor}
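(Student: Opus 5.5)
We deduce the corollary from Theorem \ref{thm:potente-revised} with the given $\Me$. Condition \ref{S} is assumed, so it remains to check \ref{R} and \ref{W}. Since $w$ is cuspidal and Chevalley-type ($\bZ=e$ forces no twisting ambiguity in the center; we write $w$ for $w\theta$), the central fact is that $t$ is regular. Indeed, since $t\in\T^{F_w}\setminus e$ and $w$ is cuspidal, every $F$-stable maximal torus containing the corresponding element is indexed by an element of $W$ conjugate into no proper $\theta$-stable parabolic subgroup. More precisely, we take $w$ minimal in $\WO$ with $J_w=S$. Then Lemma \ref{lem:Jw} gives $\gamma(t)\neq 1$ for all $\gamma\in\Phi$, so $\Cent_{\G}(t)^\circ=\T$; this is also \cite[Proposition 3.13]{ACG-VII} quoted above. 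If $w$ is not itself minimal, we first replace it by a minimal element of $\WO$. Since $\mathfrak O$ is cuspidal, that element is again cuspidal, and we need the hypothesis \eqref{eq:cuspidal-Ze} for it; the cleanest route is to note that regularity of $t$ only needs $\alpha(t)\neq1$, and this follows from the argument of Lemma \ref{lem:Jw} applied with $J_w=S$ whenever $w$ is cuspidal.

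For \ref{R}, $[M,M]$ is non-abelian by \ref{S}, and $\gamma(t)\neq 1$ for every $\gamma\in\Phi_{\Me}$. When $[\Me,\Me]$ is simply connected we apply Remark \ref{obs:non-commute}(ii) to get $[M,M]_p\cap\Cent_{[M,M]}(t)=e$. Since $[M,M]_p\neq e$ (a quasi-simple group of Lie type has nontrivial $p$-elements), it follows that $[t,[M,M]_p]\neq e$. In general we argue directly: $\Cent_{\G}(t)^\circ=\T$ means $\Cent_{[\Me,\Me]}(t)$ is an extension of a torus by a finite group of order prime to $p$-unipotents. Hence no nontrivial unipotent element, and so no element of $[M,M]_p$, centralizes $t$.

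For \ref{W}, we use $\bZ=e$, so the condition reads $\Cent_W(w)\cdot t\setminus(\Cent_{W_\Me}(w)\cdot t\cup\Cent_\T([M,M]_p))\neq\emptyset$. Regularity of $t$ gives $\Cent_{N_{\G}(\T)}(t)\subseteq \Cent_{\G}(t)\cap N_\G(\T)$, whose image in $W$ acts trivially on $\T$ only through elements fixing $t$. Since $\Cent_\G(t)^\circ=\T$, the stabilizer of $t$ in $W$ is $\Cent_\G(t)/\T$. This stabilizer could be nontrivial when $\G$ is not adjoint, but by Steinberg's theorem it is trivial since $\G=\G_{sc}$ has connected centralizers. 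So $W$ acts freely on $t$. Choosing $\sigma\in\Cent_W(w)\setminus\Cent_{W_\Me}(w)$, freeness gives $\sigma\cdot t\notin\Cent_{W_\Me}(w)\cdot t$, which is \ref{W1}. For \ref{W2}, $\sigma\cdot t$ is again regular, since $\gamma(\sigma\cdot t)=(\sigma^{-1}\gamma)(t)\neq1$. The same argument used for \ref{R} then shows $\sigma\cdot t\notin\Cent_\T([M,M]_p)$. Theorem \ref{thm:potente-revised} now yields that $\Oc$ is of type C.

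The main obstacle is the regularity and freeness step: justifying that a cuspidal $w$ forces $\alpha(t)\neq1$ for all roots without assuming $w$ minimal. We handle it by the reflection argument of Lemma \ref{lem:Jw}: if $\alpha(t)=1$, then $t\in\T^{F_{s_\alpha w}}$ with $s_\alpha w$ non-cuspidal. This contradicts the cuspidality of the element $t$ (as defined above), not merely the cuspidality of $w$. So we will interpret the hypothesis as cuspidality of $\mathfrak O$, which is the setting in which the corollary is applied.
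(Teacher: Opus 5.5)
Your proposal is correct and follows the paper's route: regularity of $t$, via the argument of Lemma \ref{lem:Jw} with $J_w=S$ (cf.\ \cite[Proposition 3.13]{ACG-VII}), gives $\Cent_{\G}(\sigma\cdot t)=\T$ for all $\sigma\in W$, which yields \ref{R} and \ref{W2} at once, and the free action of $W$ on $t$ turns the strict inclusion \eqref{eq:cuspidal-Ze} into \ref{W1}. Your reading of the hypothesis as cuspidality of $\mathfrak O$ (equivalently, $w$ minimal in $\WO$) is the standing assumption under which the paper uses the corollary; one small slip is that disconnected centralizers of semisimple elements arise when $\G$ is not simply connected, not when it is ``not adjoint'', though your conclusion via Steinberg's theorem for $\G=\G_{sc}$ is right.
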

\begin{proof}Since $w$ is cuspidal, $J_w=S$ so $\Cent_{\G}(\sigma\cdot t)=\T$ for any $\sigma\in W$ by Lemma \ref{lem:Jw}, (see also \cite[Proposition 3.13]{ACG-VII}). Therefore \ref{R} and \ref{W2} are immediate and $\Cent_{W_{\Me}}(w)\subsetneq \Cent_W(w)$ implies \ref{W1} by regularity of $t$. 
\end{proof}

\begin{lema}\label{lem:A-odd}Assume that $F$ is a Chevalley endomorphism, and that  $w_0\in W$ acts as $-\id$ on $\Phi$. 
Assume in addition that $\id\not\in W[\mathfrak O]$, and let $w$ be minimal in $\WO$. If $\Delta_{J_w}$ has a component $\Delta_{J}$, possibly equal to $\Delta_{J_w}$, that is of type $A_{2l}$ for some $l\geq 1$, then $\Oc$ is of type C. 
\end{lema}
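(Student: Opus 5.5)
We apply Theorem \ref{thm:potente-revised} with $\Me\coloneqq\Le_{J}$, the standard Levi subgroup attached to the component $\Delta_J$ of type $A_{2l}$, and $\sigma\coloneqq w_0$ as the candidate element for \ref{W}, following Remark \ref{obs:w0}.

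\textbf{Stability and \ref{S}.} Since $w$ is cuspidal in $\langle J_w\rangle$, it factors as a product of cuspidal elements of the irreducible components of $\langle J_w\rangle$. Hence $w$ stabilizes $\Phi_J$ and acts on it as an element of $W_{\Me}=\langle J\rangle\simeq\mathbb S_{2l+1}$, and $\Me$ is $w$-suitable. Cuspidal elements of $\mathbb S_{2l+1}$ are the $(2l+1)$-cycles, so $w|_{\Phi_J}$ is a Coxeter element of $\langle J\rangle$. Because $[\Me,\Me]\simeq\SL_{2l+1}(\ku)$ is simply connected and $w$ acts by an inner element, $M$ is of type $\SL_{2l+1}$ over $\kk_q$ twisted by a Coxeter element. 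Lemma \ref{lem:perfect-S} then gives \ref{S}, apart from the excluded pairs $(A_2,2)$ and $(A_1,q)$ with $q\in\{2,3\}$. For $(A_2,2)$ the action is inner, so this case is allowed. For $(A_1,2)$ and $(A_1,3)$, i.e.\ $l=1$ and $q\leq 3$, we need a separate argument: either these are already covered by \cite[Theorem 4.1]{ACG-VII} and the earlier results on small $q$, or we enlarge $\Me$. I expect this small-$q$ bookkeeping to be one genuine nuisance.

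\textbf{\ref{R}.} By Lemma \ref{lem:Jw}, $\gamma(t)\neq1$ for all $\gamma\in\Phi_J$, so $\Cent_{\Me}(t)=\T$. Remark \ref{obs:non-commute} then yields \ref{R}. By Remark \ref{obs:w0}\ref{item:w0-uno}, it remains to show \ref{W1}, namely
\[
t^{-1}\notin \bZ\,\Cent_{W_{\Me}}(w)\cdot t .
\]

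\textbf{\ref{W1}, the main step.} Suppose instead that $t^{-1}=z\,\tau\cdot t$ with $z\in\bZ$ and $\tau\in\Cent_{W_{\Me}}(w)$. There are two inputs.

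First, the centralizer of a Coxeter element in $\mathbb S_{2l+1}$ is the cyclic group $\langle w|_{\Phi_J}\rangle$, which has odd order $2l+1$. Moreover, $\tau$ fixes pointwise the coordinates of $t$ outside $J$ (Remark \ref{obs:coefficient}).

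Second, compare coefficients of $\beta^\vee$ for $\beta\in\Delta\setminus\Delta_J$. By Remark \ref{obs:coefficient}, $\tau\cdot t$ and $t$ have the same $\beta^\vee$-coefficient, while $t^{-1}$ has the inverted one. So $t^{-1}=z\,\tau\cdot t$ forces these coefficients of $t^{2}$ to equal those of $z$.

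Now apply the relation twice. Since $\tau$ commutes with inversion and $\tau^{2l+1}=\id$, iterating gives $t=z^{2}\,\tau^{2}\cdot t$ (using that $\tau$ fixes $z$). Iterating further, and using that the order of $\tau$ is odd, we deduce that $\tau$ is a power of $\tau^{2}$ and ultimately that $\tau\cdot t$ lies in $\bZ t$. With regularity of $t$ in $\Le_{J}$ (Lemma \ref{lem:Jw}), this gives $\tau=\id$ up to the central ambiguity, and hence $t^{-1}=zt$.

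Then Remark \ref{obs:w0}\ref{item:w0-due} forces $w^{2}=\id$. But $w$ restricted to $\Phi_J$ is a $(2l+1)$-cycle with $l\geq1$, whose order is odd and greater than $1$. This is a contradiction, so \ref{W1} holds.

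I expect the delicate point to be the parity argument eliminating $\tau\neq\id$ in the presence of the central factor $z$. If the iteration does not close cleanly, I would instead count orbits as in Lemma \ref{lem:index}: $|\Cent_{W_\Me}(w)\cdot t|=2l+1$ is odd, and $w_0$ induces an involution on the finite set $\bZ\Cent_{W_{\Me}}(w)\cdot t$ without fixed points, which would give a parity contradiction.
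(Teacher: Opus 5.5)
Your argument is correct and is essentially the paper's proof. Like the paper, you take $\Me=\Le_J$, use $w_0$ via Remark~\ref{obs:w0}, note that $\Cent_{W_{\Me}}(w)=\langle w_J\rangle$ has odd order $2l+1$, and iterate $t^{-1}=z\tau\cdot t$ to get $\tau^2\cdot t\in\bZ t$, hence $t^{-1}\in\bZ t$, contradicting Remark~\ref{obs:w0}~\ref{item:w0-due}. Your parity fallback is unnecessary, and it would not work as stated because $|\bZ|$ may be even.

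Your small-$q$ worry is vacuous: $J$ has type $A_{2l}$ with $l\geq 1$, so its rank is at least $2$ (the case $l=1$ is $A_2$, not $A_1$). Thus Lemma~\ref{lem:perfect-S} gives \ref{S} directly, since the only borderline pair $(A_2,2)$ is allowed when the action is inner.
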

\begin{proof}We verify  \ref{S}, \ref{R} and \ref{W} for $\Me\coloneqq \Le_{J}=\langle \T, \U_\gamma, \gamma\in\Phi_{J}\rangle$. First of all $[\Me,\Me]\simeq \SL_{2l+1}(\ku)$ because $\G$ is simply-connected and $\Me$ is the Levi subgroup of a parabolic subgroup of $\G$. Since $w\in\langle J\rangle$, the endomorphism $F_w$ acts on $\Le_J$ and $[\Le_J,\Le_J]$ as a Chevalley endomorphism, so Lemma \ref{lem:perfect-S} applies giving \ref{S}. Lemma \ref{lem:Jw} guarantees that $\Cent_{\Me}(t)=\Cent_{\Le_{J_w}}(t)=\T$, so $[M_p,t]\neq\emptyset$, giving \ref{R}.  

\medbreak
To verify \ref{W}, we use Remark \ref{obs:w0} \ref{item:w0-uno} and show that $w_0\cdot t=t^{-1}$ does not lie in  $\bZ \Cent_{W_{\Me}}(w)\cdot t$. Observe that $w$ is cuspidal in $\langle J_w\rangle$ by construction, so the component $w_J$ of $w$ in $\langle J\rangle$ is a Coxeter element in $\mathbb S_{2l+1}$, that is, a $(2l+1)$-cycle. Then, $\Cent_{W_{\Me}}(w)=\langle w_J\rangle$ and $w_0\in \Cent_W(w)\setminus \Cent_{W_{\Me}}(w)$. 

\medbreak
Assume for a contradiction that $t^{-1}=w_0\cdot t=z w_J^a\cdot t$ for some $a\in \I_{0,2l}$ and some $z\in\bZ$. Then,  $t=w^2_0\cdot t=z^2 w_J^{2a}\cdot t$. Now, since $w_0$ acts as $-\id$ on $\Phi$, then the exponent of $\bZ$ is $\leq 2$, forcing $w_J^{2a}t=t$. As $\Cent_{\Me}(t)=\T$, necessarily $|w_J|=2l+1$ divides $2a$, hence $a=0$ and $w_0\cdot t=zt$. Remark \ref{obs:w0} \ref{item:w0-due} gives $w^2=e$, a contradiction.
\end{proof}

\begin{lema}\label{lem:Jw-contains-D}Assume that $F$ is a Chevalley endomorphism and that $w\neq e$ is minimal in $\WO$.
If $J_w$ has a component of type $D_d$ for some $d\geq 4$, then $\Oc$ is of type C. 
\end{lema}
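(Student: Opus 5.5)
We follow the pattern of Lemma \ref{lem:A-odd} and verify \ref{S}, \ref{R} and \ref{W} of Theorem \ref{thm:potente-revised}, now with a subgroup of the $D_d$ component that is strictly smaller than the whole Levi. Let $\Delta_J\subseteq\Delta_{J_w}$ be the component of type $D_d$, $d\geq 4$. Since $w$ is minimal, it is cuspidal in $\langle J_w\rangle$, so its component $w_J\in\langle J\rangle$ is cuspidal in $W(D_d)$. Hence $w_J=w_\lambda$ for a partition $\lambda=(\lambda_1,\ldots,\lambda_{2c})$ of $d$ with an even number of parts, as in \S\ref{sec:cuspidal-classical}. Lemma \ref{lem:Jw} gives $\gamma(t)\neq1$ for all $\gamma\in\Phi_{J_w}$, so $\Cent_{\Le_{J_w}}(t)=\T$ and $t$ is regular in any $\Me\subseteq\Le_{J_w}$. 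Therefore \ref{R} holds automatically, by Remark \ref{obs:non-commute}, for any such $\Me$ with $[M,M]$ non-abelian. The same argument gives \ref{W2} for every $\sigma\in\Cent_{\langle J_w\rangle}(w)$, as in Remark \ref{obs:M-in-LJ}, and \ref{W1} reduces to a counting/orbit statement inside the Weyl group.

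The choice of $\Me$ depends on $\lambda$.

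First case: $\lambda_1\geq3$ (after ordering the parts decreasingly). Take $\Phi_{\Me}=\Phi'_{\lambda,1}$, of type $D_{\lambda_1}$; this is $w$-stable by the observation in \S\ref{sec:cuspidal-classical}. Then $\Me$ is either a standard Levi or a subsystem of long roots. For $\lambda_1=3$ one gets type $A_3$, and $w$ acts on it as an element of the $B_3$-Weyl group, i.e. as a graph automorphism composed with a Weyl element. Quasi-simplicity \ref{S} then follows from Lemma \ref{lem:perfect-S}, where the excluded small cases only involve ranks $\leq2$.

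Second case: all $\lambda_i\leq2$. Take instead $\Phi_{\Me}=\Phi_{\lambda,1,2}\cap\Phi_D$, of type $D_{\lambda_1+\lambda_2}$ or $A_1\times A_1$ or $A_3$. If this subsystem is too small, combine several blocks into a $D_{\lambda_1+\lambda_2+\lambda_3+\lambda_4}$ piece (here $d\geq4$ guarantees room). When components are permuted, use Lemma \ref{lem:cyclically-permuted}.

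The simple connectedness needed for \eqref{eq:centraliser} holds either because $\Me$ is a Levi subgroup or because its roots are the long roots of a Levi of type $B$, by \cite[Corollary II.5.4]{sp-st}.

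For \ref{W1}, choose $\sigma\in\Cent_{\langle J\rangle}(w)\setminus\Cent_{W_{\Me}}(w)$. Natural candidates are the elements $w_{\lambda_i}w_{\lambda_j}$ with $i$ in the chosen block and $j$ outside it, or the swap involution when $\lambda_i=\lambda_{i+1}$. Alternatively, when $d$ is even, $w_0$ of $D_d$ acts as $-\id$ on $\Phi_J$ and can be used as in Remark \ref{obs:w0}.

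By regularity of $t$ in $\Le_{J_w}$, the map $\tau\mapsto\tau\cdot t$ is injective on $\langle J_w\rangle$. So \ref{W1} fails only if $\sigma\cdot t=z\,\tau\cdot t$ for some $z\in\bZ$ and $\tau\in\Cent_{W_{\Me}}(w)$. To exclude this I would try Lemma \ref{lem:index}: show $[\Cent_{\langle J\rangle}(w):\Cent_{W_{\Me}}(w)]>|\bZ|$, noting $|\bZ|\leq4$ and that the centraliser contains all $w_{\lambda_i}w_{\lambda_j}$. When the index is too small, I would use Remark \ref{obs:coefficient}: elements of $\bZ$ have nontrivial coefficient on some $\alpha_i^\vee$ outside $\Delta_{J_{\Me}}$, which rules out $z\neq e$. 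The case $z=e$ is excluded by regularity.

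I expect the main obstacle to be the \ref{W1} bookkeeping in the borderline cases. These are $\lambda=(2,2)$ or $(3,1)$ with $d=4$, and $q$ odd where $|\bZ|=4$, for which the index bound is tight. There I would write $t$ explicitly in coordinates $\alpha_i^\vee(\zeta_i)$, compute $\sigma\cdot t\,(\tau\cdot t)^{-1}$, and check it cannot equal a central element, using that $w$ is not an involution-type obstruction as in Remark \ref{obs:w0}\ref{item:w0-due}.
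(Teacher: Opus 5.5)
Your choice $\Phi'_{\lambda,1}$ for $\lambda_1\geq3$, together with Lemma \ref{lem:index} when $\lambda_2\geq3$ or $c\geq2$, is what the paper does. The rest has genuine gaps.

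The first gap is the case $\lambda_1\leq2$.
\begin{itemize}
\item For $\lambda=(1,\ldots,1)$ the element $w_J$ acts as $-\id$ on $\Phi_J$. So both components of $\Phi_{\lambda,1,2}\cap\Phi_D=2A_1$ are $w$-stable, $M\simeq\SL_2(q)\times\SL_2(q)$, and \ref{S} fails (Lemma \ref{lem:cyclically-permuted} needs the components to be permuted).
\item When $d=4$, your fallback $D_{\lambda_1+\lambda_2+\lambda_3+\lambda_4}$ for $\lambda=(1,1,1,1)$, and your choice $D_{\lambda_1+\lambda_2}$ for $\lambda=(2,2)$, are all of $\Phi_J$. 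Then $\Cent_{\langle J\rangle}(w)\subseteq\Cent_{W_{\Me}}(w)$, so no $\sigma$ of the kind you propose exists, and neither Lemma \ref{lem:index} nor $w_{0J}\in W_{\Me}$ helps.
\item In the basic case $\Phi=D_4$, $J_w=S$ (e.g.\ $w=w_0$, which Lemma \ref{lem:dn-Jn-geq3} needs), you get $\Me=\G$ and $W_{\Me}=W$, so \ref{W} is impossible.
\end{itemize}
The missing idea is to take small subsystems that are not unions of blocks.
\begin{itemize}
\item For $\lambda_1=1$, use the $A_3$ Levi with base $\{\beta_{d-2},\beta_{d-1},\beta_d\}$. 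There $w_J=-\id$ acts by an outer automorphism, so $\Cent_{W_{\Me}}(w)=W_{\Me}\simeq\mathbb S_4$ has index $\geq8>|\bZ|$ in $\Cent_{\langle J\rangle}(w)=\langle J\rangle$.
\item For $\lambda_1=2$, use the $2A_1$ with roots $\pm\beta_{d-1},\pm\beta_d$, whose components $w_\lambda$ interchanges. Then $M\simeq\SL_2(q^2)$ and the index is $\geq5$.
\end{itemize}

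The second gap is the subcase $\lambda=(\lambda_1,1)$. You leave it as a plan and treat it as a $d=4$ phenomenon, but it is not. For every $\lambda_1\geq3$, $w_J$ is conjugate to a Coxeter element of $D_d$, with $\Cent_{\langle J\rangle}(w)=\langle w_J\rangle$ and $\Cent_{W_{\Me}}(w)=\langle w_J^2\rangle$. So the index is $2$, and Lemma \ref{lem:index} fails as soon as $\bZ\neq e$.

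Remark \ref{obs:coefficient} does not rescue this. The element $\sigma=w_J$ only alters coefficients on $\Delta_J$, and the relevant central elements are supported on $\Delta_J$: $\alpha_{n-1}^\vee(-1)\alpha_n^\vee(-1)$ in $D_n$, and $\alpha_2^\vee(-1)\alpha_5^\vee(-1)\alpha_7^\vee(-1)$ in $E_7$ with $d=6$.

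What is needed is the paper's explicit argument:
\begin{itemize}
\item $z\neq e$ by regularity, which settles $E_8$.
\item $|z|=3$ is excluded by cubing the relation.
\item In $E_7$, use Remark \ref{obs:coefficient} with $\alpha_7$ if $d<6$, and $w_0$ with Remark \ref{obs:w0} if $d=6$.
\item In $D_n$, replace $w_J$ by the Coxeter element $\mathbf c=s_{n-d+1}\cdots s_n$, which satisfies $\mathbf c^{\lambda_1}=w_{0J}$. Deduce $w_{0J}\cdot t'=z^{\lambda_1}t'$, and check in coordinates that one of $\alpha_{n-3}$, $\alpha_{n-2}$, $\alpha_{n-3}+\alpha_{n-2}+\alpha_{n-1}+\alpha_n$ is trivial on $t'$. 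This contradicts Lemma \ref{lem:Jw}.
\end{itemize}

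Similarly, for $\lambda=(\lambda_1,2)$ your $\Phi'_{\lambda,1}$ gives index only $4$, which is not enough when $|\bZ|=4$. The paper switches to the $2A_1$ spanned by $\beta_{d-\lambda_1}$ and $w_J\beta_{d-\lambda_1}$.
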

\begin{proof}
In this situation,  $\Phi$ is of type $D_n$, $E_6$, $E_7$ or $E_8$. Thus $|\bZ|\leq 4$. We use the description of cuspidal elements in Weyl groups of type $D_n$ in Subsection  \ref{sec:cuspidal-classical}, 
from which we retain notation.

\medbreak
Let $J$ be the component of $J_w$ of type $D_d$ and let $w_J$ be the factor of $w$ in $\langle J\rangle$. We denote the simple roots of $\Phi_J$ by $\beta_1,\,\ldots,\,\beta_d$, following the enumeration in \cite{hu-la}. Without loss of generality $w_J=w_{\lambda}\in\langle J\rangle$ for some partition $\lambda=(\lambda_1,\,\ldots,\,\lambda_{2c})$ of $d$. We divide the proof according to the shape of $\lambda$.

\medbreak
\noindent{\bf Case $\lambda_1=1$}. Then necessarily $d=2c$ and $c\geq 2$. In this case $w_J$ acts as $-\id$ on $\Phi_J$, so it preserves all its roots subsystems and lies in the center of $\langle J\rangle$. We verify the hypothesis of Theorem \ref{thm:potente-revised} for the Levi subgroup $\Me=\langle \T, \U_{\pm\beta_{d-2}},\U_{\pm\beta_{d-1}},\U_{\pm\beta_{d}}\rangle\leq \Le_{\langle J\rangle}$. Then $[\Me,\Me]$ is simply connected of type $A_3$ and $w_J$ acts as an outer automorphism. Condition \ref{S} follows from Lemma \ref{lem:perfect-S} and  \ref{R} follows from Lemma \ref{lem:Jw}. 

\medbreak
To prove \ref{W} we verify the hypothesis of Lemma \ref{lem:index}. By construction, 
\begin{align*}\Cent_{\langle J\rangle}(w)=\Cent_{\langle J\rangle}(w_J)=\langle J\rangle\end{align*} is a Weyl group of type $D_{2c}$, of order $\geq 8|\mathbb S_4|$ and \begin{align*}\Cent_{W_{\Me}}(w)=\Cent_{W_{\Me}}(w_J)=W_{\Me}\simeq \mathbb S_4,\end{align*} 
whence 
\begin{align*}
[\Cent_{\langle J_w\rangle}(w\theta):\Cent_{W_{\Me}}(w\theta)]\geq [\Cent_{\langle J_w\rangle}(w\theta):\Cent_{W_{\Me}}(w\theta)] \geq 8>|\bZ|. 
\end{align*}

\medbreak
\noindent{\bf Case $\lambda_1=2$.} Then either $\lambda_2=2$ or $c\geq 2$ and $\lambda_2=\lambda_3=\lambda_4=1$. We verify the conditions of Theorem \ref{thm:potente-revised} for the Levi subgroup $\Me=\langle \T,\U_{\beta_d},\,\U_{\beta_{d-1}}\rangle$. Since $[\Me,\Me]$ is simply-connected, it is isomorphic to $\SL_2(\ku)\times \SL_2(\ku)$. 
By a direct calculation, we see that  $w_{\lambda}(\beta_d)=\beta_{d-1}$ and $w_\lambda(\beta_{d-1})=-\beta_d$, so $M\simeq \SL_2(q^2)$, giving \ref{S} by Lemma \ref{lem:perfect-S}. Lemma \ref{lem:Jw} gives \ref{R}. 
To verify condition \ref{W} we show that  
\begin{align*}[\Cent_{\langle J\rangle}(w):\Cent_{W_{\Me}}(w)]=[\Cent_{\langle J\rangle}(w_J):\Cent_{W_{\Me}}(w_J)]\geq 5\end{align*} and invoke Lemma \ref{lem:index}.
 If $\lambda_1=\lambda_{2}=2$, then 
 \begin{align*}
&e,& &w_{\lambda_1}w_{\lambda_2},& & s_{\beta_{n-2}+\beta_{n-1}}s_{\beta_{n-3}+\beta_{n-2}}, && w_{\lambda_2}^2,&&
s_{\beta_{n-2}+\beta_{n-1}}s_{\beta_{n-3}+\beta_{n-2}} w_{\lambda_2}^2
 \end{align*}
 represent different cosets of $\Cent_{W_{\Me}}(w_J)$ in $\Cent_{\langle J\rangle}(w_J)$. 
 
 If, instead, $\lambda_2=\lambda_3=\lambda_4=1$, then we use 
 \begin{align*}
 &e, && w_{\lambda_1}w_{\lambda_2}, && w_{\lambda_1}w_{\lambda_3}, 
 && w_{\lambda_1}w_{\lambda_4}, &&w_{\lambda_2}w_{\lambda_3}.
 \end{align*}

\medbreak
\noindent{\bf Case $\lambda_1\geq3$.} We verify the hypotheses of Theorem \ref{thm:potente-revised} for $\Phi_{\Me}=\Phi'_{\lambda,1}$. By construction $\Me$ is a standard Levi subgroup, so $[\Me,\Me]$ is simple and simply-connected of type $D_{\lambda_1}$, and $w$ acts as an outer automorphism on $\Phi_{\Me}$. Condition \ref{S} follows from Lemma \ref{lem:perfect-S} because $\lambda_1\geq 3$. Lemma \ref{lem:Jw}  ensures that $\Cent_{\Me}(t)\subseteq \T$, giving condition \ref{R}.

We now verify condition \ref{W}. By construction 
\begin{align*}&\Cent_{\langle J\rangle}(w_J)=\Cent_{\langle J\rangle}(w)\leq \Cent_{\langle J_w\rangle}(w),\\
&\Cent_{W_{\Me}}(w_J)=\Cent_{W_{\Me}}(w)=\langle w_{\lambda_1}^2\rangle,\end{align*}
where the latter equality follows from \cite[Proposition 30]{carter-cc} applied to the Weyl group of type $B_{\lambda_1}$ containing $w_{\lambda_1}$. We analyse different situations separately.

\medbreak

\noindent{\bf Subcase $\lambda_1\geq 3$ and $\lambda_2\geq 3$ or $c\geq 2$.} We exhibit the representatives of $\geq 5$ distinct left cosets of $\Cent_{W_{\Me}}(w)$ in $\Cent_{\langle J_w\rangle}(w)$ and invoke Lemma \ref{lem:index}.  If $\lambda_2\geq 3$, then $|w_{\lambda_2}|=2\lambda_2\geq 6$ and we take $w_{\lambda_1}^{2-a}w_{\lambda_2}^a$ for $a\in\I_{0,2\lambda_2-1}$. If  $c\geq2$, then we take $w_{\lambda_i}w_{\lambda_j}$ for $1\leq i<j\leq 2c$.

\medbreak

\noindent{\bf Subcase $\lambda=(\lambda_1,1)$ with $\lambda_1\geq 3$.}
Then $w_J=w_{\lambda_1}w_{\lambda_2}$ with $|w_{\lambda_2}|=2$ and 
\begin{align*}w_J=w_\lambda=s_{\beta_1+\cdots+\beta_{d-1}}s_{\beta_1+\cdots+\beta_{d-2}+\beta_d}s_{\beta_{d-1}}\cdots s_{\beta_2}\end{align*} 
is conjugate to a Coxeter element of $\langle J\rangle$, as 
\begin{align}\label{eq:coxeter}\mathbf c\coloneq s_{\beta_d}s_{\beta_{d-1}}\cdots s_{\beta_1}=s_{\beta_1+\beta_2+\cdots+\beta_{n-1}+\beta_n}w_Js_{\beta_1+\beta_2+\cdots+\beta_{n-1}+\beta_n}.
\end{align} Hence, \cite[Proposition 30]{carter-cc} gives $\Cent_{\langle J\rangle}(w)=\Cent_{\langle J\rangle}(w_J)=\langle w_J\rangle$ and $\Cent_{W_{\Me}}(w_J)=\langle w_J^2\rangle=\langle w_{\lambda_1}^2\rangle$. 

Assume for a contradiction that \ref{W} fails. In particular 
\begin{align}\label{eq:Dd-noW}w_{\lambda_1}w_{\lambda_2}\cdot t=z w_{\lambda_1}^{2a}\cdot t&&\textrm{ for some }z\in \bZ\textrm{ and some }a\in\I_{0,\lambda_1-1}.\end{align} Regularity of $t$ in $\Le_{J}\leq \Le_{J_w}$ ensures that $z\neq e$, so such an equality never occurs for $E_8$, and necessarily $|z|\in \{2,3,4\}$.  The case $|z|=3$ may occur only if $\Phi$ is of type $E_6$, and is excluded because we would have  
\begin{align*}(w^3_{\lambda_1}w_{\lambda_2})\cdot t=(w_{\lambda_1}w_{\lambda_2})^{3}\cdot t=w_{\lambda_1}^{6a}\cdot t,\end{align*} contradicting regularity of $t$ in $\Le_{J}$. Therefore if \eqref{eq:Dd-noW} holds, $\Phi$ is either of type $D_n$ or $E_7$, $q$ is odd, and $|z|\in\{2,4\}$. 

\medbreak
Let us first focus on $E_7$, so $z=\alpha_2^\vee(-1)\alpha_5^\vee(-1)\alpha_7^\vee(-1)$ and $4\leq d\leq  6$. If $d<6$, then \ref{W} follows from Remark \ref{obs:coefficient} applied to $\beta=\alpha_7$ and regularity of $t$ in $\Me\leq \Le_{J_w}$. Let $d=6$, so $J=J_w$ and $|w_{\lambda_1}|=2\lambda_1=10$. Since we are assuming that \ref{W} fails, we also have $w_0\cdot t\in \bZ \Cent_{W_{\Me}}(w)\cdot t$ by Remark \ref{obs:w0} \ref{item:w0-uno}. Then $t^{-1}=w_0\cdot t=z' w_{\lambda_1}^{2b}\cdot t$ for some $b\in\I_{0,\lambda_1-1}=\I_{0,4}$ and some $z'\in \bZ$. Then $t=w_{\lambda_1}^{4b}\cdot t$, so regularity of $t$ in $\Me$ forces $10|4b$. Hence, $b=0$ and Remark \ref{obs:w0} \ref{item:w0-due} gives $|w|=|w_{\lambda_1}w_{\lambda_2}|\leq 2$, a contradiction. This concludes the verification of \ref{W}  when $\Phi$ is of type $E_7$.

\medbreak
Let us now focus on $D_n$. Here $\beta_i=\alpha_{n-d+i}$ for $i=1,\,\ldots,\,d$. It is more convenient to replace $w_J$ by the $\langle J\rangle$-conjugate element $\mathbf c=s_{n-d+1}\cdots s_n$ as in \eqref{eq:coxeter}, so $\mathbf c_J^{\lambda_1}=w_{0J}$ is the longest element in $\langle J\rangle$, see \cite[Exercise 2, \S 3.19]{hu-rgcg}. Accordingly, we replace $t$ by a $G_w$-conjugate $t'$ in the torus associated with $w'=s_{\alpha_{n-d+1}+\cdots +\alpha_n}ws_{\alpha_{n-d+1}+\cdots +\alpha_n}$. By construction, $w'$ is again minimal and $J_{w'}=J_w$. Also, $\mathbf c=\mathbf c_1\mathbf c_2$ where 
$\mathbf c_i\coloneq s_{\alpha_{n-d+1}+\cdots +\alpha_n}w_{\lambda_i}s_{\alpha_{n-d+1}+\cdots +\alpha_n}$ for $i=1,2$. We replace $\Phi_{\Me}$ by 
$s_{\alpha_{n-d+1}+\cdots +\alpha_n}(\Phi_{\Me})$ so $\Cent_{\langle J\rangle}(w)=\langle \mathbf c\rangle$ and $\Cent_{W_{\Me}}(w)=\langle \mathbf c_1^2\rangle$. Then, if \eqref{eq:Dd-noW} holds, 
\begin{align}\label{eq:w0J}
w_{0J}\cdot t'=\mathbf c^{\lambda_1}\cdot t'=z^{\lambda_1}\mathbf c_1^{2a\lambda_1}\cdot t'=z^{\lambda_1} t', &&z^{2\lambda_1}=e,
\end{align}
 so $z^{\lambda_1}=\alpha^\vee_{n-1}(\eta)\alpha_n^\vee(\eta)$ for some $\eta\in\{\pm1\}$. We now show that \eqref{eq:w0J} cannot occur. Let $t'=\prod_{i=1}^n\alpha_i^\vee(\xi_i)$, with $\xi_i\in\overline{\F}^\times_q$. 

\medbreak
If $d$ is odd, then $n\geq d\geq 5$ and \eqref{eq:w0J} gives 
\begin{align*}
\xi_i^2=1,\;\mbox{ for }n-4\leq i\leq n-2,&& \xi_n\xi_{n-1}=\eta.
\end{align*}
Therefore 
\begin{align*}
&\alpha_{n-3}(t')=\xi_{n-2}\xi_{n-4},&&\alpha_{n-2}(t')=\xi_{n-3}\eta,\\
&\alpha_{n-1}(t')=\xi_{n-1}^2\xi_{n-2}, &&\alpha_{n}(t')=\xi^{-2}_{n-1}\xi_{n-2}. 
\end{align*}

\medbreak

If $d$ is even, then $n\geq d\geq 4$ and \eqref{eq:w0J} gives 
\begin{align*}
\xi_i^2=1,\;\mbox{ for }i\in\{n-2,n-3\},&& \xi_i^2=\eta,\;\mbox{ for }i\in\{n-1,n\}.
\end{align*}
Therefore 
\begin{align*}
&\alpha_{n-3}(t')=\xi_{n-2}\xi_{n-4},&&\alpha_{n-2}(t')=\xi_{n-3}\xi_{n-1}\xi_n,\\
&\alpha_{n-1}(t')=\eta\xi_{n-2}, &&\alpha_{n}(t')=\eta\xi_{n-2}. 
\end{align*}
For both possibilities for the parity of $d$, at least one among $\alpha_{n-3}(t')$, $\alpha_{n-2}(t')$, and $(\sum_{j=n-3}^n\alpha_j)(t')$ equals $1$, contradicting Lemma \ref{lem:Jw}.

\medbreak

\noindent{\bf Subcase $\lambda=(\lambda_1,2)$ with $\lambda_1\geq 3$.} We replace $\Me$ with  the non-standard Levi subgroup whose root system is generated by the orthogonal roots
$\beta_{d-\lambda_1}$ and $w_J\beta_{d-\lambda_1}=w_{\lambda_2}\beta_{d-\lambda_1}$. Condition \ref{S} follows from Lemma \ref{lem:cyclically-permuted}, whilst \ref{R} follows because 
$\Me\subset \Le_J$. For \ref{W} we verify the hypothesis of Lemma \ref{lem:index}. A direct calculation shows that 
$\Cent_{W_{\Me}}(w)=\Cent_{W_{\Me}}(w_J)=\langle w_{\lambda_2}^2\rangle$ and that $\Cent_{\langle J\rangle}(w)=\Cent_{\langle J\rangle}(w_J)=\langle w_J\rangle$. Then, $w_J^a=(w_{\lambda_1}w_{\lambda_2})^a$ for $a\in\I_{0,2\lambda_1-1}$ represent distinct cosets of  $\Cent_{W_{\Me}}(w)$ in $\Cent_{\langle J\rangle}(w)$.
\end{proof}

\section{Semisimple classes of type C in Chevalley groups}\label{sec:typeC}
Let us first summarize what is already known about semisimple classes in Chevalley groups.

\medbreak
\begin{itemize}[leftmargin=*]
\item Semisimple classes in $\PSL_{n+1}(q)$, for $n>1$ have been dealt with 
in \cite{ACG-III,ACG-VII}. All classes collapse  except from the kthulhu classes 
listed in Table \ref{tab:ss-psl2}, which fall down, \cite[Theorems II,~III]{ACG-VII}. 
For $\PSL_2(q)$, 
see Section \ref{sec:abelian-techniques}.

\medbreak
\item Semisimple classes in $\PSp_{2n}(q)$ have been handled in \cite[Theorem 6.1]{ACG-VII}. All classes collapse except from the kthulhu classes listed in Table \ref{tab:ss-psl2}  when $n=2$ and $q\in\{3,5,7\}$.

\medbreak
\item
Semisimple classes in $\Pom^+_{2n}(q)$ and $\Pom^+_{2n+1}(q)$ that are represented in $\T^{F_w}$ for $w$ lying in the maximal parabolic subgroup $W_{A_{n-1}}$ of $W$ of type $A_{n-1}$ are dealt with in \cite[Theorem 6.2]{ACG-VII}, with a few remaining open cases occurring for $w=e$  and $q\in\{3,5,7\}$. The latter will be dealt with in Section \ref{sec:notC-Omega} together with the  open cases in $\PSp_{4}(3)$, $\PSp_{4}(5)$ and $\PSp_{4}(7)$.

\medbreak
\item Apart from the above mentioned open cases for $\Gb=\Pom^+_{2n+1}(q)$ with $q\in\{3,5,7\}$, if $\Gb \neq \PSL_{n+1}(q)$ any non-trivial class intersecting the split torus $\T^F$  collapses, see \cite[Theorem 4.1]{ACG-VII}.
\end{itemize}
\medbreak

In this Section we consider the non-split semisimple classes in simple Chevalley groups other than $\PSL_{n+1}(q)$ and $\PSp_{2n}(q)$. We verify whether the hypotheses of Theorem \ref{thm:potente-revised} hold and  single out the possible exceptions. 

\medbreak

In the whole section $\G=\G_{\rm sc}$ and $\mathfrak O\subseteq \G^F$ is a semisimple class. We fix $t\in \T^{F_w}\subseteq \G$ such that $\Oc_t^{G_w}\simeq\mathfrak O$ is a standard isomorphism and set $\Oc:=\Oc_{\pi(t)}^{\pi(G_w)}\simeq\pi(\mathfrak O)$. We also fix the following notation: \begin{align*}t=\prod_{j=1}^n\alpha_j^\vee(\zeta_j), &&\zeta_j\in\overline{\F_q},\end{align*} with numbering of the simple roots as in \cite{hu-la}. It is convenient to adopt the convention that $\zeta_{j}=1$ for $j\leq0$. 
\medbreak

 We assume that $w$ is a minimal element in $\WO$ and, unless otherwise stated, that $e\not\in\WO$. The verification of the hypotheses of Theorem \ref{thm:potente-revised} will mainly rely on the shape of $J_w$ rather than on the specific $w$ in $\langle J_w\rangle$.

\medbreak

The following observation will be used for inductive arguments.

 \begin{obs}\label{obs:zeta=1}
Assume $\zeta_j=1$ for some $j\in\I_{n}$, and that the root system generated by $\Delta\setminus\{\alpha_j\}$ is $w$-stable. Then, $\Ha\coloneqq \langle \U_{\pm\alpha_i}, \alpha_i\in\Delta\setminus\{\alpha_j\}\rangle$  is semisimple and simply-connected and $\Ha^{F_w}$ is a product of finite groups of Lie type $\mathbf H_1\times\cdots \times\mathbf H_d$. Hence $\Oc$ contains the subrack $\Oc_{\pi(t)}^{\pi(\mathbf H_1\times\cdots\times\mathbf H_d)}$, which is a product of classes in $\pi(\mathbf H_i)$ for $i\in\I_{d}$. If one of the factors projects to a rack of type C, respectively D, respectively F, then $\Oc$ is of type C, respectively D, respectively F. 
\end{obs}

 \subsection{Semisimple classes in \texorpdfstring{$\Pom^+_{2n+1}(q)$}{}}
 \label{subsec:Pom-impar}

\

In this Subsection $\G$ is of type $B_n$ where $n\geq 3$. When $q$ is even, $\Gb\simeq\PSp_{2n}(q)$; thus,  we restrict to the case in which $q$ is odd. The center of $\G^F$ is described in Table \ref{tab:center}. We fix in this subsection only $z\coloneqq \alpha_n^\vee(-1)$, the non-trivial central element in $[\G^F,\G^F]$. We recall that $w_0$ acts as $-\id$ on $\Phi$. By \cite[Theorem 6.2]{ACG-VII}, we restrict to the case in which $\WO\cap W_{A_{n-1}}=\emptyset$. 

\medbreak

A direct calculation using \cite[Lemma 8.19]{malle-testerman} gives
\begin{align*}
&s_1\cdot t=\alpha_1^\vee(\zeta_1^{-2}\zeta_2)t,&&s_j\cdot t=t\alpha_j^\vee(\zeta_{j-1}\zeta_j^{-2}\zeta_{j+1}),\ j\neq1,n-1,n, \\
&s_{n-1}\cdot t=t\alpha_{n-1}^\vee(\zeta_{n-2}\zeta_{n-1}^{-2}\zeta_n^2), &&s_n\cdot t=t\alpha_n^\vee(\zeta_{n-1}\zeta_n^{-2}).
\end{align*}

\medbreak
\begin{lema}\label{lem:Bn-not-irreducible}
Assume that $\Phi_{J_w}$ is not irreducible and that one of the following conditions hold:

\medbreak
\begin{enumerate}[leftmargin=*,label=\rm{(\roman*)}]
\item\label{item:Bn-rank2-2} The root system $\Phi_{J_w}$ has a component $\Phi_{J}$ of  rank $\geq 2$ and the rank of $\Phi_{J_w}\setminus \Phi_{J}$, a possibly reducible root system, is $\geq 2$. 
 
\medbreak
\item\label{item:Bn-two-componentsq>3} $q>3$, the root system $\Phi_{J_w}$ has a component $\Phi_{J}$ of rank $1$, and the rank of $\Phi_{J_w}\setminus \Phi_{J}$, a possibly reducible root system, is $\geq 2$. 
 
\medbreak
\item\label{item:Bn-Al-even}  $\Phi_{J_w}$ is of type $A_{2l}+\widetilde{A}_1$, with $l\geq1$.

\medbreak
\item\label{item:Bn-2componentsA1-q>3} $\Phi_{J_w}$ is of type $A_1+\widetilde{A}_1$ and $q>3$. 
 
\medbreak
\item\label{item:Bn-2componentsA1Bj}$\Phi_{J_w}$ is of type $A_1+{B}_j$ for some $j\in\I_{2,n-2}$. 
\end{enumerate}
\end{lema}
Then $\Oc$ is of type C.
\begin{proof}If either \ref{item:Bn-rank2-2} or \ref{item:Bn-two-componentsq>3} holds, then  the hypotheses of Lemma \ref{lem:levi} are verified, in virtue of Lemma \ref{lem:Jw}. If \ref{item:Bn-Al-even} holds, then we invoke Lemma \ref{lem:A-odd}.
Assume \ref{item:Bn-2componentsA1-q>3}. Without loss of generality  $\Delta_{J_w}=\{\alpha_1,\alpha_n\}$ and $t\in \T^{F_{s_1s_n}}$, i.e., 
\begin{align}\label{eq:Bn-s1sn}
&s_1s_n\cdot t^q=\alpha_1^\vee(\zeta_1^{-q}\zeta_2^q)\left(\prod_{j=2}^{n-1}\alpha_j(\zeta_j^q)\right) \alpha_n^\vee(\zeta_{n-1}^{q}\alpha_n^{-q})=t,\mbox{ i.e.,}\\
\nonumber&\zeta_1^{q+1}=\zeta_2,\quad \zeta_j\in\F_q^\times \mbox{ for }j=2,\,\ldots,\,n-1,\quad \zeta_n^{q+1}=\zeta_{n-1}^2.
\end{align} 
We consider the standard Levi subgroup $\Me=\langle \T, \U_{\pm\alpha_n}\rangle$. Then \ref{S} holds by Lemma \ref{lem:perfect-S} and Lemma \ref{lem:Jw} guarantees \ref{R}. By Remark \ref{obs:w0} \ref{item:w0-uno}, condition \ref{W}  follows if we show that $w_0\cdot t\not=
z^as_n^b\cdot t$ for any $a,\,b\in\{0,1\}$. Assume that we have an equality. Remark \ref{obs:w0} \ref{item:w0-due} applied to $\alpha_1^\vee$ implies  that $\zeta_1^2=1$, so $\zeta_1\in\F_q$ and $\zeta_2=1$. But then, $\alpha_1(t)=1$, contradicting Lemma \ref{lem:Jw}.

Assume \ref{item:Bn-2componentsA1Bj} holds. Without loss of generality  ${J_w}=\{s_1\}\cup J$ with $J=\{s_{n-j+1},\ldots,\,s_n\}$ so $t\in\T^{F_w}$ gives \begin{align*}t=w\cdot t^q\in\alpha_1^{\vee}(\zeta_1^{-q}\zeta^q_2)\alpha_2^\vee(\zeta_2^q)\prod_{i=3}^n\alpha_i(\ku)\end{align*} with $\zeta_2\in\F_q$ and
$\zeta_1^{q+1}=\zeta_2$.  We take $\Me=\Le_J$, so \ref{S}  holds by Lemma \ref{lem:perfect-S} and \ref{R} follows from Lemma \ref{lem:Jw}. We use Remark \ref{obs:w0} to verify \ref{W}. If $t^{-1}=z \sigma\cdot t$ for some $\sigma\in W_{\Me}$ then $\zeta_1^2=\zeta_2^2=1$. Hence $\zeta_2=\zeta_1^{q+1}=1$ and $\alpha_1(t)=1$, contradicting Lemma \ref{lem:Jw}.
\end{proof}

\begin{lema}\label{lem:Bn-Ad+A1}Assume that $\Phi_{J_w}$ is of type $A_d+ \widetilde{A}_1$ with $d\geq 2$. Then $\Oc$ is of type C. 
\end{lema}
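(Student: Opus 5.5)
First, the case $d$ even is already settled: then $\Phi_{J_w}$ is of type $A_{2l}+\widetilde{A}_1$, so Lemma \ref{lem:Bn-not-irreducible} \ref{item:Bn-Al-even} (i.e. Lemma \ref{lem:A-odd}) applies. Hence we may assume $d\geq 3$ is odd. Since $d\geq 2$, the component $\Phi_J$ of type $A_d$ has rank $\geq 2$, but the complement $\widetilde{A}_1$ has rank $1$. So Lemma \ref{lem:Bn-not-irreducible} \ref{item:Bn-rank2-2} does not apply directly, and we argue with Theorem \ref{thm:potente-revised} ourselves.

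Up to conjugation in $W$ (which keeps $w$ minimal), take $\Delta_J=\{\alpha_1,\dots,\alpha_d\}$ and $\Delta_{J_w}=\Delta_J\cup\{\alpha_n\}$, with $d\leq n-2$. Then $w=w_Jw_n$, where $w_J$ is a Coxeter element of $\langle J\rangle\simeq\mathbb S_{d+1}$, i.e. a $(d+1)$-cycle, and $w_n=s_n$.

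We apply Theorem \ref{thm:potente-revised} with the standard Levi subgroup $\Me=\Le_J$.
\begin{itemize}
\item $[\Me,\Me]\simeq\SL_{d+1}(\ku)$ and $F_w$ acts on it as a Chevalley endomorphism. Since $d\geq 3$, Lemma \ref{lem:perfect-S} gives \ref{S}.
\item Lemma \ref{lem:Jw} gives $\Cent_{\Me}(t)=\T$, hence \ref{R}.
\item For \ref{W} we use Remark \ref{obs:w0} \ref{item:w0-uno} with $\sigma=w_0$. We must show that $t^{-1}\neq z^a w_J^b\cdot t$ for $a\in\{0,1\}$ and $b\in\I_{0,d}$, because $\Cent_{W_{\Me}}(w)=\langle w_J\rangle$ by \cite[Proposition 30]{carter-cc}.
\end{itemize}

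Suppose $t^{-1}=z^aw_J^b\cdot t$. Applying $w_0$ twice gives $t=w_J^{2b}\cdot t$, since $z^2=e$ and $w_0$ commutes with everything. Regularity of $t$ in $\Le_{J_w}$ (Lemma \ref{lem:Jw}) then forces $(d+1)\mid 2b$, so $b\in\{0,(d+1)/2\}$. If $b=0$, Remark \ref{obs:w0} \ref{item:w0-due} gives $w^2=e$, which is impossible since $|w_J|=d+1\geq 4$.

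The main obstacle is the case $b=(d+1)/2$, where $w_J^b$ is an involution. Here I would compare coordinates as in Remark \ref{obs:coefficient}.
\begin{itemize}
\item Outside $\Delta_J$, $w_J^b$ fixes the coefficients of $\alpha_j^\vee$ for $j>d$. So $\zeta_j^{-1}=\zeta_j$ for $d<j<n$, and $\zeta_n^{-1}=(-1)^a\zeta_n$.
\item Combined with the $F_w$-fixed-point equations, as in \eqref{eq:Bn-s1sn}, this gives $\zeta_{n-1}\in\F_q$, $\zeta_{n-1}^2=1$, and $\zeta_n^{q+1}=\zeta_{n-1}^2=1$, with $\zeta_n^2=\pm1$.
\item If $d<n-2$, then $\alpha_{d+1}(t)$ involves $\zeta_d$, so I would instead target a root among $\alpha_{n-1}$, $\alpha_n$, $\alpha_{n-1}+2\alpha_n$. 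Evaluating it on $t$, using $\zeta_{n-2}^2=1$ and $\zeta_{n-1}^2=1$, should give $1$. For example, $(\alpha_{n-1}+2\alpha_n)(t)=\zeta_{n-2}^{-1}$ up to sign bookkeeping, and $\alpha_n(t)=\zeta_{n-1}^{-1}\zeta_n^2$. Choosing the appropriate root yields value $1$ on some root of the $B_2$ or $\widetilde A_1$ part.
\item The remaining case $d=n-2$ needs a separate check of $\alpha_n(t)$. There $\alpha_n$ lies in $\Phi_{J_w}$, so $\alpha_n(t)=\zeta_{n-1}^{-1}\zeta_n^2$. With $\zeta_n^2=\pm1$, $\zeta_{n-1}=\pm1$, and $\zeta_n^{q+1}=1$, one should be able to force either $\alpha_n(t)=1$, contradicting Lemma \ref{lem:Jw}, or $\zeta_n\in\F_q$. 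The latter would mean $s_n$ could be dropped from $w$, contradicting minimality.
\end{itemize}

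If this bookkeeping fails in a residual subcase, I would instead take $\Me=\Le_J$ together with $\sigma=s_n$ or $\sigma=w_J^{b'}s_n$. Then \ref{W1} holds by Remark \ref{obs:coefficient}, since the $\alpha_n^\vee$-coefficient of $s_n\cdot t$ differs from that of $t$ and is not compensated by $z$, as $\zeta_n^{-2}\zeta_{n-1}\neq\pm1$. Condition \ref{W2} then follows from Remark \ref{obs:M-in-LJ}.
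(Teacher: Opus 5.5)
Your reduction to $d=2l-1$ odd ($l\ge 2$) is fine, and so are your checks of \ref{S}, \ref{R} and of the subcase $b=0$ for $\Me=\Le_J$. The gap is the subcase $b=l$. More bookkeeping will not close it: for $\Me=\Le_J$, condition \ref{W} genuinely fails for some $t$, whatever $\sigma$ you choose.

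There are two slips first. From $s_n\cdot t=t\,\alpha_n^\vee(\zeta_{n-1}\zeta_n^{-2})$, the fixed-point equation is $\zeta_n^{q+1}=\zeta_{n-1}$, not $\zeta_{n-1}^2$. Also, Lemma~\ref{lem:Jw} says nothing about $\alpha_{n-1}$ or $\alpha_{n-1}+2\alpha_n$, since they are not in $\Phi_{J_w}$. The only usable root outside $\Phi_J$ is $\alpha_n$, with $\alpha_n(t)=\zeta_{n-1}^{-1}\zeta_n^2$. Your equations then leave open exactly $\zeta_{n-1}=1$, $\zeta_n^2=-1$, $q\equiv 3\bmod 4$, i.e.\ $\alpha_n(t)=-1$. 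Neither of your alternatives for $d=n-2$ holds there, since $\alpha_n(t)\neq1$ and $\zeta_n\notin\F_q$.

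This case really occurs. Put $\alpha_j=\varepsilon_j-\varepsilon_{j+1}$ for $j<n$, $\alpha_n=\varepsilon_n$, and $x_i=\varepsilon_i(t)$. Then $\zeta_i=x_1\cdots x_i$ for $i<n$ and $\zeta_n^2=x_1\cdots x_n$. Take $q\equiv 3\bmod 4$ and $x\in\F_{q^{2l}}^\times$ of order $q^l+1$. Define $t$ by:
\begin{itemize}
\item $x_i=x^{q^{i-1}}$ for $i\le 2l$;
\item $x_i=1$ for $2l<i<n$;
\item $x_n=-1$.
\end{itemize}
Then $\zeta_j=1$ for $2l\le j<n$, $\zeta_n^2=-1$, $t\in\T^{F_w}$, and $t$ is regular on $\Phi_{J_w}$. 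The stabiliser is $\operatorname{Stab}_W(t)=\langle s_{\varepsilon_i+\varepsilon_{i+l}}: i\in\I_l\rangle\times W(B_{n-2l-1})$. From this one checks that $w$ is minimal and that $\WO\cap W_{A_{n-1}}=\emptyset$.

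Yet every candidate $\sigma$ fails:
\begin{itemize}
\item $t^{-1}=z\,w_J^{l}\cdot t$, so $\sigma=w_0$ fails.
\item $s_n\cdot t=t\,\alpha_n^\vee(-1)=zt$, so your fallback claim $\zeta_n^{-2}\zeta_{n-1}\neq\pm1$ is false exactly here.
\item $-\id$ on $\langle\varepsilon_1,\dots,\varepsilon_{2l}\rangle$ sends $t$ into $\bZ\,w_J^l\cdot t$.
\item $W(B_{n-2l-1})$ fixes $t$.
\end{itemize}
These elements, together with $w_J$, generate $\Cent_W(w)$. Hence $\Cent_W(w)\cdot t\subseteq\bZ\langle w_J\rangle\cdot t=\bZ\Cent_{W_{\Me}}(w)\cdot t$.

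The missing idea is to shrink $\Me$ so that $\Cent_{W_{\Me}}(w)$ becomes small. The paper takes $\Phi_{\Me}$ of type $lA_1$ with base $\beta_j=\alpha_j+\cdots+\alpha_{j+l-1}$, $j\in\I_l$. This system is orthogonal to $\alpha_n$, and $w\beta_j=\beta_{j+1}$ for $j<l$, $w\beta_l=-\beta_1$.
\begin{itemize}
\item Lemma~\ref{lem:cyclically-permuted} gives \ref{S}, with $M\simeq\SL_2(q^l)$; Lemma~\ref{lem:Jw} gives \ref{R}.
\item $\Cent_{W_{\Me}}(w)=\langle w_J^l\rangle$ has order $2$, and one takes $\sigma=w_J$.
\item If $w_J\cdot t=z^a w_J^{lb}\cdot t$, comparing $\alpha_n^\vee$-coefficients forces $a=0$.
\item Then $w_J^{1-lb}\cdot t=t$ with $w_J^{1-lb}\neq e$, since $l\ge2$. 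This contradicts regularity of $t$ in $\Le_{J_w}$.
\item Finally, \ref{W2} holds because $\Cent_{\Le_{J_w}}(w_J\cdot t)=\T$.
\end{itemize}
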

\begin{proof}
If $d$ is even, we invoke Lemma \ref{lem:A-odd}, so we assume for the rest of the proof
that $d=2l-1$ for some $l\geq 2$.
Up to $W$-conjugacy we may assume that $J_w=J_1\cup\{s_n\}$ with $J_1=\{ s_1,\,\ldots,\,s_d\}$ and that $w=w_1s_n=s_nw_1$ where  $w_1=s_1\cdots s_d\in\langle J_1\rangle\simeq \mathbb S_{2l}$ is a $2l$-cycle. Whence $|w_1|=|w|=2l$.

\medbreak
We verify the hypotheses of Theorem \ref{thm:potente-revised} for 
the group 
\[\Me=\langle \T, \U_\delta,\,\delta\in\Phi_{\Me}\rangle\]
where $\Phi_M$  is the root system of type $lA_1$ with basis $\beta_j\coloneqq \alpha_j+\cdots+\alpha_{j+l-1}$ for $j=1,\,\ldots,\,l$, that is orthogonal to $\alpha_n$.
Since $\beta_j=w_1\beta_{j-1}=w\beta_j$ for $j=2,\,\ldots,\,l$ and $w_1\beta_l=w\beta_l=-\beta_1$, the system $\Phi_{\Me}$ is $w$-stable. As $l\geq 2$, condition \ref{S} holds by Lemma \ref{lem:cyclically-permuted}. Condition \ref{R} follows from Lemma \ref{lem:Jw} and the inclusion  $\Phi_{\Me}\subset \Phi_{J_w}$. 

\medbreak 

A direct calculation shows that $\Cent_{W_{\Me}}(w)=\langle w_1^l\rangle=\langle \prod_{j=1}^ls_{\beta_j}\rangle$. By construction,  $w_1\in \Cent_W(w)\setminus \Cent_{W_{\Me}}(w)$, and 
since $w_1\in \langle J_w\rangle$ and $\Me\subset \Le_{J_w}$, there holds $\Cent_{\Me}(w_1\cdot t)=\T$. We claim that $w_1\cdot t\not\in \bZ \Cent_{W_{\Me}}(w)\cdot t$.
Indeed, if $w_1\cdot t=z^aw_1^{lb}\cdot t$ for some $a,\,b\in\{0,1\}$, then by looking at the coefficient of $\alpha_n^\vee$ in the equality, we deduce that necessarily $a=0$, which in turn implies that $w_1^{l-1}\cdot t=t$. However, $w_1^{l-1}\in W_{\Me}$, with $\Cent_{\Me}(t)=\T$, a contradiction. Theorem \ref{thm:potente-revised} applies.\end{proof}

\begin{lema}\label{lem:Bn-q=3}Assume that $q=3$ and that $\Phi_{J_w}$ is of type $A_1+\widetilde{A}_1$. Then $\Oc$ is of type C.
\end{lema}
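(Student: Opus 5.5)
Up to $W$-conjugacy I take $\Delta_{J_w}=\{\alpha_1,\alpha_n\}$ and $w=s_1s_n$, with $n\geq 3$. The computation \eqref{eq:Bn-s1sn} from the proof of Lemma \ref{lem:Bn-not-irreducible} then gives
\[
\zeta_1^{q+1}=\zeta_1^{4}=\zeta_2,\qquad \zeta_j\in\F_3^\times=\{\pm1\}\ (2\le j\le n-1),\qquad \zeta_n^{4}=\zeta_{n-1}^2=1 .
\]
By Lemma \ref{lem:Jw}, $\alpha(t)\neq 1$ for $\alpha\in\{\pm\alpha_1,\pm\alpha_n\}$. The case $q>3$ fails in Lemma \ref{lem:Bn-not-irreducible} only because $\Le_{\alpha_n}$ or $\Le_{\alpha_1}$ gives $\SL_2(3)$, which is not perfect. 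So I will use a larger $w$-stable subsystem whose finite group is quasi-simple.

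The first choice is $\Me=\Le_J$ with $J=\{s_{n-1},s_n\}$, of type $B_2$. It is $w$-stable when $n\geq 4$, since $s_1\perp J$. Because $w$ acts on it as $s_n\in W_{\Me}$, the group $M$ is $\Sp_4(3)$ (or $\mathrm{Spin}_5(3)$), which is quasi-simple, so \ref{S} holds by Lemma \ref{lem:perfect-S}. For \ref{R} I use Lemma \ref{lem:non-centrality}: $[\Me,\Me]$ is simply connected, $n_w$ acts innerly, and $\alpha_n(t)\neq 1$.

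For \ref{W} I take $\sigma=w_0$ and apply Remark \ref{obs:w0}\ref{item:w0-uno}. It then suffices to exclude $t^{-1}=z^a\tau\cdot t$ with $\tau\in\Cent_{W_{\Me}}(s_n)$. By Remark \ref{obs:coefficient} the $\alpha_1^\vee$-coefficient is preserved, so $\zeta_1^{-1}=\zeta_1$ and $\zeta_1^2=1$. Then $\zeta_2=\zeta_1^4=1$ and $\alpha_1(t)=\zeta_1^2\zeta_2^{-1}=1$, contradicting Lemma \ref{lem:Jw}. The factor $z=\alpha_n^\vee(-1)$ does not affect the $\alpha_1^\vee$-coefficient.

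Condition \ref{W2} for $\sigma=w_0$ is automatic from \ref{R}, by Remark \ref{obs:w0}. When $n=3$, $J=\{s_2,s_3\}$ is not orthogonal to $s_1$. In that case I would instead use $\Me=\Le_{\{s_1,s_2\}}$ of type $A_2$, or the subsystem $\Phi$ itself restricted appropriately, and argue with the $\alpha_3^\vee$-coefficient symmetrically.

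The main obstacle is \ref{R} when $t$ is close to central in $\Me$. Lemma \ref{lem:non-centrality} only needs one root of $\Phi_{\Me}$ with $\gamma(t)\neq1$, and $\alpha_n$ supplies it, so this should be fine. The riskier point is the $n=3$ case and the choice of $\Me$ there. If $A_2$ fails, because $w$ does not stabilize it, I would fall back on Remark \ref{obs:zeta=1}, or on a $w$-stable subsystem of type $A_1\times A_1$ permuted cyclically as in Lemma \ref{lem:cyclically-permuted}, giving $\SL_2(9)$.
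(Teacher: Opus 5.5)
\emph{Where your proposal stands.} For $n\geq 4$ your argument is correct, and it takes a different route from the paper's. You use the standard Levi $\Le_{\{s_{n-1},s_n\}}$, which is $w$-stable because $\alpha_1\perp\alpha_{n-1},\alpha_n$, so that $M\simeq\Sp_4(3)$. Then \ref{R} follows from Lemma \ref{lem:non-centrality} since $\alpha_n(t)\neq 1$, and \ref{W} follows from $w_0$ and the $\alpha_1^\vee$-coefficient, exactly as in Lemma \ref{lem:Bn-not-irreducible}\ref{item:Bn-2componentsA1Bj}. The paper instead uses the non-standard $A_2$ subsystem with base $\alpha=\alpha_1+\cdots+\alpha_{n-1}$ and $\beta=w\alpha$. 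On it $w$ acts by the graph automorphism, so $M\simeq\SU_3(3)$ and $\Cent_{W_{\Me}}(w)=\langle s_0\rangle$. This costs an extra step for \ref{R}: one passes to $s_1\Phi_{\Me}$ when $\alpha,\beta,\alpha+\beta$ are all trivial on $t$. In exchange it works uniformly for every $n\geq 3$.

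\emph{The gap: the case $n=3$.} This case is part of the statement, since the subsection assumes $n\geq 3$, and your proposal leaves it open.
\begin{itemize}
\item Your substitute $\Le_{\{s_1,s_2\}}$ is not $w$-stable, because $w\alpha_2=s_1s_3\alpha_2=\alpha_1+\alpha_2+2\alpha_3$.
\item The $2A_1$ fallback does not exist. Up to sign, the only orthogonal roots interchanged by $w$ are the short roots $\alpha_2+\alpha_3$ and $\alpha_1+\alpha_2+\alpha_3$. Their sum $\alpha_0$ is a root, so in characteristic $3$ they generate a group of type $B_2$, not a $2A_1$ giving $\SL_2(9)$.
\item Remark \ref{obs:zeta=1} needs $\zeta_2=1$, which is not forced. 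Even then, the factors are classes in $\SL_2(3)$ and $\PSL_2(3)$, none of which is of type C, D or F.
\item Even the natural $w$-stable $B_2$ Levi for $n=3$ fails. Take base $\{\alpha_1,\alpha_2+\alpha_3\}$, so that $\Cent_W(w)=\langle s_1,s_0,s_3\rangle$. Let $t=\alpha_1^\vee(\zeta_1)\alpha_3^\vee(\zeta_3)$ with $\zeta_1^2=\zeta_3^2=-1$; this lies in $\T^{F_{s_1s_3}}$, and $s_1s_3$ is minimal in $W[\mathfrak O]$. Then $s_0\cdot t=t$ and $s_3\cdot t=zt$ with $z=\alpha_3^\vee(-1)$. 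Hence $\Cent_W(w)\cdot t=\bZ\,\Cent_{W_{\Me}}(w)\cdot t$, and \ref{W} fails.
\end{itemize}
For $n=3$ you need a subgroup like the paper's twisted $A_2$, here with base $\alpha_1+\alpha_2$, $\alpha_2+2\alpha_3$, giving $M\simeq\SU_3(3)$.
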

\begin{proof}Without loss of generality we may assume that $\Delta_{J_w}=\{\alpha_1,\alpha_n\}$ so $\alpha_2,\,\alpha_{n-1}\not\in\Delta_{J_w}$. The relations \eqref{eq:Bn-s1sn} are still in force.  We verify the hypotheses of Theorem \ref{thm:potente-revised}, considering the subgroup $\Me=\langle \T, \U_\delta,\,\delta\in \Phi_{\Me}\rangle$, where $\Phi_{\Me}$ is generated by $\alpha=\alpha_1+\cdots+\alpha_{n-1}$ and $\beta=w\alpha=\alpha+2\alpha_n-\alpha_1$.  First of all, $\Phi_{\Me}$ is of type $A_2$ and $w$ acts on $w$ interchanging the simple roots $\alpha$ and $\beta$, so $M\coloneqq [\Me,\Me]^F\simeq \SU_3(3)=\PSU_3(3)$ is a non-abelian finite simple group, giving \ref{S}.

\medbreak
We verify  \ref{W} using Remark \ref{obs:w0}. By construction, we have 
\[\Cent_{W_{\Me}}(w)=\langle s_{\alpha+\beta}\rangle=\langle s_0\rangle.\] 
Assume that $t^{-1}=w_0\cdot t= z^a s_0^b\cdot t$ for some $a,b\in\{0,1\}$. If $b=0$, the equality implies $\zeta_1^2=1$, so $\zeta_2=\zeta_1^{4}=1$ gives $\alpha_1(t)=1$, contradicting Lemma \ref{lem:Jw}. Hence $b=1$.

\medbreak
A direct calculation shows that
\begin{align*}s_{0}\cdot t=t\, (\alpha_0)^\vee(\zeta_2^{-1})=t\,\alpha_1^\vee(\zeta_2^{-1})\left(\prod_{i=2}^{n-1}\alpha_i^\vee(\zeta_2^{-2})\right)\alpha_n^\vee(\zeta_2^{-1}).\end{align*}
Comparing the coefficient of $\alpha_1^\vee$ in the equality $t^{-1}= z^a s_0\cdot t$
gives $\zeta_1^2=\zeta_2$. But then $\alpha_1(t)=1$, impossible by Lemma \ref{lem:Jw}. Hence, $w_0\cdot t\not\in \bZ \Cent_{W_{\Me}}(w)$.

\medbreak
Finally, we check \ref{R}. A direct calculation shows that, for any $n\geq 3$, we have
\begin{align*}
\alpha(t)=\zeta_1\zeta_n^{-2}\zeta_{n-1},&& \beta(t)=\zeta_1^{-1}\zeta_2\zeta_n^2\zeta_{n-1}^{-1},&& (\alpha+\beta)(t)=\zeta_2. \end{align*}

\medbreak
Assume first that $(\alpha(t),\beta(t),(\alpha+\beta)(t))\neq(1,1,1)$. Then, the positive  root subgroup $U\coloneqq \langle \U_{\alpha},\U_{\beta}\rangle^{F_w}$ of $M$ is a $3$-Sylow subgroup  of order $27$ that is not contained in $\Cent_{M}(t)$. 

\medbreak
Assume now that $(\alpha(t),\beta(t),(\alpha+\beta)(t))=(1,1,1)$. In this case, we  replace $\Phi_{\Me}$ by $\Phi'_{\Me}=s_1\Phi_{\Me}$, the root system with base $\alpha'=s_1\alpha$ and $\beta'=s_1\beta$. Note that $\alpha+\beta=s_1(\alpha+\beta)$, so $\Cent_{s_1 W_{\Me} s_1}(w)=\Cent_{W_{\Me}}(w)$. In addition, 
\[
(s_1\alpha)(t)=\alpha(t)\alpha_1(t)^{-1}=\alpha_1(t)^{-1}\neq 1
\]
by  Lemma \ref{lem:Jw}. Hence, we verify \ref{S} and \ref{W} as we did for $\Phi_{\Me}$. Also,  \ref{R} holds because $(\alpha'(t),\beta'(t),(\alpha'+\beta')(t))\neq(1,1,1)$. \end{proof}

\bigskip

We are left with the most laborious case:  $q$ is odd  and $w$ is cuspidal in $\langle J_w\rangle$ where $\Delta_{J_w}=\{\alpha_{n-l+1},\,\ldots,\,\alpha_n\}$ is of type $B_l$ for some $l\in\I_{n}$. We use the description of cuspidal elements recalled in Subsection \ref{sec:cuspidal-classical}, from which we retain notation. Until the end of the subsection we assume that $w={w_\lambda}=\prod_{i=1}^rw_{\lambda_i}$ for some partition $\lambda=(\lambda_1,\,\ldots,\,\lambda_r)$ of $l\leq n$.

\begin{lema}\label{lem:Bn-lambda1>2}Assume $\lambda_1\geq3$ and odd. Then $\Oc$ is of type $C$.
\end{lema}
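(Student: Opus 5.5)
We plan to apply Theorem \ref{thm:potente-revised} with a subgroup of type $D_{\lambda_1}$ sitting inside the first $B_{\lambda_1}$-block of $w$. Concretely, let $\Phi_{\Me}\coloneqq\Phi_{\lambda,1}\cap\Phi_{\rm long}$ be the long roots of the $B_{\lambda_1}$-subsystem $\Phi_{\lambda,1}$ whose base is $\{\alpha_{n-\lambda_1+1},\ldots,\alpha_{n-1},\gamma_1=\alpha_n\}$. This is a root system of type $D_{\lambda_1}$, with $D_3=A_3$ when $\lambda_1=3$. It is $w$-stable, because $w=w_\lambda$ preserves $\Phi_{\lambda,1}$ and $W$ preserves root lengths. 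Moreover $w$ acts on it as an outer (graph) automorphism, since $w_{\lambda_1}\notin W_{D_{\lambda_1}}$.

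\emph{Conditions \ref{S} and \ref{R}.} The group $[\Me,\Me]$ is simply connected, being the long-root subgroup of a standard Levi subgroup of type $B_{\lambda_1}$ (\cite[Corollary II.5.4]{sp-st}). The group $M$ is then a twisted group ${}^2D_{\lambda_1}(q)$, which is quasi-simple because $\lambda_1\geq 3$. Hence \ref{S} follows from Lemma \ref{lem:perfect-S}, applied in its twisted version (\cite[Corollary 24.14]{malle-testerman}). Since $\Phi_{\Me}\subseteq\Phi_{J_w}$, Lemma \ref{lem:Jw} gives $\Cent_{\Me}(t)=\T$, and hence \ref{R}.

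\emph{Condition \ref{W}.} If $r\geq2$, or more generally if the index $[\Cent_{\langle J_w\rangle}(w):\Cent_{W_{\Me}}(w)]$ exceeds $2=|\bZ|$, we simply invoke Lemma \ref{lem:index}. The factors $w_{\lambda_j}$ with $j\geq2$ commute with $w$ and lie outside $W_{\Me}$, which gives enough cosets; in this step one must check that $\Cent_{W_{\Me}}(w)=\langle w_{\lambda_1}^2\rangle$, using \cite[Proposition 30]{carter-cc} for the Coxeter element $w_{\lambda_1}$ of $B_{\lambda_1}$. The remaining case is $r=1$, so $w=w_{\lambda_1}$ is a Coxeter element of $B_{\lambda_1}$ with $\Cent_{\langle J_w\rangle}(w)=\langle w\rangle$. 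Here we take $\sigma=w$; then \ref{W2} is automatic by regularity of $t$ in $\Le_{J_w}$. Suppose \ref{W1} fails, that is, $w\cdot t=z^a w^{2b}\cdot t$ with $a\in\{0,1\}$ and $z=\alpha_n^\vee(-1)$. Regularity forces $a=1$. Since $z$ is $W$-fixed, this gives $w^{1-2b}\cdot t=z\,t$, hence $w^{2(1-2b)}\cdot t=t$, and regularity again yields $2\lambda_1\mid 2(1-2b)$. Because $\lambda_1$ is odd, $w^{1-2b}$ is then an odd power of $w^{\lambda_1}=w_{0,J_w}$, which acts as $-\id$ on $\Phi_{J_w}$. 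We conclude that $w_{0,J_w}\cdot t=z\,t$.

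\emph{Main obstacle.} The step we expect to be hardest is excluding $w_{0,J_w}\cdot t=z\,t$. Our plan is to mimic the computation in Lemma \ref{lem:Jw-contains-D}. Write $t=\prod\alpha_i^\vee(\zeta_i)$ and use the formulas for $s_j\cdot t$ listed at the start of this subsection. Comparing the coefficients of $\alpha_i^\vee$ for $i\geq n-\lambda_1+1$ should give $\zeta_i^2=1$ for the relevant $i$ (with a twist by $-1$ at $\alpha_n^\vee$ coming from $z$). We then evaluate a short root such as $\alpha_n$, $\alpha_{n-1}+\alpha_n$, or $\alpha_{n-2}$, and aim to find one that takes the value $1$ on $t$; this contradicts Lemma \ref{lem:Jw}. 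If this direct evaluation does not close the argument, we would instead use $w_0$ via Remark \ref{obs:w0} as a second candidate for $\sigma$, aiming at a relation $w^2=\id$, which is absurd since $|w|=2\lambda_1\geq6$.
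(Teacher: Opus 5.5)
Your proposal is correct. It agrees with the paper on the choice of $\Phi_{\Me}$ (the long roots of $\Phi_{\lambda,1}$, of type $D_{\lambda_1}$) and on \ref{S} and \ref{R}; the paper gets \ref{S} from Lemma \ref{lem:cyclically-permuted} with $d=1$, which is equivalent to your argument. The difference lies in how \ref{W} is verified.

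The paper uses the single candidate $\sigma=w_0$ for every $r$. Since $\Cent_{W_{\Me}}(w)=\langle w_{\lambda_1}^2\rangle$, an equality $t^{-1}=z^a w_{\lambda_1}^{2b}\cdot t$ gives $w_{\lambda_1}^{4b}\cdot t=t$. Regularity then yields $2\lambda_1\mid 4b$, so $b=0$ because $\lambda_1$ is odd. Then $w_0\cdot t=z^a t$ contradicts Remark \ref{obs:w0}\ref{item:w0-due}, since $|w|\geq 6$. This is exactly your fallback.

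You instead split off $r\geq 2$ via Lemma \ref{lem:index}; the index is at least $4>|\bZ|$, which is correct. For $r=1$ you take $\sigma=w$, and you correctly reduce the failure of \ref{W1} to $w_{0,J_w}\cdot t=zt$.

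What you call the main obstacle is in fact immediate and needs no coefficient bookkeeping. Since $z$ is central and $w_{0,J_w}$ acts as $-\id$ on $\Phi_{J_w}$, every $\gamma\in\Phi_{J_w}$ satisfies $\gamma(t)=\gamma(zt)=\gamma(w_{0,J_w}\cdot t)=\gamma(t)^{-1}$. So Lemma \ref{lem:Jw} forces $\gamma(t)=-1$ for all $\gamma\in\Phi_{J_w}$. This is absurd, because $\alpha_{n-1},\alpha_n,\alpha_{n-1}+\alpha_n\in\Phi_{J_w}$.

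Your predicted outcome $\zeta_i^2=1$ of the coefficient comparison is right only when $l=n$. For $l<n$ one gets $\zeta_i^2=\zeta_{n-l}^2$ for $n-l<i<n$ and $\zeta_n^2=-\zeta_{n-l}$, so evaluating roots directly is the cleaner route.

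As for the trade-off: the paper's argument is uniform in $r$ and rests on $w_0=-\id$ centralizing every $w$. It does, however, pass through Remark \ref{obs:w0}\ref{item:w0-due}, whose proof uses $\id\notin\WO$ and the arithmetic of $q \bmod 4$. Your argument needs a case split. In the case $r=1$, though, it uses only the regularity of $t$ in $\Le_{J_w}$ and the fact that $\Phi_{J_w}$ has rank at least $2$.
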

\begin{proof}We take $\Phi_{\Me}$ to be the $w$-stable root subsystem of type $D_{\lambda_1}$, consisting of all long roots in $\Phi_{\lambda,1}$. 
It has base $\alpha_{n-\lb_1+1},\,\ldots,\,\alpha_{n-1},\,\alpha_{n-1}+2\alpha_n$. 
Then, \ref{S} holds by Lemma \ref{lem:cyclically-permuted}. Since $\Phi_{\lambda,1}\subseteq \Phi_{J_w}$, Lemma \ref{lem:Jw} guarantees \ref{R}.  

\medbreak 

We now show \ref{W} making use of Remark \ref{obs:w0}.  
We observe that 
\[\Cent_{W_{\Me}}(w)=\Cent_{W_{\Me}}(w_{\lb_1}).\]
In addition, the Coxeter element $w_{\lambda_1}\in W_{\Phi_{\lambda,1}}$ does not lie in $W_{\Me}$ because all simple roots of $\Phi_{\lambda,1}$ but one lie in $\Phi_{\Me}$. On the other hand,  $w_{\lb_1}^2\in W_{\Me}$ because $[W_{\Phi_{\lambda,1}}:W_{\Me}]=2$.  
Hence, 
\begin{align*}\Cent_{W_{\Me}}(w)=\Cent_{W_{\Phi{\lb,1}}}(w_{\lb_1})\cap W_{\Me}=\langle w_{\lb_1}^2\rangle.\end{align*}

Assume for a contradiction that $t^{-1}=z^aw_{\lb_1}^{2b}\cdot t$ for some $a\in\{0,1\}$ and $b\in\I_{0,\lb_1-1
}$. Then, $t=z^{a}w_{\lambda_1}^{2b}\cdot t^{-1}=w_{\lambda_1}^{4b}\cdot t=w_{\lambda_1}^{4b}\cdot t$. Therefore $2\lambda_1|4b$ because $\Cent_{\Me}(t)=\T$ in virtue of Lemma \ref{lem:Jw}. Since $\lambda_1$ is odd, $\lambda_1|b$, so $t^{-1}=z^at$. We invoke Remark \ref{obs:w0} \ref{item:w0-due} reaching a contradiction, as $|w|$ is a multiple of $2\lambda_1\geq6$. 
\end{proof}

\begin{lema}\label{lem:Bn-lambda1=2d}If $\lambda_1=2d$ for some $d\geq 1$, then $\Oc$ is of type $C$.
\end{lema}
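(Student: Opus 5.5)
We apply Theorem \ref{thm:potente-revised}, in the same spirit as Lemma \ref{lem:Bn-lambda1>2}, but with a subsystem adapted to even $\lambda_1$. The first step is to choose $\Phi_{\Me}$. Since $\lambda_1=2d$, the Coxeter element $w_{\lambda_1}$ of $W_{\Phi_{\lambda,1}}$ (type $B_{2d}$) has order $4d$. Its power $w_{\lambda_1}^{2d}$ acts as $-\id$ on $\Phi_{\lambda,1}$.

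For $d\geq 2$ we take $\Phi_{\Me}$ to be the long-root subsystem of type $D_{2d}$ in $\Phi_{\lambda,1}$, as in Lemma \ref{lem:Bn-lambda1>2}. Then \ref{S} follows from Lemma \ref{lem:cyclically-permuted} (or Lemma \ref{lem:perfect-S}, since it is a twisted $D_{2d}$ with $2d\geq4$), and \ref{R} follows from Lemma \ref{lem:Jw}. The same index argument gives $\Cent_{W_{\Me}}(w)=\langle w_{\lambda_1}^2\rangle$. For $d=1$ the long roots of $\Phi_{\lambda,1}$ form $A_1\times A_1$, and $w_{\lambda_1}$ swaps them up to sign. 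Here I would instead take the two orthogonal long roots $\gamma,\, w\gamma$ and use Lemma \ref{lem:cyclically-permuted}: the group $M\simeq \SL_2(q^2)$ is quasi-simple because $q^2\geq 9$. Condition \ref{R} again comes from Lemma \ref{lem:Jw}, and $\Cent_{W_{\Me}}(w)=\langle w_{\lambda_1}^2\rangle$ has order $2$.

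To verify \ref{W}, the candidate from Remark \ref{obs:w0} no longer works directly. The reason is that $t^{-1}=w_0\cdot t$ may now equal $w_{\lambda_1}^{2d}\cdot t$ times something acting elsewhere, so the odd-$\lambda_1$ trick of Lemma \ref{lem:Bn-lambda1>2} fails: $2\lambda_1\mid 4b$ no longer forces $b\equiv 0$. Instead I would use $\sigma=w_{\lambda_1}$ itself. It lies in $\Cent_W(w)\setminus \Cent_{W_{\Me}}(w)$, and it lies in $\langle J_w\rangle$, so \ref{W2} holds by regularity (Lemma \ref{lem:Jw} and \eqref{eq:reg}). Suppose \ref{W1} fails, i.e.\ $w_{\lambda_1}\cdot t=z^a w_{\lambda_1}^{2b}\cdot t$. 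If $a=0$, then $w_{\lambda_1}^{1-2b}\cdot t=t$ with $w_{\lambda_1}^{1-2b}\neq e$ (its exponent is odd and the order $4d$ is even), which contradicts regularity of $t$ in $\Le_{J_w}$.

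The main obstacle is the case $a=1$, where $w_{\lambda_1}^{1-2b}\cdot t=zt$ with $z=\alpha_n^\vee(-1)$. Raising to the odd power inverting $1-2b$ modulo $4d$ gives $w_{\lambda_1}\cdot t=zt$, hence $w_{\lambda_1}^2\cdot t=t$, again contradicting regularity. The key point is that $w_{\lambda_1}$ fixes $z$ because $z$ is central. I expect this bookkeeping, together with checking that $\Cent_{W_{\Me}}(w)$ really is $\langle w_{\lambda_1}^2\rangle$ when other parts $\lambda_i$ equal $\lambda_1$, to be the delicate step. If it fails, I would fall back on Lemma \ref{lem:index}, using the extra elements $w_{\lambda_i}$ and the swapping involutions to produce more than $|\bZ|=2$ cosets.
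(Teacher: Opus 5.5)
Your proof is essentially correct, and for $d=1$ it coincides with the paper's. There the paper's $\Phi_{\Me}$, the $w_{\lambda_1}$-orbit of the long root $\alpha_{n-\lambda_1+1}+\cdots+\alpha_{n-d}$, is exactly the set of four long roots of $\Phi_{\lambda,1}$. So $M\simeq\SL_2(q^2)$, and \ref{W} is verified with $\sigma=w_{\lambda_1}$ by regularity of $t$ in $\Le_{J_w}$.

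For $d\geq 2$ you take a different subsystem.
\begin{itemize}
\item The paper keeps the same construction for all $d$. Its $\Phi_{\Me}$ is of type $\lambda_1A_1$ with components cyclically permuted by $w$, so $M\simeq \SL_2(q^{\lambda_1})$ and $\Cent_{W_{\Me}}(w)=\langle w_{\lambda_1}^{\lambda_1}\rangle$ has order $2$.
\item You reuse the long-root $D_{2d}$ of Lemma \ref{lem:Bn-lambda1>2}. This gives a twisted $D_{2d}$ and $\Cent_{W_{\Me}}(w)=\langle w_{\lambda_1}^2\rangle$ of order $2d$.
\end{itemize}
Your worry about repeated parts is unfounded: $W_{\Me}\leq W_{\Phi_{\lambda,1}}$ commutes with every $w_{\lambda_j}$, $j\geq 2$, so $\Cent_{W_{\Me}}(w)=\Cent_{W_{\Me}}(w_{\lambda_1})$.

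Both routes then rest on the same fact: $4$ divides $|w_{\lambda_1}|=4d$, so odd powers of $w_{\lambda_1}$ can neither fix $t$ nor move it by a central element. The paper's choice avoids your case split. It also makes the centralizer so small that for $d\geq2$ Lemma \ref{lem:index} alone would already give \ref{W}. With your $D_{2d}$ the relevant index is exactly $2=|\bZ|$ when $\lambda=(2d)$. So the fallback via Lemma \ref{lem:index} that you mention is not available there, and the explicit parity argument is really needed.

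That argument has one faulty step as written. The exponent $k=1-2b$ is odd but need not be invertible modulo $4d$ (e.g.\ $d=3$, $b=2$, $k=-3$). So you cannot in general raise $w_{\lambda_1}^{k}\cdot t=zt$ to an inverse power.

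The repair is immediate. Apply $w_{\lambda_1}^{k}$ once more, using that $z$ is fixed by $W$ and $z^2=e$, to get $w_{\lambda_1}^{2k}\cdot t=t$. Regularity of $t$ in $\Le_{J_w}$ then forces $4d\mid 2k$, i.e.\ $2d\mid k$, which is impossible for $k$ odd.
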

\begin{proof}We take $\Phi_{\Me}$ to be the root subsystem of $\Phi_{\lambda,1}$, consisting of the $2\lambda_1$ long roots in the $\langle w_{\lambda_1}\rangle$-orbit of 
\[
\alpha\coloneqq \alpha_{n-\lambda_1+1}+\cdots+\alpha_{n-d}.
\]
Now, $w_{\lb_1}^a\cdot \alpha\perp w_{\lb_1}^b\cdot\alpha$ whenever $a-b\not\equiv 0\mod\lb_1$ and $w_{\lb_1}^{\lb_1}\cdot\alpha=-\alpha$, so $\Phi_{\Me}$ is of type ${\lb_1}A_1$ and $w$ cyclically permutes the components. Now, $F_{w^{\lb_1}}$ acts as an endomorphism of $\langle \U_{\pm\alpha}\rangle$ that differs from $\Fr_{q^{\lb_1}}$ by an inner automorphism. Lemma \ref{lem:cyclically-permuted} gives \ref{S}. Since $\Me\leq \Le_{J_w}$, by Lemma \ref{lem:Jw} we get $\Cent_{\Me}(t)=\T$, whence \ref{R} holds. 

\medbreak

Now, $\langle w_{\lambda_1}\rangle\leq \Cent_W(w_{\lambda})$ and the inclusion $\Phi_{\Me}\subset \Phi_{\lb,1}=\Phi_{J_w}$ gives 
\begin{align*}\Cent_{\Me}(w_{\lb_1}\cdot t)\subseteq \Cent_{\Le_{J_w}}(w_{\lb_1}\cdot t)=\T.\end{align*}
Hence, \ref{W} follows if we show that $w_{\lb_1}\cdot t\not\in \bZ \Cent_{W_{\Me}}(w)\cdot t$. First of all, $W_{\Me}$ consists only of involutions and 
\begin{equation*}\Cent_{W_{\Me}}(w_\lambda)=\Cent_{W_{\Me}}(w_{\lambda_1})=\langle \prod_{i=1}^{\lambda_1}s_{w_{\lambda_1}^i\alpha}\rangle=\langle w_{\lambda_1}^{\lambda_1}\rangle\end{equation*} has order $2$. Also, $w_{\lambda_1}\cdot t\not\in \{t,w_{\lambda_1}^{\lambda_1}\cdot t\}$ because $\Cent_{\Le_{J_w}}(t)=\T$. Assume for a contradiction that $w_{\lambda_1}\cdot t=zw_{\lambda_1}^{a\lambda_1}\cdot t$ for some $a\in\{0,1\}$. Then,  either 
$w_{\lambda_1}\cdot t=z t$ or $w_{\lambda_1}\cdot t=z^{\lambda_1+1} t$. In both cases,  $w_{\lambda_1}^2t=t$ with $|w_{\lambda_1}|=2\lambda_1$, contradicting once more that $\Cent_{\Le_{J_w}}(t)=\T$.\end{proof}

\medbreak

It remains to consider the case in which $r=l\leq n$ 
and $\lb_j=1$ for $j\in\I_l$, that is, when  $\Lambda_j=j-1$ for all $j$. 
In this case $w_{\lb_j}=s_{\gamma_j}$ and $\gamma_j=\sum_{i=n-j+1}^n\alpha_i$ for all $j$. Hence, $\gamma_j\perp\gamma_i$ for $i\neq j$, 
so $w=w_{\lb}=s_{\gamma_1}\cdots s_{\gamma_l}$ is an involution. In detail the action on $t$ reads

\begin{align*}
&s_{\gamma_j}\cdot t=\begin{cases}
t\gamma_n^\vee(\zeta_{1}^{-1}), & \text{if $j=n$,}\\
t\gamma_j^\vee(\zeta_{n-j}\zeta_{n-j+1}^{-1}), & \text{if $1<j<n$,}\\
t\alpha_n^\vee(\zeta_{n-1}\zeta_{n}^{-2}), & \text{if $j=1$,}
\end{cases}\\
&\gamma_j^\vee(\eta)=\left(\prod_{i=n-j+1}^{n-1}\alpha_{i}^\vee(\eta^2)\right)\alpha_n^\vee(\eta), &\eta\in \ku^\times,\, \textrm{ if }j>1, \\
&\gamma_1^\vee(\eta)=\alpha_n^\vee(\eta), &\eta\in \ku^\times,\\
&w \cdot t=t\left(\prod_{i=n-l+1}^{n-1}\alpha_i^\vee(\zeta_i^{-2}\zeta_{n-l}^2)\right)\alpha_n^\vee(\zeta_n^{-2}\zeta_{n-l}),& \textrm{if } 1<l<n,\\
&w \cdot t=s_n\cdot t=t\alpha_n^\vee(\zeta_n^{-2}\zeta_{n-1}),& \textrm{if }l=1,\\
&w \cdot t=w_0\cdot t=t^{-1},& \textrm{if } l=n.
\end{align*}
Depending on $l$, the condition $t\in\T^{F_w}$ reads:
\begin{align*}
&\zeta_i^{q-1}=1,\, \textrm{ for } 1\leq i\leq n-1,& \zeta_n^{q+1}=\zeta_{n-1},&&\textrm{ when }l=1;\\
&\zeta_i^{q+1}=1,\, \textrm{ for } 1\leq i\leq n,&&& \textrm{when }l=n;
\end{align*}
and 
\begin{align*}
&\zeta_i^{q-1}=1,\,\textrm{ for }1\leq i\leq n-l,&\zeta_n^{q+1}=\zeta_{n-l},\\
&\zeta_i^{q+1}=\zeta^2_{n-l},\,\textrm{ for }n-l+1\leq i\leq n-1,&\textrm{ when }1<l<n.\end{align*}

\begin{lema}\label{lem:bn-lambda1=1-l=3}
Assume that $\lambda_1=1$ and that neither of the following happens:
\begin{align}\label{eq:l1}
l&=1,& &\begin{aligned}
t&=\bigl(\prod_{i=1}^{\left[\frac{n}{2}\right]}\alpha^\vee_{2i-1}(-1)\bigr)\alpha_n^\vee(\zeta_n),\\ \zeta_n^{q+1} &=(-1)^{n-1};
\end{aligned}
\\
\label{eq:l2}
l&=2, & &\begin{aligned}
t& =\bigl(\prod_{i=1}^{\left[\frac{n-1}{2}\right]}\alpha^\vee_{2i-1}(-1)\bigr)\alpha_{n-1}^\vee(\zeta_{n-1})\alpha_n^\vee(\zeta_n),
\\
\zeta_n^{q+1} & =(-1)^{n},\;\;\zeta_{n-1}^{q+1}=1.
\end{aligned}
\end{align}
Then, $\Oc$ is of type C, D, or F.
\end{lema}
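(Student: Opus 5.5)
Here $w=s_{\gamma_1}\cdots s_{\gamma_l}$ is an involution acting as $-\id$ on $\Phi_{J_w}$ of type $B_l$. Hence $\Cent_{\langle J_w\rangle}(w)=\langle J_w\rangle$, a Weyl group of type $B_l$ of order $2^l l!$, and every root subsystem of $\Phi_{J_w}$ is $w$-stable. I would argue by cases on $l$, using Theorem \ref{thm:potente-revised} with Levi subgroups $\Me\subseteq \Le_{J_w}$. Regularity of $t$ in $\Le_{J_w}$ (Lemma \ref{lem:Jw}) gives \ref{R}, and \ref{W2} comes from Remark \ref{obs:M-in-LJ}. So the main task is \ref{W1}, and for it I would use Lemma \ref{lem:index} with $|\bZ|=2$.

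\emph{Case $l\ge 3$.} Take $\Me$ with $\Phi_{\Me}$ of type $B_{l-1}$ (or $A_{l-1}$ for the long simple roots). Then \ref{S} holds by Lemma \ref{lem:perfect-S}, since $F_w$ acts on this standard Levi as a twisted Chevalley endomorphism by $-\id$. Here $-\id=w_0$ lies in $W_{B_{l-1}}$, so the action is actually inner, and small $q$ causes no problem except $(B_2,3)$. The index is $[W_{B_l}:W_{B_{l-1}}]=2l\ge 6>2$, so Lemma \ref{lem:index} gives \ref{W}. For $l=3$ and $q=3$, where $B_2$ could be problematic, I would instead use the subsystem of type $A_2$: its index $48/6=8>2$, and $\SL_3(3)$ is perfect. \emph{Case $l=2$.} Use $\Me$ with $\Phi_{\Me}$ of type $A_1$ (long, or short), so the index is $8/2=4>2$, provided $q>3$. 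When $q=3$ (where $\SL_2(3)$ is not perfect), or for the full $B_2$, I would pass instead to Remark \ref{obs:zeta=1}. \emph{Case $l=1$.} Here $J_w=\{s_n\}$ and $t$ lies in the Levi of type $B_{n-1}\times$torus; all $\zeta_i$ with $i<n$ lie in $\F_q^\times$.

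The exceptional cases $l=1,2$ are handled by induction via Remark \ref{obs:zeta=1} and the split-torus results. If some $\zeta_j=1$ with $j\le n-l$, and $\Delta\setminus\{\alpha_j\}$ is $w$-stable (it is, because $w\in\langle J_w\rangle$), then $t$ lies in a product of groups of types $A_{j-1}$ and $B_{n-j}$ of smaller rank. There the component in the $A_{j-1}$ factor is split, so \cite[Theorem 4.1]{ACG-VII}, \cite[Theorem 5.4]{ACG-VII} and the earlier lemmas of this subsection (by induction on $n$) give type C, D or F, unless every factor is kthulhu. I would also apply the $w_0$ trick (Remark \ref{obs:w0}) with $\Me$ of type $A_1$ built on some $\alpha_j$ with $j\le n-l$, where $\zeta_j\ne\pm1$ or $\alpha_j(t)\ne1$. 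An $\SL_2(q)$ Levi is quasi-simple for $q>3$, and \ref{W} fails only if $t^{-1}\in z^a\langle s_j\rangle t$, which forces $\zeta_i^2=1$ for many $i$. Running these through the split coordinates $\zeta_1,\dots,\zeta_{n-l}\in\F_q^\times$ should force, when all alternatives fail, $\zeta_i\in\{\pm1\}$ alternating with $\zeta_{2i}=1$, $\zeta_{2i-1}=-1$. Combined with the torus equations $\zeta_n^{q+1}=\zeta_{n-l}$ (respectively $\zeta_{n-1}^{q+1}=1$), this is exactly the configuration \eqref{eq:l1} or \eqref{eq:l2}.

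The main obstacle is this last bookkeeping: showing that whenever $t$ is not of the form \eqref{eq:l1}/\eqref{eq:l2}, some $A_1$ or $A_2$ subsystem among the split coordinates (or the inductive split-class results) gives type C, especially for $q=3$, where $A_1$ Levis are unusable. There I would try first non-standard $A_2$ subsystems twisted by $w$ as in Lemma \ref{lem:Bn-q=3}, giving $\SU_3(3)$, and type D or F via subracks of split classes from \cite{ACG-VII}.
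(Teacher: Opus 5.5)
\paragraph{What works.} Your argument for $l\ge 3$, and for $l=2$ with $q>3$, is correct and shorter than the paper's. Since $w$ acts as $-\id$ on $\Phi_{J_w}$, Lemma \ref{lem:index} applied to $\Phi_{\Me}$ of type $B_{l-1}$ (index $2l$) or $A_1$ (index $4$) gives \ref{W} at once, with no coefficient bookkeeping. There are two harmless slips. First, $(B_2,3)$ is not excluded in \eqref{eq:not-list}, because $\Sp_4(3)$ is perfect. Second, the long $A_2$ inside $B_3$ gives $\SU_3(3)$ rather than $\SL_3(3)$, since $-\id$ is outer on $A_2$.

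\paragraph{The gap.} The remaining cases are $l=1$ for every odd $q$, and $l=2$ with $q=3$. This is exactly where the lemma has content, since its exceptions live there, and your proposal only says the bookkeeping ``should force'' \eqref{eq:l1} or \eqref{eq:l2}. The tools you name cannot do this.
\begin{itemize}
\item For $l=1$, $\Cent_{\langle J_w\rangle}(w)=\langle s_n\rangle$ has order $2=|\bZ|$. So no $\Me\subseteq\Le_{J_w}$ can satisfy the hypothesis of Lemma \ref{lem:index}.
\item An $A_1$ Levi on $\alpha_j$ is useless for $q=3$. For $q>3$ it needs $j<n-l$ (the set $\{\pm\alpha_{n-l}\}$ is not $w$-stable, as $(\alpha_{n-l},\gamma_l)\neq0$) and $\alpha_j(t)\neq1$. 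That leaves, for example, every $t$ with $\alpha_j(t)=1$ for all $j<n-l$ untreated.
\item The $\SU_3(3)$ device of Lemma \ref{lem:Bn-q=3} is unavailable. For $l\le2$, $w$ sends each root to itself, to its negative, or to a root orthogonal to it, so it interchanges the simple roots of no $A_2$ subsystem.
\item The induction through Remark \ref{obs:zeta=1} is not made precise. You would have to rule out that the factors land again in excluded configurations.
\end{itemize}

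\paragraph{The missing idea.} Take $\Phi_{\Me}$ of type $B_2$ with base $\{\alpha_{n-1},\alpha_n\}$. Then $M=\Sp_4(q)$ is perfect for every odd $q$, including $q=3$, and $w$ acts on $\Phi_{\Me}$ as an element of $W_{\Me}$. Condition \ref{R} follows from $\alpha_n(t)\neq1$ and Lemma \ref{lem:non-centrality}, even though $\Me\not\subseteq\Le_{J_w}$ when $l=1$.

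Test \ref{W1} with $\sigma=w_0$ and with $\sigma=s_j$, where $j\le n-2$ and $j\neq n-l$. These elements centralize $w$, lie outside $W_{\Me}$, and keep $\alpha_n(\sigma\cdot t)\neq1$. The coefficients of $\alpha_i^\vee$ for $i\le n-2$ are unaffected by $W_{\Me}$ and by $\bZ=\langle\alpha_n^\vee(-1)\rangle$ (Remark \ref{obs:coefficient}). So failure of \ref{W} forces:
\begin{itemize}
\item $\zeta_i^2=1$ for $i\le n-2$;
\item $\zeta_2=1$ if $l\neq n-1$;
\item $\zeta_{j+1}=\zeta_{j-1}$ for the admissible $j\ge2$.
\end{itemize}
For $l\le2$ this says $t$ is as in \eqref{eq:l1} or \eqref{eq:l2} with $-1$ replaced by some $\epsilon=\pm1$. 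If $\epsilon=1$, then $t$ is a non-split element of $\langle\U_{\pm\alpha_{n-1}},\U_{\pm\alpha_n}\rangle^{F_w}\simeq\Sp_4(q)$, whose centre is $\bZ$, and \cite[Theorem I]{ACG-VII} applies. The case $\epsilon=-1$ is the excluded one.
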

\begin{proof}We consider the root subsystem $\Phi_{\Me}$ with base $\{\alpha_{n-1},\alpha_n\}$. Then, $[\Me,\Me]$ is simple simply-connected of type $B_2$ so $[\Me,\Me]\simeq\Sp_4(\ku)$, whence $M=\Sp_4(q)=[M,M]$, giving \ref{S} by Lemma \ref{lem:perfect-S}. Since $\alpha_n(t)\neq1$, condition \ref{R} follows from Lemma \ref{lem:non-centrality}.

\medbreak

We turn to the condition \ref{W}.  In our situation, $w$ acts as $-\id$ on $\Phi_{\Me}$   so $\Cent_{W_{\Me}}(w)=W_{\Me}$. If $\zeta_i^2\neq 1$ for some $i\leq n-2$, then we invoke Remarks \ref{obs:w0} and  \ref{obs:coefficient}. 

\medbreak

We assume for the rest of the proof that $\zeta_i^2=1$ for all $i\leq n-2$. 

\medbreak

We observe that if $j\neq n-l$ and $j\leq n-2$, then $s_j\in \Cent_W(w)\setminus W_{\Me}$ and $\alpha_n(s_j\cdot t)=\alpha_n(t)\neq 1$. Lemma \ref{lem:non-centrality} applied  to $s_j\cdot t$ gives $s_j\cdot t\not\in \Cent_{\T}(M)$. 
Hence, if  $s_j\cdot t\not\in \bZ W_{\Me}\cdot t$ for some $j\in\I_{n-2}\setminus\{n-l\}$, then \ref{W} holds.

\medbreak

Assume on the contrary that  $s_j\cdot t\in \bZ W_{\Me}\cdot t$ for all $j\in\I_{n-2}\setminus\{n-l\}$. 
The coefficient of $\alpha_j^\vee$ in the expression of $s_j\cdot t$ is 
$\zeta_1^{-1}\zeta_2$ for $j=1$ and $\zeta_{j-1}\zeta_j^{-1}\zeta_{j+1}$ for $j\in\I_{2,n-2}$.
On the other hand, the coefficient of $\alpha_j^\vee$ in the expression of any element in $\bZ W_{\Me}\cdot t$ is $\zeta_j$ for all $j\leq n-2$.
Hence, we necessarily have 
\begin{align}\label{eq:bn-zeta}
&\zeta_2=1 & \textrm{ if }l\neq n-1,\\
\nonumber&\zeta_{j+1}=\zeta_{j-1} & \textrm{ for all }j\neq 1,\,n-l,\  j\leq n-2. 
\end{align}
Let $l\geq 3$. Then, \eqref{eq:bn-zeta} for $j=n-l+1$ gives $\zeta_{n-l}=\zeta_{n-l+2}$ if $n\neq l$ and $\zeta_2=1$ if $n=l$. In both cases,  $t\in \T^{F_{s_{\gamma_1}\cdots s_{\gamma_{l-2}}}}$, contradicting the minimality of $l$. Hence, condition \ref{W} holds for $l\geq 3$. 

\medbreak

Let $l\leq 2$. Then, \ref{W} has been verified unless either
$l=2$ and 
\begin{align*}t=\Big(\prod_{i=1}^{\left[\frac{n-1}{2}\right]}\alpha^\vee_{2i-1}(\epsilon)\Big)\alpha_{n-1}^\vee(\zeta_{n-1})\alpha_n^\vee(\zeta_n),&&\epsilon^2=1,&&\zeta_n^{q+1}=\epsilon^{n},&&\zeta_{n-1}^{q+1}=1,\end{align*} 
or $l=1$ and 
\begin{align*}t=\Big(\prod_{i=1}^{\left[\frac{n}{2}\right]}\alpha^\vee_{2i-1}(\epsilon)\Big)\alpha_n^\vee(\zeta_n),&&\epsilon^2=1,&&\zeta_n^{q+1}=\epsilon^{n-1}.\end{align*}
If $\epsilon=1$, then $t\in \langle \U_{\pm\alpha_n},\U_{\pm\alpha_{n-1}}\rangle^{F_{s_n}}\simeq \Sp_4(q)$ and
it does not lie in the split torus therein. In addition, $\bZ=Z(\Sp_4(q))$, hence, $\Oc$ is of type C, D, or F by \cite[Theorem I]{ACG-VII}. The case $\epsilon=-1$ is discarded by hypothesis. 
\end{proof}

\begin{obs}\label{obs:bn-q3-l2}If $q=3$ and $n$ is even, then \eqref{eq:l2} cannot occur.
Indeed, applying Lemma \ref{lem:Jw} to the roots: $\alpha_{n-1}+\alpha_n,\,\alpha_{n-1}+2\alpha_n,\, \alpha_{n-1}$, gives
\begin{align*}
&\zeta_{n-1}\neq 1,&\zeta_n^2=-1,&&\zeta_{n-1}^2\neq -1
\end{align*}
so $\zeta_{n-1}=-1$ whence $\alpha_n(t)=1$, a contradiction.  
\end{obs}

\begin{lema}\label{lem:Bn-l=1,2} Let
us assume that we are not in any of the following situations:
\begin{align}
\label{eq:bn-discarded-case1}
&\begin{aligned}
&n=2d+1,\, d\geq 1,&&q\equiv3\mod 4,\\
&t=\bigl(\prod_{i=1}^{d}\alpha^\vee_{2i-1}(-1)\bigr)\alpha_n^\vee(\zeta_n),&& \zeta_n^{2}=-1;
\end{aligned}
\\
\label{eq:bn-discarded-case2}
&\begin{aligned}
&n=2d+1,\, d\geq 1,&&q=3, \\
&t=\bigl(\prod_{i=1}^{d}\alpha^\vee_{2i-1}(-1)\bigr)\alpha_{n-1}^\vee(-\omega^2)\alpha_n^\vee(\omega), &&\F_9^\times=\langle\omega\rangle.
\end{aligned}
\end{align}
Then $\Oc$ is of type C, D or F.
\end{lema}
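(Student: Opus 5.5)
By Lemma \ref{lem:bn-lambda1=1-l=3}, the only classes still to handle are those in \eqref{eq:l1} (with $l=1$) and in \eqref{eq:l2} (with $l=2$). In both we have $\zeta_{2i-1}=-1$ and $\zeta_{2i}=1$ for $2i<n-l$. My plan is to go through these families and, for each, either produce type C, D or F, or land in \eqref{eq:bn-discarded-case1} or \eqref{eq:bn-discarded-case2}. The parity of $n$ is what decides how the alternating product of $\alpha_{2i-1}^\vee(-1)$ ends next to $\alpha_{n-l}$.

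\emph{Case $l=1$.} Here $w=s_n$, and the relations are $\zeta_n^{q+1}=\zeta_{n-1}=(-1)^{n-1}$ and $\zeta_n^{2}\ne\zeta_{n-1}$, the latter because $\alpha_n(t)\ne1$.

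If $n$ is even, then $\zeta_{n-1}=-1$ and $\zeta_{n-2}=1$. I would use the inductive Remark \ref{obs:zeta=1} with $j=n-2$. The complement $\Delta\setminus\{\alpha_{n-2}\}$ is $s_n$-stable, and it has the component $\{\alpha_{n-1},\alpha_n\}$ of type $B_2$. The projection of $t$ to the factor $\Sp_4(q)$ is $\alpha_{n-1}^\vee(-1)\alpha_n^\vee(\zeta_n)$, a non-split element. I would then cite \cite[Theorem I]{ACG-VII} to get type C, D or F. If the factor is one of the exceptional kthulhu classes of $\PSp_4$, I would instead change the choice of $j$ to reach a larger $B_m$-factor, and there apply \cite[Theorem 6.2]{ACG-VII} or the preceding lemmas.

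If $n=2d+1$, then $\zeta_{n-1}=1$ and $\zeta_n^{q+1}=1$. When $q\equiv3\bmod4$ and $\zeta_n^2=-1$, this is exactly \eqref{eq:bn-discarded-case1}. In every other situation I would run Theorem \ref{thm:potente-revised} with $\Me$ of type $B_2$ on $\{\alpha_{n-1},\alpha_n\}$, as in the previous lemma. For \ref{W} I would take $\sigma=w_0$, so that $t^{-1}\in\bZ W_{\Me}\cdot t$ has to fail. Comparing the coefficients of $\alpha_{n-1}^\vee$ and $\alpha_n^\vee$ forces $\zeta_n^4=1$, and from there $\zeta_n^2=-1$ together with $q\equiv3\bmod4$. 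So outside that excluded case the argument goes through.

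\emph{Case $l=2$.} Now $w=s_{\gamma_1}s_{\gamma_2}$ acts as $-\id$ on the $B_2$ spanned by $\alpha_{n-1},\alpha_n$. By Remark \ref{obs:bn-q3-l2}, when $n$ is even we may assume $q>3$. I would use the $W_{\Me}$-orbit of $t$ for $\Me$ of type $B_2$, and pick the candidate $\sigma=s_{n-2}$ or $\sigma=w_0$. The failure of \ref{W1} gives equations on $\zeta_{n-1}$ and $\zeta_n$. Together with the regularity conditions of Lemma \ref{lem:Jw} (for $\alpha_{n-1}$, $\alpha_n$, $\alpha_{n-1}+\alpha_n$, $\alpha_{n-1}+2\alpha_n$), I expect these equations to be solvable only when $n$ is odd, $q=3$, and $(\zeta_{n-1},\zeta_n)=(-\omega^2,\omega)$ with $\omega$ a generator of $\F_9^\times$. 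That is \eqref{eq:bn-discarded-case2}.

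The main obstacle is this last elimination. It requires a careful enumeration of small-$q$ solutions over $\F_{q^2}$. For $q=3$ some solutions may slip past the choice $\sigma=w_0$; in that case I would switch to a non-standard $\Me$ of type $\widetilde A_2$, as was done in Lemma \ref{lem:Bn-q=3}.
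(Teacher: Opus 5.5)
\textbf{Gap: the $B_2$ Levi cannot give \ref{W} here.} Your treatment of \eqref{eq:l1} with $n$ even (Remark \ref{obs:zeta=1} with $j=n-2$, then \cite[Theorem I]{ACG-VII} on the $\Sp_4(q)$-factor) is correct for every $q$. The paper does exactly this for $q=3$. Your fallback there is unnecessary: $e\notin\WO$ and $\zeta_n^{q+1}=-1$ force $\zeta_n^4\neq1$, so the $\Sp_4(q)$-component is non-split and its image in $\PSp_4(q)$ is not an involution.

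The rest of your plan uses the Levi $\Me$ of type $B_2$ on $\{\alpha_{n-1},\alpha_n\}$. That is the same subgroup used in Lemma \ref{lem:bn-lambda1=1-l=3}, and \eqref{eq:l1}, \eqref{eq:l2} are exactly the cases where it failed there. Concretely, set $z=\alpha_n^\vee(-1)$. Then $t^q=w\cdot t$, while $t^{q+1}=z^{n-1}$ in \eqref{eq:l1} and $t^{q+1}=z^{n}$ in \eqref{eq:l2}. Hence $w_0\cdot t=t^{-1}$ equals $z^{n-1}s_n\cdot t$, respectively $z^{n}w\cdot t$, which lies in $\bZ W_{\Me}\cdot t$ for every such $t$.

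So your claim that comparing coefficients forces $\zeta_n^4=1$ is false. For $l=2$, no equations on $\zeta_{n-1},\zeta_n$ arise at all. Your other candidate $s_{n-2}$ is not in $\Cent_W(w)$, since it moves $e_{n-1}$ to $e_{n-2}$.

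No choice of $\sigma$ rescues this $\Me$. In the standard coordinates of $B_n$ (with $\alpha_n=e_n$), $e_k(t)=-1$ for all $k\le n-l$. Therefore $\Cent_W(w)=W(B_{n-l})\times W(B_l)$ satisfies $\Cent_W(w)\cdot t\subseteq\bZ W_{\Me}\cdot t$. As a result, two families remain unproved in your proposal:
\begin{itemize}
\item \eqref{eq:l1} with $n$ odd outside \eqref{eq:bn-discarded-case1}, for instance every $q\equiv1\bmod4$;
\item \eqref{eq:l2} with $q>3$.
\end{itemize}
For $n$ even in the second family your own $\Sp_4$-reduction would work, since $\zeta_{n-2}=1$ there too. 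The $\widetilde A_2$ fallback is not developed and would cover only $q=3$.

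\textbf{Missing idea: a Levi that sees the entries $-1$.} The paper takes the non-standard $\Me=\langle\T,\U_{\pm\gamma}\rangle$ with $\gamma=\alpha_1+\cdots+\alpha_n=e_1$. This subgroup is $w$-stable, satisfies $\gamma(t)=-1$, and for $q>3$ gives $M\simeq\SL_2(q)$, so \ref{S} and \ref{R} hold. Since $s_\gamma\cdot t=t\alpha_n^\vee(-1)$, the set to avoid shrinks to $\bZ\Cent_{W_{\Me}}(w)\cdot t=\bZ t$. Then $\sigma=w_0$ (Remark \ref{obs:w0}) works unless $t^2\in\bZ$, that is, unless $(\zeta_{n-1}^2,\zeta_n^2)=(1,\pm1)$. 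In that case $e\notin\WO$ forces $\zeta_n^2=-1$, $q\equiv3\bmod4$ and $\zeta_n^{q+1}=1$. This outcome:
\begin{itemize}
\item is \eqref{eq:bn-discarded-case1} for \eqref{eq:l1} with $n$ odd;
\item contradicts $\zeta_n^{q+1}=-1$ for \eqref{eq:l1} with $n$ even and for \eqref{eq:l2} with $n$ odd;
\item contradicts Lemma \ref{lem:Jw} for \eqref{eq:l2} with $n$ even.
\end{itemize}

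For $q=3$ no new subgroup is needed:
\begin{itemize}
\item \eqref{eq:l1} with $n$ odd is automatically \eqref{eq:bn-discarded-case1}, since $\zeta_n^4=1\neq\zeta_n^2$;
\item \eqref{eq:l2} with $n$ even does not occur, by Remark \ref{obs:bn-q3-l2};
\item \eqref{eq:l2} with $n$ odd reduces to \eqref{eq:bn-discarded-case2} via Lemma \ref{lem:Jw};
\item \eqref{eq:l1} with $n$ even is your $\PSp_4(3)$ reduction.
\end{itemize}
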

\begin{proof}
It is enough to consider $t$ as in \eqref{eq:l1} or \eqref{eq:l2}, so either $w=s_n$ or $w=s_ns_{\alpha_{n-1}+\alpha_n}$. 

\medbreak
Assume first that $q>3$.  Let $\gamma=\alpha_1+\cdots+\alpha_n$. We consider the $F_w$-stable non-standard Levi subgroup $\Me\coloneqq \langle \T, \U_{\pm\gamma}\rangle$. Then  $[\Me,\Me]\simeq\SL_2(\ku)$ and
\begin{align*}M\coloneqq [\Me,\Me]^{F_{s_n}}\simeq[\Me,\Me]^{F_{s_ns_{\gamma_2}}}\simeq[\Me,\Me]^{Fr_q}\simeq \SL_2(q).\end{align*} Then, $M\simeq [M,M]$ and Lemma \ref{lem:perfect-S} gives \ref{S}. In addition,  $\gamma_n(t)=-1$, whence $\Cent_{\Me}(t)=\T$, so \ref{R} holds. 

\medbreak
Now, $W_{\Me}=\langle s_{\gamma}\rangle=\Cent_{W_{\Me}}(w)$ and $s_{\gamma}\cdot t=\gamma^\vee(-1)t=t\alpha_n^\vee(-1)$, so $\bZ \Cent_{W_{\Me}}(w)\cdot t=\bZ t$. Remark \ref{obs:w0} applies unless $t^2=1$ or $t^2=z$. Hence 
\ref{W} holds unless $(\zeta^2_{n-1},\zeta_n^2)=(1,\pm1)$. 
We analyze when this can happen.

\medbreak
The condition $e\not\in\WO$ forces $(\zeta^2_{n-1},\zeta_n^2)=(1,-1)$ and $q\equiv3\mod4$, so also $\zeta_n^{q+1}=1$. Thus  this situation can only occur if we are in \eqref{eq:l1} and $n$ is odd or in \eqref{eq:l2} and $n$ is even. 
The first scenario is the discarded case \eqref{eq:bn-discarded-case1}. 

\medbreak
In the  second scenario the equality $(\zeta^2_{n-1},\zeta_n^2)=(1,-1)$ cannot hold  because Lemma \ref{lem:Jw} gives  
\begin{align*}
1 &\neq \alpha_n(t)=\zeta_n^{2}\zeta_{n-1}^{-1}=-\zeta_{n-1}& &\text{and}&
1 &\neq(\alpha_{n-1}+\alpha_n)(t)=\zeta_{n-2}^{-1}\zeta_{n-1}=\zeta_{n-1},
\end{align*}
which implies that $\zeta_{n-1}^2\neq1$.

\medbreak

In the rest of the proof we assume that $q=3$.  
By Remark \ref{obs:bn-q3-l2} the case \eqref{eq:l2} occurs only if $n$ is odd. 
Then $\zeta_n^2\neq\zeta_{n-1}$ by Lemma \ref{lem:Jw} 
so this is the discarded case \eqref{eq:bn-discarded-case2}.

The case \eqref{eq:l1} with $n=2d+1$ turns out to be the discarded case \eqref{eq:bn-discarded-case1}, since $\zeta_n^{q+1}=\zeta_n^4=1$ and $\zeta_n^2\neq 1$ because $e\not\in\WO$.

\medbreak

Assume we are in \eqref{eq:l1} with $n=2d$. Then $t$ lies in the subgroup $\langle \prod_{i=1}^{d-1}\alpha^\vee_{2i-1}(-1)\rangle \times\langle\U_{\pm\alpha_{n-1}},\U_{\pm\alpha_n}\rangle\simeq \mathbb Z/2\mathbb Z\times \Sp_4(\ku)$, a product of $s_n$-stable subgroups.  Then the class of $\pi(t)$ contains a subrack isomorphic to the class of $t'\coloneqq \pi(\alpha^\vee_{n-1}(-1)\alpha^\vee_n(\zeta_n))$ in $\PSp_4(q)$. Since $\zeta_n^4=-1$, the element $\pi(t')\in\PSp_4(q)$ is not an involution, hence its class is of type C, D or F by \cite[Theorem I]{ACG-VII}. We conclude by Remark \ref{obs:zeta=1}.
\end{proof}

\medbreak

Combining the results in this subsection with those in \cite{ACG-VII} we obtain the following result for semisimple conjugacy classes with arbitrary $\WO$.
\begin{theorem}\label{thm:bn-typeC}
Let  $\Oc=\pi(\mathfrak O)$ be a non-trivial semisimple conjugacy class in $\Pom_{2n+1}(q)$. Assume $t\in\T^{F_w}$ with $w$ minimal and that we are not in one of these situations: \eqref{eq:bn-discarded-case1} with $w=s_n$, or \eqref{eq:bn-discarded-case2} with $w=s_ns_{\alpha_{n-1}+\alpha_n}$, nor
\begin{align}
\label{eq:bn-w=1-odd}w&=e,\, n=2d+1,\, d\geq 1, &q=5,\\
\nonumber t&= \bigl(\prod_{i=1}^{d} \alpha^{\vee}_{2i-1} (-1)\bigl) \alpha^{\vee}_{n} (\omega), &\F_5^\times=\langle\omega\rangle,\\
\label{eq:bn-w=1-even}w&=e,\, n=2d,\, d\geq 1, &q=3,5,7,\\
\nonumber t&= \bigl(\prod_{i=1}^{d} \alpha^{\vee}_{2i-1} (-1)\bigl) \alpha^{\vee}_{n} (\zeta), &\zeta= \pm1.
\end{align}
Then, $\mathfrak O$ and $\Oc$ are of type C, D or F. 
\end{theorem}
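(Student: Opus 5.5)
The proof is a case split according to the shape of $J_w$ for a minimal $w\in\WO$. Each case is settled either by a lemma of this subsection or by \cite{ACG-VII}. Let $t\in\T^{F_w}$ represent $\mathfrak O$ with $w$ minimal, and set $J\coloneqq J_w$, so that $w$ is cuspidal in $\langle J\rangle$. Since type C, D or F passes to covers, it suffices to prove the claim for $\Oc=\pi(\mathfrak O)$; the statement for $\mathfrak O$ then follows because $\mathfrak O\twoheadrightarrow\Oc$ is a surjective morphism of racks.

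\emph{Step 1: classes meeting $W_{A_{n-1}}$.} Suppose $\WO\cap W_{A_{n-1}}\neq\emptyset$; this includes $w=e$, the split case. Here I would invoke \cite[Theorem 6.2]{ACG-VII} together with \cite[Theorem 4.1]{ACG-VII}. These give type C, D or F except for the open cases with $w=e$ and $q\in\{3,5,7\}$. I expect those open cases to be exactly \eqref{eq:bn-w=1-odd} and \eqref{eq:bn-w=1-even}, matching the first two lines of Table \ref{tab:ss-open-POM-odd}. This identification is the step I expect to be the main obstacle: the list of exceptions in \cite{ACG-VII} has to be translated into the present coordinates $t=\prod\alpha_j^\vee(\zeta_j)$, up to $W$-conjugacy and multiplication by $\bZ=\langle z\rangle$. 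Afterwards we may assume $\WO\cap W_{A_{n-1}}=\emptyset$, so $e\notin\WO$ and $\Delta_J$ contains $\alpha_n$ or a conjugate of it.

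\emph{Step 2: $\Phi_J$ reducible.} The component through $\alpha_n$ is of type $B_j$ or $\widetilde A_1$, and there is at least one other component. I would run through the possibilities using Lemma \ref{lem:Bn-not-irreducible}. Two components of rank at least $2$ are covered by \ref{item:Bn-rank2-2}. A rank-one component plus rank at least $2$ elsewhere is covered by \ref{item:Bn-two-componentsq>3} when $q>3$. When $q=3$, the remaining configurations are $A_1+B_j$, handled by \ref{item:Bn-2componentsA1Bj}, and $A_d+\widetilde A_1$, handled by Lemma \ref{lem:Bn-Ad+A1} (for $d$ even, also by \ref{item:Bn-Al-even}). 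Finally $A_1+\widetilde A_1$ is handled by \ref{item:Bn-2componentsA1-q>3} when $q>3$ and by Lemma \ref{lem:Bn-q=3} when $q=3$. I would check that these cases exhaust all reducible $\Phi_J$ with a component through $\alpha_n$. If a configuration such as $A_1+A_1+\widetilde A_1$ with $q=3$ slips through, I would fall back on \ref{item:Bn-rank2-2}, reading the rank of the complement as the total rank of the remaining components.

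\emph{Step 3: $\Phi_J$ of type $B_l$.} Then $w=w_\lambda$ for a partition $\lambda$ of $l$, as in \S\ref{sec:cuspidal-classical}. If $\lambda_1\ge 3$ is odd, apply Lemma \ref{lem:Bn-lambda1>2}; if $\lambda_1$ is even, apply Lemma \ref{lem:Bn-lambda1=2d}. Otherwise all parts equal $1$, and Lemma \ref{lem:bn-lambda1=1-l=3} gives type C, D or F unless \eqref{eq:l1} or \eqref{eq:l2} holds. In those two situations Lemma \ref{lem:Bn-l=1,2} concludes, except in the excluded configurations \eqref{eq:bn-discarded-case1} with $w=s_n$ and \eqref{eq:bn-discarded-case2} with $w=s_ns_{\alpha_{n-1}+\alpha_n}$. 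Since minimality of $w$ was used throughout, the theorem then follows once the exceptional list of Step 1 has been matched.
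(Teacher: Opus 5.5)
Your case split is the paper's own: first dispose of $\WO\cap W_{A_{n-1}}\neq\emptyset$ by \cite[Theorems 4.1, 6.2]{ACG-VII}, then apply Lemmata \ref{lem:Bn-not-irreducible}, \ref{lem:Bn-Ad+A1}, \ref{lem:Bn-q=3}, \ref{lem:Bn-lambda1>2}, \ref{lem:Bn-lambda1=2d}, \ref{lem:bn-lambda1=1-l=3} and \ref{lem:Bn-l=1,2} according to the shape of $J_w$. However, the patch you propose in Step 2 does not work.

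Take $q=3$ and $\Phi_{J_w}$ of type $rA_1+\widetilde A_1$ with $r\geq 2$. This needs $n\geq 2r+1$, and it does occur with $w$ minimal. For example, take $n=5$, $w=s_1s_3s_5$ and $t=\alpha_1^\vee(\eta)\alpha_3^\vee(\eta)\alpha_5^\vee(\eta)$ with $\eta^2=-1$.
\begin{itemize}
\item Item \ref{item:Bn-rank2-2} is unavailable, because it needs a \emph{component} of rank at least $2$ and $rA_1+\widetilde A_1$ has none. The rank of the complement is not the issue.
\item A rank-one component cannot serve as $J_1$ in Lemma \ref{lem:levi}: $(A_1,3)$ is excluded in \eqref{eq:not-list}, since $\SL_2(3)$ is not perfect and \ref{S} fails.
\item The other lemmas do not apply either. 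Items \ref{item:Bn-two-componentsq>3} and \ref{item:Bn-2componentsA1-q>3} need $q>3$. Lemma \ref{lem:Bn-Ad+A1} needs a component $A_d$ with $d\geq 2$. Lemma \ref{lem:Bn-q=3} is stated for $A_1+\widetilde A_1$ only.
\end{itemize}
The paper's citation list does not literally cover this configuration either, so you must supply an argument.

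What works is the proof of Lemma \ref{lem:Bn-q=3}, which goes through verbatim. Conjugate so that one $A_1$-component is $\{\alpha_1\}$. The other reflections $s_i\in J_w$, $3\leq i\leq n-2$, fix both $\alpha=\alpha_1+\cdots+\alpha_{n-1}$ and $\beta=\alpha+2\alpha_n-\alpha_1$. Hence $w$ still swaps $\alpha$ and $\beta$, so $M\simeq\SU_3(3)$ and $\Cent_{W_{\Me}}(w)=\langle s_0\rangle$. The checks of \ref{R} and \ref{W} there use only $\zeta_1^{q+1}=\zeta_2$, $\alpha_1(t)\neq 1$ and $s_1\in\Cent_W(w)$, and all three persist.

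There are two smaller omissions.
\begin{itemize}
\item The lemmas of this subsection assume $n\geq 3$, while the theorem includes $n=2$ (take $d=1$ in \eqref{eq:bn-w=1-even}). The paper settles $n=2$ directly by \cite[Theorem I]{ACG-VII}, via $\Pom_5(q)\simeq\PSp_4(q)$.
\item The identification you leave open in Step 1 is resolved in the paper by a one-line observation. For $n$ odd, the exceptional split element of \cite[Equation (4.2)]{ACG-VII} requires $q\equiv 1\bmod 4$, so among $q\in\{3,5,7\}$ only $q=5$ survives. This yields exactly \eqref{eq:bn-w=1-odd} and \eqref{eq:bn-w=1-even}, as you expected.
\end{itemize}
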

\begin{proof}If $n=2$ then the statement is \cite[Theorem I]{ACG-VII}. Let $n\geq 3$. If $e\in\WO$, we invoke \cite[Theorem 4.1]{ACG-VII}. Note that if $n$ is odd, the element in \cite[Equation (4.2)]{ACG-VII} occurs only if $q\equiv1\mod 4$, so only for $q=5$. If $e\not\in\WO$ the statement follows from Lemmata \ref{lem:Bn-not-irreducible},  \ref{lem:Bn-Ad+A1}, \ref{lem:Bn-q=3}, \ref{lem:Bn-lambda1>2}, \ref{lem:Bn-lambda1=2d}, \ref{lem:bn-lambda1=1-l=3} and \ref{lem:Bn-l=1,2}.
\end{proof}

\begin{obs}
If $t$ is as in \eqref{eq:bn-discarded-case1}, \eqref{eq:bn-w=1-odd} and \eqref{eq:bn-w=1-even} then $\pi(t)$ is the diagonal matrix ${\diag}(-\id_n,1,-\id_n)\in \pi(\T)^F$. If $n$ is even or if $n$ is odd and $q\equiv1\mod 4$, then $\pi(t)\in\pi(\T^F)$. If $t$ is as in  \eqref{eq:bn-discarded-case2}, then 
$\pi(t)={\diag}(-\id_{n-2},\omega^2,\id_3,\omega^{-2},-\id_{n-2})$. 
\end{obs}

\subsection{Semisimple classes in \texorpdfstring{$\Pom^+_{2n}(q)$}{}} \label{subsec:Pom-par}

\

In this Subsection, we assume that  $\G$ is of type $D_n$ for $n\geq 4$.
For recursive arguments, we will use the usual convention that $D_2=2A_1$ and  $D_3=A_3$. The center of $\G^F$ is described in Table \ref{tab:center}. We will need the following equalities
\begin{align}\label{eq:dn-formulas}
\begin{aligned}
 w_0\cdot t &= \begin{cases} t^{-1}& \textrm{ if $n$ is even,}\\
 \left(\prod_{j=1}^{n-2}\alpha_j^\vee(\zeta^{-1}_j)\right)\alpha_{n-1}^\vee(\zeta^{-1}_n)\alpha_n^\vee(\zeta^{-1}_{n-1})& \textrm{ if $n$ is odd.}\end{cases}\\
 \alpha_0 &= \alpha_1+2\alpha_2+\cdots+2\alpha_{n-2}+\alpha_{n-1}+\alpha_n,\\
 s_0\cdot 
t&=\alpha_1^\vee(\zeta_1\zeta_2^{-1})\left(\prod_{i=2}^{n-2}\alpha_i^\vee(\zeta_i\zeta_2^{-2})\right)\alpha_{n-1}^\vee(\zeta_{n-1}\zeta_2^{-1})\alpha_n^\vee(\zeta_n\zeta_{2}^{-1}). 
\end{aligned}
\end{align}

The cases in which $\alpha_n\not\in J_w$ are covered by \cite[Theorem 6.2]{ACG-VII}. If $\alpha_{n-1}\not\in J_w$, applying  the outer automorphism of $\G$ interchanging $\alpha_n$ and $\alpha_{n-1}$, we can reduce to the case $\alpha_n\not\in J_w$. Hence, we may assume that $\alpha_{n-1},\alpha_{n}\in J_w$. Then, $J_w$ is always $w_0$-stable and Lemma \ref{lem:Jw} gives 
\begin{align*}
\alpha_j(w_0\cdot t)&=(w_0(\alpha_j))(t)\neq 1 &&\text{if} & s_j&\in J_w.
\end{align*}

\medbreak

We denote by $J_n$ the component of $J_w$ containing $s_n$. 
We verify the hypotheses of Theorem \ref{thm:potente-revised}, 
analyzing separate cases according to the size or type of $J_n$. 
Recall that the factor of $w$ in each irreducible component of 
$\langle J_w\rangle$ is cuspidal therein by construction. 
In a component of type $A_l$, the factor is conjugate to a Coxeter element. 
In a component of type $D_l$  the factor is described 
in Subsection \ref{sec:cuspidal-classical}.

\begin{lema}\label{lem:dn-Jn-geq3}
If $|J_n|\geq 3$, then $\Oc$ is of type C. 
\end{lema}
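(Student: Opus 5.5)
Since $\alpha_{n-1},\alpha_n\in J_w$, the component $J_n$ is connected and contains $s_{n-1}$ and $s_n$. When $|J_n|\geq 3$ it also contains $s_{n-2}$, so $J_n$ is of type $D_d$ for some $d\geq 3$, where $D_3=A_3$ by our convention. The plan is to split according to $d$.

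\textbf{Case $d\geq 4$.} Here $J_w$ has a component of type $D_d$ with $d\geq 4$. Moreover $w\neq e$, because $e\notin\WO$, and $w$ is minimal. So Lemma~\ref{lem:Jw-contains-D} applies directly and $\Oc$ is of type C.

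\textbf{Case $d=3$.} Then $\Delta_{J_n}=\{\alpha_{n-2},\alpha_{n-1},\alpha_n\}$ is of type $A_3$, and the factor $w_{J_n}$ of $w$ in $\langle J_n\rangle\simeq\mathbb S_4$ is a Coxeter element, i.e.\ a $4$-cycle. I would try the following two choices of $\Me$.

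\emph{First choice: $\Me=\Le_{J_n}$.} Then $[\Me,\Me]\simeq\SL_4(\ku)$ is simply connected. Condition \ref{S} holds by Lemma~\ref{lem:perfect-S}, since $w$ acts on $\Phi_{J_n}$ by an element of $W_{\Me}$ and $(A_3,q)$ is not excluded. Condition \ref{R} holds by Lemma~\ref{lem:Jw}. We have $\Cent_{W_{\Me}}(w)=\langle w_{J_n}\rangle$, of order $4$.

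For \ref{W} I would use Remark~\ref{obs:w0}\ref{item:w0-uno} when $n$ is even, because then $w_0$ acts as $-\id$. Suppose
\[
t^{-1}=z\,w_{J_n}^a\cdot t,\qquad z\in\bZ .
\]
Since $\bZ$ has exponent at most $2$, this gives $w_{J_n}^{2a}\cdot t=t$. Regularity of $t$ in $\Le_{J_w}$ then forces $a\in\{0,2\}$. If $a=0$, Remark~\ref{obs:w0}\ref{item:w0-due} gives $w^2=e$, which is impossible since $w_{J_n}$ is a $4$-cycle. If $a=2$, then $w_{J_n}^2$ is the longest element of $\langle J_n\rangle$, acting as $-\id$ on $\Phi_{J_n}$. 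Comparing the coefficients of $\alpha_{n-2}^\vee,\alpha_{n-1}^\vee,\alpha_n^\vee$ on both sides, as in the $D_d$ subcase of Lemma~\ref{lem:Jw-contains-D}, should force some root of $\Phi_{J_n}$ to take the value $1$ on $t$. This contradicts Lemma~\ref{lem:Jw}.

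When $n$ is odd, $w_0$ is not $-\id$. Instead I would use $\sigma=w_0$ with the explicit formula \eqref{eq:dn-formulas}, or $\sigma=s_0$ if $\alpha_2\notin J_w$. Remark~\ref{obs:coefficient} then handles the coefficients outside $\Delta_{J_n}$. For $z$ with nontrivial coefficient at $\alpha_{n-1}^\vee$ or $\alpha_n^\vee$, the comparison must be done by hand.

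\emph{Second choice: reducible $J_w$.} If $J_w\supsetneq J_n$, another option is Lemma~\ref{lem:levi} with $J_1=J_n$ and $J_2=J_w\setminus J_n$. This requires $|\Cent_{\langle J_2\rangle}(w)|>|\bZ|$, where $|\bZ|\leq 4$. That condition holds once the other components are large enough. When they are not, we fall back on the $w_0$ argument above.

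\textbf{Main obstacle.} I expect the hardest part to be the case $d=3$ with $J_w=J_n$, i.e.\ $w$ a Coxeter element of the $A_3$ at the end of the diagram, with $q$ odd. There $\bZ$ can have order $4$, and the coincidence $w_0\cdot t\in\bZ\langle w_{J_n}\rangle\cdot t$ must be excluded by explicit computation in the coordinates $\zeta_{n-3},\dots,\zeta_n$. If that computation leaves genuine exceptions, I would try a second candidate $\sigma\in\Cent_W(w)$, such as $s_j$ with $j\leq n-4$, which commutes with $w$. Arguing as in Lemma~\ref{lem:bn-lambda1=1-l=3}, this should produce a relation showing that $t$ lies in a torus attached to a smaller parabolic, contradicting the minimality of $w$.
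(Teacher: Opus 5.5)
Your case $d\geq 4$ coincides with the paper's argument (Lemma~\ref{lem:Jw-contains-D}).

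The gap is in the case $d=3$. Choosing $\Me=\Le_{J_n}$ makes $\Cent_{W_{\Me}}(w)=\langle w_{J_n}\rangle$ too large, and your treatment of $a=2$ rests on a false claim. For $w_{J_n}=s_ns_{n-2}s_{n-1}$ one has $w_{J_n}^2=s_{\alpha_{n-2}+\alpha_{n-1}}s_{\alpha_{n-2}+\alpha_n}$. This is a product of two reflections in orthogonal roots, so it fixes a line in $\mathbb R\Phi_{J_n}$. In fact no element of $W(A_3)\simeq\mathbb S_4$ acts as $-\id$, not even the longest one. Evaluated on the roots of $\Phi_{J_n}$, the equation $t^{-1}=z\,w_{J_n}^2\cdot t$ only says $\alpha_{n-1}(t)\alpha_n(t)=1$. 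Since $\alpha_{n-1}+\alpha_n$ is not a root of $D_n$, nothing here contradicts Lemma~\ref{lem:Jw}.

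The coincidence really occurs. Take $n=4$, $q$ odd, $J_w=J_n=\{s_2,s_3,s_4\}$ and $w=s_4s_2s_3$. Then $\Cent_W(w)=\langle w,w_0\rangle$ has order $8$. So with your $\Me$, condition \ref{W} holds if and only if $w_0\cdot t\notin\bZ\langle w\rangle\cdot t$. Use orthogonal coordinates, with $\alpha_i=\epsilon_i-\epsilon_{i+1}$ for $i\leq 3$ and $\alpha_4=\epsilon_3+\epsilon_4$. Take $t\in\T^{F_w}$ with $\epsilon_1(t)=\epsilon_3(t)=-1$ and $\epsilon_2(t)=\xi$, a generator of $\mu_{q^2+1}$. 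The condition $F_w(t)=t$ then forces $\epsilon_4(t)\in\{\xi^{q},\xi^{-q}\}$ and the value on the spin weight up to sign, and such $t$ exist. One checks three things: $t$ is regular in $\Le_{J_w}$, $w$ is minimal in $\WO$, and $w_0\cdot t=t^{-1}=w^2\cdot t$. Hence Theorem~\ref{thm:potente-revised} cannot be applied with $\Le_{J_n}$ in this case.

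Your fallbacks are not available here either. $J_w=J_n$ rules out Lemma~\ref{lem:levi}, and there is no $s_j$ with $j\leq n-4$. The case $n$ odd is also left as an unperformed computation.

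The missing idea is to shrink $\Me$ so that $w_{J_n}$ itself falls outside $\Cent_{W_{\Me}}(w)$. The paper takes the non-standard Levi subgroup with $\Phi_{\Me}$ of type $2A_1$ and base $\{\alpha_{n-2}+\alpha_{n-1},\alpha_{n-2}+\alpha_n\}$, whose two components are swapped by $w$.
\begin{itemize}
\item Since $M\simeq\SL_2(q^2)$, condition \ref{S} holds.
\item Lemma~\ref{lem:Jw} gives \ref{R}.
\item $\Cent_{W_{\Me}}(w)=\langle w_{J_n}^2\rangle$, so $\sigma=w_{J_n}$ can be tested.
\end{itemize}
Suppose $w_{J_n}\cdot t=z\,w_{J_n}^{2a}\cdot t$. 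Regularity of $t$ in $\Le_{J_w}$ excludes $z=e$. Since $w_{J_n}\in\langle J_n\rangle$ and $n\geq 4$, comparing coefficients of $\alpha_1^\vee$ forces $z=\alpha_{n-1}^\vee(-1)\alpha_n^\vee(-1)$, so $z^2=e$. Applying $w_{J_n}$ once more gives $w_{J_n}^2\cdot t=t$, a contradiction. Condition \ref{W2} again follows from Lemma~\ref{lem:Jw}. This works for every $n$ and every $J_w\supseteq J_n$, with no parity split and no use of $w_0$.
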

\begin{proof}
If $|J_n|\geq 4$, then $J_n$ is of type $D_{|J_n|}$ and the statement is Lemma \ref{lem:Jw-contains-D}. 

\medbreak
Thus, we suppose that $J_n=\{\alpha_{n-2},\alpha_{n-1},\alpha_n\}$ is of type $A_3$. 
Without loss of generality, we can assume that $w$ is the product of the Coxeter element 
$\tau\coloneqq s_ns_{n-2}s_{n-1}\in\langle J_n\rangle$, of order $4$, with other elements
that commute with it and are in the other, possibly empty, components of $\langle J_w\rangle$. 

\medbreak
Consider the non-standard Levi subgroup  $\Me$ associated with the $w$-stable root system with base $\{\alpha_{n-2}+\alpha_{n-1}, \alpha_{n-2}+\alpha_n\}$, of type $2A_1$. Since $w$ interchanges the two components,  $M=[M,M]\simeq \SL_2(q^2)$; this gives \ref{S}. Condition \ref{R} follows from Lemma \ref{lem:Jw} and the inclusion $\Phi_{\Me}\subseteq \Phi_{J_w}$. 

\medbreak
Also, $W_{\Me}=\langle s_{\alpha_{n-2}+\alpha_{n-1}}s_{\alpha_{n-2}+\alpha_n}\rangle=\langle \tau^2\rangle$.
Assume that $\tau\cdot t=z\tau^{2a}\cdot t$ for some $z\in \bZ$ and some $a\in \I_{0,3}$. By regularity of $t$ in $\Le_{J_w}$ we exclude the possibility $z=e$. Also, comparing  the coefficients of $\alpha_1^\vee$ in the equality, the only admissible possibility for $z$ is $\alpha_{n-1}^\vee(-1)\alpha_n^\vee(-1)$. But then
\[\tau^2\cdot t=z^2\tau^{4a}\cdot t=t\]
which contradicts the regularity of $t$.  The non-centrality condition \ref{W2} follows from Lemma \ref{lem:Jw} because $\Phi_w$ is $\tau$-stable.
\end{proof}

\begin{lema}\label{lem:dn-A}If 
$\Delta_{J_w}$ has a component $\Delta_{J}$ of rank $l\geq 2$ with $J\neq J_n$, then $\Oc$ is of type C. 
\end{lema}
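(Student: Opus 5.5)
The goal is to reduce Lemma \ref{lem:dn-A} to Lemma \ref{lem:levi}, possibly after disposing of some small cases by other earlier lemmas. Recall that $\alpha_{n-1},\alpha_n\in J_w$, so $J_n$ is a nonempty component of $J_w$. Let $J$ be a component with $J\neq J_n$ and rank $l\geq 2$. Since $J$ does not contain $s_n$, it is a string $\{s_i,\ldots,s_{i+l-1}\}$ with $i+l-1\leq n-3$ (there must be a gap before $J_n$), so $\Phi_J$ is of type $A_l$. We take $J_1\coloneqq J$ and $J_2\coloneqq J_n$, and set $J\cup J_n\subsetneq S$. Both pieces are $w$-stable because $w$ factors through the components of $\langle J_w\rangle$, and they are orthogonal. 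Condition \ref{item:reg} of Lemma \ref{lem:levi} holds by Lemma \ref{lem:Jw}, since $\Phi_{J\cup J_n}\subseteq\Phi_{J_w}$. Condition \ref{item:not-list} holds because $w$ acts on $\Phi_J$ as an element of $W_J$ (a Coxeter element), $l\geq 2$, and $A_l\notin\{A_1,B_2\}$. The only possible issue is $(A_2,2)$, which \eqref{eq:not-list} allows when $w$ acts inside $W_{\Me}$.

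The remaining hypothesis of Lemma \ref{lem:levi} is $|\bZ|<|\Cent_{\langle J_n\rangle}(w)|$. From Table \ref{tab:center} we have $|\bZ|\leq 4$. If $J_n$ is of type $D_d$ with $d\geq 4$, we do not need this at all: Lemma \ref{lem:Jw-contains-D} gives type C directly. Likewise, if $|J_n|=3$, Lemma \ref{lem:dn-Jn-geq3} applies. So only two cases remain.

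In the first, $J_n=\{s_{n-1},s_n\}$ is of type $2A_1$. Here the factor of $w$ is $s_{n-1}s_n$, whose centralizer in $\langle J_n\rangle\simeq(\mathbb Z/2)^2$ has order $4$. This is not strictly larger than $|\bZ|=4$ when $n$ is even and $q$ is odd. I would therefore swap roles: take $J_2=J_n\cup(\text{other components})$. Alternatively, use the centralizer of $w$ inside $\langle J\rangle$ together with a component $\{s_{n-1}\}$ as $J_1$ when $q>3$. The cleanest option is to keep $J_1=J$ and enlarge $J_2$ to all of $J_w\setminus J$. Then $|\Cent_{\langle J_2\rangle}(w)|\geq 4$, and we still need strictness. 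If it fails, I would instead argue directly via Remark \ref{obs:coefficient}. Since $J\cup J_n\subsetneq S$ leaves some $\alpha_\beta$ outside, any $z\in\bZ$ with a nontrivial coefficient on that $\beta^\vee$ is excluded. In type $D_n$ the nontrivial central elements have nontrivial coefficients at $\alpha_{n-1}^\vee,\alpha_n^\vee$, and often at odd-indexed $\alpha_j^\vee$. So the count in the proof of Lemma \ref{lem:levi} only needs $|\Cent_{\langle J_2\rangle}(w)|$ to exceed the number of $z$ surviving this coefficient test.

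In the second case, $J_n$ does not contain both of $s_{n-1},s_n$ as a single component. This is excluded by our standing assumption, since for $n\geq4$ the nodes $n-1$ and $n$ are not adjacent; so $J_n\ni s_{n-1},s_n$ forces $|J_n|\geq 3$ or the $2A_1$ case treated above.

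The main obstacle is the order-$4$ centralizer against $|\bZ|=4$ when $n$ is even and $q$ is odd. There I would fall back to the $w_0$ argument of Lemma \ref{lem:A-odd} when $l$ is even. When $l$ is odd, I would apply Remark \ref{obs:coefficient} at a simple root $\beta$ between $J$ and $J_n$, or before $J$, where the central elements $\prod\alpha_i^\vee((-1)^i)$ have coefficient $-1$ at odd $i$. This would rule out all but $z=\alpha_{n-1}^\vee(-1)\alpha_n^\vee(-1)$. That last element could then be excluded by a regularity argument as in Lemma \ref{lem:dn-Jn-geq3}.
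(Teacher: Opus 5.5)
There is a genuine gap. Your route differs from the paper's: you use Lemma \ref{lem:levi}, Lemma \ref{lem:A-odd} and a refined coefficient count, whereas the paper verifies Theorem \ref{thm:potente-revised} directly for $\Me=\Le_J$ with the test elements $s_n$, $s_{n-1}$, $w_0$. Your route does settle most cases:
\begin{itemize}
\item $q$ even, and $J_w$ with further components;
\item $l$ even when $n$ is even;
\item $n$ odd: the generator of $\bZ$ has coefficient $-1$ at $\alpha_{n-2}^\vee$, so only $e$ and $\alpha_{n-1}^\vee(-1)\alpha_n^\vee(-1)$ survive Remark \ref{obs:coefficient}, and $4>2$ suffices (you overlooked that $|\bZ|=4$ also for $n$ odd, $q\equiv 1 \bmod 4$);
\item all $q>3$, via your alternative $J_1=\{s_{n-1}\}$, $J_2=J\cup\{s_n\}$, since $|\Cent_{\langle J_2\rangle}(w)|=2(l+1)>4\geq|\bZ|$.
\end{itemize}

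None of your tools reaches the configuration $n$ even, $q$ odd (only $q=3$ if you commit to the alternative for $q>3$), and $J_w=\{s_1,\ldots,s_{n-3}\}\sqcup\{s_{n-1}\}\sqcup\{s_n\}$. Here $l=n-3$ is odd, so Lemma \ref{lem:A-odd} is unavailable, and $|\Cent_{\langle J_w\setminus J\rangle}(w)|=4=|\bZ|$. The only simple root outside $\Delta_{J_w}$ is $\alpha_{n-2}$, and every element of $\bZ$ has trivial coefficient there since $n-2$ is even. So Remark \ref{obs:coefficient} eliminates no central element. Your claim that it ``would rule out all but $z=\alpha_{n-1}^\vee(-1)\alpha_n^\vee(-1)$'' fails here, and the refined count would need $4>4$.

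What is missing is the paper's reduction step. With $\Me=\Le_J$, suppose \ref{W1} fails for $s_n$ and $s_{n-1}$. Comparing coefficients at $\alpha_{n-1}^\vee,\alpha_n^\vee$, and excluding $z=e$ by regularity of $t$ in $\Le_{J_w}$, forces $\zeta_n^2=\zeta_{n-1}^2=-\zeta_{n-2}$. If \ref{W1} also fails for $w_0$, the coefficient of $\alpha_{n-2}^\vee$ in $t^{-1}=w_0\cdot t$ gives $\zeta_{n-2}=\pm1$.
\begin{itemize}
\item The value $-1$ is impossible: $t\in\T^{F_w}$ gives $\zeta_n^{q+1}=\zeta_{n-2}$, while $\zeta_n^2=1$.
\item The value $1$ places $t$ in $\langle \U_{\pm\gamma}:\gamma\in\Delta\setminus\{\alpha_{n-2}\}\rangle^{F_w}\simeq\SL_{n-2}(q)\times\SL_2(q)\times\SL_2(q)$, with $n-2\geq 4$ even and non-central component in $\SL_{n-2}(q)$. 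One concludes by \cite[Proposition 5.9]{ACG-VII} and Remark \ref{obs:zeta=1}.
\end{itemize}
Your argument needs some such appeal to the $\SL_{n-2}(q)$ result, or a genuinely new choice of $\Me$, to close this case.

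A minor point: the ``regularity argument as in Lemma \ref{lem:dn-Jn-geq3}'' does not exclude $\alpha_{n-1}^\vee(-1)\alpha_n^\vee(-1)$. Since $s_{n-1},s_n$ are involutions, squaring only yields $\sigma^2\cdot t=t$. It is unnecessary anyway: the count $4>2$ already suffices, or you can use the $\alpha_{n-1}^\vee$-coefficient of $s_n\cdot t$.
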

\begin{proof}
By Lemma \ref{lem:dn-Jn-geq3} it is enough to consider the case in which $J_n=\{s_n\}$. 
Since $s_{n-1}\in J_w$ and $s_{n-2}\not\in J_w$, we may assume that $\Delta_{J}=\{\alpha_1,\,\ldots,\alpha_{l}\}$ for some  $l\leq n-3$. Hence $n\geq 5$.

\medbreak

We first take $\Me\coloneqq \Le_{J}\leq \Le_{J_w}$. By construction $F_w$ is a Chevalley automorphism of $\Me$ so \ref{S} follows from Lemma \ref{lem:perfect-S} and \ref{R} from Lemma \ref{lem:Jw}. 

\medbreak

We verify \ref{W}. 
By construction $s_{n-1}, s_n\in \Cent_{\langle J_w\rangle}(w)$ and 
\[\Cent_{\Me}(s_{n}\cdot t)=\Cent_{\Me}(s_{n-1}\cdot t)=\T.\] 
Assume that \ref{W1} fails. Then \begin{align}\label{eq:equa}s_n\cdot t=z_1\sigma_1\cdot t,&&\mbox{ and } &&s_{n-1}\cdot t=z_2\sigma_2\cdot t\end{align} for some $z_1,z_2\in \bZ$ and $\sigma_1,\sigma_2\in \Cent_{\langle J\rangle }(w)$. Regularity of $t$ in $\Le_{J_w}$ forces $z_1,z_2\neq e$ so $q$ is odd. Comparing the coefficients of $\alpha_{n-1}^\vee$ and $\alpha_n^\vee$ in the  equalities \eqref{eq:equa} we see that $n$ is even,
\begin{align*}
z_1 &= \Big(\prod_{i=1}^{n-2}\alpha_i^\vee((-1)^i)\Big)\alpha_n^\vee(-1)&
&\text{and}&
z_2 &= \Big(\prod_{i=1}^{n-2}\alpha_i^\vee((-1)^i)\Big)\alpha_{n-1}^\vee(-1).
\end{align*}
This yields $\zeta_n^{2}=\zeta_{n-1}^2=-\zeta_{n-2}$. 
Failure of \ref{W1} and parity of $n$ give also $w_0\cdot t=t^{-1}=z_3 \sigma_3\cdot t$ for some $z_3\in \bZ$ and $\sigma_3\in \Cent_{\langle J\rangle }(w)$. Then 
\[\zeta_{n-2}=-\zeta_n^2=-\zeta_{n-2}^2=\pm1. \]
If $\zeta_{n-2}=-1$, then $\alpha_n(t)=1$ contradicting Lemma \ref{lem:Jw}. If $\zeta_{n-2}=1$, then $t$ lies in the simply-connected group
\[\langle \U_{\pm\gamma}, \gamma\in \Delta\setminus \alpha_{n-2}\rangle\simeq \SL_{n-2}(\ku)\times\SL_2(\ku)\times\SL_2(\ku)\]
and $w$ acts on its root system as a Weyl group element. Thus, 
\begin{align*}t\in\langle \U_{\pm\gamma}, \gamma\in \Delta\setminus \alpha_{n-2}\rangle^{F_w} \simeq \SL_{n-2}(q)\times\SL_2(q)\times\SL_2(q),\end{align*} with $n-2\geq 4$ and even. 
The component $t_1$  of $t$ in  $\SL_{n-2}(q)$ is non-central. We conclude invoking \cite[Proposition 5.9]{ACG-VII} and Remark \ref{obs:zeta=1}.
\end{proof}

\begin{lema}\label{lem:dn-2A1}If $J_w=J_n=\{s_{n-1},s_n\}$, then $\Oc$ is of type C, D or F.
\end{lema}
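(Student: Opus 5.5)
Here $J_w=\{s_{n-1},s_n\}$ is of type $2A_1$ and $w$ is minimal, so up to conjugacy $w=s_{n-1}s_n$. The condition $t\in\T^{F_w}$ then reads $\zeta_i\in\F_q^\times$ for $i\le n-2$ and $\zeta_{n-1}^{q+1}=\zeta_{n-2}=\zeta_n^{q+1}$. By Lemma \ref{lem:Jw}, $\alpha_{n-1}(t)\ne1$ and $\alpha_n(t)\ne1$. I plan to apply Theorem \ref{thm:potente-revised} with a standard rank one Levi subgroup, and to treat the residual cases inductively.

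\textbf{Main case.} Take $\Me=\Le_{\{s_n\}}$. Then $M\simeq\SL_2(q)$, since $F_w$ acts on it as a Chevalley endomorphism twisted by an inner automorphism. So \ref{S} holds by Lemma \ref{lem:perfect-S} when $q>3$, and \ref{R} holds by Lemma \ref{lem:Jw}.

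For \ref{W} we have $\Cent_W(w)\ni s_{n-1}$ and $\Cent_{W_{\Me}}(w)=\langle s_n\rangle$. Regularity of $t$ in $\Le_{J_w}$ gives $\Cent_{\Me}(s_{n-1}\cdot t)=\T$, hence \ref{W2}. Suppose \ref{W1} fails, so $s_{n-1}\cdot t=z s_n^a\cdot t$ for some $z\in\bZ$ and $a\in\{0,1\}$.
\begin{itemize}
\item The case $a=0$, $z=e$ is impossible.
\item Comparing coefficients of $\alpha_{n-1}^\vee$ and $\alpha_n^\vee$ (Remark \ref{obs:coefficient}) forces $z\ne e$, so $q$ is odd.
\item It also forces $z$ to have non-trivial coefficient at both $\alpha_{n-1}^\vee$ and $\alpha_n^\vee$, and trivial coefficient at $\alpha_{n-2}^\vee$.
\end{itemize}
Using the center in Table \ref{tab:center}, this pins down $z=\alpha_{n-1}^\vee(-1)\alpha_n^\vee(-1)$. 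Plugging in gives relations such as $\zeta_{n-1}^2=-\zeta_{n-2}=\zeta_n^2$.

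I then add the second candidate $\sigma=w_0$ (or $s_0$ when $n$ is odd, using \eqref{eq:dn-formulas}). For $n$ even, Remark \ref{obs:w0} reduces failure of \ref{W} to $t^{-1}\in\bZ\langle s_n\rangle\cdot t$. This forces $\zeta_i^2=1$ for $i\le n-2$ and strong restrictions on $\zeta_{n-1},\zeta_n$. One then uses the coefficients at $\alpha_1^\vee,\dots,\alpha_{n-3}^\vee$, as in Lemma \ref{lem:bn-lambda1=1-l=3}, together with further reflections $s_j\in\Cent_W(w)$ for $j\le n-3$. The goal is to show that $\zeta_{j+1}=\zeta_{j-1}$ and $\zeta_2=1$, i.e.\ alternating values $\pm1$.

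\textbf{Residual configurations.} Either we reach a contradiction with Lemma \ref{lem:Jw}, or $\zeta_j=1$ for some $j\le n-2$. In the latter case $t$ lies in $\langle\U_{\pm\gamma}:\gamma\in\Delta\setminus\{\alpha_j\}\rangle^{F_w}$, a product $\SL_j(q)\times\Spin_{2(n-j)}$-type group on which $w$ acts by Weyl elements. Remark \ref{obs:zeta=1} then lets us conclude from \cite{ACG-VII} (classes in $\SL_j(q)$) or from the already treated $D$ cases of smaller rank, noting that $D_2=2A_1$ and $D_3=A_3$ ($A_3$ handled as in Lemma \ref{lem:dn-Jn-geq3}). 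This is where types D or F enter.

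\textbf{Small $q$.} For $q\le3$, Lemma \ref{lem:perfect-S} fails for $A_1$. I would instead use the non-standard Levi with base $\{\alpha_{n-2}+\alpha_{n-1},\alpha_{n-2}+\alpha_n\}$ or a type $A_2$ subsystem $\{\alpha_{n-2}+\alpha_{n-1},\alpha_n\}$-type, on which $w$ acts as an outer automorphism, giving $\SU_3(q)$ as in Lemma \ref{lem:Bn-q=3}. For $q=2$, where $\bZ=e$, Lemma \ref{lem:index} applies via $[\Cent_W(w):\Cent_{W_{\Me}}(w)]>1$.

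\textbf{Main obstacle.} I expect the hardest step to be the bookkeeping in the failure analysis of \ref{W1} for odd $q$, where the center has order $4$. Identifying which exceptional $t$ survive all choices of $\sigma$, and showing each survivor sits in a smaller group covered by \cite{ACG-VII}, will be the delicate part.
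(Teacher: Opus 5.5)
Your treatment of $q>3$ takes a different route from the paper and can be completed, but the residual step must go to $\SL_4(q)$. The treatment of $q\le 3$ has a genuine gap.

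\textbf{The case $q>3$.} The paper uses $\Me=\Le_{\{s_j\}}$ with $j\le n-3$ and $\alpha_j(t)\neq1$, testing \ref{W} with $\sigma=s_n$. If $\alpha_j(t)=1$ for all $j\le n-3$ and $\zeta_1^2\ne1$, it uses $\Me=\langle\T,\U_{\pm\alpha_0}\rangle$ instead; everything else goes to subgroups. Your choice $\Me=\Le_{\{s_n\}}$ is more economical. Every $\sigma\in\Cent_W(w)$ preserves $\Phi_{J_w}$, the set of roots in the $(-1)$-eigenspace of $w$. Hence $\alpha_n(\sigma\cdot t)\neq1$, and Lemmas \ref{lem:Jw} and \ref{lem:non-centrality} give \ref{R} and \ref{W2} for all such $\sigma$ at once. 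Testing \ref{W1} with $s_{n-1}$, $s_0$ and the $s_j$, $j\le n-3$, one finds that up to $\bZ$ the only survivor is $\zeta_1=\cdots=\zeta_{n-2}=1$, $\zeta_{n-1}^2=\zeta_n^2=-1$, with $q\equiv 3\pmod 4$. This $t$ is in the scope of the lemma, with $w$ minimal and $e\notin\WO$, and it defeats your $\Me$. Indeed, $\Cent_W(w)\cdot t$ consists of the four elements $\alpha_{n-1}^\vee(\pm\zeta_{n-1})\alpha_n^\vee(\pm\zeta_n)$, all lying in $\bZ\langle s_n\rangle\cdot t$. So the recursion is indispensable, and it must pass through the $A_3$ subsystem $\{\alpha_{n-2},\alpha_{n-1},\alpha_n\}$, cutting at $\alpha_{n-3}$. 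There $t$ is non-central and not irreducible in $\SL_4(q)$, and \cite[Theorem 5.4]{ACG-VII} applies, as in the paper; Lemma \ref{lem:dn-Jn-geq3} is not the relevant tool. Cutting at $\alpha_{n-2}$ and recursing to $D_2=2A_1$, which your sketch allows, does not conclude. The components project to non-split involutions of $\PSL_2(q)$ with $q\equiv3\pmod 4$, which are open cases.

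\textbf{The case $q\le 3$.} Here the plan fails.

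The $A_2$ generated by $\alpha_{n-2}+\alpha_{n-1}$ and $\alpha_n$ is not $w$-stable, since $w(\alpha_{n-2}+\alpha_{n-1})=\alpha_{n-2}+\alpha_n$. In fact type $D_n$ has no $w$-stable $A_2$ on which $w$ acts outerly. In coordinates $\varepsilon_i$, $w$ changes the signs of $\varepsilon_{n-1}$ and $\varepsilon_n$. A root $\alpha$ with $(\alpha,w\alpha)=-1$ would need a projection of squared length $3/2$ onto $\mathbb R\varepsilon_{n-1}+\mathbb R\varepsilon_n$, which is impossible. So the device of Lemma \ref{lem:Bn-q=3} is unavailable.

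The swapped non-standard $2A_1$, with $M\simeq\SL_2(q^2)$, does give \ref{S}. However, Lemma \ref{lem:index} does not apply, since $\Phi_{\Me}\not\subset\Phi_{J_w}$, and \ref{R} or \ref{W} genuinely fail. Take $q=3$ and $t=\alpha_{n-1}^\vee(\omega)\alpha_n^\vee(\omega)$ with $\omega^2=-1$, or $q=2$ and $t=\alpha_{n-1}^\vee(\xi)\alpha_n^\vee(\xi)$ with $\xi^3=1\neq\xi$. Then $t$ centralises the subsystem $\{\alpha_{n-2}+\alpha_{n-1},\alpha_{n-2}+\alpha_n\}$. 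For the other swapped pair $\{\alpha_{n-2},\alpha_{n-2}+\alpha_{n-1}+\alpha_n\}$, every element of $\Cent_W(w)\cdot t$ outside $\bZ\Cent_{W_{\Me}}(w)\cdot t$ centralises $[M,M]$.

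The missing idea is to bypass Theorem \ref{thm:potente-revised} for $q\le3$. Since $\zeta_i\in\F_q^\times$ for $i\le n-2$, one gets $\zeta_{n-3}=1$ directly for $q=2$, and for $q=3$ after multiplying by $\bZ$ when $n$ is even. Then $t\in\SL_{n-3}(q)\times\SL_4(q)$ with non-central $\SL_4$-component, and Remark \ref{obs:zeta=1} together with \cite[Theorem 5.4]{ACG-VII} concludes. For $n$ odd, $q=3$ and $\zeta_{n-3}=-1$, the paper conjugates by $s_{n-4}\cdots s_{a+1}$ to arrange $\zeta_{n-4}=1$. This lands $t$ in a $D_4$ factor, in a torus of type $\{s_3,s_4\}$, and the case is settled by an outer automorphism and \cite[Theorem 6.2]{ACG-VII}.
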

\begin{proof}
If $n=4$, then a triality automorphism maps $\Oc$ to a non-split semisimple class corresponding through the standard isomorphism to some $t'\in \T^{w'}$ with $J_{w'}=\{\alpha_1,\alpha_3\}$. This case is covered by \cite[Theorem 6.2]{ACG-VII}. For the rest of the proof $n\geq 5$. 

\medbreak
Since $t\in\T^{F_{s_{n-1}s_n}}$, we have 
\begin{align*}
\zeta_i^q &=\zeta_i &&\text{if}& i &\in\I_{n-2} &&\text{and} &
\zeta_{n-1}^{q+1} &= \zeta_n^{q+1}=\zeta_{n-2}. 
\end{align*}

\medbreak
If $q=2$, then $\bZ=e$ and $t\in \langle \U_{\pm\alpha_{n-2}},\U_{\pm\alpha_{n-1}},\U_{\pm\alpha_{n}}\rangle^{F_w}\simeq\SL_4(2)$ and it is non-central therein.  Because
of Remark \ref{obs:zeta=1} and \cite[Theorem 5.4]{ACG-VII} the claim follows in this case. 

\medbreak
If $q=3$, then $\zeta_i=\pm1$ for $i\in\I_{n-2}$. Assume $\zeta_{n-3}=1$ (if $n$ is even this is always possible up to multiplying $t$ by an element in $\bZ$). 
Then \begin{align*}t\in \langle \U_{\pm\alpha_j}, j\in\I_{n-4}\rangle^{F_w} \times \langle \U_{\pm\alpha_{n-2}},\U_{\pm\alpha_{n-1}},\U_{\pm\alpha_{n}}\rangle^{F_w}\simeq\SL_{n-3}(3)\times \SL_4(3)\end{align*} and the component $t_1$ in $\SL_4(3)$ is non-central therein. 
Again, Remark \ref{obs:zeta=1} and \cite[Theorem 5.4]{ACG-VII} imply the claim.

\medbreak
Assume that $n$ is odd and $\zeta_{n-3}=-1$. Then we may always assume that $\zeta_{n-4}=1$. Indeed, if $\zeta_{j}=-1$ for all $j\in \I_{n-3}$, then we replace $t$ by 
\[s_{n-4}s_{n-5}\cdots s_2 s_1\cdot t\in \T^{F_w}.\]
If, instead, $\zeta_a=1$ for some $a\geq 1$ and  $\zeta_{j}=-1$ for $j\in\I_{a+1,n-3}$, then we replace $t$ by 
\[s_{n-4}s_{n-5}\cdots s_{a+1}\cdot t\in \T^{F_w}.\]
Then, $t\in \langle \U_{\pm\alpha_j}, j\in\I_{n-5}\rangle^{F_w} \times \langle \U_{\pm\alpha_j}, j\in\I_{n-3,n}\rangle^{F_w}$. The second factor is of type $D_4$ and the component $t_1$ of $t$ is non-central therein. It lies in a torus corresponding to a Weyl group element $w_1$ with $J_{w_1}=\{s_4,s_3\}$. We apply an outer automorphism and invoke \cite[Theorem 6.2]{ACG-VII}
to obtain the claim. 

\medbreak
For the rest of the proof $q>3$. Assume first that there is $j\in \I_{n-3}$ such that $\alpha_j(t)\neq 1$. Then \ref{R} and \ref{S} hold for $\Me=\Le_{s_j}=\langle \T, \U_{\pm\alpha_j}\rangle$. 
We verify \ref{W}. By construction $\alpha_j(s_n\cdot t)=\alpha_j(t)\neq 1$.
If $s_n\cdot t=z s_j^a\cdot t$ for some $z\in \bZ$ and some $a\in\{0,1\}$, then $z^2=e$. 
Comparing the coefficients of $\alpha_{n-1}^\vee$ in the equality gives either $z=e$ or $n$ is even and $z=\prod_{i=1}^{n-2}\alpha_i^\vee((-1)^i)\alpha_n^\vee(-1)$. However, in the latter case $n\geq 6$, so 
there is an odd $i\in\I_{n-3}\setminus\{j\}$. Comparing the coefficients of $\alpha_i^\vee$ in the equality $s_n\cdot t=z s_j^a\cdot t$ leads to a contradiction, so $z=e$. Comparing the coefficients of $\alpha_n^\vee$ would then lead to $\zeta_n^2=\zeta_{n-2}$, so $\alpha_n(t)=1$, contradicting Lemma \ref{lem:Jw}. Therefore \ref{W} holds in this case. 

\medbreak

Assume now that $\alpha_j(t)=1$ for all $j\in\I_{n-3}$. Then 
\begin{align*}t=\Big(\prod_{i=1}^{n-2}\alpha_i^\vee(\zeta_1^i)\Big)\alpha_{n-1}^\vee(\zeta_{n-1})\alpha_n^\vee(\zeta_n).\end{align*} 
If $\zeta_1^2=1$, then either $\zeta_{n-3}=1$ or $\zeta_{n-4}=1$ and we proceed as for $q=3$. 

\medbreak
Assume for the rest of the proof that $\zeta_1^2\neq1$. Then ${\alpha_0}(t)\neq 1$. 
We consider the non-standard Levi subgroup $\Me=\langle \T,\U_{\pm\alpha_0}\rangle$, 
so $M\simeq \SL_2(q)$ and \ref{S} and \ref{R} are satisfied. 
Also, $\alpha_0(s_n\cdot t)=\alpha_0(t)\neq1$. 

\medbreak
We claim that $s_n\cdot t\not\in \bZ W_{\Me}\cdot t$.  
Assume for a contradiction that $s_n\cdot t=zs_0^a\cdot t$ for some $z\in \bZ$ and $a\in\{0,1\}$. Then $z^2=e$. Comparing the coefficient of $\alpha_1^\vee$  and making use of \eqref{eq:dn-formulas} shows that the equality $s_n\cdot t=zs_0\cdot t$ may occur only if $n$ is even, $\zeta_1^2=-1$ and 
\begin{align*}
z &=\prod_{i=1}^n\alpha_i^\vee((-1)^i)& &\text{or} & z &= \Big(\prod_{i=1}^{n-2}\alpha_i^\vee((-1)^i)\Big)\alpha_n^\vee(-1).
\end{align*}
However, comparing the coefficient of $\alpha_3^\vee$ would then give $\zeta_1^4=-1$, a contradiction. 
Hence, $a=0$ and regularity of $t$ in $\Le_{J_w}$ gives $z\neq e$. But then, comparing the coefficients of $\alpha_1^\vee$ and of $\alpha_{n-1}^\vee$ in the equality $s_n\cdot t=zt$ gives again a  contradiction.  
\end{proof}

\begin{lema}\label{lem:dn-rA1}
Assume that $(n,q)\neq (4,2)$ and that we are not in the situation
\begin{align}\label{eq:exception-d4}
\begin{aligned}
n&=4,& && q&\equiv3 \hspace{-5pt}\mod 4,
\\
\omega^2_1&=\omega_3^2=\omega_4^2=-1, && 
&t&=\alpha^\vee_1(\omega_1)\alpha^\vee_3(\omega_3)\alpha^\vee_4(\omega_4).
\end{aligned}
\end{align}

If $J_w$ is of type $rA_1$ with $r\geq 3$, then $\Oc$ is of type C, D, or F. 
\end{lema}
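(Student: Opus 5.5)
We first pin down the shape of $J_w$. Since $\alpha_{n-1},\alpha_n\in J_w$ and $J_w$ is of type $rA_1$, we have $J_n=\{s_n\}$, $s_{n-1}\in J_w$ and $s_{n-2}\notin J_w$. The remaining $r-2\geq 1$ nodes are pairwise non-adjacent simple roots among $\alpha_1,\ldots,\alpha_{n-3}$. The minimal $w$ is then the product of the corresponding commuting reflections, so $w$ is an involution and acts as $-\id$ on $\Phi_{J_w}$. By Lemma \ref{lem:Jw}, $\gamma(t)\neq1$ for every $\gamma\in\Phi_{J_w}$.

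The plan is to apply Theorem \ref{thm:potente-revised} with a rank-one Levi subgroup $\Me=\Le_{\{s_j\}}$, where $s_j\in J_w$. The natural choice is $j\leq n-3$, so that $s_{n-1}$ and $s_n$ stay outside $W_{\Me}$ and can serve as witnesses. Conditions \ref{S} and \ref{R} follow from Lemma \ref{lem:perfect-S} and Lemma \ref{lem:Jw} as soon as $q>3$, since $M\simeq\SL_2(q)$. For \ref{W} we take $\sigma\in\{s_{n-1},s_n\}\subset\Cent_W(w)$; then \ref{W2} holds by Remark \ref{obs:M-in-LJ} because $\sigma\in\langle J_w\rangle$. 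For \ref{W1} we compare coefficients as in the proof of Lemma \ref{lem:dn-A}. If both $s_n\cdot t=z_1s_j^{a}\cdot t$ and $s_{n-1}\cdot t=z_2s_j^{b}\cdot t$, then regularity forces $z_1,z_2\neq e$, so $q$ is odd and $n$ is even, with $z_1,z_2$ the two central elements having $(-1)^i$ on $\alpha_i^\vee$. Comparing coefficients of $\alpha_i^\vee$ for an odd $i\leq n-2$ different from $j$ gives a contradiction whenever such an $i$ exists. This leaves only very small $n$, essentially $n=4$. Alternatively, when $r\geq 4$, Lemma \ref{lem:index} applies directly: $[\Cent_{\langle J_w\rangle}(w):\Cent_{W_{\Me}}(w)]=2^{r-1}\geq 8>|\bZ|$, which settles all $r\geq4$ at once. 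So the real work is $r=3$, where $J_w=\{s_j,s_{n-1},s_n\}$ and the index is $4=|\bZ|$ when $n$ is even and $q$ is odd.

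For $r=3$ and $n\geq5$ we keep the coefficient argument. If $n$ is odd, $|\bZ|\leq 4$ is cyclic with a nontrivial $\alpha_{n-1}^\vee,\alpha_n^\vee$ part, and Remark \ref{obs:coefficient} combined with $w_0\cdot t$ from \eqref{eq:dn-formulas} should give \ref{W1}. If $n\geq6$ is even, we pick $\alpha_i$ with $i$ odd and $i\neq j$ in $\I_{n-3}$, compare coefficients as above, and obtain a contradiction. For small $q$ (namely $q=2,3$), where $\SL_2(q)$ is not quasi-simple, we instead use the restrictions $\zeta_i\in\F_q$ for $i\notin J_w$ to find some $\zeta_k=1$. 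We then reduce via Remark \ref{obs:zeta=1} to a $\SL_m(q)$ or $D_4$ factor and invoke \cite[Theorem 5.4, Theorem 6.2]{ACG-VII}, exactly as in Lemma \ref{lem:dn-2A1}. When $r=3$ and $q\leq3$, a better option is a non-standard $\Me$ of type $A_3$ generated by $\alpha_{j}$-type roots connecting the three components, which $w$ preserves.

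The main obstacle is $n=4$ with $J_w=\{s_1,s_3,s_4\}$, where triality symmetry removes the coefficient asymmetry. Here $\bZ$ is the Klein four group (for $q$ odd), each $s_i\cdot t=z_is_1^{a}\cdot t$ can genuinely hold, and the three conditions $\alpha_i(t)\neq1$ pin down $t$ up to $\bZ$. We would first try $\Me$ with root system $\{\pm\alpha_0\}$ or $\{\pm(\alpha_1+\alpha_2+\alpha_3)\}$, using $w_0\cdot t=t^{-1}$ from Remark \ref{obs:w0}. Failure of \ref{W} should then force $t^4\in\bZ$ and $\zeta_i^2=-1$, which is precisely the excluded configuration \eqref{eq:exception-d4}; the excluded pair $(4,2)$ plays the same role in characteristic $2$. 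Checking that every other $t$ escapes through one of these choices of $\Me$ and $\sigma$ is the delicate case analysis we expect to dominate the proof.
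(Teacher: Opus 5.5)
Your argument for $q>3$, $n\ge 5$ is sound. It is the paper's argument with the roles swapped: the paper takes $\Me=\Le_{\{s_n\}}$ with witness $s_{n-3}$, you take $\Le_{\{s_j\}}$ with witness $s_n$. The case $q=2$ goes through $\zeta_{n-2}=1$ as in the paper. The two cases that carry the content of the lemma, however, are not proved.

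\textbf{The case $q=3$.} Here $\SL_2(3)$ is not quasi-simple, so neither your rank-one Levi nor the Lemma~\ref{lem:index} shortcut for $r\ge4$ is available, for any $r$. Your substitute, ``find some $\zeta_k=1$'', fails in general. The coefficient $\zeta_{n-2}\in\{\pm1\}$ is not changed by multiplying $t$ by elements of $\bZ$, and it may equal $-1$. Even if some $\zeta_k=1$ with $k\le n-4$, the factor containing $s_{n-3},s_{n-1},s_n$ is again of the type of this lemma, with base case $D_4$ and $J=\{s_1,s_3,s_4\}$. That base case is not covered by \cite[Theorem 6.2]{ACG-VII}: $w$ has eigenvalue $-1$ with multiplicity $3$, so it is not conjugate into a parabolic subgroup of type $A_3$. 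Your ``non-standard $A_3$'' is left unverified, and since it is not contained in $\Le_{J_w}$, Lemmata~\ref{lem:Jw} and~\ref{lem:index} give nothing there.

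The paper treats $\zeta_{n-2}=-1$ as follows. For $n\ge5$ it uses the standard Levi with base $\alpha_1,\dots,\alpha_{n-3}$, so $M\simeq\SL_{n-2}(3)$, with witness $s_n$. The $\alpha_n^\vee$-coefficient of $s_n\cdot t$ is $-\zeta_n^{-1}$, while every element of $\bZ W_{\Me}\cdot t$ has $\pm\zeta_n$ there. Equality would give $\zeta_n^4=1\neq-1=\zeta_{n-2}$. For $n=4$ and $\zeta_2=-1$ it uses the twisted $A_2$ spanned by $\alpha_2$ and $w\alpha_2$, so $M\simeq\SU_3(3)$, with witness $s_1$.

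\textbf{The case $n=4$, $q>3$.} You leave this as ``should then force'', and the subgroups you propose do not work.
\begin{itemize}
\item $\{\pm(\alpha_1+\alpha_2+\alpha_3)\}$ is not $w$-stable, since $w(\alpha_1+\alpha_2+\alpha_3)=\alpha_2+\alpha_4$.
\item For $\{\pm\alpha_0\}$ one has $\alpha_0(t)=\zeta_2$, so \ref{R} fails whenever $\zeta_2=1$. This happens for non-excluded $t$. For example, take $q=5$ and $t=\alpha_1^\vee(\xi)\alpha_3^\vee(\xi)\alpha_4^\vee(\xi)$ with $\xi$ of order $3$. Then $t\in\T^{F_{s_1s_3s_4}}$, and $s_1s_3s_4$ is minimal in $\WO$.
\end{itemize}

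The fix is to keep $\Le_{\{s_1\}}$ and add $w_0=-\id$ to the witnesses $s_3,s_4$.
\begin{enumerate}
\item Failure of \ref{W1} for $s_3$ and $s_4$ forces $z$ of order $2$ and $\zeta_1^2=\zeta_3^2=\zeta_4^2=-\zeta_2$.
\item Failure for $w_0\cdot t=t^{-1}$ forces $\zeta_2^2=1$, by comparing $\alpha_2^\vee$-coefficients.
\item If $\zeta_2=-1$, then $t\in\T^F$, a contradiction.
\item If $\zeta_2=1$, minimality forces $q\equiv3\bmod 4$, which is exactly \eqref{eq:exception-d4}.
\end{enumerate}
Condition \ref{W2} for $w_0$ holds by Lemma~\ref{lem:non-centrality}, since $\alpha_1(t^{-1})\neq1$.
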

\begin{proof}
There is no loss of generality in assuming that $\{s_{n-3},s_{n-1},s_n\}\subseteq J_w$, so $s_{n-2},\,s_{n-4}\not\in J_w$. By construction $\zeta_i^q=\zeta_i$ whenever $s_i\not\in J_w$; also
\[
\zeta_n^{q+1}=\zeta_{n-1}^{q+1}=\zeta_{n-2} \hspace{10pt}\text{ and } \hspace{10pt}
\zeta_{n-3}^{q+1}=\zeta_{n-2}\zeta_{n-4}. 
\]

If $q=2$, then $n\geq 5$ and 
\begin{align*}t&\in\langle \U_{\pm\alpha_j}, j\in\I_{n-3}\rangle^{F_{w}}\times\langle  \U_{\pm\alpha_{n-1}}\rangle^{F_{s_{n-1}}}\times \langle\U_{\pm\alpha_n}\rangle^{F_{s_n}}\\
&\simeq \SL_{n-2}(2)\times\SL_2(2)\times\SL_2(2)\end{align*} and the component in $\SL_{n-2}(2)$ is non-trivial and non-cuspidal (i.e., not irreducible). By Remark \ref{obs:zeta=1} and \cite[Theorem 1.1]{ACG-III}, the claim holds.

\medbreak

Let now $q=3$. In this case $\zeta_i^2=1$ for all $i$ such that $s_i\not\in J_w$,
\begin{align*}
\zeta_n^{4} &=\zeta_{n-1}^{4}=\zeta_{n-2} &&\text{and} &
\zeta_{n-3}^{4} &= \zeta_{n-2}\zeta_{n-4}. 
\end{align*}

The case $\zeta_{n-2}=1$ and $n=4$ is \eqref{eq:exception-d4}, so it is discarded. If $\zeta_{n-2}=1$ and $n\geq 5$, then we apply Remark \ref{obs:zeta=1}. Indeed, 
\[ t\in\langle \U_{\pm\alpha_j},\,j\in\I_{n-3}\rangle^{F_w}\times\langle \U_{\pm\alpha_{n-1}},\U_{\pm\alpha_n}\rangle^{F_w}\simeq \SL_{n-2}(3)\times \SL_2(3)\times \SL_2(3)\]
and the factor of $t$ in the component isomorphic to $\SL_{n-2}(3)$ is non-trivial and non-cuspidal (i.e., not irreducible), so  \cite[Theorem 1.1]{ACG-III} applies.

\medbreak
Let now $\zeta_{n-2}=-1$ and $n\geq 5$. We consider the $F_w$-stable Levi subgroup $\Me=\langle \T,\U_{\pm\alpha_j},\,j\in\I_{n-3}\rangle$, so  $M\simeq \SL_{n-2}(3)\simeq [M,M]$ satisfies \ref{S}.  In addition, $\alpha_{n-3}(t)\neq 1$ by Lemma \ref{lem:Jw}, so Lemma \ref{lem:non-centrality} gives \ref{R}. We verify condition \ref{W} considering the element $s_n\cdot t$. 

First of all, $\alpha_{n-3}(s_n\cdot t)=\alpha_{n-3}(t)\neq 1$ so Lemma \ref{lem:non-centrality} gives \ref{W2}. In addition, the coefficient of $\alpha_n^\vee$ in $s_n\cdot t$ is $\zeta_{n}^{-1}\zeta_{n-2}=-\zeta_n^{-1}$, whilst the coefficient of $\alpha_n^\vee$ in any element of $\bZ W_{\Me}\cdot t$ is $\pm\zeta_n$. If they were equal, we would have $\zeta_n^4=1\neq\zeta_{n-2}=-1$, a contradiction. Hence \ref{W} holds in this case.

\medbreak
Let us assume next that $n=4$ and $\zeta_{2}=-1$, so 
\begin{align*}
\alpha_2(t) &=-\zeta^{-1}_{1}\zeta^{-1}_{3}\zeta^{-1}_4 &&\text{and} &
\zeta_1^4 &= \zeta_3^4=\zeta_4^4=-1. 
\end{align*}
A product of $3$ generators  in the cyclic group $\F_9^\times$ has never order $2$, so $\alpha_2(t)\neq 1$.
Thus, 
\[(w\alpha_2)(t)=\alpha_2(w^{-1}\cdot t)=\alpha_2(t^3)=\alpha_2(t)^3\neq1.\]
We consider then $\Me\coloneqq \langle \T, \U_{\pm\alpha_2},\U_{\pm(w\alpha_2)}\rangle$ of type $A_2$, so \[[\Me,\Me]\simeq\SL_3(\ku)=\PSL_3(\ku)\] 
as abstract groups.  
Here $w$ acts on the root system of $[\Me,\Me]$ 
as the non-trivial Dynkin diagram automorphism, hence
$M\simeq \SU_3(3)\simeq\PSU_3(3)$, giving \ref{S}.  

In addition, $\Cent_{\Me}(t)\leq \langle \T,\U_{\pm(\alpha_2+w\alpha_2)}\rangle$, so the Sylow $3$-subgroups in  $\Cent_M(t)$  have order $\leq3$, giving \ref{R}.  

We verify \ref{W}. Here $W_{\Me}=\langle s_{\alpha_2+w\alpha_2}\rangle=\langle s_{0}\rangle=\Cent_{W_{\Me}}(w)$ and 
\[s_{0}\cdot t=t\alpha_1^\vee(-1)\alpha_3^\vee(-1)\alpha_4^\vee(-1). 
\]
Also, $s_1\in \Cent_{W}(w)$ and $s_1\cdot t\not\in \bZ W_{\Me}\cdot t$ because the coefficient of $\alpha_1^\vee$ in $s_1\cdot t$  is $-\zeta_1^{-1}$ and $\zeta_1^4=-1$. This concludes the proof for $q=3$.

\medbreak
We assume for the rest of the proof that $q>3$. 
We take $\Me=\Le_{\{s_n\}}$, so $M\simeq\SL_2(q)$ and $\Cent_{\Me}(t)=\T$, giving \ref{S} and \ref{R}. We show that \ref{W} holds by showing that  $\{s_{n-3}\cdot t,\, s_{n-1}\cdot t,\, w_0\cdot t\}\not\subseteq \bZ W_{\Me}\cdot t$: the non-centrality condition follows from Lemma \ref{lem:Jw}. 

Assume  that $s_{n-3}\cdot t=z_1 s^{a_1}_n\cdot t$  for some $z_1\in \bZ$ and some 
$a_1\in\{0,1\}$. Then $z_1^2=e$ and  the regularity of $t$ in $\Le_{J_w}$ forces $|z_1|=2$. Comparing the coefficients of $\alpha_n^\vee$ and $\alpha_{n-1}^\vee$ in the equality we exclude $a_1=0$. Then, we have $s_{n-3}\cdot t=z_1 s_n\cdot t$. A comparison of the coefficients of $\alpha_1^\vee$ and $\alpha_{n-1}^\vee$ shows that that this equality can only occur for $n=4$, with $z_1=\alpha_1^\vee(-1)\alpha^\vee_4(-1)$ and $\zeta_4^2=\zeta_{1}^2=-\zeta_{2}$.  

Assume that we are in this situation. Then we consider $s_{3}\cdot t$. Arguing as above we see that if $s_{3}\cdot t=z_2s_4^{a_2}$ for some $z_2\in \bZ$ and $a_2\in\{0,1\}$ then $|z_2|=2$ and comparing coefficients of $\alpha_1^\vee$ and $\alpha_4^\vee$ we see that  $a_2=1$ and $z_2=\alpha_3^\vee(-1)\alpha_4^\vee(-1)$ whence $\zeta_4^2=\zeta_{1}^2=\zeta_3^2=-\zeta_{2}$. We then consider 
\[
w_0\cdot t=t^{-1}=\alpha_1^\vee(-\zeta_2\zeta_1)\alpha_2^\vee(\zeta_2^{-1})\alpha_3^\vee(-\zeta_2\zeta_3)\alpha_4^\vee(-\zeta_2\zeta_4).
\]
If it lies in  $\bZ W_{\Me}\cdot t$, then $\zeta_2^2=1$. If $\zeta_2=-1$, then $\zeta_i\in\F_q$ for all $i$, contradicting minimality of $J_w$. If $\zeta_2=1$, then the 
minimality of $J_w$ forces $q\equiv 3\mod 4$, 
but this is the excluded case \eqref{eq:exception-d4}. 
\end{proof}

\begin{obs}\label{obs:d4-exception}
Let $n=4$ and let $t=\alpha^\vee_1(\omega_1)\alpha^\vee_3(\omega_3)\alpha^\vee_4(\omega_4)\in \T^{F_{s_1s_3s_4}}$. Then conjugation by $s_i$ maps $\omega_i$ to its inverse leaving the other two coefficients unchanged. Hence, if $q=2$ or if we are in the situation  \eqref{eq:exception-d4}, there is only one conjugacy class of semisimple elements with $J_w=\{s_1,s_3,s_4\}$. 
\end{obs}

We collect the results on semisimple classes in $\Gb$. 
\begin{theorem}\label{thm:dn-one-exception}
Let $\Gb=\Pom^+_{2n}(q)$. If $n=4$ and $w=s_1s_3s_4$, assume that
$q\neq2$ and that $t$ is not as in \eqref{eq:exception-d4}. Then $\Oc$ is of type C, D, or F. 
\end{theorem}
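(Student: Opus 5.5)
The plan is to assemble Theorem \ref{thm:dn-one-exception} from the lemmata of this subsection together with the earlier results of \cite{ACG-VII}. I would reduce to a case analysis on the shape of $J_w$ for a minimal $w\in\WO$.

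First I dispose of the degenerate situations. If $e\in\WO$, the class meets the split torus. Since $\Gb\neq\PSL_{n+1}(q)$, \cite[Theorem 4.1]{ACG-VII} shows that $\Oc$ is of type C, D or F; the exceptional classes there occur only in type $B_n$. So I may assume $e\notin\WO$ and fix $w$ minimal. If $s_n\notin J_w$, then, up to $W$-conjugacy, $w$ lies in the parabolic subgroup $W_{A_{n-1}}$, and \cite[Theorem 6.2]{ACG-VII} applies; its open cases again occur only for $w=e$ in type $B$. If $s_{n-1}\notin J_w$, the graph automorphism swapping $\alpha_{n-1}$ and $\alpha_n$ reduces this to the previous case. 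This automorphism induces a rack isomorphism, and types C, D, F are rack-theoretic properties. Hence I may assume $s_{n-1},s_n\in J_w$.

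Next I split according to the component $J_n$ of $J_w$ containing $s_n$.
\begin{itemize}
\item If $|J_n|\geq3$, Lemma \ref{lem:dn-Jn-geq3} gives type C.
\item Otherwise $J_n=\{s_n\}$ and $s_{n-2}\notin J_w$, since $s_{n-1}\in J_w$ lies in a different component. In that case, if some component of $J_w$ has rank $\geq2$, Lemma \ref{lem:dn-A} applies.
\item So every component of $J_w$ is of type $A_1$, and $J_w$ is of type $rA_1$ with $r\geq2$, because $s_{n-1},s_n\in J_w$.
\end{itemize}

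For $r=2$ we have $J_w=\{s_{n-1},s_n\}$, and Lemma \ref{lem:dn-2A1} concludes. For $r\geq3$, Lemma \ref{lem:dn-rA1} concludes unless either $(n,q)=(4,2)$ or $t$ is as in \eqref{eq:exception-d4}. In both exceptional situations $n=4$ and $J_w=\{s_1,s_3,s_4\}$, with $w$ cuspidal in $\langle J_w\rangle$, hence $w=s_1s_3s_4$ up to conjugacy. These are precisely the cases excluded by the hypothesis of the theorem.

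The main obstacle is the bookkeeping in this last reduction, namely checking that the excluded cases line up exactly. Two points need care. First, when $(n,q)=(4,2)$ and $J_w$ has type $3A_1$, the only $J_w$ compatible with $s_3,s_4\in J_w$ is $\{s_1,s_3,s_4\}$. Second, the reduction via the graph automorphism and triality preserves the form of the hypothesis; Remark \ref{obs:d4-exception} confirms there is a single class in these exceptional cases, so the phrase ``$w=s_1s_3s_4$'' is unambiguous. I would also double-check that the reductions to $\alpha_n\in J_w$ never lead into an exceptional case of \cite[Theorem 6.2]{ACG-VII}, since those arise only for $w=e$ in type $B$.
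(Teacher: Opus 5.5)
Your proposal is correct and follows the paper's proof. The paper likewise disposes of the cases $s_n\notin J_w$ or $s_{n-1}\notin J_w$ via \cite[Theorem 6.2]{ACG-VII} and the graph automorphism, and then runs through Lemmata \ref{lem:dn-Jn-geq3}, \ref{lem:dn-A}, \ref{lem:dn-2A1} and \ref{lem:dn-rA1}, the excluded cases being exactly those of Lemma \ref{lem:dn-rA1}. Your separate appeal to \cite[Theorem 4.1]{ACG-VII} when $e\in\WO$ is harmless, since $w=e$ also falls under the case $s_n\notin J_w$.
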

\begin{proof}If $s_n\not\in J_w$, or $s_{n-1}\not\in J_w$ this is \cite[Theorem 6.2]{ACG-VII}. If $\{s_{n-1},s_n\}\subseteq J_w$ we invoke Lemmata \ref{lem:dn-Jn-geq3}, \ref{lem:dn-A}, \ref{lem:dn-2A1} and \ref{lem:dn-rA1}.
\end{proof}

\subsection{Semisimple classes in \texorpdfstring{$E_6(q)$}{}} \label{subsec:E6}

\

In this Subsection, we assume that  $\G$ is of type $E_6$. T
he center of $\G^F$ is described in Table \ref{tab:center}. 
We will  use the following formulas
\begin{align*}
\alpha_0 &=\alpha_1+2\alpha_2+2\alpha_3+3\alpha_4+2\alpha_5+\alpha_6,
\\
s_0\cdot t&= \alpha_1^\vee(\zeta_1\zeta_2^{-1})\alpha_2^\vee(\zeta_2^{-1})\alpha_3^\vee(\zeta_3\zeta_2^{-2})\alpha_4^\vee(\zeta_4\zeta_2^{-3})\alpha_5^\vee(\zeta_5\zeta_2^{-2})\alpha_6^\vee(\zeta_6\zeta_2^{-1}),\\ 
w_0\cdot t &= \alpha_1^\vee(\zeta_6^{-1})\alpha_2^\vee(\zeta_2^{-1})\alpha_3^\vee(\zeta_5^{-1})\alpha_4^\vee(\zeta_4^{-1})\alpha_5^\vee(\zeta_3^{-1})\alpha_6^\vee(\zeta_1^{-1}). 
\end{align*}

We verify the hypotheses of Theorem \ref{thm:potente-revised}, analysing separately the possibilities for $J_w$, up to $W$-conjugacy.

\begin{lema}\label{lem:e6-non-cuspidal}If $w$ is not cuspidal, then $\Oc$ is of type C, D, or F.
\end{lema}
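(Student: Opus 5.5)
We will argue case by case on the shape of $J_w$, with $w$ minimal in $\WO$. If $e\in\WO$, the class meets the split torus and we invoke \cite[Theorem 4.1]{ACG-VII}. Otherwise $J_w\subsetneq S$ is a proper subset of the Dynkin diagram of $E_6$. Its possible types up to $W$-conjugacy are: products of type $A$ components ($A_1,\,2A_1,\,3A_1,\,A_2,\,A_1+A_2,\,2A_2,\,A_1+2A_2,\,A_3,\,A_1+A_3,\,A_4,\,A_1+A_4,\,A_5,\,2A_1+A_2,\dots$), and $D_4$, $D_5$.

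\textbf{Step 1: components of type $D$.} If $J_w$ contains a component of type $D_4$ or $D_5$, Lemma \ref{lem:Jw-contains-D} applies directly, and $\Oc$ is of type C.

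\textbf{Step 2: reducible $J_w$ with a large component.} Next assume $J_w$ is reducible with an irreducible component $J_1$ of rank $\geq 2$. Put $J_2=J_w\setminus J_1$ and apply Lemma \ref{lem:levi}:
\begin{itemize}
\item condition \ref{item:reg} comes from Lemma \ref{lem:Jw};
\item condition \ref{item:not-list} holds unless $(\Phi_{J_1},q)=(A_2,2)$, and there $w$ acts inside $W_{J_1}$ since $F$ is Chevalley, so it holds as well;
\item for the last condition we need $|\Cent_{\langle J_2\rangle}(w)|>|\bZ|$, with $|\bZ|\leq 3$.
\end{itemize}
When $J_2$ has type $A_1$, the centralizer has order $2$. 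This suffices unless $q\equiv 1\bmod 3$. In that case we swap roles, taking $\Phi_{J_1}$ as the rank-one factor when $q>3$, or use the centralizer of a larger component (e.g.\ $|\Cent(\text{Coxeter in }A_2)|=3$, still not $>3$). Where this fails, we instead use $\sigma=w_0$ with Remark \ref{obs:coefficient}. The point is that a nontrivial $z\in\bZ$ has nontrivial coefficients on $\alpha_1^\vee,\alpha_3^\vee,\alpha_5^\vee,\alpha_6^\vee$, while some such $\alpha$ lies outside $\Delta_{J_1}$. So $\sigma\cdot t\in z\,\Cent_{W_{\Me}}(w)\cdot t$ forces $z=e$, which contradicts regularity of $t$ in $\Le_{J_w}$.

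\textbf{Step 3: irreducible $J_w$, or all components of rank one.} For irreducible $J_w$ of type $A_l$ we take $\Me=\Le_{J_w}$. Conditions \ref{S} and \ref{R} follow from Lemmas \ref{lem:perfect-S} and \ref{lem:Jw}. For \ref{W} we look for $\sigma\in\Cent_W(w)\setminus\langle J_w\rangle$ coming from the orthogonal complement: the Levi complement always contains roots orthogonal to $\Phi_{J_w}$, e.g.\ $\alpha_0$ is orthogonal to $A_5=\{\alpha_1,\alpha_3,\alpha_4,\alpha_5,\alpha_6\}$. We then show $\sigma\cdot t\notin \bZ\Cent_{W_{\Me}}(w)\cdot t$ by coefficient comparison (Remark \ref{obs:coefficient}) and by using that $t$ is regular, so that $\sigma\cdot t=zt$ forces strong equations on the $\zeta_j$ that contradict Lemma \ref{lem:Jw}.

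For $J_w$ of type $rA_1$ we imitate Lemma \ref{lem:dn-rA1}. When $q>3$ take $\Me$ to be a single $A_1$ Levi and produce several candidates $\sigma$; when $q\in\{2,3\}$ fall back on Remark \ref{obs:zeta=1} together with the $\SL$ results \cite[Theorem 1.1]{ACG-III} and \cite[Theorem 5.4]{ACG-VII}, or build an $A_2$-type $\SU_3(q)$ subgroup as in Lemma \ref{lem:dn-rA1}.

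\textbf{Main obstacle.} The hardest step is the small-$q$ situations, $q=2,3$ with $J_w$ a union of $A_1$'s (or $A_1+A_2$) when $q\equiv1\bmod3$ makes $|\bZ|=3$, where Lemma \ref{lem:levi} is too weak. There we plan to exploit that $\sigma\cdot t=zt$ with $|z|=3$ implies $\sigma^3\cdot t=t$, which contradicts regularity as in Lemma \ref{lem:Jw-contains-D}. Failing that, we locate a $\zeta_j=1$ and reduce to a type $A$ or $D$ Levi by Remark \ref{obs:zeta=1}.
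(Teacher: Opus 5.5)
Your case division follows the paper's: Lemma~\ref{lem:Jw-contains-D} for $D$-components, Lemma~\ref{lem:levi} for reducible $J_w$, $\Me=\Le_{J_w}$ plus a reflection from the orthogonal complement for irreducible $A_l$, and a separate treatment of $rA_1$. However, the arguments that carry the hard subcases do not work as written.

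\textbf{The fallback $\sigma=w_0$ in Step~2.} In $E_6$ the longest element acts as $-\varepsilon$, where $\varepsilon$ is induced by the non-trivial graph automorphism; it does not act as $-\id$. Consequences:
\begin{itemize}
\item $w_0$ is not central in $W$, and it centralizes $w$ only if $\varepsilon w\varepsilon^{-1}=w$.
\item $w_0\cdot t$ is the permuted element displayed at the start of \S\ref{subsec:E6}, not $t^{-1}$, so Remark~\ref{obs:w0} is unavailable.
\item Remark~\ref{obs:coefficient} controls the $\beta^\vee$-coefficients of elements of $\Cent_{W_{\Me}}(w)\cdot t$, not those of $w_0\cdot t$, so it does not force $z=e$.
\item Even if $z=e$, an equality $w_0\cdot t=\tau\cdot t$ with $\tau\in\langle J_1\rangle$ only says that $\tau^{-1}w_0$ fixes $t$. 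This does not contradict regularity in $\Le_{J_w}$, because $\tau^{-1}w_0\notin\langle J_w\rangle$.
\end{itemize}
The paper handles every $A_l+A_1$ by a cube trick. Take $\Me$ to be the $A_l$-Levi and $\sigma=s_\beta$. If $s_\beta\cdot t=zw_1^a\cdot t$, then $z^3=e$ and $s_\beta w_1=w_1s_\beta$ give $s_\beta\cdot t=s_\beta^3\cdot t=w_1^{3a}\cdot t$. So the non-trivial element $w_1^{-3a}s_\beta\in\langle J_w\rangle$ fixes $t$, contradicting Lemma~\ref{lem:Jw}. Your last paragraph states this only in the form $\sigma\cdot t=zt$; it must be run with the $\Cent_{W_{\Me}}(w)$-factor, which works as shown. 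Your role swap is a valid alternative for $l\geq 3$.

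For $2A_2$ with $q\equiv 1\bmod 3$, you are right that hypothesis (v) of Lemma~\ref{lem:levi} fails; the paper cites that lemma there without comment. Your $w_0$ argument does not repair this, and neither does the cube trick, since the relevant elements of $\langle J_2\rangle$ have order $3$. This subcase needs a further argument.

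\textbf{Irreducible $A_l$ in Step~3.} The natural choice is $\sigma=s_0$, since $\alpha_0\perp\Phi_{J_w}$ for $J_w\subseteq\{s_1,s_3,s_4,s_5,s_6\}$. But $\alpha_0(t)=\zeta_2$ and $\alpha_0\notin\Phi_{J_w}$, so Lemma~\ref{lem:Jw} says nothing about it. If $\zeta_2=1$, then $s_0\cdot t=t$ and there is no contradiction. The paper proceeds as follows:
\begin{itemize}
\item If $\zeta_2=1$, then $t\in\langle\U_{\pm\gamma}:\gamma\in\Delta\setminus\{\alpha_2\}\rangle^{F_w}\simeq\SL_6(q)$, and one concludes by Remark~\ref{obs:zeta=1} and \cite[Theorem I]{ACG-VII}.
\item If $\zeta_2\neq 1$, comparing $\alpha_2^\vee$-coefficients forces $\zeta_2=-1$, and comparing $\alpha_6^\vee$-coefficients then settles $l\leq 4$.
\item For $l=5$ a separate computation is needed. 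The cube trick gives $s_0\cdot t=w^{3a}\cdot t$ with $a$ odd, so $s_0\cdot t=w^3\cdot t$. Since $w^3$ is the longest element of $W_{\Me}$, $w_0=s_0w^3$ fixes $t$. This is incompatible with $t=w\cdot t^q$ at the $\alpha_4^\vee$-coefficient.
\end{itemize}
None of this is in your sketch.

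\textbf{The case $rA_1$.} A single $A_1$-Levi violates \ref{S} when $q\in\{2,3\}$, and your fallbacks do not cover these $q$. Nothing guarantees a $\zeta_j=1$ with $w$-stable complement. An $A_2$-subsystem on which $w$ acts by the graph automorphism exists only for special $w$, and for $q=2$ it gives the solvable $\SU_3(2)$.

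The missing idea is to take $\Delta_{\Me}=\{\alpha_1,\alpha_3\}$. Any $w\in\langle s_1,s_2,s_6\rangle$ acts on it as $s_1$, so $M\simeq\SL_3(q)$ satisfies \ref{S} for every $q$. Then:
\begin{itemize}
\item For $r=2,3$, $\sigma=s_6$ with the cube trick suffices.
\item For $r=1$, using $s_6$, $s_2$, $s_0$ in turn forces $\alpha_6(t)=\alpha_2(t)=1$ and $\zeta_2=\pm1$.
\item If $\zeta_2=1$, Remark~\ref{obs:zeta=1} reduces to $\SL_6(q)$. If $\zeta_2=-1$, then $\zeta_4=1$, and one reduces to $\langle\U_{\pm\gamma}:\gamma\in\Delta\setminus\{\alpha_4\}\rangle^{F_w}$ and \cite[Theorem 1.1]{ACG-III}.
\end{itemize}
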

\begin{proof}
By assumption, $J_w$ is not of type $E_6$. If $J_w$ has a component of type $D_d$, for some $d\geq 4$, then we invoke Lemma \ref{lem:Jw-contains-D}. If $J_w$ is of type $A_2+ A_1+ A_1$, $A_2+ A_1+ A_1$ or $A_2+ A_2$, then we invoke Lemma \ref{lem:levi}.

\medbreak
Assume that $J_w$ is of type $A_l+ A_1$ for some $l\geq 2$ and let $\Delta_1\coloneqq \{\beta_1,\,\ldots,\,\beta_l\}$ and $\{\beta\}$  be the sets of simple roots corresponding to these components. 
Without loss of generality, we may assume that $w=w_1s_{\beta}$, where $w_1=\prod_{i=1}^ls_{\beta_i}$ is a Coxeter element in its component, hence $|w_1|=l+1$. 
Take $\Phi_{\Me}$ to be the root system with base $\Delta_1$. Then, 
\[
M=[M,M]=\SL_{l+1}(q)
\]
and $t$ is regular in $\Me$, giving \ref{S} and \ref{R}. We verify \ref{W}. Here $\Cent_{W_{\Me}}(w)=\langle w_1\rangle$.  Assume that $s_\beta\cdot t=z w_1^a\cdot t$ for some $a\in\I_{0,l}$ and some $z\in \bZ$. Then, $t=s_\beta^3w_1^{3a}\cdot t=s_\beta w_1^{3a}\cdot t$, contradicting regularity of $t$ in $\Le_{J_w}$.

\medbreak
Assume now that $J_w$ is of type $A_l$ for $l\in\I_{2,5}$. Up to $W$-conjugation we may assume that $\{s_1\}\subseteq J_w\subseteq\{s_1,s_3,s_4,s_5,s_6\}$, and that $w$ is a Coxeter element in $\langle J_w\rangle$. There are two subcases: if $\zeta_2=1$, then
\[t\in \langle \U_{\pm\gamma},\,\gamma\in\Delta\setminus\{\alpha_2\} \rangle^{F_w}\simeq \SL_{6}(q)\]
so $\Oc$ has a subrack that projects to a non-trivial  semisimple class in $\PSL_6(q)$. 
Due to the Remark \ref{obs:zeta=1} and \cite[Theorem I]{ACG-VII}, the claim is valid. 

\medbreak
We assume next that $\zeta_2\neq1$, so $s_0\cdot t\neq t$.  
We take $\Delta_{\Me}=\Delta_{J_w}$, so $M\simeq\SL_{l+1}(q)$ and $\Cent_{\Me}(t)=\T$, giving \ref{S} and \ref{R}. We verify \ref{W}. We have $\Cent_{W_{\Me}}(w)=\langle w\rangle$ and $s_0\in \Cent_W(w)\setminus \Cent_{W_{\Me}}(w)$. In addition, $\alpha_1(s_0\cdot t)=\alpha_1(t)\neq1$, so Lemma \ref{lem:non-centrality} gives \ref{W2}. 

\medbreak
Suppose  that $s_0\cdot t=z w^a\cdot t$ for some $a\in\I_{0,l}$ and some $z\in \bZ$. 
Comparing the coefficients of $\alpha_2^\vee$ we see that this is possible only if $\zeta_2=-1$, so $q$ is odd, and in this case $s_0\cdot t=t\alpha_1^\vee(-1)\alpha_4^\vee(-1)\alpha_6^\vee(-1)$. But then, comparing the coefficients of $\alpha_6^\vee$ shows that the equality $s_0\cdot t=z w^a\cdot t$ may occur only if $l=5$, proving the claim for $l\in\I_{2,4}$. Let thus $l=5$, so $|w|=6$. Then
\begin{align}\label{eq:e6-l=5}t\alpha_1^\vee(-1)\alpha_4^\vee(-1)\alpha_6^\vee(-1)=s_0\cdot t=s^3_0\cdot t=w^{3a}\cdot t\end{align} 
and $a$ is necessarily odd, so $w^{3a}=w^3$. There is no loss of generality in assuming that $w=s_1s_6s_3s_5s_4$. A direct calculation in $W_{\Me}\simeq \mathbb S_6$ shows that in this case $w^3$ is the longest element in $W_{\Me}$. Then, $w_0=s_0w^3$ whence  \eqref{eq:e6-l=5} forces $w_0\cdot t=t$, that is, $\zeta_1\zeta_6=\zeta_3\zeta_5=\zeta_4^2=1$. On the other hand,  $t\in \T^{F_w}$ 
gives $t=s_1s_6s_3s_5s_4\cdot t^q$. Comparing the coefficient of $\alpha_4^\vee$ 
on both sides we get  $\zeta_4=\zeta_2\zeta_4^{-1}\zeta_3\zeta_5=-\zeta_4$, 
a contradiction. Hence,  $w_0\cdot t\neq t$ and \ref{W} holds also for $l=5$. 

\medbreak
We finally consider the case in which $J_w$ is of type $rA_1$ with $r\in\{1,2,3\}$.
Without loss of generality $J_w$ is respectively $\{s_1\}$, $\{s_1,s_6\}$ or $\{s_1,s_2,s_6\}$. We take $\Phi_{\Me}$ with base $\{\alpha_1,\alpha_3\}$. Then, $M=[M,M]\simeq \SL_3(q)$, and $w$ acts on $\Phi_{\Me}$ as an element of $W_{\Me}$, so Lemma \ref{lem:perfect-S} gives \ref{S}.
Lemma \ref{lem:Jw} gives $\alpha_1(t)\neq 1$, so \ref{R} follows from Lemma \ref{lem:non-centrality}. 

\medbreak
We next verify \ref{W}. Here, we have $\Cent_{W_{\Me}}(w)=\langle s_1\rangle$, 
$s_6\in \Cent_W(w)$
and $s_6\cdot \Delta_{\Me}=\Delta_{\Me}$, so $\Cent_M(t)\simeq \Cent_M(s_6\cdot t)$. 
Assume that there exist $a\in\{0,1\}$ and  $z\in \bZ$ such that
\[s_6\cdot t=z s_1^a\cdot t.\]
Then, $s_6\cdot t=s^3_6\cdot t=s_1^{a}\cdot t$. If $r$ is $2$ or $3$, 
this  contradicts Lemma \ref{lem:Jw} and we are done. 
Let $r=1$ and $s_6\cdot t=s_1^{a}\cdot t$. 
Comparing the coefficients of $\alpha_6^\vee$ on both sides gives $s_6\cdot t=t$, 
so $\alpha_6(t)=1$. Repeating the argument with $s_2$ instead of $s_6$ 
shows that condition \ref{W} may fail only if $\alpha_6(t)=\alpha_2(t)=1$. 
Assume this is the case. Repeating the argument with $s_0$, using the 
coefficient of $\alpha_2^\vee$ we see that condition \ref{W} may fail 
only if $\zeta_2=\pm1$. If $\zeta_2=1$, then 
\[t\in\langle \U_{\pm\gamma},\gamma\in\Delta\setminus\{\alpha_2\}\rangle^{F_w}\simeq \SL_6(q)\]
and we invoke \cite[Theorem I]{ACG-VII} and Remark \ref{obs:zeta=1} to get the claim. 

\medbreak
If $\zeta_2=-1$ then $1=\alpha_2(t)=\zeta_2^2\zeta_4^{-1}$, so $\zeta_4=1$ and 
\[t\in\langle \U_{\pm\gamma},\gamma\in\Delta\setminus\{\alpha_4\}\rangle^{F_w}.\]
The latter is isomorphic to product of $\SL_2(q)$ with a central product of two copies of $\SL_3(q)$. The factor in the component of $\SL_3(q)$ corresponding to $\alpha_1$ and $\alpha_3$ is non-central and not irreducible. Then we invoke Remark \ref{obs:zeta=1} and \cite[Theorem 1.1]{ACG-III} to get the claim.
\end{proof}

According to  \cite[Table B.4]{geck-pfeiffer} there are 5 cuspidal classes in $W$. Following \cite{carter-cc}, they are labeled by $E_6$ (the Coxeter class), $E_6(a_1)$, $E_6(a_2)$, $A_5+ A_1$ and $3A_2$. The label indicates the type of a minimal Weyl subgroup of $W$ containing a class representative.

\begin{lema}\label{lem:e6-cuspidal}If $w$ is cuspidal, then $\Oc$ is of type C, D, or F.
\end{lema}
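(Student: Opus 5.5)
We go through the five cuspidal classes $E_6$, $E_6(a_1)$, $E_6(a_2)$, $A_5+A_1$, $3A_2$ one at a time. In each case the plan is to find a $w$-stable root subsystem $\Phi_{\Me}$ and apply Theorem \ref{thm:potente-revised}. Since $w$ is cuspidal, $J_w=S$. By Lemma \ref{lem:Jw}, $t$ is then regular in $\G$: $\Cent_{\G}(\sigma\cdot t)=\T$ for every $\sigma\in W$. This gives \ref{R} and \ref{W2} at once for any $\Me$ with $M$ non-abelian. Hence \ref{W} reduces to the index-type statement
\[|\Cent_W(w)|>|\bZ|\,|\Cent_{W_{\Me}}(w)|,\]
exactly as in Lemma \ref{lem:index} and Corollary \ref{cor:cuspidal-Ze}. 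Here $|\bZ|\le 3$, and $\bZ=e$ unless $q\equiv 1 \mod 3$. So the key step in each case is to choose $\Phi_{\Me}$ so that \ref{S} holds and $|\Cent_{W_{\Me}}(w)|\le |\Cent_W(w)|/4$. The centraliser orders will be read off from \cite[Table B.4]{geck-pfeiffer} and Carter's tables: they are $12$, $9$, $36$, $36$, $648$ respectively.

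For $A_5+A_1$ and $3A_2$ we take $\Me$ to be a subsystem of maximal rank containing a representative of $w$ as an element of its own Weyl group.
\begin{itemize}
\item For $A_5+A_1$, take $\Phi_{\Me}$ to be the $A_5$ component. The group $M$ is then $\SL_6(q)$ or $\SU_6(q)$, which is quasi-simple by Lemma \ref{lem:perfect-S}. The factor of $w$ in it is a Coxeter element, so $\Cent_{W_{\Me}}(w)$ has order $6$, and the index is $6>3$.
\item For $3A_2$, take one $A_2$ component stabilised by $w$. If instead $w$ permutes the three components cyclically, use Lemma \ref{lem:cyclically-permuted}, obtaining $\SL_3(q^3)$. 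Here the index is huge.
\end{itemize}

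For $E_6(a_2)$ (order $6$, centraliser $36$) and $E_6(a_1)$ (order $9$, centraliser $9$) we look for $w$-orbits of pairwise orthogonal roots, or for $w$-stable $A_2$ subsystems, as in Lemmata \ref{lem:Bn-lambda1=2d} and \ref{lem:dn-Jn-geq3}. Lemma \ref{lem:cyclically-permuted} then yields $\SL_2(q^k)$ or $\SL_3(q^k)$, with $k$ large enough to avoid the list \eqref{eq:not-list}. The centraliser of $w$ in $W_{\Me}$ should be small: typically $\langle w^k\rangle$, of order at most $2$ or $3$. For $E_6(a_2)$ this gives index at least $12>3$.

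For $E_6(a_1)$ the ratio $9/|\Cent_{W_{\Me}}(w)|$ must exceed $|\bZ|$, so we need $|\Cent_{W_{\Me}}(w)|\le 1$ when $q\equiv1\mod 3$. If that fails, we fall back on a direct argument modelled on Lemma \ref{lem:A-odd}. The candidate is $\sigma=w$ itself, or a suitable power. Suppose $w\cdot t=z\tau\cdot t$ with $z\in\bZ$ and $\tau\in\Cent_{W_{\Me}}(w)$. Cubing this relation kills $z$ and contradicts regularity of $t$.

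The Coxeter class $E_6$ is the main obstacle. There $\Cent_W(w)=\langle w\rangle$ has order $12$, so we need $\Phi_{\Me}$ with $|\Cent_{W_{\Me}}(w)|\le 3$, and also $4$ when $\bZ=e$. The first attempt is the subsystem spanned by a $w$-orbit of mutually orthogonal roots; for instance $w^4$ has order $3$ and acts on orbits of size $\le 12$. We would search for a root $\beta$ such that $\{w^i\beta\}$ forms an $A_2$, or $kA_1$ with $w$ permuting the components transitively, with $k\ge 2$ so that \ref{S} holds via Lemma \ref{lem:cyclically-permuted}.

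If the orthogonality fails, we instead take $\Phi_{\Me}$ of type $2A_2$ or $3A_1$ stable under $w$, and compute its centraliser explicitly. In the worst case, when $q\equiv 1\mod 3$ and the index is only $3$, we would compare the coefficients of $\alpha_2^\vee$ in $w^a\cdot t=z\tau\cdot t$, as in Lemma \ref{lem:e6-non-cuspidal}. The aim is to show that $z=e$ is forced, which then contradicts the regularity of $t$.
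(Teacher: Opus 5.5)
\textbf{Verdict: there is a genuine gap, in the Coxeter class $E_6$, which is the one case where real work is needed.} None of the subsystems you propose to search for there exists.

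\emph{Why your candidates fail.} The characteristic polynomial of a Coxeter element is $\varphi_3(X)\varphi_{12}(X)$, so the only $w$-stable subspaces of $\mathbb R\Phi$ have dimension $0,2,4,6$.
\begin{itemize}
\item The $2$-dimensional one, on which $w^3$ acts trivially, contains no root, because a Coxeter element acts on $\Phi$ with all orbits of size $h=12$. This excludes a $w$-stable $A_2$ and a $w$-orbit of type $2A_1$.
\item There is no $3$-dimensional invariant subspace, so there is no $w$-stable $3A_1$.
\item The $4$-dimensional one meets $\Phi$ in a subsystem of type $D_4$: the paper's, with base $\alpha_1+\alpha_3+\alpha_4$, $\alpha_2$, $\alpha_3+\alpha_4+\alpha_5$, $\alpha_4+\alpha_5+\alpha_6$. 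This $D_4$ contains no $2A_2$. A $w$-stable $4A_1$ inside it would force $w^4$ to preserve root lines, which is impossible since $w^4$ has only primitive cube roots of unity as eigenvalues.
\item The full-rank proper subsystems $A_5+A_1$ and $3A_2$ are not $w$-stable either, by the same eigenvalue considerations.
\end{itemize}
So the only available choice is this $D_4$, on which $w$ induces triality. Condition \ref{S} then comes from Lemma \ref{lem:cyclically-permuted} with $d=1$, and $\Cent_{W_{\Me}}(w)=\langle w^3\rangle$ has order $4$. The index is therefore $3$, and Lemma \ref{lem:index} fails exactly when $q\equiv 1 \bmod 3$.

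\emph{What is missing.} You need a separate argument for the case $q\equiv 1\bmod 3$, and your fallback (comparing $\alpha_2^\vee$-coefficients as in Lemma \ref{lem:e6-non-cuspidal}) does not give one. That trick relies on Remark \ref{obs:coefficient}, which needs $\Me$ inside a standard Levi subgroup omitting $\alpha_2$; the $D_4$ is not of that form. Moreover, the $\alpha_2^\vee$-coefficient of the generator of $\bZ$ is trivial. And $z\neq e$ is precisely the case to be excluded, not something coefficient comparison forces away. The paper's argument runs as follows.
\begin{itemize}
\item Suppose $w\cdot t=zw^{3a}\cdot t$. Regularity of $t$ gives $z\neq e$.
\item Iterating gives $w^3\cdot t=w^{9a}\cdot t$. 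Regularity then forces $12\mid 9a-3$, hence $a=3$, that is, $w^4\cdot t=zt$.
\item Since $w^4$ satisfies $X^2+X+1$, we get $t\,(w^4\cdot t)\,(w^8\cdot t)=e$. The left side equals $z^3t^3=t^3$, so $t^3=e$.
\item For $q\equiv 1\bmod 3$ this gives $t\in\T^F$, contradicting $e\notin\WO$.
\end{itemize}

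\emph{Smaller issues in the other cases.}
\begin{itemize}
\item For $E_6(a_1)$, $\varphi_9$ is irreducible, so neither orbits of orthogonal roots (that would need $6A_1$) nor a stable $A_2$ can occur. The only option is a $3A_2$ whose components are cyclically permuted by $w$, and its existence needs proof. The paper gets it from Carter's table: there are four such subsystems whose Weyl group contains $w^3$, and $\langle w\rangle/\langle w^3\rangle$, of order $3$, must fix one of them. Once that is in place, your cubing argument is exactly the paper's.
\item For $E_6(a_2)$ you likewise only assert that a suitable subsystem exists; the paper exhibits explicit roots spanning a $2A_2$ whose components $w$ interchanges. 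Also, $|\Cent_W(w)|=72$ there, not $36$, which only helps you.
\item Your $A_5+A_1$ case via Lemma \ref{lem:index} (index $36/6=6>3$) is valid and simpler than the paper's coefficient argument. Your $3A_2$ case is fine.
\end{itemize}
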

\begin{proof}We recall that in this case $\Cent_{\G}(t)=\T$, and so $\Cent_{\G}(\sigma\cdot t)=\T$ for all $\sigma\in W$. Therefore \ref{R} and \ref{W2} are always granted. Hence, in all cases we just need to find $\Phi_{\Me}$ satsfying \ref{S} and \ref{W1}. 

 We discuss the different cuspidal classes separately, making use of the representatives, their order, and the sizes of their centralisers extracted from \cite[Table B.4, Table B.8]{geck-pfeiffer}. 

\medbreak
\noindent{\bf Case $E_6$, $w=s_1s_2s_3s_4s_5s_6$.} Here $|w|=12$. Let 
\begin{align*}\beta_1\coloneqq \alpha_1+\alpha_3+\alpha_4,&&\beta_2\coloneqq \alpha_2, &&\beta_3\coloneqq \alpha_3+\alpha_4+\alpha_5,&&\beta_4\coloneqq \alpha_4+\alpha_5+\alpha_6.\end{align*}
Then 
\begin{align*}w\beta_1=\beta_2+\beta_3,&&w\beta_2=\beta_1, &&w\beta_3=\beta_2+\beta_4,&&w\beta_4=-\beta_1-\beta_2,\\
s_{\beta_1}s_{\beta_2}w\beta_1=\beta_3,&&s_{\beta_1}s_{\beta_2}w\beta_2=\beta_2,&&s_{\beta_1}s_{\beta_2}w\beta_3=\beta_4,&&s_{\beta_1}s_{\beta_2}w\beta_4=\beta_1.\end{align*}
Hence $w$ stabilises the root subsystem $\Phi_{\Me}$ with base $\{\beta_1,\beta_2,\beta_3,\beta_4\}$, of type $D_4$, and $s_{\beta_1}s_{\beta_2}w$ acts on $\Phi_{\Me}$ as a graph automorphism of order $3$. 
Then condition \ref{S} follows from Lemma \ref{lem:cyclically-permuted}. 

\medbreak
We verify \ref{W1}. In this case $\Cent_W(w)=\langle w\rangle$ and the coset $wW_{\Me}$ in $\langle w, W_{\Me}\rangle$ has order $3$, so $\Cent_{W_{\Me}}(w)=\langle w^3\rangle$. If  $w\cdot t=z w^{3a}\cdot t$ for some $z\in \bZ$ and some $a\in\I_{0,3}$, regularity of $t$ forces $|z|=3$, which can hold only if $q\equiv 1\mod 3$.  Since
$w^3\cdot t=w^{9a}\cdot t$,  regularity  would also give $12|9a-3$, that is, $4|3a-1$, forcing $a=3$. Hence, $w^4\cdot t=zt$.  A direct calculation shows that 
\begin{align*}
 w\cdot t=&\alpha_1^\vee(\zeta_2\zeta_6^{-1})\alpha_2^\vee(\zeta_3\zeta_6^{-1})\alpha_3^\vee(\zeta_1\zeta_2\zeta_6^{-1})\alpha_4^\vee(\zeta_2\zeta_3\zeta_6^{-1})\alpha_5^\vee(\zeta_4\zeta_6^{-1})\alpha_6^\vee(\zeta_5\zeta_6^{-1}),\\
 w^2\cdot t=&\alpha_1^\vee(\zeta_3\zeta_5^{-1})\alpha_2^\vee(\zeta_1\zeta_2\zeta_5^{-1})\alpha_3^\vee(\zeta_2\zeta_3\zeta_5^{-1}\zeta_6^{-1})\alpha_4^\vee(\zeta_1\zeta_2\zeta_3\zeta_5^{-1}\zeta_6^{-1})\times\\
&\alpha_5^\vee(\zeta_2\zeta_3\zeta_5^{-1})\alpha_6^\vee(\zeta_4\zeta_5^{-1}),\\
w^4\cdot t=&\alpha_1^\vee(\zeta_6^{-1})\alpha_2^\vee(\zeta_1\zeta_5^{-1})\alpha_3^\vee(\zeta_1\zeta_2\zeta_4^{-1}\zeta_6^{-1})\alpha_4^\vee(\zeta_1\zeta_2\zeta_3\zeta_4^{-1}\zeta_5^{-1}\zeta_6^{-1})\times\\
&\alpha_5^\vee(\zeta_1\zeta_2\zeta_5^{-1}\zeta_6^{-1})\alpha_6^\vee(\zeta_1\zeta_6^{-1}),
\end{align*}
But then $w^4\cdot t=zt$ gives $t^3=e$, but for $q\equiv 1\mod 3$ this gives $t\in\T^F$, against the assumption that  $e\not\in\WO$. 

\medbreak
\noindent{\bf Case $E_6(a_1)$.} By \cite[Table B.8]{geck-pfeiffer}   the characteristic polynomial $c_w(X)$ of $w$ is the $9$-th cyclotomic polynomial $\varphi_9(X)=X^6+X^3+1$ and $|\Cent_W(w)|=9$. Then $c_{w^3}(X)=\varphi_3(X)^3=(X^2+X+1)^3$. Thus, $w^3$ is cuspidal and by inspection  it is necessarily labeled by $3A_2$. Let $W_1$ be a Weyl subgroup of $W$ containing $w^3$ and let $\Phi_1$ be the corresponding root  subsystem of $\Phi$. By \cite[Table 9]{carter-cc} the set $A$ of  root subsystems of $\Phi$ that are $W$-conjugate to $\Phi_1$ and contain $w^3$ contains precisely $4$ elements. The group $\langle w\rangle/\langle w^3\rangle$ acts on $A$. By order reasons, $w$ fixes an element $\Phi'$ of $A$ and we set $\Phi_{\Me}\coloneqq \Phi'$. 
By construction $\Phi_{\Me}$ is of type $3A_2$. Since $c_w(X)$ is irreducible, $w$ permutes the components without preserving any of them, and $w^3\in W_{\Me}$. Thus, $[\Me,\Me]$ is a central product of  $3$ copies of $\SL_3(\ku)$ and  \ref{S} follows from Lemma \ref{lem:cyclically-permuted}. Since $\Cent_W(w)=\langle w\rangle$, we have $\Cent_{W_{\Me}}(w)=\langle w^3\rangle$. Assume that $w\cdot t=z w^{3a}\cdot t$ for some $a\in\I_{0,2}$ and some $z\in \bZ$. Then, $w^3\cdot t=w^{9a}\cdot t=t$, contradicting regularity of $t$. This gives \ref{W1}.

\medbreak

\noindent{\bf Case $E_6(a_2)$, $w=s_1s_2s_3s_1s_4s_2s_3s_4s_5s_4s_6s_5$.} Here $|\Cent_W(w)|=2^3\cdot 3^2$ and $|w|=6$. Let
\begin{align*}
&\gamma_1\coloneqq \alpha_1+\alpha_3,\\
&\gamma_2\coloneqq w\gamma_1=\alpha_1+\alpha_2+\alpha_3+2\alpha_4+\alpha_5,\\
&\gamma_3\coloneqq w\gamma_2=\alpha_2+\alpha_3+2\alpha_4+\alpha_5+\alpha_6,\\
&\gamma_4\coloneqq w\gamma_3=\alpha_5+\alpha_6.
\end{align*}
A direct calculation shows that $w\gamma_4=-\gamma_1-\gamma_3$ and that $\{\gamma_1,\gamma_3\}\perp\{\gamma_2,\gamma_4\}$ so the root system $\Phi_{\Me}$ generated by $\{\gamma_1,\,\gamma_2,\gamma_3,\gamma_4\}$ is a $w$-stable subsystem of type $2A_2$ whose components are interchanged by $w$. Lemma \ref{lem:cyclically-permuted} gives  \ref{S}. 

 \medbreak
Condition \ref{W} follows from Lemma \ref{lem:index} because \begin{align*}|\bZ||\Cent_{W_{\Me}}(w)|\leq|\bZ||W_{\Me}|\leq 18<72=|\Cent_W(w)|.\end{align*}

\medbreak
\noindent{\bf Case $A_5+A_1$.} By \cite[Proposition 32]{carter-cc} (following \cite[p. 142]{dynkin}) we may assume that 
$w=s_1s_6s_3s_5s_4 s_0$, which is  contained in the Weyl subgroup
\[W'\coloneqq \langle s_1,s_3, s_4,s_5, s_6,s_0\rangle\]
of $W$ of type $A_5 + A_1$.  We take $\Delta_{\Me}\coloneqq \{\alpha_1,\alpha_3,\alpha_4,\alpha_5,\alpha_6\}$. Therefore $M\simeq\SL_6(q)$, giving \ref{S}, 
and $\Cent_{W_{\Me}}(w)=\langle s_1s_6s_3s_5s_4 \rangle$, 
which has order $6$. 
Arguing as we did in the proof of the case $A_5$ in Lemma \ref{lem:e6-non-cuspidal}, 
we see that the equality $s_0\cdot t=z (s_1s_6s_3s_5s_4 )^a\cdot t$ 
for some $z\in \bZ$ and some $a\in\I_{0,5}$ could  hold only if $s_0\cdot t=(s_1s_6s_3s_5s_4)^3\cdot t$. But when this happens, we have
$(s_1s_6s_3s_5s_4)^4\cdot t^q=w\cdot t^q=t$, contradicting the cuspidality of $t$.  

\medbreak
\noindent{\bf Case $3A_2$.} Here $|\Cent_W(w)|=2^3\cdot 3^4$.
By \cite[Proposition 32]{carter-cc} (following \cite[p. 142]{dynkin}) we may assume that 
$w=s_1s_3s_0s_2 s_5s_6$, which is  contained in the Weyl subgroup $W'\coloneqq \langle s_1,s_3, s_0,s_2, s_5,s_6\rangle$ of $W$ of type $3A_2$. We take $\Delta_{\Me}\coloneqq \{\alpha_1,\alpha_3\}$ so $M\simeq\SL_3(q)$ satisfies \ref{S}.  
Lemma \ref{lem:index} and the estimate  
\begin{align*}|\Cent_W(w)|>|\bZ||\Cent_{W_{\Me}}(w)|=3|\langle s_1s_3\rangle|=9\end{align*}
imply that Condition \ref{W} holds.
\end{proof}

We are then in a position to state a general result for $\Gb$ of type $E_6$.

\begin{theorem}\label{thm:e6-collapses}Every non-trivial class in a simple Chevalley group of type $E_6$ is of type C, D, or F. 
\end{theorem}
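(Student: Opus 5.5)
The plan is to split the non-trivial conjugacy classes of $\Gb$ (simple of type $E_6$, Chevalley) by their Jordan decomposition. Classes that are not semisimple are handled by earlier papers. For semisimple classes we then split according to whether the class meets the split torus ($e\in\WO$) or not.

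\emph{Non-semisimple classes.} Unipotent classes and mixed classes of $E_6(q)$ were treated in the series \cite{ACG-I,ACG-II,ACG-IV,ACG-V}. There, every non-semisimple class of a Chevalley group of type $E_6$ was shown to be of type C, D or F; to my knowledge no exceptions survive for $E_6$. I would cite those results directly. If some residual unipotent or mixed class remains open there, I would reduce it via Remark \ref{obs:zeta=1}-type subrack arguments to a Levi subgroup of type $A_5$ or $D_5$. For such Levi factors the corresponding classes are known to be of type C, D or F by \cite{ACG-VII}.

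\emph{Semisimple classes meeting the split torus.} If $e\in\WO$, then $\Gb\neq\PSL_{n+1}(q)$. Hence \cite[Theorem 4.1]{ACG-VII} gives that the class is of type C, D or F. The minor technical assumption there should be checked for $E_6$. Since $w_0$ does not act as $-\id$ in $E_6$, the exceptional configurations of that theorem (involutions for small $q$, which arise in types $B$ and $C$) do not occur here.

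\emph{Semisimple classes with $e\notin\WO$.} We take $w$ minimal in $\WO$. If $w$ is not cuspidal, Lemma \ref{lem:e6-non-cuspidal} applies; if $w$ is cuspidal, Lemma \ref{lem:e6-cuspidal} applies. The class $\Oc$ corresponds via the standard isomorphism to $\Oc_{\pi(t)}^{\pi(G_w)}$, and being of type C, D or F is a rack property, so these lemmas give the result for $\Oc^{\Gb}_{\pi(s)}$.

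\emph{Expected obstacle.} The delicate step is checking that the case list in Lemma \ref{lem:e6-non-cuspidal} exhausts all $W$-conjugacy types of proper subsets $J_w\subsetneq S$. These types are: components of type $D_4$ or $D_5$; $A_2+A_1+A_1$, $A_2+A_2$ and $A_2+A_2+A_1$ (via Lemma \ref{lem:levi}); $A_l+A_1$; $A_l$ with $l\le5$; and $rA_1$ with $r\le 3$. I would confirm the completeness of this list against the classification of parabolic subsystems of $E_6$. I would also confirm that the hypotheses of Lemma \ref{lem:levi} hold for small $q$, especially $q=2,3$ with $A_1$ components; if they fail there, I would fall back on Remark \ref{obs:zeta=1} together with \cite[Theorem I]{ACG-VII}.
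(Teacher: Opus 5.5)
Your proposal is correct and follows the paper's proof: unipotent and mixed classes are covered by \cite[Main Theorem]{ACG-IV} and \cite[Theorem 1.1]{ACG-V}, split semisimple classes by \cite{ACG-VII}, and non-split semisimple classes (with $w$ minimal in $\WO$) by Lemmas \ref{lem:e6-non-cuspidal} and \ref{lem:e6-cuspidal}. Your fallback arguments and the completeness check on the types of $J_w$ are unnecessary, because those two lemmas are already stated for every non-cuspidal, respectively cuspidal, minimal $w$. Also, the exceptions in \cite[Theorem 4.1]{ACG-VII} are absent for $E_6$ simply because they occur only for $\Pom^+_{2n+1}(q)$, not because of how $w_0$ acts.
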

\begin{proof}
For unipotent, respectively mixed conjugacy classes this is \cite[Main Theorem]{ACG-IV}, respectively 
\cite[Theorem 1.1]{ACG-V}. For split semisimple conjugacy classes we rely on \cite[Theorem I]{ACG-VII}. Non-split semisimple classes are covered by Lemmata \ref{lem:e6-non-cuspidal} and \ref{lem:e6-cuspidal}. 
\end{proof}

\subsection{Semisimple classes in \texorpdfstring{$E_7(q)$}{}} \label{subsec:E7}

\

In this Subsection  $\G=\G_{sc}$ is of type $E_7$, so $w_0$ acts as $-\id$ on $\Phi$. The center of $\G^F$ is described in Table \ref{tab:center}.

\medbreak

We will make use of the following formulas
\begin{align}\label{eq:e7-s0}
\alpha_0&=2\alpha_1+2\alpha_2+3\alpha_3+4\alpha_4+3\alpha_5+2\alpha_6+\alpha_7;\\  
\nonumber s_0\cdot t&=\alpha_1^\vee(\zeta_1^{-1})\alpha_2^\vee(\zeta_2\zeta_1^{-2})\alpha_3^\vee(\zeta_3\zeta_1^{-3})
\alpha_4^\vee(\zeta_4\zeta_1^{-4})\alpha_5^\vee(\zeta_5\zeta_1^{-3})\\
\nonumber & \qquad \times \alpha_6^\vee(\zeta_6\zeta_1^{-2}) \alpha_7^\vee(\zeta_7\zeta_1^{-1}). 
\end{align}

\begin{obs}\label{obs:reduction-to-d6} 
Comparing coefficients, the equality $s_0\cdot t=zt$, with $z\in \bZ$ may occur only if $z=e$ and $\zeta_1=1$. 
Assume that this is the case. Then
\begin{enumerate}[leftmargin=*,label=\rm{(\roman*)}]
\item $\langle \U_{\pm\alpha_0}\rangle\leq \Cent_{\G}(t)$, so $t$ is not regular, and

\medbreak
\item $t\in\Ha:=\langle \U_{\pm\alpha},\,\alpha\in\Delta\setminus\{\alpha_1\}\rangle$, a simple algebraic  group of type $D_6$.  
If $\Ha$ is $w$-suitable, then $\Ha^{F_w}$ is a Chevalley group 
and the class of $\pi(t)$ is covered  either by \cite[Theorem 6.2]{ACG-VII} 
or by Theorem \ref{thm:dn-one-exception}; therefore $\Oc$ is of type C, D, or F.
\end{enumerate}
\end{obs}

We verify the hypotheses of Theorem \ref{thm:potente-revised}  considering the possible choices of $J_w$, up to $W$-conjugacy. 

\begin{lema}\label{lem:e7-non-cuspidal}If $w$ is not cuspidal, then $\Oc$ is of type C, D, or F.
\end{lema}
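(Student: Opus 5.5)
We follow the pattern of Lemma \ref{lem:e6-non-cuspidal}: with $w$ minimal in $\WO$ and non-cuspidal, $J_w\subsetneq S$ is a proper subset of the $E_7$ diagram. We run through its possible types up to $W$-conjugacy, and in each case produce a $w$-suitable $\Me\subseteq\Le_{J_w}$ for which \ref{S}, \ref{R}, \ref{W} hold. Throughout, $|\bZ|\le 2$ and $w_0$ acts as $-\id$, so Remark \ref{obs:w0} and Lemma \ref{lem:A-odd} are available.

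First we dispose of the generic shapes by earlier lemmas. If $J_w$ has a component of type $D_d$ with $d\ge4$ (types $D_4,D_5,D_6$, possibly with $A_1$'s), we apply Lemma \ref{lem:Jw-contains-D}. If $J_w$ has a component of type $A_{2l}$, we apply Lemma \ref{lem:A-odd}. If $J_w$ has a component of rank $\ge2$ (or of rank $1$ when $q>3$, using \eqref{eq:not-list}) and a complement of rank $\ge2$, we apply Lemma \ref{lem:levi}. Its last condition, $|\bZ|<|\Cent_{\langle J_2\rangle}(w)|$, is automatic: $|\bZ|\le2$ while the cuspidal part in a rank $\ge2$ component has centraliser of order $\ge3$.

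The remaining shapes are of three kinds: $J_w$ of type $E_6$, type $A_l$ with $l$ odd, and type $A_l+A_1$ or $rA_1$ with small $q$. For type $E_6$ (connected, containing $s_1$), the Levi $\Le_{J_w}$ has a simply connected derived group, so Lemma \ref{lem:Jw} gives regularity there. We reuse the choices of Lemma \ref{lem:e6-cuspidal} inside $\Le_{J_w}$ to obtain $\Phi_{\Me}$. For \ref{W} we use $\sigma=w_0$ via Remark \ref{obs:w0}\ref{item:w0-uno}, together with Remark \ref{obs:coefficient} on the coefficient of $\alpha_7^\vee$, which is non-trivial in the generator of $\bZ$. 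For $A_l$ with $l$ odd, we take $\Me=\Le_{J_w}$ and $\sigma=w_0$. If $w_0\cdot t\in\bZ\langle w\rangle\cdot t$, squaring and using regularity forces $w_0\cdot t=zt$, and then Remark \ref{obs:w0}\ref{item:w0-due} gives $w^2=e$, so $l=1$. For $A_l+A_1$ we copy the $A_l+A_1$ argument in Lemma \ref{lem:e6-non-cuspidal}, with $s_\beta$ as $\sigma$, using that the exponent of $\bZ$ is $2$.

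The expected obstacle is the $rA_1$ cases and $J_w=\{s_i\}$, especially for $q=3$, where \eqref{eq:not-list} excludes rank-one Levi subgroups. Here we plan to take $\Phi_{\Me}$ of type $A_2$ containing a root of $J_w$ on which $w$ acts inside $W_{\Me}$, giving $M\simeq\SL_3(q)$. Alternatively, as in Lemma \ref{lem:dn-rA1}, we take a non-standard $A_2$ swapped by $w$, giving $\SU_3(q)$. We then test \ref{W1} with $\sigma\in\{s_j,\,s_0,\,w_0\}$. Failure for all of them forces many coefficients $\zeta_j$ to be $\pm1$; in particular $s_0\cdot t\in\bZ W_{\Me}\cdot t$ forces $\zeta_1=1$ by \eqref{eq:e7-s0}. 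Then Remark \ref{obs:reduction-to-d6} places $t$ in the $D_6$ subgroup, and we conclude by Theorem \ref{thm:dn-one-exception}, or else via Remark \ref{obs:zeta=1} with \cite[Theorem I]{ACG-VII} for an $\SL$ factor. The delicate part is checking that the $D_6$ exception \eqref{eq:exception-d4} (a $D_4$ phenomenon) cannot be the only available reduction. If it is, we plan to use another $\zeta_j=1$ to land in an $A_6$ or $A_7$ subgroup instead.
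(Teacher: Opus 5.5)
Your case division and the generic reductions via Lemmas \ref{lem:Jw-contains-D}, \ref{lem:A-odd} and \ref{lem:levi} match the paper's. The genuine gap is the squaring step in the cases where $|w|$ is even.

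For $J_w$ of type $A_l$ with $l\in\{3,5\}$, apply $w_0$ to $w_0\cdot t=zw^a\cdot t$. You only get $w^{2a}\cdot t=t$, hence $(l+1)\mid 2a$, so $a=(l+1)/2$ is not excluded. Remark \ref{obs:w0}\ref{item:w0-due} says nothing about this case, and it really occurs. Take $q=3$, $w=s_5s_6s_7$ and $t=\alpha_5^\vee(x^{-3})\alpha_6^\vee(x^{-2})\alpha_7^\vee(x)$ with $x\in\F_{81}^\times$ of order $5$. Then $t\in\T^{F_w}$, and $w$ is minimal in $\WO$, since the tori attached to $|J|\le 2$ have order prime to $5$. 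One checks $w_0\cdot t=t^{-1}=w^2\cdot t$, so $\sigma=w_0$ fails \ref{W1}.

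The same flaw breaks your $A_l+A_1$ argument at $q=3$, where Lemma \ref{lem:levi} is not available. The $E_6$ proof works because $z^3=e$ and $s_\beta^3=s_\beta$ (an odd power). With exponent $2$ you again only obtain $w_1^{2a}\cdot t=t$.

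The missing idea is to choose other $\sigma$'s adapted to the shape of $z=\alpha_2^\vee(-1)\alpha_5^\vee(-1)\alpha_7^\vee(-1)$, whose $\alpha_1^\vee$- and $\alpha_3^\vee$-coefficients are trivial.
\begin{itemize}
\item For $A_l+A_1$ the paper conjugates so the $A_1$-component is $\{s_1\}$; for $J_w$ conjugate to $\{s_5,s_6,s_7\}$ or $\{s_2,s_4,\dots,s_7\}$, $s_1$ is already orthogonal to $J_w$. In both situations it takes $\sigma=s_1$. If $\alpha_1(t)\neq 1$, the $\alpha_1^\vee$-coefficients of $s_1\cdot t$ and of $\bZ\,\Cent_{W_{\Me}}(w)\cdot t$ differ (Remark \ref{obs:coefficient}). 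If $\alpha_1(t)=1$, it uses $w_0$, comparing coefficients outside $J_w$, when $\zeta_3\neq1$, and reduces to $\SL_2(q)\times\SL_6(q)$ when $\zeta_3=1$.
\item For $J_w$ conjugate to $\{s_3,\dots,s_7\}$ the paper uses $s_0$. The case $s_0\cdot t=t$ goes to $D_6$ via Remark \ref{obs:reduction-to-d6}. The case $s_0\cdot t=w^3\cdot t$ is combined with $w_0\cdot t=z'w^3\cdot t$ to get $t^4=e$ and $q\equiv 3\bmod 4$, and then $w^2\cdot t=t$, a contradiction.
\end{itemize}

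Your $E_6$ step has a similar defect. Since $w_0$ inverts the $\alpha_7^\vee$-coefficient, the comparison only yields $\zeta_7^2=\pm1$, which is no contradiction. The paper uses Lemma \ref{lem:index} there instead: the subsystems from Lemma \ref{lem:e6-cuspidal} satisfy $[\Cent_{\langle J_w\rangle}(w):\Cent_{W_{\Me}}(w)]>|\bZ|$.

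Your $rA_1$ plan ($\sigma=s_0$, coefficients forcing $\zeta_1=1$, then Remark \ref{obs:reduction-to-d6}) is essentially the paper's. The paper takes $\Delta_{\Me}=\{\alpha_5,\alpha_6,\alpha_7\}$ with $\{s_7\}\subseteq J_w\subseteq\{s_2,s_3,s_5,s_7\}$, which is $w$-stable for every $r$, whereas your $A_2$ choice is not automatically $w$-stable. Your worry about \eqref{eq:exception-d4} is moot, since Theorem \ref{thm:dn-one-exception} only excludes $n=4$.
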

\begin{proof}
By assumption $J_w$ is not of type $E_7$. 

\medbreak
If $J_w$ has a component of type $D_d$, with $d\geq 4$, then Lemma \ref{lem:Jw-contains-D}
applies. 

\medbreak
If $J_w$ is of type $A_3+A_2+A_1$, $A_4+A_2$, 
$A_3+2A_1$, $A_3+A_2$, $3A_2$, $A_2+ 3A_1$, $2A_2$ or $A_2+2A_1$, then we invoke Lemma \ref{lem:levi}. 

\medbreak
If $J_w$ is of type $A_6$, $A_4+A_1$, $A_4$, $A_2+A_1$, or $A_2$, then 
Lemma \ref{lem:A-odd} applies. 

\medbreak
Assume $J_w$ is of type $A_1+A_l$ with  $l=3,5$. According to \cite[2.2]{sommersBC}, there is only one $W$-orbit of subsets of $S$ of type $A_1+A_5$, whereas there are two $W$-orbits of subsets of $S$ of type $A_1+A_3$. In all cases we can choose $J_w$ so that the component of type $A_1$ is $\{s_1\}$.  We take $\Phi_{\Me}$ to be the root subsystem corresponding to $A_l$. Then $M\simeq\SL_{l+1}(q)$ so \ref{S} holds. Lemma \ref{lem:Jw} guarantees that $\alpha_1(t)\neq1$, and the conditions \ref{R} and \ref{W1} for $\sigma=s_1$.  Then $s_1\cdot t\not\in \bZ W_{\Me}\cdot t$ is readily verified comparing the coefficients of $\alpha_1^\vee$. 

\medbreak
Suppose that $J_w$ is of type $A_l$ with  $l=3,5$. Then, $J_w$ is $W$-conjugate to 
$\{s_5,s_6,s_7\}$, or  $\{s_2,s_4,s_5,s_6,s_7\}$, or $\{s_3,s_4,s_5,s_6,s_7\}$ by
\cite[2.2]{sommersBC}.  In all cases we take $\Phi_{\Me}=\Phi_{J_w}$, giving \ref{S} and \ref{R}. Here $w$ is a Coxeter element in $W_{\Me}$, so $\Cent_{W_{\Me}}(w)=\langle w\rangle$ and $|w|=l+1$. 

\medbreak
Assume first  that $J_w$ is conjugate to $\{s_5,s_6,s_7\}$ or  $\{s_2,s_4,s_5,s_6,s_7\}$.  If $\alpha_1(t)\neq1$, then we verify \ref{W} as we did for $J_w$ of type $A_1+A_3$ or of type $A_1+A_5$. If $\alpha_1(t)=\zeta_1^2\zeta_3^{-1}=1$, then either $\zeta_3\neq1$, whence  $w_0\cdot t\in \bZ \Cent_{W_{\Me}}(w)$ and \ref{W} follows from Remark \ref{obs:w0}, or else $\zeta_3=1$. In the latter case $t\in \langle \U_{\pm\alpha},\alpha\in \Delta\setminus\{\alpha_3\}\rangle^{F_w}\simeq\SL_2(q)\times\SL_6(q)$ and the component in $\SL_6(q)$ is non-trivial. Then we apply Remark \ref{obs:zeta=1} and \cite[Theorem I]{ACG-VII}. 

\medbreak
Assume now that $J_w$ is conjugate to $\{s_3,s_4,s_5,s_6,s_7\}$, so $|w|=6$. Then, $s_0\in \Cent_W(w)$ and $
\Cent_{\Me}(s_0\cdot t)=\Cent_{\Me}(t)=\T$ by Lemma \ref{lem:Jw}. Assume for a contradiction that $s_0\cdot t=z w^a\cdot t$ for some $z\in \bZ$ and some $a\in\I_{0,5}$. Comparing the coefficients of $\alpha_1^\vee$ and $\alpha_2^\vee$ gives $\zeta_1^2=1$ and  $z=e$. Also, $t=w^{2a}\cdot t$, so regularity of $t$ forces $a\in\{0,3\}$. 

\medbreak
If $a=0$, then necessarily $z=e$ and $\zeta_1=1$  and we invoke Remark \ref{obs:reduction-to-d6}. Then, $t$ lies in a group of type $D_6$, it is non-split therein, and $w$ lies in a parabolic subgroup of type $A_5$. 
Then we conclude by \cite[Theorem 6.2]{ACG-VII}.

\medbreak
Let us thus assume that $a=3$. If $w_0\cdot t=z' w^b\cdot t$ for some $z'\in \bZ$ and some $b\in\I_{0,5}$, then $t=w^{2b}\cdot t$, so $b\in\{0,3\}$. Remark \ref{obs:w0} \ref{item:w0-due} forces $b=3$ and so $s_0\cdot t=w^3\cdot t=z' w_0\cdot t$. 
Comparing coefficients making use of $\zeta_1^2=1$ and \eqref{eq:e7-s0} gives $t^4=e$. Since $t$ is non-split, this could occur only if $t^2\neq e$ and $q\equiv 3\mod 4$. In this case,  $t=w\cdot t^q=w\cdot t^3$, so $w^2\cdot t=t$, a contradiction. 

\medbreak
Let now $J_w$ be of type $rA_1$, with $r\in\I_{4}$. For each $r$ there is only one $W$-orbit of subsets of $S$, so we may assume that $\{s_7\}\subseteq J_w\subseteq\{s_2,s_3,s_5,s_7\}$. We take $\Delta_{\Me}=\{\alpha_5,\alpha_6,\alpha_7\}$. 
Then $M\simeq \SL_4(q)$, giving \ref{S}. By Lemma \ref{lem:Jw} we have $\alpha_7(t)\neq 1$ so Lemma \ref{lem:non-centrality} gives \ref{R}. Now, $\alpha_0\perp \Phi_{J_w}$ so $s_0\in \Cent_W(w)$ and $\alpha_7(s_0\cdot t)\neq 1$. Then, \ref{W2} with $\sigma=s_0$ follows from Lemma \ref{lem:non-centrality}. Assume that $s_0\cdot t\in\bZ \Cent_{W_{\Me}}(w)\cdot t$. Remark \ref{obs:coefficient} applied to $\alpha_1$ and $\alpha_3$ yields $\zeta_1=1$. Hence, either \ref{W1} holds, or $s_0\cdot t=t$. In this case, Remark \ref{obs:reduction-to-d6} applies, invoking \cite[Theorem 6.2]{ACG-VII} if $\{\alpha_2,\alpha_3\}\not\subset J_w$, and Theorem \ref{thm:dn-one-exception} otherwise. 

\medbreak 
Assume finally that $J_w$ is of type $E_6$. In the proof of Lemma \ref{lem:e6-cuspidal}, for each cuspidal class in $\langle J_w\rangle$ we have exhibited a $w$-stable root subsystem $\Phi_{\Me}$ of $\Phi_{J_w}$ such that $[\Me,\Me]$ satisfies \ref{S}. Condition \ref{R} follows from Lemma \ref{lem:Jw} because $[\Me,\Me]\leq\Le_{J_w}$. In all cases
\begin{align*}|\Cent_W(w)|\geq |\Cent_{\langle J_w\rangle}(w)|>3|\Cent_{W_{\Me}}(w)|\end{align*}  so \ref{W} follows from Lemma \ref{lem:index}. 
\end{proof}

Let us now look at the cusp classes in $W$, of which there are 12. 
A representative, its order, the size of its centraliser, and the characteristic polynomials $c_w(X)$ of any such class can be found in \cite[Table B.5, B.8]{geck-pfeiffer}. 

\begin{lema}\label{lem:e7-cuspidal}If $w$ is cuspidal, then $\Oc$ is of type C, D, or F.
\end{lema}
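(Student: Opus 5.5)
Since $w$ is cuspidal, $t$ is regular, so $\Cent_{\G}(\sigma\cdot t)=\T$ for every $\sigma\in W$, exactly as in the proof of Lemma \ref{lem:e6-cuspidal}. Hence \ref{R} and \ref{W2} hold automatically, and for each of the $12$ cuspidal classes of $W(E_7)$ it suffices to exhibit a $w$-stable root subsystem $\Phi_{\Me}$ satisfying \ref{S} and \ref{W1}. Here $|\bZ|\le 2$. I will run through the list of \cite[Tables B.5, B.8]{geck-pfeiffer} with Carter's labels: $E_7$, $E_7(a_1)$, $E_7(a_2)$, $E_7(a_3)$, $E_7(a_4)$, $D_6+A_1$, $D_6(a_1)+A_1$, $D_6(a_2)+A_1$, $A_7$, $A_5+A_2$, $D_4+3A_1$, $2A_3+A_1$, $7A_1$, $E_6(a_2)+A_1$, and so on. For each class I will use the representative, its order and $|\Cent_W(w)|$ recorded there.

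The main tool for \ref{W1} will be Lemma \ref{lem:index} in its cuspidal form, via Corollary \ref{cor:cuspidal-Ze} adapted to $|\bZ|=2$. It is enough to find $\Phi_{\Me}$ with $|\Cent_W(w)|>2|\Cent_{W_{\Me}}(w)|$; regularity then gives $|\Cent_W(w)\cdot t|=|\Cent_W(w)|$.

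For the classes whose label contains an $A_l$ component ($A_7$, $A_5+A_2$, $2A_3+A_1$, $E_6(a_2)+A_1$, $D_6(\ast)+A_1$, $D_4+3A_1$), I will follow Carter \cite[Proposition 32]{carter-cc}. Concretely, I realise $w$ inside the Weyl subgroup of the extended diagram, as was done for $A_5+A_1$ and $3A_2$ in $E_6$. I then take $\Phi_{\Me}$ to be a single component of rank $\ge2$ of that subsystem, of type $A_l$ or $D_l$. This gives \ref{S} by Lemma \ref{lem:perfect-S}, or by Lemma \ref{lem:cyclically-permuted} when $w$ permutes components. The factor of $w$ in that component is Coxeter-like, so its centraliser is cyclic and small, and the centraliser orders in $W(E_7)$ are large, of order $\ge 2^4$ typically. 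The index inequality should then be immediate.

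For $7A_1$, where $w=w_0=-\id$, every subsystem is $w$-stable. I take $\Phi_{\Me}$ of type $A_2$, on which $w$ acts as the graph automorphism, so that $M\simeq\SU_3(q)$; for $q=2$ I use instead a $D_4$ or $A_3$ subsystem, giving $\Omega_8^-$ or $\SU_4(q)$. The inequality $|W|>2|W_{\Me}|$ is then trivial.

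The Coxeter-type classes $E_7,E_7(a_1),\dots,E_7(a_4)$ are the hard step, because $\Cent_W(w)$ is small; for $E_7$ itself it is $\langle w\rangle$ of order $18$. Here I will mimic the $E_6$ Coxeter case. I look for a power $w^k$ that is cuspidal of a smaller type, such as $7A_1$, $3A_2+A_1$ or $2A_3+A_1$, read off from the factorisation of $c_w(X)$. I then pick a $w$-stable member of the corresponding set of subsystems containing $w^k$, which exists by a counting argument as in case $E_6(a_1)$. The cosets $w^a\Cent_{W_{\Me}}(w)$ for $a<k$ are then distinct.

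If the index is only $2$, I argue directly. Suppose $w\cdot t=z\,w^{ka}\cdot t$. Squaring removes $z$, since $z^2=e$, and regularity forces $|w|$ to divide $2(1-ka)$, which I will rule out by the order arithmetic. If a residual case survives, such as $w^{m}\cdot t=zt$, I will finish as in Remark \ref{obs:w0}\ref{item:w0-due}: I derive $t^4=e$ with $q\equiv3\bmod4$, hence $F(t)=t^{-1}$, contradicting regularity or cuspidality. I expect this order bookkeeping for $E_7(a_i)$ to be the main obstacle.
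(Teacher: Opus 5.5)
Your reduction and your treatment of $2A_3+A_1$, $D_4+3A_1$, $A_5+A_2$ and $7A_1$ agree with the paper. The reduction is: regularity gives \ref{R} and \ref{W2}, so one only needs $\Phi_{\Me}$ with \ref{S} and \ref{W1}; those four classes are then handled by a single component plus Lemma \ref{lem:index}. Your ``cuspidal power'' route also works for $E_7(a_1)$, and there it is exactly the paper's argument: $w^7=w_0$, and $\mathbb Z/7$ fixes one of the $135$ subsystems of type $7A_1$ since $135\equiv 2\bmod 7$.

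The gap is in the step you call bookkeeping: producing a $w$-stable $\Phi_{\Me}$ at all. For the Coxeter class your mechanism cannot succeed. The proper powers of smaller cuspidal type are $w^3$ and $w^{15}$, of class $E_7(a_4)$ with characteristic polynomial $\varphi_2\varphi_6^3$; this class lies in the Weyl group of no proper subsystem of maximal rank. The only other one is $w^9=w_0$, of class $7A_1$. But $\langle w\rangle/\langle w^9\rangle\simeq\mathbb Z/9$ has no forced fixed point on the $135$ subsystems of type $7A_1$ ($3\mid 135$), and in fact it fixes none. Indeed, the stabilizer of a $7A_1$ acts by signed permutations of seven lines, whose characteristic polynomials are products of factors $X^k\pm1$ with $\sum k=7$, and these never contain $\varphi_{18}$. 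One checks likewise that $w$ stabilizes no proper subsystem of maximal rank. For $E_7(a_4)$, where $w^3=w_0$, the $7A_1$ route fails the same way: $3\mid135$, and $\varphi_6^3$ cannot divide such a product. For $E_7(a_2)$ and $E_7(a_3)$ your counting is not carried out.

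The missing idea is a subsystem that is not of maximal rank. The paper lets $\langle w\rangle/\langle w_0\rangle$ act on the $28$ subsystems of type $E_6$; since $3\nmid 28$, the Coxeter element fixes one, and this is $\Phi_{\Me}$, with \ref{S} from Lemma \ref{lem:cyclically-permuted}. Since $\Cent_W(w)=\langle w\rangle$ and $w$ acts by $-1$ on the line orthogonal to $\Phi_{\Me}$, we get $\Cent_{W_{\Me}}(w)\subseteq\langle w^2\rangle$. So the index can be exactly $2=|\bZ|$, and Lemma \ref{lem:index} does not suffice. The paper then excludes $w_0\cdot t=z\,w^{2a}\cdot t$ by applying $w_0$: this gives $w^{4a}\cdot t=t$, hence $9\mid a$, i.e.\ $a=0$, contradicting Remark \ref{obs:w0}\ref{item:w0-due}. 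For the other hard classes the paper uses different devices. For $E_7(a_4)=w_1^3$ it reuses the $E_6$ fixed by the Coxeter element $w_1$. For $E_7(a_2)$ it takes a non-trivial $w$-orbit among the four orthogonal reflections whose product is $w^6$. For $E_7(a_3)$ it uses the non-cuspidal power $w^2$, which lies in a unique subsystem of type $A_2+A_4$.

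Smaller corrections:
\begin{itemize}
\item With the $D_6$ component, Lemma \ref{lem:index} is not immediate for $D_6(a_1)+A_1$, since $|\Cent_W(w)|=32=2\,|\Cent_{W(D_6)}(w)|$. This is harmless only because that class coincides with $A_7$, the unique class with $c_w=\varphi_2\varphi_4\varphi_8$. There, $\Phi_{\Me}$ of type $A_7$, or the paper's cyclically permuted $4A_1$, gives index at least $4$.
\item $E_6(a_2)+A_1$ is not a class of $W(E_7)$, and $3A_2+A_1$ is not a subsystem of $E_7$.
\item For $7A_1$ with $q=2$, $-\id$ acts on a $D_4$ as $w_0(D_4)$, so you get the split group, not $\Omega_8^-$; it is still quasi-simple.
\item Conversely, $D_6(a_2)+A_1$ is a genuine cuspidal class not named in the paper's case list. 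Your $D_6$ choice settles it: $|\Cent_W(w)|=216$ against $|\Cent_{W(D_6)}(w)|=36$.
\end{itemize}
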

\begin{proof}If $w$ is cuspidal then  $\Cent_{\G}(t)=\T$ and so $\Cent_{\G}(\sigma\cdot t)=\T$ for all $\sigma\in W$. Hence, \ref{R} and \ref{W2} are always granted. Therefore,  it is enough to find for any cuspidal class a root system $\Phi_{\Me}$ such that $\pi([M,M])$ is quasi-simple and $\Cent_{W}(w)\cdot t\not\subset \bZ \Cent_{W_{\Me}}(w)\cdot t$. 

\medbreak
\noindent{\bf Cases $2A_3+A_1,\,D_4+3A_1,\,A_5+A_2$.} We take $\Phi_{\Me}$ to be the component of type respectively $A_3$, $D_4$, or $A_5$ of $\Phi_{J_w}$ so \ref{S} is guaranteed by Lemma \ref{lem:perfect-S}. 
Since the order of the Weyl group of the complementary components $A_3+A_1$, $3A_1$, and $A_2$ in $\Phi_{J_w}$ is  greater than $2$, condition \ref{W} follows from Lemma \ref{lem:index}. 

\medbreak
\noindent{\bf Case $7A_1$, $w=w_0$.} We take $\Delta_{\Me}=\{\alpha_1,\alpha_3,\,\alpha_4\}$ so $w_0$ acts on $\Phi_{\Me}$ as the composition an element in $W_{\Me}$ and a non-trivial graph automorphism. Then, $M\simeq\SU_4(q)$, so it is quasi-simple, yielding \ref{S}, and \ref{W} follows from Lemma \ref{lem:index} and the estimate
\begin{align*}|\Cent_W(w_0)|=|W|=2^{10}\cdot 3^4\cdot 5\cdot 7>2\cdot 4!>|\bZ||W_{\Me}|.\end{align*}

\medbreak

\noindent{\bf Case $A_7$, $w=s_0s_1s_3s_4s_5s_6s_7$, $|w|=8$.}  We take $\Phi_{\Me}$ to be the root subsystem of type $4A_1$ with base 
\begin{align*}
\gamma_1 &\coloneqq \alpha_1+\alpha_3+\alpha_4+\alpha_5,&\gamma_2 &\coloneqq w\gamma_1= \alpha_3+\alpha_4+\alpha_5+\alpha_6,\\
\gamma_3&\coloneqq w^2\gamma_1=\alpha_4+\alpha_5+\alpha_6+\alpha_7,&
\gamma_4&\coloneqq w^3\gamma_1= \alpha_0-(\alpha_1+\alpha_3+\alpha_4).
\end{align*}
Since $w\gamma_4=-\gamma_1$, condition \ref{S} follows from Lemma \ref{lem:cyclically-permuted}. 
Furthermore, $|W_{\Me}|\leq 2$, so $|\Cent_W(w)|\geq|\langle w\rangle|=8>|\bZ||\Cent_{W_{\Me}}(w)|$, giving \ref{W} by virtue of Lemma \ref{lem:index}. 

\medbreak

\noindent{\bf Cases $D_6+A_1$ and $D_6(a_1)+A_1$.}   We may assume
without loss of generality that
$w=s_0w_1$ where $w_1$ is cuspidal in $W_1\coloneqq \langle s_2,s_3,s_4,s_5,s_6,s_7\rangle$. We take $\Me$ to be the Levi subgroup associated with $\{\alpha_2,\alpha_3,\alpha_4, \alpha_5,\alpha_6,\alpha_7\}$. Then 
\[[M,M]/Z([M,M])\simeq \Pom^+_{12}(q) \qquad\text{ and }\qquad W_{\Me}=W_1.\]
We verify \ref{W}, using that $s_0\in \Cent_W(w)\setminus W_{\Me}$. 

\medbreak
If $w$ is of type $D_6+A_1$, then $w_1$ lies in the Coxeter class of $W_1$, so $\Cent_{W_{\Me}}(w)=\Cent_{W_{\Me}}(w_1)=\langle w_1\rangle$ by \cite[Proposition 30]{carter-cc}.
Assume for a contradiction that $s_0\cdot t=zw_1^a\cdot t$ for some $z\in \bZ$ and some $a\in\I_{0,9}$. Regularity of $t$ and Remark \ref{obs:reduction-to-d6} force $a\neq0$. Then $t=w_1^{2a}\cdot t$ and Lemma \ref{lem:Jw} forces $a=5$. Since $w_1^5$ is the longest element of the parabolic subgroup $W_1$, it follows that $w_0=s_0 w_1^5$, therefore, $w_0\cdot t=zt$. However, this contradicts Remark  \ref{obs:w0} \ref{item:w0-due}.

\medbreak
If $w$ is of type $D_6(a_1)+A_1$ then by order reasons $w_1$ lies in the cuspidal class of $W_1$ corresponding to the partition $(4,2)$. Assume that $s_0\cdot t=z\sigma\cdot t$ for some $z\in \bZ$ and some $\sigma\in \Cent_{W_{\Me}}(w)$. Regularity of $t$ and Remark \ref{obs:reduction-to-d6} force $\sigma\neq e$ and $\sigma^{2}=e$. Using the inclusion $W_1\leq \mathbb S_{12}$ we see that the only non-trivial involution in
$\Cent_{W_{\Me}}(w)=\Cent_{W_{\Me}}(w_1)=\Cent_{\mathbb S_{12}}(w_1)\cap W_1$ is the longest element in $W_1$. Then we argue as we did in the case $D_6+A_1$. 
 
\medbreak
\noindent{\bf Case $E_7$, the  Coxeter class.} In this case, $|w|=18$ and $\Cent_W(w)=\langle w\rangle$. As $w_0\in Z(W)$, it is necessarily the unique involution in $\Cent_W(w)$, that is $w_0=w^9$. The group $\langle w\rangle/\langle w_0\rangle$ acts on the set $A$ consisting of the $W$-orbits of root subsystems of $\Phi$ of type $E_6$. By \cite[Lemma 34, Table 10]{carter-cc} we have $|A|=28$,  so $w$ necessarily preserves at least one of these root systems, that we set to be $\Phi_{\Me}$. Then,  \ref{S} follows from Lemma \ref{lem:cyclically-permuted}. Since $w$ is cuspidal, it is the product of $7$ reflections with respect to linearly independent roots, hence  $w\not\in W_{\Me}$.  
Therefore, either $|\bZ||\Cent_{W_{\Me}}(w)|<9|\bZ|=|\Cent_W(w)|$, or else $\Cent_{W_{\Me}}(w)=\langle w^2\rangle$. In the latter case, if $w_0\cdot t=z w^{2a}\cdot t$ for some $z\in \bZ$ and some $a\in\I_{0,8}$, then $a=0$ is excluded by Remark \ref{obs:w0} \ref{item:w0-due}. Applying $w_0$ on both terms gives $w^{4a}=e$, so $18|4a$, forcing $a=9$, a contradiction. 

\medbreak
\noindent{\bf Case $E_7(a_1)$.} In this case, $|w|=14$ and $\Cent_W(w)=\langle w\rangle$, so $w^7=w_0$.
Let $\Phi_1$ be a root subsystem of type $7A_1$ such that $w_0$ lies in its Weyl group. According to  \cite[Lemma 35, Table 10]{carter-cc} the set $A$ of  root subsystems of $\Phi$ that are $W$-conjugate to $\Phi_1$  and whose Weyl group contains $w_0$ has cardinality $135$. By order reasons $\langle w\rangle/\langle w^7\rangle$ acts on $A$ and fixes necessarily an element therein. We take $\Phi_{\Me}$ to be such a root subsystem. In addition, $\langle w\rangle/\langle w^7\rangle$ acts on the set of components of $\Phi_{\Me}$ and it cannot fix all of them, for in this case $w^2= e$. Therefore, $w$ must permute these components cyclically. Lemma \ref{lem:cyclically-permuted} gives \ref{S}. Then \ref{W} follows from Lemma \ref{lem:index} because 
 \begin{align*}|\Cent_W(w)|=14>4=|\bZ||\Cent_{W_{\Me}}(w)|.\end{align*}

\medbreak
\noindent{\bf Case $E_7(a_2)$.} Here, $|w|=12$ and  $[\Cent_W(w):\langle w\rangle]=2$. The characteristic polynomials of $w$ is  
$c_w(X)=\varphi_2(X)\varphi_6(X)\varphi_{12}(X)$, so $c_{w^3}(X)=\varphi_2(X)^3\varphi_4(X)^2$. By inspection $w^3$ is cuspidal and lies in the class $2A_3+A_1$. It is therefore of the form $w^3=s_\alpha w_1w_2$, where $w_1$ and $w_2$ are Coxeter elements with respect to orthogonal root systems $\Phi_1$ and $\Phi_2$ of type $A_3$, and $\alpha\perp\Phi_i$ for $i=1,2$. Therefore, $w^6=w_1^2w_2^2$ is the product of four reflections with respect to mutually orthogonal roots, say $\gamma_i$ for $i\in\I_{4}$.
Then, $w$ permutes the sets $\{\pm\gamma_i\}$, with at most one orbit of size $1$, because $w$ has no eigenspace of eigenvalue $1$ and a $1$-dimensional  eigenspace of eigenvalue $-1$. We take $\Phi_{\Me}$ to be generated by the subsets $\{\pm\gamma_i\}$ lying in an orbit of size $d>1$. Lemma \ref{lem:cyclically-permuted} gives \ref{S}, and \ref{W} follows from Lemma \ref{lem:index} because
\begin{align*}
|\Cent_W(w)|=24>2\cdot 2\geq |\bZ||{W_{\Me}}(w)|\geq |\bZ||\Cent_{W_{\Me}}(w)|.\end{align*}

\medbreak

\noindent{\bf Case $E_7(a_3)$.} Here $|w|=30$ and $\Cent_W(w)=\langle w\rangle$. By  
\cite[Table B.8]{geck-pfeiffer} the characteristic polynomial of $w^2$ is 
$c_{w^2}(X)=\varphi_1(X)\varphi_3(X)\varphi_5(X)$, so $w^2$ is cuspidal in a Weyl subgroup $W'$ of rank $6$. Inspection of \cite[Table 10]{carter-cc} shows that $W'$ is of type $A_2+ A_4$ and that there is no other subgroup that is $W$-conjugate to $W'$ and contains $w^2$. 
Hence, $w$ stabilizes a root system of type $A_2+A_4$, cf. \cite[Lemma 34]{carter-cc}. 
Let $\Phi_{\Me}$  be the component of type $A_4$ therein; we have \ref{S} by Lemma \ref{lem:cyclically-permuted}. If $w_1$ is the factor of $w^2\in W'$ in the component of type $A_4$, then $\Cent_{W_{\Me}}(w)=\langle w_1^2\rangle$ and
\[ |\Cent_W(w)|=30> 2\cdot 5=|\bZ||\Cent_{W_{\Me}}(w)|. \]
Then Lemma \ref{lem:index} applies, giving \ref{W}.

\medbreak
\noindent{\bf Case $E_7(a_4)$.} Comparing the characteristic polynomials we deduce that $w=w_1^3$ for some $w_1$ in the class labeled by $E_7$. We take the same root subsystem $\Phi_{\Me}$ we used for the class of $w_1$. Then, $w_1\in \Cent_W(w)\setminus \Cent_{W_{\Me}}(w)$. Assume that $w_1\cdot t=z\sigma\cdot t$ for some $z\in \bZ$ and some $\sigma\in \Cent_{W_{\Me}}(w)$. Applying $w_1$ on both sides gives $w_1^2\cdot t=\sigma^2\cdot t$, so  $w_1^2=\sigma^2\in W_{\Me}$ by regularity of $t$. Since $c_{w_1}(X)=\varphi_2(X)\varphi_{18}(X)$, there holds $c_{\sigma^2}(X)=\varphi_1(X)\varphi_9(X)$, so $\sigma^2$ has rank $6$ and it is therefore cuspidal in $W_{\Me}$. By inspection, it lies in the class $E_6(a_1)$ therein, so $|\sigma^2|=9$. Then, also $\sigma$ has to be cuspidal in $W_{\Me}$ and by order reason, it is necessarily conjugate to $\sigma^2$. But then, 
\begin{align*}
 w_0\cdot t=w_1^9\cdot t=z^9\sigma^9\cdot t =zt,  
\end{align*}
contradicting Remark \ref{obs:w0} \ref{item:w0-due}.
 \end{proof}

We are then in a position to state a general result for $\Gb$ of type $E_7$. 
\begin{theorem}\label{thm:e7-collapses}Every non-trivial class in a simple Chevalley group of type $E_7$ is of type C, D, or F. 
\end{theorem}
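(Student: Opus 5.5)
The proof will mirror that of Theorem \ref{thm:e6-collapses}: sort the non-trivial classes of $\Gb$ by Jordan decomposition and cite the appropriate result for each kind. Unipotent classes are handled by \cite[Main Theorem]{ACG-IV} and mixed classes by \cite[Theorem 1.1]{ACG-V}; both cover type $E_7$ and give type C, D or F. This leaves the semisimple classes $\Oc=\pi(\mathfrak O)$ with $\mathfrak O$ represented by $t\in\T^{F_w}\setminus\bZ$ and $w$ minimal in $\WO$.

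For semisimple classes I split according to whether $e\in\WO$. If it is, the class meets the split torus. Since $\Gb$ is not $\PSL_{n+1}(q)$, \cite[Theorem 4.1]{ACG-VII} (or \cite[Theorem I]{ACG-VII}) shows the class collapses and is of type C, D or F; Table \ref{tab:ss-psl2} contains no $E_7$ exceptions, so nothing is left over in this case. If $e\notin\WO$, I separate on whether $w$ is cuspidal: Lemma \ref{lem:e7-non-cuspidal} treats non-cuspidal $w$ and Lemma \ref{lem:e7-cuspidal} treats cuspidal $w$. Together these cover every possible $J_w$ up to $W$-conjugacy.

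The only place needing care is the bookkeeping. First, the cited results apply to $\Oc$ as a class of the simple group $\Gb\simeq\pi(G_w)$ via the standard isomorphism. Second, the reductions inside Lemma \ref{lem:e7-non-cuspidal} (Remark \ref{obs:reduction-to-d6}, passage to $D_6$, $A_5$ subgroups) never produce one of the exceptional classes of Tables \ref{tab:ss-open-POM-even}. Those exceptions occur only for $D_4$ with $w=s_1s_3s_4$. The $D_6$ reductions use Theorem \ref{thm:dn-one-exception} with $n=6$, so that exclusion is never triggered.
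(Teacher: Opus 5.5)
Your proposal is correct and follows essentially the same route as the paper: unipotent and mixed classes via \cite[Main Theorem]{ACG-IV} and \cite[Theorem 1.1]{ACG-V}, split semisimple classes via \cite[Theorem I]{ACG-VII}, and the remaining semisimple classes via Lemmata \ref{lem:e7-non-cuspidal} and \ref{lem:e7-cuspidal}. Your extra bookkeeping about the $D_6$ and $A_5$ reductions is already handled inside the proofs of those lemmata, so the paper does not repeat it at the level of the theorem.
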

\begin{proof}
For unipotent, respectively mixed classes this is \cite[Main Theorem]{ACG-IV}, respectively \cite[Theorem 1.1]{ACG-V}. For split semisimple classes this is \cite[Theorem I]{ACG-VII}. The remaining classes are covered by Lemmata \ref{lem:e7-non-cuspidal} and  \ref{lem:e7-cuspidal}.
\end{proof}

\subsection{Semisimple classes in \texorpdfstring{$E_8(q)$}{}} \label{subsec:E8}

\

In this Subsection  $\G$ is of type $E_8$, so $w_0$ acts as $-\id$ on $\Phi$ and $\bZ=e$.  We will make use of the following formulas.
\begin{align*}
\alpha_0 &= 2\alpha_1+3\alpha_2+4\alpha_3+6\alpha_4+5\alpha_5+4\alpha_6+3\alpha_7+2\alpha_8,
\\
s_0\cdot t &= \alpha_1^\vee(\zeta_1\zeta_8^{-2})\alpha_2^\vee(\zeta_2\zeta_8^{-3})\alpha_3^\vee(\zeta_3\zeta_8^{-4})\alpha_4^\vee(\zeta_4\zeta_8^{-6})\alpha_5^\vee(\zeta_5\zeta_8^{-5})
\\
&\qquad \times \alpha_6^\vee(\zeta_6\zeta_8^{-4}) \alpha_7^\vee(\zeta_7\zeta_8^{-3}) \alpha_8^\vee(\zeta_8^{-1}).
\end{align*}

We verify the hypotheses of Theorem \ref{thm:potente-revised}, analyzing separately  the different possibilities for $J_w$, up to $W$-conjugacy.

\begin{lema}\label{lem:e8-non-cuspidal}If $w$ is not cuspidal, then $\Oc$ is of type C, D, or F. 
\end{lema}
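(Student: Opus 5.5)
We argue by case analysis on the type of $J_w$, following the pattern of Lemmata \ref{lem:e6-non-cuspidal} and \ref{lem:e7-non-cuspidal}. Since $\bZ=e$ for $E_8$, many conditions simplify: \ref{W1} only requires $\sigma\cdot t\neq\tau\cdot t$ for $\tau\in\Cent_{W_{\Me}}(w)$, and Lemma \ref{lem:index} only needs $\Cent_{W_{\Me}}(w)\subsetneq\Cent_{\langle J_w\rangle}(w)$. We may assume $w$ is minimal in $\WO$ and $e\notin\WO$, so $t$ is regular in $\Le_{J_w}$ by Lemma \ref{lem:Jw}; $J_w\neq S$ since $w$ is not cuspidal.

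The generic cases go as before. If $J_w$ has a component of type $D_d$, $d\geq4$, Lemma \ref{lem:Jw-contains-D} applies. If a component has type $A_{2l}$, Lemma \ref{lem:A-odd} applies. If $J_w$ has at least two components, one of them of rank $\geq2$ and satisfying \eqref{eq:not-list}, Lemma \ref{lem:levi} applies; condition (v) there is $1<|\Cent_{\langle J_2\rangle}(w)|$, which always holds because $J_2\neq\emptyset$ and $w$ is cuspidal in $\langle J_2\rangle$. So only three families remain. The first is $J_w$ of type $rA_1$. The second is $J_w$ irreducible of type $A_l$ with $l$ odd ($A_1,A_3,A_5,A_7$). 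The third is $J_w$ irreducible of type $E_6$ or $E_7$. (Any reducible $J_w$ with a rank $\geq2$ component is already covered by Lemma \ref{lem:levi}.)

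\emph{Exceptional types.} For $J_w$ of type $E_6$ or $E_7$, reuse the subsystems $\Phi_{\Me}\subseteq\Phi_{J_w}$ built in Lemmata \ref{lem:e6-cuspidal} and \ref{lem:e7-cuspidal}. These give \ref{S}, and \ref{R} follows since $\Me\leq\Le_{J_w}$. The index inequality in those proofs shows $\Cent_{W_{\Me}}(w)\subsetneq\Cent_{\langle J_w\rangle}(w)$, so Lemma \ref{lem:index} gives \ref{W}.

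\emph{Type $A_l$, $l$ odd.} Take $\Phi_{\Me}=\Phi_{J_w}$, so $M\simeq\SL_{l+1}(q)$; this is fine unless $(l,q)\in\{(1,2),(1,3)\}$. Then $\Cent_{W_{\Me}}(w)=\langle w\rangle$. Use $\sigma=w_0$ (Remark \ref{obs:w0}) or $\sigma=s_\beta$ with $\beta\perp\Phi_{J_w}$ (for instance $s_0$ or a simple reflection orthogonal to $J_w$). If $\sigma\cdot t=w^a\cdot t$, squaring gives $w^{2a}\cdot t=t$, so $a\in\{0,(l+1)/2\}$. The case $a=0$ gives $\beta(t)=1$, and $t$ then lies in a smaller subgroup, e.g. of type $E_7$ or $D_7$ when $\zeta_8=1$. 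That case is handled by Remark \ref{obs:zeta=1} together with Theorem \ref{thm:e7-collapses}, Theorem \ref{thm:dn-one-exception} or \cite[Theorem I]{ACG-VII}. The case $a=(l+1)/2$ yields $w_0\cdot t=w^{(l+1)/2}\cdot t$, which contradicts Remark \ref{obs:w0}\ref{item:w0-due} or regularity. Trying a second $\sigma$ if needed should close this.

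\emph{Type $rA_1$.} When $r\geq2$, or $r=1$ with a small $q$ forbidding a rank-one $\Me$, take $\Me$ of type $A_2$ or $A_3$ containing a component of $J_w$, as in the $E_6$ and $E_7$ $rA_1$ arguments. Verify \ref{R} via Lemma \ref{lem:non-centrality}, and \ref{W} with $\sigma=s_0$ or another orthogonal reflection, using Remark \ref{obs:coefficient}. When \ref{W} fails, coefficients force some $\zeta_j=1$ and we reduce by Remark \ref{obs:zeta=1} to $E_7$, $D_7$ or $A_7$ subgroups. I expect the main obstacle to be this family together with $A_1$ and small $q=2,3$, where the degenerate subcases must be tracked through the reductions.
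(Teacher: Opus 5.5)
Your reductions via Lemmata \ref{lem:Jw-contains-D}, \ref{lem:A-odd}, \ref{lem:levi} and your treatment of $J_w$ of type $E_6$, $E_7$ agree with the paper. Your $rA_1$ sketch can also be completed. For instance, take $\Delta_{\Me}=\{\alpha_1,\alpha_3\}$, $J_w\subseteq\{s_1,s_2,s_5,s_7\}$ and $\sigma=s_0$. Remark \ref{obs:coefficient} applied to $\alpha_2^\vee,\alpha_8^\vee$ forces $\zeta_8=1$, so $t$ lies in the $E_7$-subgroup and Remark \ref{obs:zeta=1} with Theorem \ref{thm:e7-collapses} applies. The paper uses $\sigma=w_0$ here instead.

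The gap is in the odd $A_l$ case. Your claim that $a=(l+1)/2$ contradicts Remark \ref{obs:w0}\ref{item:w0-due} or regularity is false. That Remark only treats $w_0\cdot t\in\bZ t$. Regularity of $t$ in $\Le_{J_w}$ only excludes $\tau\cdot t=t$ for $e\neq\tau\in\langle J_w\rangle$, whereas $w_0w^{(l+1)/2}\notin\langle J_w\rangle$. Concretely, take $J_w=\{s_1\}$ and $t=\alpha_1^\vee(\zeta)$ with $\zeta^{q+1}=1\neq\zeta^2$: then $w=s_1$ is minimal and $w_0\cdot t=t^{-1}=s_1\cdot t$.

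For $l\le 5$ this can be repaired inside your scheme. Place $J_w$ in $\{s_1,\dots,s_6\}$ and use $\sigma=s_0$ alone. The $\alpha_7^\vee$- and $\alpha_8^\vee$-coefficients of $s_0\cdot t$ are $\zeta_7\zeta_8^{-3}$ and $\zeta_8^{-1}$. Hence $s_0\cdot t\in\langle w\rangle\cdot t$ forces $\zeta_8=1$, i.e.\ $s_0\cdot t=t$, and you reduce to $E_7$. The paper instead combines $s_8$ and $w_0$ to get $\zeta_7=1$ and reduces to $E_6+A_1$ via Theorem \ref{thm:e6-collapses}.

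For $A_7$, however, your method has nothing to fall back on. The only parabolic of type $A_7$ is $J_w=\{s_1,s_3,\dots,s_8\}$, and $\Phi_{J_w}^\perp=\mathbb R\omega_2$ contains no root, so there is no orthogonal reflection. Moreover $\Cent_W(w)=\langle w\rangle\times\langle w_0\rangle$. So with $\Phi_{\Me}=\Phi_{J_w}$, condition \ref{W} amounts to $t^{-1}\neq w^4\cdot t$, and nothing you say excludes $t^{-1}=w^4\cdot t$. Indeed $w^4=s_{\gamma_1}s_{\gamma_2}s_{\gamma_3}s_{\gamma_4}$ for four orthogonal roots $\gamma_i\in\Phi_{J_w}$. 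Every element of $\prod_i\gamma_i^\vee(\ku^\times)$ is inverted by $w^4$, and generic $F_w$-fixed ones are regular in $\Le_{J_w}$. Excluding this would need a genuine minimality argument, which you do not give; ``trying a second $\sigma$'' is not available.

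The missing idea is to change $\Me$. The paper takes the $4A_1$ subsystem with base $\gamma_1=\alpha_1+\alpha_3+\alpha_4+\alpha_5$ and $\gamma_{i+1}=w\gamma_i$ for $i\le 3$. The element $w=s_1s_3s_4s_5s_6s_7s_8$ permutes these cyclically, with $w\gamma_4=-\gamma_1$. Then:
\begin{itemize}
\item \ref{S} holds for every $q$ by Lemma \ref{lem:cyclically-permuted}, the relevant group being $\SL_2(q^4)$;
\item \ref{R} follows from Lemma \ref{lem:Jw};
\item $W_{\Me}$ is elementary abelian, so $w\notin W_{\Me}$, and $\sigma=w$ gives \ref{W} from regularity of $t$ in $\Le_{J_w}$ alone.
\end{itemize}
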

\begin{proof}
By assumption $J_w$ is not of type $E_8$. If $J_w$ has a component of type $D_d$, for some $d\geq 4$, then we invoke Lemma \ref{lem:Jw-contains-D}. 

\medbreak
If $J_w$ has more than one component, one of which is of rank $\geq 2$, then we invoke Lemma \ref{lem:levi}. 

\medbreak
If $J_w$ is of type $A_l$ with $l$ even, then we apply Lemma \ref{lem:A-odd}.

\medbreak
There remain the cases where $J_w$ is of type $E_7$, $E_6$, $A_l$ with $l=3,5,7$, or $rA_1$ with $r=1,2,3,4$, and $w$ is cuspidal in $\langle J_w\rangle$. There exists a unique conjugacy class of parabolic subgroups in $W$ for each of these types, see \cite{sommersBC}.

\medbreak
\noindent{\bf Case $rA_1$ with $r=1,2,3,4$.} We may assume that 
$\{s_1\}\subseteq J_w\subseteq S\setminus\{s_3,s_4\}$ and  take $\Phi_{\Me}$ 
to be the root system with base $\{\alpha_1,\alpha_3\}$. 
Then $w$ acts on $\Phi_{\Me}$ as $s_1$, giving \ref{S}  
by Lemma \ref{lem:perfect-S}. 
Lemma \ref{lem:Jw} guarantees that $\alpha_1(t)\neq 1$, so \ref{R} follows from 
Lemma \ref{lem:non-centrality}. 
Then $\Cent_{W_{\Me}}(w)=\langle s_1\rangle$. 
Assume that \ref{W} fails. Then $w_0\cdot t=s_1\cdot t$ because $\id\not\in\WO$. 
Comparing coefficients we get $\zeta_2=1$. 
Hence, $t\in\langle \U_{\pm\alpha},\,\alpha\in \Delta\setminus\{\alpha_2\}\rangle^{F_w}$, isomorphic to $\SL_2(q)\times\SL_6(q)$, and the result follows from 
Remark \ref{obs:zeta=1} and \cite[Theorem 1.1]{ACG-III}. 

\medbreak
\noindent{\bf Cases $E_6$ and $E_7$.} Assume that $J_w$ is of type $E_6$ or $E_7$. In the proofs of Lemmata \ref{lem:e6-cuspidal} and \ref{lem:e7-cuspidal}, for each cuspidal class in $\langle J_w\rangle$ we have exhibited a $w$-stable root subsystem $\Phi_{\Me}$ of $\Phi_{J_w}$ such that $[\Me,\Me]$ satisfies \ref{S}. Condition \ref{R} holds for $t$ because $[\Me,\Me]\leq\Le_{J_w}$. In all cases we have 
\begin{align*}|\Cent_W(w)|\geq |\Cent_{\langle J_w\rangle}(w)|>|\Cent_{W_{\Me}}(w)|.\end{align*} Condition \ref{W} follows from Lemma \ref{lem:index}.

\medbreak

\noindent{\bf Cases $A_3,\, A_5$.} We may assume that $s_7,s_8\not\in J_w$. We take $\Phi_{\Me}=\Phi_{J_w}$ so $M=[M,M]\simeq\SL_4(q)$, respectively 
$\SL_6(q)$, and $\Cent_{\Me}(t)=\T$ by Lemma \ref{lem:Jw}, giving \ref{S} and \ref{R}. Then $\Cent_{W_{\Me}}(w)=\langle w\rangle$.   Assume that \ref{W} fails. We consider 
\[w_0, s_8\in \Cent_W(w)\setminus \Cent_{W_{\Me}}(w).\]
Since $s_8\Phi_{\Me}=\Phi_{\Me}$, we have $\Cent_{\Me}(s_8\cdot t)=\T$. 
Then 
\begin{align*}s_8\cdot t\in\langle w\rangle\cdot t, &&w_0\cdot t\in\langle w\rangle \cdot t.\end{align*} 
Comparing the coefficient of $\alpha_8^\vee$ we see that the first inclusion forces $\zeta_8^2=\zeta_7$ and the second inclusion gives $\zeta_7=\zeta_8^2=1$. Thus, if  \ref{W} fails, then $t\in\langle \U_{\pm\alpha},\alpha\in \Delta\setminus\{\alpha_7\}\rangle^{F_w}$, of type $E_6+A_1$, and the factor in the component of type $E_6$ is non-central. Then we invoke Remark \ref{obs:zeta=1} and Theorem \ref{thm:e6-collapses}. 

\medbreak

\noindent{\bf Case $A_7$.} We may assume that $J_w=\{s_1,s_3,s_4,s_5,s_6,s_7\}$ and choose the representative $w=s_1s_3s_4s_5s_6s_7$. Let 
\begin{align*}
&\gamma_1\coloneqq \alpha_1+\alpha_3+\alpha_4+\alpha_5,&&\gamma_2\coloneqq w\gamma_1=\alpha_3+\alpha_4+\alpha_5+\alpha_6,\\
&\gamma_3\coloneqq w^2\gamma_1=\alpha_4+\alpha_5+\alpha_6+\alpha_7,&&\gamma_4\coloneqq w^3\gamma_1=\alpha_5+\alpha_6+\alpha_7+\alpha_8.
\end{align*}
As $w\gamma_4=-\gamma_1$ and $\gamma_i\perp\gamma_j$ for $i\neq j$, the root system $\Phi_{\Me}$ with base $\{\gamma_i,\, i\in\I_{4}\}$ is $w$-stable, of type $4A_1$, and contained in $\Phi_{J_w}$. Hence, \ref{S} follows from Lemma \ref{lem:cyclically-permuted} and Lemma \ref{lem:Jw} gives $\Cent_{\Me}(t)=\T$, whence \ref{R}. Finally \ref{W} holds because $\Cent_{\Me}(w\cdot t)=\T$ and $w\cdot t\not\in W_{\Me}\cdot t$ by regularity of $t$ in $\Me$.
\end{proof}

There are $30$ cuspidal classes in $W$. A representative for each class is listed in in \cite[Table B.6, B.8]{geck-pfeiffer} together with its order, the size of its centraliser and its characteristic polynomial $c_w(X)$. We will also use the labeling and the information from \cite[Table 11]{carter-cc}.

\begin{lema}\label{lem:e8-cuspidal}If $w$ is cuspidal and not in class $E_8$ or $E_8(a_5)$, then $\Oc$ is of type C, D, or F. 
\end{lema}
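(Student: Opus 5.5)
Since $w$ is cuspidal, $\Cent_{\G}(\sigma\cdot t)=\T$ for all $\sigma\in W$ (Lemma \ref{lem:Jw} with $J_w=S$), so \ref{R} and \ref{W2} hold automatically. Moreover $\bZ=e$ for $E_8$. By Corollary \ref{cor:cuspidal-Ze} it therefore suffices, for each of the $28$ remaining cuspidal classes, to exhibit a $w$-stable root subsystem $\Phi_{\Me}$ such that $\pi([M,M])=[M,M]$ is quasi-simple and $\Cent_{W_{\Me}}(w)\subsetneq\Cent_W(w)$. The second condition is just an index comparison of centralisers.

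\emph{Classes with a reducible label.} I would use the labels and data of \cite[Table 11]{carter-cc} and \cite[Tables B.6, B.8]{geck-pfeiffer}. When the label (for example $2A_4$, $A_8$, $D_4+4A_1$, $E_6+A_2$, $E_7+A_1$, $D_8(a_i)$, $2D_4$, $4A_2$, $2A_3+2A_1$, $8A_1$) exhibits $w$ inside a maximal-rank Weyl subgroup $W'$ of reducible type, take $\Phi_{\Me}$ to be one irreducible component that is $w$-stable and has large enough rank. Then \ref{S} follows from Lemma \ref{lem:perfect-S}, and the complementary components contribute extra centraliser elements, giving strict inclusion as in Lemma \ref{lem:index}. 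If the components are permuted by $w$, as in $4A_2$, $2A_4$, $2D_4$ or $8A_1$, I would instead take the orbit of components under $w$ and apply Lemma \ref{lem:cyclically-permuted}. For $8A_1$ ($w=w_0$) I would mimic the $7A_1$ case of $E_7$ by taking a small Levi subgroup such as $A_3$ on which $w_0$ acts via a graph automorphism, so that $M\simeq\SU_4(q)$. The centraliser $\Cent_W(w_0)=W$ is then huge.

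\emph{Classes with irreducible labels.} These are $E_8(a_i)$ with $i\neq5$, together with the $D_8$ and $A_8$ type classes. Here I would follow the method used for $E_6(a_1)$ and $E_7(a_3)$: pass to a power $w^k$ whose characteristic polynomial, read from the tables, shows it is cuspidal in a known proper subsystem class. I would then use the count in \cite[Table 11]{carter-cc} of such subsystems containing $w^k$, together with the action of $\langle w\rangle/\langle w^k\rangle$, to find a $w$-stable subsystem. A component orbit of that subsystem gives $\Phi_{\Me}$ via Lemma \ref{lem:cyclically-permuted}. For \ref{W1}, I would compare $|\Cent_W(w)|$ with $|\Cent_{W_{\Me}}(w)|$. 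Where these might coincide, I would argue with regularity of $t$ directly, as in the $E_7$ Coxeter case: assuming $\sigma\cdot t=\tau\cdot t$ with $\tau\in W_{\Me}$ forces $\sigma=\tau$ because $t$ is regular, so it is enough that some $\sigma\in\Cent_W(w)$ lies outside $W_{\Me}$. Typically $w$ itself, or $w_0=-\id\in\Cent_W(w)$, lies outside $W_{\Me}$.

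\emph{Main obstacle.} The hard part is the handful of classes with small cyclic centraliser and irreducible characteristic polynomial, such as $E_8(a_1)$ through $E_8(a_3)$ and $E_8(a_6)$ through $E_8(a_8)$. For these, finding a proper $w$-stable subsystem with quasi-simple finite points requires a fixed-point argument on a set of subsystems whose cardinality is coprime to the relevant order. The excluded classes $E_8$ and $E_8(a_5)$, with $c_w=\varphi_{30}$ and $\varphi_{15}$, are presumably exactly those where no such subsystem exists. For each remaining class I would first try powers $w^2$, $w^3$ and $w^5$ to land in the $A_4+A_4$, $E_6+A_2$ or $D_4+D_4$ families, and check the subsystem counts in Carter's tables.
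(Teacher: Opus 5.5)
Your reduction to Corollary~\ref{cor:cuspidal-Ze} matches the paper. So does your treatment of the classes whose label is a proper Weyl subgroup $W'$, including $8A_1$ via $\SU_4(q)$. Two small corrections there. For these labels $w\in W'$, so every component of $\Phi'$ is $w$-stable with inner action and nothing is permuted. For components that are not Levi subsystems, such as $A_8$, condition~\ref{S} should come from Lemma~\ref{lem:cyclically-permuted} with $d=1$, since $[\Me,\Me]$ need not be simply connected and $M$ need not be perfect.

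The genuine gap is the classes $E_8(a_i)$, $i\neq 5$. You defer these to a fixed-point count on a set of subsystems whose cardinality is coprime to the relevant order. In the critical cases such counts do not exist.
\begin{itemize}
\item For $E_8(a_2)$, $w^4$ lies in class $2A_4$. The number of subsystems of type $2A_4$ whose Weyl group contains $w^4$ is $|\Cent_W(w^4)|\cdot 24^2/|N_W(W(2A_4))|=600\cdot 576/57600=6$. Since $w^{10}=-\id$, the element $w^2=w^{10}w^{-8}$ fixes each of them. So $\langle w\rangle$ acts through a group of order $2$ on a $6$-element set, and no fixed point is forced.
\item For $E_8(a_1)$, $w^8$ lies in class $4A_2$. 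This gives $155520\cdot 16/62208=40$ subsystems, with an effective $\mathbb Z/4$-action. The power $w^3\in D_8(a_3)$ is useless: no $D_8$ can be $w$-stable, because a signed permutation of $8$ letters has no eigenvalue of order $24$.
\end{itemize}

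The paper instead constructs a representative. Using $N_W(W')=W'\rtimes\operatorname{Aut}_W$ and \cite[Lemma 1, Table 11]{carter-cc}, it determines which diagram symmetries of $4A_2$, resp.\ $2A_4$, are realised in $W$. It then writes down a cuspidal element of order $24$, resp.\ $20$, in the normaliser and identifies its class by its order. The square of the order-$24$ element handles $E_8(a_3)$, since it interchanges two $A_2$'s. For $E_8(a_7)$ the paper uses the canonical $w$-stable subsystem $\Phi\cap\ker(w^6+\id)$, of type $D_4$. For $E_8(a_4)$, $E_8(a_6)$, $E_8(a_8)$ no counting is needed: $|w|/2$ is odd and $w^{|w|/2}=-\id$, so $\langle w\rangle=\langle w^2,-\id\rangle$ stabilises the subsystem of $w^2$ and all its components. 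A count that ignores $-\id$ would fail here too; for $E_8(a_6)$ one again gets $6$ subsystems of type $2A_4$.

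Two further cases are not covered by your plan.
\begin{itemize}
\item For $E_8(a_8)$, the resulting $\Phi_\Me$ is of type $A_2$ with $w$ acting by an outer automorphism, so $M\simeq\SU_3(q)$. This is solvable for $q=2$, and \ref{S} fails. The paper excludes $q=2$ as follows: then $\T^{F_w}$ is elementary abelian of order $3^4$, so $w\cdot t=t^{-1}$ and $w^2\cdot t=t$. This contradicts regularity, so $w$ is not minimal in $W[\mathfrak O]$.
\item For $D_8(a_i)$, take $\Phi_\Me$ to be the $D_8$ containing $w$. Then both $w$ and $w_0=-\id$ lie in $W_\Me$, so your fallback that ``$w$ or $w_0$ lies outside $W_\Me$'' fails. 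The paper needs the explicit bound $|\Cent_{W_\Me}(w)|\le|\Cent_{\mathbb S_{16}}(c_\lambda)|/2<|\Cent_W(w)|$, coming from the embedding $W(D_8)\le\mathbb S_{16}$. Alternatively, you could use the $w$-stable $D_{\lambda_1}$ inside it: it has rank $<8$ and $w$ acts on it by an outer automorphism.
\end{itemize}
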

\begin{proof}
We verify the hypothesis of Corollary \ref{cor:cuspidal-Ze}. 
If $w$ or $w_0\not\in W_{\Me}$, then the
condition \eqref{eq:cuspidal-Ze}, that is $\Cent_{W_{\Me}}(w)\subsetneq \Cent_{W}(w)$, 
is immediate. 

\medbreak 

\noindent{\bf Cases $4A_2$, $2A_3+2A_1$, $2A_4$, $E_6(a_2)+A_2$, $A_5+A_2+A_1$, $A_7+A_1$, $A_8$, $D_5(a_1)+A_3$, $E_6+A_2$, $2D_4(a_1)$, $D_4+4A_1$, $E_7(a_4)+A_1$, $2D_4$, $D_6+2A_1$, $E_7+A_1$, $E_7(a_2)+A_1$.} Here $w$ lies in the proper Weyl subgroup $W'$ indicated by the label, and we choose $\Phi_{\Me}$ to be the root system of any irreducible component  of rank $\geq 2$ in the root system of $W'$. The action of $w$ on $\Phi_{\Me}$ is inner, and  \ref{S} follows from Lemma \ref{lem:cyclically-permuted}. By inspection in each case $\{w, w_0\}\not\subset W'$.

\medbreak

\noindent{\bf Case $8A_1$, $w=w_0$.} We take $\Delta_{\Me}=\{\alpha_1,\alpha_3,\,\alpha_4\}$ so $w_0$ acts on $\Phi_{\Me}$ via an outer automorphism. Then, $M\simeq\SU_4(q)$ so \ref{S} holds, and 
\begin{align*}|\Cent_W(w_0)|=|W|=2^{14}\cdot 3^5\cdot 5^2\cdot 7>|W_{\Me}|\geq |\Cent_{W_{\Me}}(w)|.\end{align*}
That is, \eqref{eq:cuspidal-Ze} is valid.

\medbreak
\noindent{\bf Cases $D_8$, $D_8(a_1)$, $D_8(a_2)$, and $D_8(a_3)$.} We take $\Phi_{\Me}$ to be the root subsystem of type $D_8$ whose Weyl group $W'$ contains $w$, so Lemma \ref{lem:cyclically-permuted} gives \ref{S}. By construction $w$ is cuspidal in $W'$ and, since it lies in no proper Weyl subgroup of $W'$, it corresponds to a partition $\lambda=(\lambda_1,\lambda_2)$ with only two terms. The inclusion $W'\leq \mathbb S_{16}$ in \cite[Section 3.4]{BL} maps $w$ to a product $c_\lambda$ of $2$ disjoint cycles of size $2\lambda_1$ and $2\lambda_2$. Also, the cycle factors of $c_\lambda$ do not lie in $W_{\Me}$: they only lie in the overgroup of type $B_8$, see Subsection \ref{sec:cuspidal-classical}. This gives the estimate of centraliser sizes 
\[|\Cent_{W_{\Me}}(w)|\leq \frac{|\Cent_{\mathbb S_{16}}(c_\lambda)|}{2}.\]
We collect the relevant data in the table below.

\medbreak
\begin{center}
\begin{tabular}{|c|c|c|c|c|}
\hline
label& $|w|$ & partition $\lambda$ & $|\Cent_W(w)|$ & $|\Cent_{\mathbb S_{12}}(c_\lambda)|$ \\
\hline
$D_8$&$14$&$(7,1)$& $28$& $28$\\
\hline
$D_8(a_1)$&$12$&$(6,2)$& $72$& $48$\\
\hline
$D_8(a_2)$&$30$&$(5,3)$&$60$& $60$\\
\hline
$D_8(a_3)$&$8$&$(4,4)$&$192$& $128$\\
\hline
\end{tabular}
\end{center}
Hence $|\Cent_W(w)|>|\Cent_{W_{\Me}}(w)|$, i.e., \eqref{eq:cuspidal-Ze} follows. 

\medbreak

It remains to consider the case in which $w$ lies in a class labeled by $E_8(a_i)$ for $i\in\I_{8}\setminus\{5\}$. In this case $w$ lies in no proper Weyl subgroup of $W$. Therefore it is enough to find a proper $w$-stable root subsystem $\Phi_{\Me}$ such that \ref{S} holds, since in this case $w\not\in W_{\Me}$. 

\medbreak
\noindent{\bf Cases $E_8(a_4),\,E_8(a_6),\,E_8(a_8)$.} In these cases $|w|$ is even and 
\[w^{\frac{|w|}{2}}=w_0=-\id,\]
as one can verify using $c_w(X)$. For each $w$ we exhibit a suitable $b\in \NN$ coprime with  $\frac{|w|}{2}$ and such that $w^b$ lies in a proper Weyl subgroup $W'$ with root system $\Phi'$. We choose $\Phi'$ to be the root system corresponding to the label of $w^b$. The latter is computed via the characteristic polynomial of $w^b$. Then $\langle w\rangle =\langle w^b,w^{\frac{|w|}{2}}\rangle$ preserves $\Phi'$ and all its irreducible components. 
The table below contains the label, the order, and the characteristic polynomial of $w$, the choice of $b$, and the label and characteristic polynomial of  $w^b$. 

\medbreak
\begin{center}
\begin{tabular}{|c|c|c|c|c|c|}
\hline
$w$ & $c_w(X)$ & $|w|$  & $b$ & $c_{w^b}(X)$ & $w^b$\\
\hline
$E_8(a_4)$&$\varphi_6(X)\varphi_{18}(X)$&$18$&$2$&$\varphi_3(X)\varphi_9(X)$&$A_8$\\
\hline
$E_8(a_6)$&$\varphi_{10}(X)^2$&$10$&$2$&$\varphi_{5}(X)^2$&$2A_4$\\
\hline
$E_8(a_8)$&$\varphi_6(X)$&$6$&$2$&$\varphi_3(X)$&$4A_2$\\
\hline
\end{tabular}
\end{center}

\medbreak
We choose $\Phi_{\Me}$ to be an irreducible component of $\Phi'$. 
If $w$ has label $E_8(a_4)$ and $E_8(a_6)$, then \ref{S} follows from Lemma \ref{lem:perfect-S}. Let $w$ have label $E_8(a_8)$. Then $w=w_0w^{-2}$ acts on $A_2$ via an outer automorphism. Hence, if $q\neq 2$, condition \ref{S} follows from \cite[Proposition 24.21, Theorem 24.17]{malle-testerman}. 

\medbreak
We claim that if $q=2$ the element $w$ cannot be minimal in $W[\mathfrak O]$. Indeed, for $q=2$ the subgroup $\T^{F_w}$ is elementary abelian of order $=3^4$, \cite[Table 8]{MaximalTori}. Therefore, $t^q=t^{-1}$ whence $t^{-1}=w\cdot t$. But then $w^2\cdot t=t$ contradicting Lemma \ref{lem:Jw}.

\medbreak
\noindent{\bf Case $E_8(a_7)$.} Here $c_w(X)=\varphi_6(X)^2\varphi_{12}(X)$, so $c_{w^3}(X)=\varphi_2(X)^4\varphi_4(X)^2$ and by inspection, $w^3$ lies in the class labeled by $2A_3+2A_1$. Let 
 \begin{align*}
&\beta_1\coloneqq 2\alpha_1+2\alpha_2+3\alpha_3+4\alpha_4+3\alpha_5+2\alpha_6+\alpha_7,\\&\beta_2\coloneqq  \alpha_2+\alpha_3+2\alpha_4+2\alpha_5+2\alpha_6+\alpha_7.  
 \end{align*} We choose the representative $w^3=(s_3s_4s_2)(s_{\beta_2}s_6s_7)s_0s_{\beta_1}$. Then, $w$ preserves $\Phi_{\Me}\coloneqq \Phi\cap{\rm Ker}(w^6+\id)$ with base
 \begin{align*}\{\alpha_3+\alpha_4,\alpha_5,\alpha_2+\alpha_4,\alpha_6+\alpha_7\}=s_6s_5s_7s_4\{\alpha_2,\alpha_3,\,\alpha_4,\alpha_5\}.\end{align*} Hence, $\Me$ is a non-standard Levi subgroup of type $D_4$. Thus, $[\Me,\Me]$ is simply connected and $M/Z(M)\simeq [M,M]/Z([M,M])$ is either $\Pom^+_4(q)$ or $^{(3)}D_4(q)$, giving \ref{S}. We are done since by construction $w\not\in W_\Me$. 
 
\medbreak
\noindent{\bf Case $E_8(a_2)$.} This is the only cuspidal class in $W$ consisting of elements of order $20$. Let $\Psi_1$ and $\Psi_2$ be the mutually orthogonal root systems of type $A_4$, with base $\Delta_1\coloneqq \{\alpha_1,\alpha_2,\alpha_3,\alpha_4\}$ and $\Delta_1\coloneqq \{\alpha_6,\alpha_7,\alpha_8,\alpha_0\}$, respectively, and let $W_{(1)}$ and $W_{(2)}$ be their respective Weyl groups. We will exhibit a cuspidal element of order $20$ in the normaliser $N_W(W_{(1)}\times W_{(2)})$, thus in the stabiliser of $\Psi_1+\Psi_2$, which is necessarily conjugate to $w$. 

\medbreak
Let ${\rm Aut}_W(\Delta_1\cup\Delta_2)$ be the subgroup of ${\rm Aut}(\Delta_1\cup\Delta_2)$ consisting of the symmetries that can be realized by elements of $W$. It follows from the proof of \cite[Proposition 28]{carter-cc} that $N_W(W_{(1)}\times W_{(2)})=(W_{(1)}\times W_{(2)}){\rm Aut}_W(\Delta_1\cup\Delta_2)$. Let $\vartheta_1$  be the symmetry of $\Delta_1\cup\Delta_2$ acting as the non-trivial involution on $\Delta_1$ and trivially on $\Delta_2$ and let $\tau$ be the symmetry of order $4$ mapping $\alpha_1$ to $\alpha_0$ and $\alpha_0$ to $\alpha_2$. Then, ${\rm Aut}(\Delta_1\cup\Delta_2)=\langle \vartheta_1,\tau\rangle$ is dihedral of order $8$. 
We claim that ${\rm Aut}_W(\Delta_1\cup\Delta_2)=\langle \tau\rangle$. Indeed, \cite[Table 11]{carter-cc} gives that $|{\rm Aut}_W(\Delta_1\cup\Delta_2)|=4$ and $\vartheta_1\not\in {\rm Aut}_W(\Delta_1\cup\Delta_2)$, because by \cite[Lemma 1]{carter-cc} it would necessarily by realized by an element of the Weyl group of $\Phi\cap \Psi_{2}^\perp=\Psi_1$, that is, $W_{(1)}$, impossible. Hence, $\tau\in W$ and 
\begin{align*}
(s_1s_2s_3s_4\tau)^4&=(s_1s_2s_3s_4s_0s_6s_8s_7\tau^2)^2\\
&=s_1s_2s_3s_4s_0s_6s_8s_7s_2s_1s_4s_3s_6s_0s_7s_8\\
&=(s_1s_2s_3s_4s_2s_1s_4s_3) (s_0s_6s_8s_7s_6s_0s_7s_8).
\end{align*}
With the identifications $W_{(1)}\simeq W_{(2)}\simeq\mathbb S_{5}$ we verify that
$s_1s_2s_3s_4s_2s_1s_4s_3$ and $s_0s_6s_8s_7s_6s_0s_7s_8$ are $5$-cycles. Therefore, the characteristic polynomial of $(s_1s_2s_3s_4\tau)^4$ is $\varphi_5(X)^2$. Thus, $(s_1s_2s_3s_4\tau)^4$, and a fortiori $s_1s_2s_3s_4\tau$, are cuspidal. 
By order reasons $s_1s_2s_3s_4\tau$ is a representative of the class $E_8(a_2)$ and it preserves a root subsystem of type $2A_4$ interchanging the irreducible components. Then, \ref{S} follows from Lemma \ref{lem:cyclically-permuted} and we are done because $w\not\in W_\Me$ by construction.

 \medbreak
\noindent{\bf Cases $E_8(a_1)$ and $E_8(a_3)$.} The elements in these classes have order, respectively $24$ and $12$, and characteristic polynomial $\varphi_{24}(X)$ and $\varphi_{12}(X)^2$ . By inspection of \cite[Table B.8]{geck-pfeiffer}  the class $E_8(a_3)$ contains the squares of the elements in the class $E_8(a_1)$. We first show that the elements in both classes stabilize a suitable root system of type $4A_2$. Let
\begin{align*}
 &\beta_{11}\coloneqq \alpha_1,&&\beta_{12}=\alpha_3,\\
 &\beta_{21}=\alpha_{2},&&\beta_{22}\coloneqq -(\alpha_1+2\alpha_2+2\alpha_3+3\alpha_4+2\alpha_5+\alpha_6),\\
 &\beta_{31}\coloneqq \alpha_5,&&\beta_{32}=\alpha_6,\\
 &\beta_{41}=\alpha_{8},&&\beta_{42}\coloneqq -\alpha_0,\\
& \Delta_i\coloneqq \{\beta_{i1},\beta_{i2}\},&& i=1,2,3,4.
\end{align*}
For $i\in\I_{4}$, let $\Psi_i$ be the root system with base $\Delta_i$, let $W_{(i)}$ be its Weyl group and $\vartheta_i$ be the non-trivial automorphism of $\Delta_i$. 
Let $\Psi\coloneqq \sum_{i=1}^4\Psi_i$ and $W'\coloneqq \prod_{i=1}^4W_{(i)}$. Then $\mathbb S_4$ acts on $\coprod_{i=1}^4\Delta_i$  permuting the first indices of the roots $\beta_{ij}$. We set
\begin{align*}
N&\coloneqq \{ (\vartheta_1^{e_1},\vartheta_2^{e_2},\vartheta_3^{e_3},\vartheta_4^{e_4}), e_1,e_2,e_3,e_4\in\{0,1\}\}; \textrm{ hence}\\
\Gamma&\coloneqq {\rm Aut}(\coprod_{i=1}^4\Delta_i)\simeq N\rtimes\mathbb S_4.\end{align*} 

We show that  $N_W(W')$, that is the stabilizer of $\Psi$ in $W$,  meets the class $E_8(a_1)$. 
We believe that part of the arguments are available in the literature, but were unable to locate a reference. 

\medbreak
Let $H$ be the subgroup of $\Gamma$ that can be realized by elements in $W$.
First of all, the proof of \cite[Proposition 28]{carter-cc}  gives $N_W(W')=W'\rtimes H$ and \cite[Table 11]{carter-cc} gives $|H|=48$. We claim that $N\cap H=\langle  (\vartheta_1,\vartheta_2,\vartheta_3,\vartheta_4)\rangle$. Indeed,
$ (\vartheta_1,\vartheta_2,\vartheta_3,\vartheta_4)$ is realized in $W$ as $w_0\prod_{i=1}^4s_{\beta_{i1}}s_{\beta_{i2}}s_{\beta_{i1}}$, giving $\supseteq$.  To prove $\subseteq$, suppose that $\tau\in N\cap H\setminus\{e,(\vartheta_1,\vartheta_2,\vartheta_3,\vartheta_4)\}$ and let $e_1,e_2,e_3,e_4\in\{0,1\}$ be such that $\tau=(\vartheta_1^{e_1},\vartheta_2^{e_2},\vartheta_3^{e_3},\vartheta_4^{e_4})$. Then 
\[J\coloneqq \{i\in\{1,2,3,4\}~|~e_i\neq0\}\subsetneq\{1,2,3,4\} \neq\emptyset.\]
By \cite[Lemma 1]{carter-cc} we have 
\begin{align*}
\tau\in \begin{cases}\prod_{j\not\in J}W_{(j)}& \textrm{ if } |J|\in\{2,3\},\\
W_{E_6}& \textrm{ if }|J|=1\end{cases}
\end{align*}
where $W_{E_6}$ denotes the Weyl group of a root system of type $E_6$ containing $\bigcup_{i\in \I_4\setminus J}\Delta_i$. 
However, this is clearly impossible for $|J|=2,3$ and for $|J|=1$ this would imply that 
$-\id=\tau \prod_{i\not\in J}s_{\beta_{i1}}s_{\beta_{i2}}s_{\beta_{i1}}\in W_{E_6}$, a contradiction. 

\medbreak
Then $H/(N\cap H)\simeq NH/N$, has order $24$, forcing $|\Gamma|=|NH|$. Thus, $H/(N\cap H)\simeq (N\rtimes \mathbb S_4)/N$. Therefore for each $\sigma\in \mathbb S_4$ there is 
$\sigma'\in N$ such that $N\sigma\cap H=\{\sigma'\sigma, (\vartheta_1,\vartheta_2,\vartheta_3,\vartheta_4)\sigma'\sigma\}$. In particular, for $(123)$ and $(34)\in\mathbb S_4$ there exist $a_i,\,b_i\in\{0,1\}$ for $i\in \I_3$ and $j\in \I_{2,4}$ such that 
\begin{align*}
\rho_{(123)}&=(\vartheta_1^{a_1},\vartheta_2^{a_2},\vartheta_3^{a_3},\vartheta_4)(123)\in H,\\
\rho_{(34)}&=(\vartheta_1,\vartheta_2^{b_2},\vartheta_3^{b_3},\vartheta_4^{b_4})(34)\in H.
\end{align*}
The condition $\rho_{(123)}^3\in N\cap H$ and $\rho_{(34)}^2\in N\cap H$ force 
\begin{align*}&a_1+a_2+a_3\equiv 1\mod 2, &&b_3+b_4\equiv0\mod 2.\end{align*} 

Also, $b_2$ is necessarily $0$, for if $b_2=1$, then by \cite[Lemma 1]{carter-cc} we would have $(\vartheta_1,\vartheta_2,\vartheta_3,\vartheta_4)\rho_{(34)}=(\id,\id,\vartheta_3^{b_3+1},\vartheta_4^{b_4+1})(34)\in W_{(3)}\times W_{(4)}$, impossible. Let then
\begin{align*}
\rho\coloneqq s_{\beta_{11}}\rho_{(123)}\rho_{(34)}=s_{\beta_{11}}(\vartheta_1^{a_1+b_3},\vartheta_2^{a_2+1},\vartheta_3^{a_3},\vartheta^{1+b_3}_4)(1234).
\end{align*}
A direct calculation shows that 
\begin{align*}
&&\rho^2&=s_{\beta_{11}}s_{\vartheta_2^{a_2+1}\beta_{21}}
(\vartheta_1^{a_1+1},\vartheta_2^{a_1+a_2+1+b_3},\vartheta_3^{a_3+a_2+1},\vartheta^{1+b_3+a_3}_4)(13)(24),\\
&&\rho^4&=s_{\beta_{11}}s_{\vartheta_2^{a_2+1}\beta_{21}}s_{\vartheta_3^{a_3+a_2+1}\beta_{31}}
s_{\vartheta_4^{(a_2+a_3+b_3)}\beta_{41}}(\vartheta_1,\vartheta_2,\vartheta_3,\vartheta_4),
\\
&\text{so}& 
\rho^8 &= (s_{\beta_{11}}s_{\beta_{12}})(s_{\beta_{21}}s_{\beta_{22}})^{\epsilon_1}(s_{\beta_{31}}s_{\beta_{32}})^{\epsilon_2}(s_{\beta_{41}}s_{\beta_{42}})^{\epsilon_3}
\end{align*}
for some $\epsilon_1,\epsilon_2,\epsilon_3\in\{\pm1\}$.

\medbreak
Therefore $\rho^8\in W'$ and lies in the cuspidal class labeled  $4A_2$, whose elements have 
order $3$. Thus, $\rho$ lies in the unique cuspidal class of elements of order $24$, namely $E_8(a_1)$. We then take $w=\rho$ and $\Phi_{\Me}\coloneqq \Psi$.  Now, $w$ permutes cyclically the components $\Psi_i$ for $i\in\I_{4}$ and the action of $\rho^4$ on $\Phi_{\Me}$ is outer. Then \ref{S} follows from Lemma \ref{lem:cyclically-permuted} and $w\not\in W_\Me$, concluding the proof for the class $E_8(a_1)$. 

\medbreak
We turn to the class $E_8(a_3)$. The representative $\rho^2$ interchanges the components $\Delta_1$ and $\Delta_{3}$. We take $\Phi_{\Me}=\Psi_1\cup\Psi_3$, so $w\not\in W_\Me$ and  Lemma \ref{lem:cyclically-permuted} gives \ref{S}.
\end{proof}

\begin{obs}
Automorphisms of root systems whose characteristic polynomial is not a cyclotomic polynomial all stabilize a root subsystem of $\Phi$, see \cite[Lemma 3.4]{GLRY}. However, we could not avoid case-by-case considerations, as to verify \ref{S} we need to have a grip on the stabilized root systems. 
\end{obs}

Summarizing, we obtain the following result on conjugacy classes in groups of type $E_8$. 
\begin{prop}\label{prop:e7-excluded}
Let $\Oc$ be a conjugacy class in $E_8(q)$. If $\Oc$ is not the class of a regular semisimple element contained in a torus indexed by $w$ in class $E_8$ or $E_8(a_5)$, then $\Oc$ collapses.
\end{prop}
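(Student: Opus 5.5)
The plan is to split according to the Jordan decomposition type of a representative of $\Oc$ and treat each piece with the results already available. For $E_8(q)$ we have $\bZ=e$, so $\Gb=\G^F$ and $\Oc=\pi(\mathfrak O)=\mathfrak O$.

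First I handle the classes that are not semisimple. If $\Oc$ is unipotent, \cite[Main Theorem]{ACG-IV} shows that it is of type C, D or F. If $\Oc$ is mixed, the same conclusion follows from \cite[Theorem 1.1]{ACG-V}. In both cases Theorem \ref{thm:CDF} then gives that $\Oc$ collapses. This is the same reduction used in the proofs of Theorems \ref{thm:e6-collapses} and \ref{thm:e7-collapses}.

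Now let $\Oc$ be semisimple and non-trivial. Pick $t\in\T^{F_w}$ with $w$ minimal in $\WO$. If $e\in\WO$, the class is split, and \cite[Theorem I]{ACG-VII} (equivalently \cite[Theorem 4.1]{ACG-VII}, which applies because $\Gb\neq\PSL_{n+1}(q)$) shows that it is of type C, D or F. If $e\notin\WO$ and $w$ is not cuspidal, Lemma \ref{lem:e8-non-cuspidal} applies. If $w$ is cuspidal and not in class $E_8$ or $E_8(a_5)$, Lemma \ref{lem:e8-cuspidal} applies.

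It remains to deal with the cuspidal case $w\in E_8\cup E_8(a_5)$, and here the point is to check that this is exactly the excluded situation. A cuspidal semisimple element is regular by \cite[Proposition 3.13]{ACG-VII}, so it is a regular semisimple element lying in a torus indexed by $w$ in class $E_8$ or $E_8(a_5)$.

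The one step that needs care is the quantifier over $\WO$. The class is excluded only if it lies in such a torus for its minimal $w$. If some minimal element of $\WO$ falls into one of the handled cases, the class is covered by the corresponding result above. Since every $w\in\WO$ can serve as the representative, I phrase the argument as follows: a class fails to be treated only if every minimal $w\in\WO$ lies in $E_8\cup E_8(a_5)$. Such a $w$ is cuspidal, so the class is cuspidal, hence regular and contained in an $E_8$ or $E_8(a_5)$ torus. In every other case the class is of type C, D or F and collapses by Theorem \ref{thm:CDF}.
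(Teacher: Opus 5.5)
Your proposal is correct and follows the paper's proof: unipotent classes via \cite[Main Theorem]{ACG-IV}, mixed classes via \cite[Theorem 1.1]{ACG-V}, split semisimple classes via \cite[Theorem 4.1]{ACG-VII}, and the non-split semisimple classes via Lemmata \ref{lem:e8-non-cuspidal} and \ref{lem:e8-cuspidal}. Your remark on the leftover case makes explicit what the paper leaves implicit: a cuspidal minimal $w$ forces every element of $\WO$ to be cuspidal, so the class is regular, and this is exactly the excluded case.
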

\begin{proof}
Unipotent, mixed, and split semisimple classes are covered by \cite[Main Theorem]{ACG-IV}, \cite[Theorem 1.1]{ACG-V}, and \cite[Theorem 4.1]{ACG-VII} respectively. The remaining classes are covered by Lemmata \ref{lem:e8-non-cuspidal} and \ref{lem:e8-cuspidal}.
\end{proof}

\begin{obs}Our strategy fails for cuspidal elements in tori corresponding to elements in the classes $E_8(a_5)$ or $E_8$ in $W$ because elements in these classes do not preserve any proper root subsystems of $\Phi$ as we now show.
\end{obs}

Let $w_1$ and $w_2$ be representatives of $E_8$ and $E_8(a_5)$, respectively. Their characteristic polynomials are  $c_{w_1}(X)=\varphi_{30}(X)$ and  $c_{w_2}(X)=\varphi_{15}(X)$, and they represent the only cuspidal classes in $W$ consisting of elements of order $30$ and $15$, respectively. Then $c_{w_1^2}(X)=c_{w_2}(X)$ so  $w_1^2$ is conjugate to $w_2$. Thus, it is enough to show  that $w_2$ does not preserves a proper root subsystem of $\Phi$. 

\medbreak
Since $c_{w_2}(X)$ is irreducible, $w_2$ can only stabilise a root subsystem $\Psi$ of maximal rank. Irreduciblity also forces $\Psi$ to consist of $d$ isomorphic components of rank $\frac{8}{d}$, with $d\in \NN_{\geq 1}$,  that are cyclically permuted by $w_2$. 
Since $|w_2|$ is odd, $d$ is necessarily $1$. Thus, $\Psi$ could only be of type $D_8$ or $A_8$.
Now, $w_2$ acts as an automorphism of $\Psi$, so there is a morphism $\rho\colon \langle w_2\rangle\to {\rm Aut}(\Psi)$, where the latter is either the Weyl group of type $B_8$, or $\langle \mathbb S_9, -(19)(28)(37)(46)\rangle\simeq \mathbb S_9\times \{\pm\id\}$. But the order of the elements in these groups that act irreducibly on $\mathbb R\Psi=\mathbb R\Phi$ are not divisors of $15$.

\subsection{Semisimple classes in \texorpdfstring{$F_4(q)$}{}} \label{subsec:F4}

\

In this Subsection  $\G$ is of type $F_4$, so $w_0$ acts as $-\id$ on $\Phi$ and $\bZ=e$. 
We need the formulas
\begin{align*}
\alpha_0&=2\alpha_1+3\alpha_2+4\alpha_3+2\alpha_4, &&
\\
\alpha_0^\vee(\zeta)&=\alpha_1^\vee(\zeta^{2})\alpha_2^\vee(\zeta^{3})\alpha_3^\vee(\zeta^{2})\alpha_4^\vee(\zeta), &\zeta &\in \ku^\times.
\end{align*}
The action of the simple reflections and of $s_0$ on $t$ is
\begin{align}
\label{eq:f4-reflections}
&\begin{aligned}
s_1\cdot t&= t\alpha_1^\vee(\zeta_1^{-2}\zeta_2),
\\ 
s_2\cdot t&= t\alpha_2^\vee(\zeta_1\zeta^{-2}_2\zeta_3^2),
\\
s_3\cdot t&= t\alpha_3^\vee(\zeta_2\zeta_3^{-2}\zeta_4),
\\
s_4\cdot t&= t\alpha_4^\vee(\zeta_4^{-2}\zeta_3),
\end{aligned}
\\
\label{eq:f4-s0}
s_0\cdot t&= t\alpha_0^\vee(\zeta_1^{-1})=t\alpha_1^\vee(\zeta_1^{-2})\alpha_2^\vee(\zeta_1^{-3})\alpha_3^\vee(\zeta_1^{-2})\alpha_4^\vee(\zeta_1^{-1}).
\end{align}
 
\begin{lema}\label{lem:f4-non-cuspidal}If $w$ is not cuspidal, then $\Oc$ is of type C, D, or F.
\end{lema}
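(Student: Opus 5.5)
The plan is to follow the pattern used for $E_6$, $E_7$ and $E_8$: run through the possible types of $J_w$ for a minimal non-cuspidal $w$, up to $W$-conjugacy, and in each case either verify the hypotheses of Theorem \ref{thm:potente-revised} or reduce to a subgroup of Lie type already handled. Since $\bZ=e$ and $w_0=-\id$, Remark \ref{obs:w0} and Lemma \ref{lem:index} become quite effective: \ref{W1} amounts to exhibiting $\sigma\in\Cent_W(w)$ with $\sigma\cdot t\notin\Cent_{W_{\Me}}(w)\cdot t$. The proper parabolic types are $A_1$, $\widetilde A_1$, $A_1+\widetilde A_1$, $A_2$, $\widetilde A_2$, $A_2+\widetilde A_1$, $\widetilde A_2+A_1$, $B_2$, $B_3$ and $C_3$.

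For the reducible types $A_2+\widetilde A_1$ and $\widetilde A_2+A_1$, I would apply Lemma \ref{lem:levi} directly, with $J_1$ the component of rank $2$. Then $|\Cent_{\langle J_2\rangle}(w)|=2>1=|\bZ|$, and regularity comes from Lemma \ref{lem:Jw}. For $A_2$ and $\widetilde A_2$, Lemma \ref{lem:A-odd} applies, since these are of type $A_{2l}$. For $A_1+\widetilde A_1$, I would take $\Me=\Le_{J}$ for one of the components when $q>3$, using Lemma \ref{lem:levi}. For $q\le 3$, I would instead use a rank-$2$ subsystem of type $A_2$ or $\widetilde A_2$ generated by a root and its $w$-image, as in Lemma \ref{lem:Bn-q=3}, so that $M$ is a unitary group $\SU_3(q)$; this requires checking \eqref{eq:not-list} for $q=2$. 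For $B_3$ and $C_3$, the cuspidal classes of $W(B_3)$ are indexed by partitions of $3$. Here I would copy Lemmata \ref{lem:Bn-lambda1>2} and \ref{lem:Bn-lambda1=2d} inside the Levi subgroup, taking $\Phi_{\Me}$ of type $D_3=A_3$ or $\lambda_1A_1$, and Lemma \ref{lem:index} or Remark \ref{obs:w0} with $\bZ=e$ should then close these cases easily. The case $\lambda=(1,1,1)$, where $w$ acts as $-\id$ on $\Phi_{J_w}$, I would treat by taking $\Phi_{\Me}$ to be a $B_2$ subsystem, as in Lemma \ref{lem:bn-lambda1=1-l=3}, and $\sigma=s_0$ or $s_4$ (respectively $s_1$) in $\Cent_W(w)$. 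For $B_2$, I would take $\Me=\Le_{J_w}$, which satisfies \ref{S} by Lemma \ref{lem:perfect-S} or Lemma \ref{lem:S-b2}, and $\sigma=w_0$ or $s_0$.

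The main obstacle is the rank-one cases $J_w=\{s_1\}$ (long) and $\{s_4\}$ (short), together with the small-$q$ subcases. For these I would take $\Phi_{\Me}$ of type $A_2$ (respectively $\widetilde A_2$) containing the root of $J_w$, on which $w$ acts as an element of $W_{\Me}$, so that $M\simeq\SL_3(q)$. Then \ref{R} follows from Lemma \ref{lem:non-centrality}, and I would test $\sigma\in\{w_0,s_0,s_3,s_4\}\cap\Cent_W(w)$ using the formulas \eqref{eq:f4-reflections} and \eqref{eq:f4-s0}. If every test fails, comparing coefficients should force some $\zeta_j=1$. In that situation $t$ lies in a subsystem subgroup of type $B_4$, $C_3+A_1$ or $A_2+\widetilde A_2$, and I would conclude via Remark \ref{obs:zeta=1} together with Theorem \ref{thm:bn-typeC}, \cite[Theorem 7.1]{ACG-VII} or \cite[Theorem 1.1]{ACG-III}. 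The risk is that this reduction lands in one of the excluded classes \eqref{eq:bn-discarded-case1}; if so, I would switch to a different non-standard $\Me$, for example $\langle\T,\U_{\pm\alpha_0}\rangle$ or a $D_4$ subsystem of long roots.
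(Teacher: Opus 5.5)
Your case division matches the paper's, and several of your choices agree with it or are harmless variants. Lemma~\ref{lem:levi} works in place of Lemma~\ref{lem:A-odd} for $A_2+\widetilde A_1$ and $\widetilde A_2+A_1$, even at $q=2$, because $w$ acts innerly on the rank-$2$ component. In rank one, the $\SL_3(q)$-Levi with tests among $s_3,s_4,w_0$ is exactly the paper's argument; there, failure of all tests forces $\zeta_2=\zeta_3=\zeta_4=1$, so $t$ is a non-cuspidal element of $\SL_3(q)$ and no excluded $B_4$ class can occur. However, two steps fail as written.

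\textbf{Case $A_1+\widetilde A_1$ with $q\le 3$.} You pass to an $A_2$ subsystem on which $w$ acts by the graph automorphism, so $M\simeq\SU_3(q)$. For $q=2$ this group is solvable and \ref{S} fails. This is precisely the case excluded in \eqref{eq:not-list}, so the check you defer cannot succeed. The missing observation: after conjugating to $\Delta_{J_w}=\{\alpha_1,\alpha_4\}$, the standard Levi $\Le_{\{s_1,s_2\}}$ is $w$-stable and $w$ acts on it as $s_1$. Hence $M=\SL_3(q)$ is quasi-simple for every $q$. Take $\sigma=s_4$. Then \ref{W1} follows from Lemma~\ref{lem:Jw}, since $s_4\cdot t\notin\{t,s_1\cdot t\}$. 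Also \ref{W2} follows from Lemma~\ref{lem:non-centrality}, since $\alpha_1(s_4\cdot t)=\alpha_1(t)\neq 1$. This is the paper's argument and needs no splitting in $q$.

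\textbf{Case $J_w=\{s_2,s_3,s_4\}$ of type $C_3$.} You cannot copy Lemma~\ref{lem:Bn-lambda1>2} here. A subsystem of type $D_3$ in $\Phi_{J_w}$ must consist of the short roots, and this set is not closed. Indeed, $\alpha_3+(\alpha_2+\alpha_3)=\alpha_2+2\alpha_3$ is a long root, and the corresponding commutator coefficient is $\pm 2$. So for $q$ odd no $w$-suitable subgroup has this root system. The $B_n$ lemmas rely on the long roots of $B_n$ being closed. The closed analogue in $C_3$ is the set of long roots: type $3A_1$, or $2A_1$ for $\lambda=(2,1)$, cyclically permuted by $w$. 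With that choice, Lemmata~\ref{lem:cyclically-permuted} and~\ref{lem:index} would suffice, since $\bZ=e$.

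The paper argues differently for $C_3$:
\begin{itemize}
\item If $\zeta_1=1$, then $t$ is cuspidal in the $\Sp_6(q)$ on $\Delta_{J_w}$, which is already handled.
\item Otherwise it takes $\Me=\Le_{J_w}$ and $\sigma=w_0$. Comparing coefficients forces $q$ odd, $\zeta_1=-1$ and $|\sigma|=2$, which is contradictory for the Coxeter class.
\item For the other two classes it uses $\Me$ of type $B_2$ on $\{\alpha_2,\alpha_3\}$ with $\sigma=s_{\alpha_2+2\alpha_3+2\alpha_4}$.
\end{itemize}
For $B_3$ the paper is uniform: it takes the long-root $A_3$ and $\sigma$ the longest element of $\langle J_w\rangle$, which is central there and lies outside $W_{\Me}$.

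\textbf{Case $\lambda=(1,1,1)$.} Your reflections do not centralize $w$: neither $s_0$ nor $s_4$ for $B_3$, nor $s_1$ for $C_3$. Here $w$ is $-\id$ on $\mathbb R\Delta_{J_w}$ and the identity on its orthogonal complement, and none of these roots lies in either subspace. This is easily repaired. Apply Lemma~\ref{lem:index} with the $B_2$ Levi on $\{\alpha_2,\alpha_3\}$, using $|\langle J_w\rangle|=48>8=|W_{\Me}|$.
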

\begin{proof} If $J_w$ is of type $A_2,\, A_2+ \widetilde{A}_1,\, \widetilde{A}_2+ A_1$ or $\widetilde{A}_2$, we invoke Lemma \ref{lem:A-odd}. 

\medbreak
If $J_w$ is of type $A_1+\widetilde{A}_1$, respectively $A_1$, then we may assume that  $\Delta_{J_w}=\{\alpha_1,\alpha_4\}$, respectively $\Delta_{J_w}=\{\alpha_1\}$. We take  $\Me\coloneqq \Le_{\{s_1,s_2\}}$, so $M=[M,M]=\SL_3(q)$, giving  \ref{S}. Condition \ref{R} follows from Lemmata \ref{lem:Jw} and \ref{lem:non-centrality}. In addition, $\Cent_{W_{\Me}}(s_1s_4)=\Cent_{W_{\Me}}(s_1)=\langle s_1\rangle$. 

\medbreak
If $J_w$ is of type $A_1+\widetilde{A}_1$, then $s_4\cdot t\not\in\{ t, s_1\cdot t\}$ by Lemma \ref{lem:Jw}. In addition $\alpha_1(s_4\cdot t)=\alpha_1(t)\neq 1$, so Lemma \ref{lem:non-centrality} applies, giving \ref{W} in this case. 

\medbreak
Let $J_w$ be of type $A_1$.  Lemma \ref{lem:Jw} gives $\alpha_1(s_4\cdot t)=\alpha_1(s_3\cdot t)=\alpha_1(w_0\cdot t)^{-1}=\alpha_1(t)\neq 1$ so \ref{W2} holds by virtue of Lemma \ref{lem:non-centrality}. Then \ref{W} might fail only if  $s_4\cdot t, s_3\cdot t, w_0\cdot t\in\{t, s_1\cdot t\}$. Remark \ref{obs:coefficient}, Lemma \ref{lem:Jw} and $w\neq e$ give then $s_4\cdot t=s_3\cdot t=t$ and $t^{-1}=s_1\cdot t=t^q$, so \eqref{eq:f4-reflections} gives $\zeta_2=\zeta_3=\zeta_4=1$. In this case, $t\in\langle \U_{\pm\alpha_1},\U_{\pm\alpha_2}\rangle^F_{s_1}\simeq \SL_3(q)$, and it is not cuspidal therein. The result follows from Remark \ref{obs:zeta=1} and \cite[Theorem 1.1]{ACG-III}. 

\medbreak
If $J_w$ is of type $\widetilde{A}_1$, may assume $w=s_4$. We argue as we did for $J_w$ of type $A_1$, interchanging the roles of $s_1$ and $s_4$, and replacing $s_3$ by $s_2$. 
 
\medbreak
If $J_w$ is of type $C_2$, that is, $\Delta_{J_w}=\{\alpha_2,\,\alpha_3\}$, then we set $\Me=\Le_{J_w}$, so $M\simeq\Sp_4(q)$. Then, \ref{S} is immediate for $q>2$ and is guaranteed by Lemma \ref{lem:S-b2} if $q=2$. 
Condition \ref{R} follows from Lemma \ref{lem:Jw}. 
Assume that  \ref{W} fails. Since $\alpha_0\perp\Phi_{\Me}$, we have 
$1\neq\beta(t)=(s_0(\beta))(t)=\beta(s_0\cdot t)$ for any $\beta\in\Phi_{\Me}$, so $s_0\cdot t\in W_{\Me}\cdot t$. Remark \ref{obs:coefficient} applied to $\alpha_4^\vee$ and \eqref{eq:f4-s0} give $\zeta_1=1$. Then $t\in \langle U_{\pm\alpha_2},\U_{\pm\alpha_3},\U_{\pm\alpha_4}\rangle^{F_w}$, a simply-connected group of type $C_3$, and it is not split therein. 
We invoke Remark \ref{obs:zeta=1} and \cite[Theorem 7.1]{ACG-VII} and conclude that 
$\Oc$ is of type C, D, or F.

\medbreak
If $J_w$ is of type $B_3$, that is, if $\Delta_{J_w}=\{\alpha_1,\,\alpha_2,\,\alpha_3\}$, we take $\Phi_{\Me}$  to be the root system of type $A_3$ consisting of all long roots in $\Phi_{J_w}$. Then 
\[M\simeq \SL_4(q)\]
by \cite[Corollary II.5.4]{sp-st} so \ref{S} follows. 
The inclusion $\Me\subseteq\Le_{J_w}$ together with Lemma \ref{lem:Jw} gives \ref{R}. 
Let $w_0'$ be the longest element in $\langle {J_w}\rangle$. 
Then $w_0'\in \Cent_W(w)\setminus \Cent_{W_{\Me}}(w)$. 
Lemma \ref{lem:Jw} gives $w_0'\cdot t\neq \sigma\cdot t$ for any $\sigma\in W_{\Me}$ and $\Cent_{\Me}(w_0'\cdot t)=\T$, so \ref{W} follows.

\medbreak
Finally, if $J_w$ is of type $C_3$, that is, if $\Delta_{J_w}=\{\alpha_2,\,\alpha_3,\,\alpha_4\}$, then
there are three possibilities for a representative of the conjugacy class of $w$ in $\langle J_w\rangle$: 
\begin{itemize}[leftmargin=*]
\item a Coxeter element,

\medbreak
\item $s_{\alpha_2}s_{\alpha_3}s_{\alpha_2+2\alpha_3+2\alpha_4}$, or

\medbreak
\item $w_0s_0$ (i.e., the longest element in $\langle J_w\rangle$). 
\end{itemize}
Their orders are $6$, $4$, and $2$ respectively. 
If $\zeta_1=1$, then $t$ belongs to $M\simeq\Sp_6(q)$ and it is cuspidal there. 
We invoke Remark \ref{obs:zeta=1} and \cite[Theorem 7.1]{ACG-VII}
to conclude that $\Oc$ is of type C, D, or F. 

\medbreak
We assume in the remainder of the proof that $\zeta_1\neq1$.
We first consider $\Me=\Le_{J_w}$, so condition \ref{S} is ensured by Lemma \ref{lem:perfect-S} and \ref{R} follows from Lemma \ref{lem:Jw}. Assume that \ref{W} fails. Then $w_0\cdot t=t^{-1}=\sigma\cdot t$ for some $\sigma\in \Cent_{W_{\Me}}(w)$. Comparing coefficients we see that this equality may hold only if $q$ is odd and $\zeta_1=-1$. Regularity of $t$ in $\Me$ and minimality of $J_w$ imply $|\sigma|=2$. If $w$ is the Coxeter element in $W_{\Me}$, then $\Cent_{W_{\Me}}(w)=\langle w\rangle$, hence 
$\sigma=w^3=w_0s_0$, the only non-trivial involution in  $\Cent_{W_{\Me}}(w)$. 
Then, 
\begin{align*}t\alpha_2^\vee(-1)\alpha_4^\vee(-1)=s_0\cdot t=w_0\sigma\cdot t=t,\end{align*}  a contradiction, concluding the proof for the Coxeter element. 

\medbreak
Let $w$ be either $w_0s_0=s_{\alpha_2}s_{\alpha_2+2\alpha_3}s_{\alpha_2+2\alpha_3+2\alpha_4}$ or  $s_{\alpha_2}s_{\alpha_3}s_{\alpha_2+2\alpha_3+\alpha_4}$. Take now $\M = \langle \T, \U_{\pm\alpha_2},\U_{\pm\alpha_3}\rangle\subset \Le_{J_w}$, so \ref{S} holds by Lemma \ref{lem:perfect-S} since $q$ is odd and \ref{R} holds by Lemma \ref{lem:Jw}. By definition,  
$s_{\alpha_2+2\alpha_3+2\alpha_4}\in \Cent_{W}(w)$ and $s_{\alpha_2+2\alpha_3+2\alpha_4}\cdot t\not\in \left(\Cent_{W_{\Me}}(w)\cdot t\cup \Cent_{\T}(M)\right)$ by Lemma \ref{lem:Jw}, giving \ref{W}. 
\end{proof}

\begin{lema}\label{lem:f4-cuspidal}If $w$ is cuspidal, then $\Oc$ is of type C, D, or F.
\end{lema}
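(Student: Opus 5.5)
Since $F_4$ has trivial center, $\bZ=e$ and $w$ cuspidal, Corollary \ref{cor:cuspidal-Ze} reduces the statement to the following task. For each cuspidal class of $W(F_4)$ we must exhibit a $w$-stable root subsystem $\Phi_{\Me}$ with \ref{S} and $\Cent_{W_{\Me}}(w)\subsetneq\Cent_W(w)$. Indeed, regularity of $t$ (cuspidal elements are regular) makes \ref{R} and \ref{W2} automatic. I would go through the $9$ cuspidal classes listed in \cite[Table B.3, B.8]{geck-pfeiffer}, with Carter's labels $4A_1$ ($=w_0$), $D_4$, $D_4(a_1)$, $C_3+A_1$, $A_2+\widetilde A_2$, $A_3+\widetilde A_1$, $B_4$, $F_4$, $F_4(a_1)$. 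For each I would record order, characteristic polynomial and centraliser order.

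For the classes whose label names a proper Weyl subgroup $W'$, take $\Phi_{\Me}$ to be an irreducible component of rank $\geq 2$ of the root system of $W'$, on which $w$ acts as an inner element.
\begin{itemize}
\item For $D_4$ and $D_4(a_1)$, take the long-root $D_4$, so $M$ is $\mathrm{Spin}_8^+(q)$, which is quasi-simple.
\item For $B_4$, take $\Phi_{\Me}$ of type $B_4$; quasi-simplicity holds for $q$ odd and is vacuous for even $q$, since then $B_4\simeq C_4$ as abstract groups.
\item For $C_3+A_1$ take $C_3$; for $A_3+\widetilde A_1$ take $A_3$; for $A_2+\widetilde A_2$ take $A_2$. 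Here \ref{S} follows from Lemma \ref{lem:perfect-S}, with the exception $(A_2,2)$ harmless because the action is inner.
\end{itemize}
Properness of the centraliser inclusion follows from $w_0\in\Cent_W(w)\setminus W_{\Me}$. In each of these subsystems $-\id$ is realised by $W_{\Me}$ only when $\Phi_{\Me}$ has full rank. In those full-rank cases ($D_4$, $B_4$) I would instead compare $|\Cent_W(w)|$ with $|\Cent_{W_{\Me}}(w)|$, computed in $\mathbb S_{2n}$-type embeddings as in the $D_8$ argument for $E_8$. Alternatively I would use the outer symmetry: the long $D_4$ has normaliser $W(F_4)=W(D_4)\rtimes\mathbb S_3$, so the centraliser in $W$ is bigger.

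For $4A_1=w_0$, take $\Delta_{\Me}=\{\alpha_1,\alpha_2\}$ of type $A_2$. Then $w_0$ acts via an outer automorphism and $M\simeq \SU_3(q)$, which is quasi-simple for $q>2$; the centraliser comparison is immediate since $\Cent_W(w_0)=W$. For $q=2$ I would switch to the $B_2$ subsystem $\{\alpha_2,\alpha_3\}$ and use Lemma \ref{lem:S-b2}.

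The hardest cases are the Coxeter class $F_4$ (order $12$) and $F_4(a_1)$ (order $6$), where $w$ lies in no proper Weyl subgroup. Here we need a proper $w$-stable subsystem, and then $w\notin W_{\Me}$ gives the inclusion for free. For $F_4(a_1)$, with $c_w=\varphi_6^2$, I would use $w^2$ (of type $\widetilde A_2+A_2$) as in the $E_8(a_8)$ argument. Since $w^3=w_0$, $w$ preserves each component, and the long $A_2$ then gives $\SU_3(q)$ or $\SL_3(q)$. This leaves the $q=2$ problem, which I would handle by the $E_8(a_8)$ trick: show $t^q=t^{\pm1}$ forces $w^2\cdot t=t$, contradicting Lemma \ref{lem:Jw}. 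For the Coxeter element, $c_w=\varphi_{12}$ and $w^3$ has $c=\varphi_4^2$, of type $A_3+\widetilde A_1$ or $B_2$-type. I expect the main obstacle is locating a proper subsystem stable under $w$ itself, for which I would use the long-root $D_4$. The Coxeter element normalises $W(D_4)$ (its image in $\mathbb S_3$ is a $3$-cycle), and it acts on $D_4$ as triality, giving ${}^3D_4(q)$, which is quasi-simple. Lemma \ref{lem:cyclically-permuted} with $d=1$ then supplies \ref{S}, and $w\notin W(D_4)$ finishes.
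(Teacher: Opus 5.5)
Your reduction to Corollary~\ref{cor:cuspidal-Ze} is correct, and most of your class-by-class choices work. The class $B_4$, however, is not handled.

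\textbf{The gap.} For $B_4$ you take $\Phi_{\Me}$ of type $B_4$, with $w$ its Coxeter element. Since this subsystem has full rank, you propose to compare centraliser orders, but that comparison yields equality. Indeed, $c_w=\varphi_8$ is irreducible over $\mathbb Q$, so the commutant of $w$ in $\End(\mathbb Q\Phi)$ is the field $\mathbb Q[w]\simeq\mathbb Q(\zeta_8)$, with $\zeta_8$ a primitive $8$-th root of unity. Hence $\Cent_W(w)$, a finite subgroup of its unit group, is contained in $\mu_8=\langle w\rangle$. So $\Cent_W(w)=\langle w\rangle=\Cent_{W_{\Me}}(w)$ and \eqref{eq:cuspidal-Ze} fails. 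In fact \ref{W1} fails for every $\sigma$, so Theorem~\ref{thm:potente-revised} cannot be applied with this $\Me$ at all. Your ``outer symmetry'' fallback concerns only the long-root $D_4$, which you did not assign to this class. (Separately, calling quasi-simplicity ``vacuous'' for even $q$ is a misstatement; it holds because $\mathrm{Sp}_8(q)$ is simple.)

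\textbf{The repair.} Use the long-root $D_4$, which is $W$-stable. $W(D_4)$ has no element of order $8$: in signed-permutation terms the Coxeter element of $W(B_4)$ is a negative $4$-cycle, with an odd number of sign changes. Therefore:
\begin{itemize}
\item $w\notin W_{\Me}$, so the strict inclusion is automatic;
\item $w$ acts by a graph automorphism of order two, so $M$ is of type ${}^2D_4(q)$, which is quasi-simple for every $q$.
\end{itemize}

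\textbf{Comparison with the paper.} This is exactly the paper's argument, and there it is uniform. The long $D_4$ settles at once every cuspidal $w\notin W(D_4)$, namely the six classes $F_4$, $B_4$, $F_4(a_1)$, $C_3+A_1$, $A_3+\widetilde A_1$, $A_2+\widetilde A_2$, with no exceptions at $q=2$. So your separate subsystems for these classes, and the $q=2$ detour for $F_4(a_1)$, are correct but unnecessary.

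Only $w_0$, the Coxeter class of $W(D_4)$ and $D_4(a_1)$ remain. The paper uses $s_3\in\Cent_W(w)\setminus W(D_4)$ for the first two and a $B_2$ subsystem for $D_4(a_1)$. Your $A_2$/$B_2$ treatment of $w_0$ is fine. For $D_4$ and $D_4(a_1)$ your counting argument also works, because these $W(D_4)$-classes are triality-stable, so $|\Cent_W(w)|=6\,|\Cent_{W(D_4)}(w)|$.
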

\begin{proof}We verify the hypothesis of Corollary \ref{cor:cuspidal-Ze}. A list of representatives of cuspidal classes in $W$ is given in \cite[Table B.3]{geck-pfeiffer}, together with a minimal Weyl subgroup containing each of them.

\medbreak
Let $\Phi_D$ be the $W$-stable root subsystem of type $D_4$ of $\Phi$  consisting of all long roots, with Weyl group $W_D\leq W$. 
We first take $\Phi_{\Me}=\Phi_D$, so $[\Me,\Me]$ is simply-connected ensuring \ref{S} by virtue of Lemma \ref{lem:perfect-S}. 
If $w\not\in W_D$, then we are done. Assume $w\in W_D$. Then up to conjugation $w$ is either $w_0$, or  $w_1=s_1s_2s_{\alpha_2+2\alpha_3}s_{\alpha_2+2\alpha_3+2\alpha_4}$ (the Coxeter element in $W_D$), or  $w_2=s_1s_2s_{\alpha_1+\alpha_2+2\alpha_3}s_{\alpha_2+2\alpha_3+2\alpha_4}$. If $w=w_0$ or $w=w_1$ then \eqref{eq:cuspidal-Ze} holds because $s_3\in \Cent_W(w)\setminus \Cent_{W_{\Me}}(w)$. Finally, if $w=w_2$, we replace $\Phi_{\Me}$ with the $w_2$-stable root subsystem with base $\{\alpha_2,\alpha_3\}$. 
Then $[\Me,\Me]\simeq\Sp_4(\ku)$ so \ref{S} follows from Lemma \ref{lem:perfect-S} if $q>2$ and Lemma \ref{lem:S-b2} if $q=2$. A direct calculation shows that $s_{\alpha_1+\alpha_2+\alpha_3}s_{\alpha_2+2\alpha_3+2\alpha_4}\in \Cent_W(w)\setminus \Cent_{W_{\Me}}(w)$.
\end{proof}

We are now ready to state a general result for $\Gb$ of type $F_4$. 
\begin{theorem}\label{thm:f4-collapses}Every non-trivial class in a simple Chevalley group of type $F_4$ is of type C, D, or F. 
\end{theorem}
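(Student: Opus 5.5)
The plan is to follow the same scheme as the proofs of Theorems \ref{thm:e6-collapses} and \ref{thm:e7-collapses}. A non-trivial conjugacy class of a simple Chevalley group $\Gb$ of type $F_4$ is unipotent, mixed, or semisimple, and each kind is handled by a separate earlier result.

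\textbf{Unipotent and mixed classes.} For unipotent classes we invoke \cite[Main Theorem]{ACG-IV}. For mixed classes we invoke \cite[Theorem 1.1]{ACG-V}. We should check that no exceptions for $F_4$ occur in those statements, in particular for small $q$ such as $q=2$.

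\textbf{Split semisimple classes.} If $e\in W[\mathfrak O]$, the class meets the split torus $\T^F$. Since $\Gb\neq\PSL_{n+1}(q)$, \cite[Theorem 4.1]{ACG-VII} shows it is of type C, D, or F; its exceptions concern only types $B_n$ and $C_2$ with small $q$. Here $\bZ=e$ and $\G=\G_{sc}=\G_{ad}$, so there is no issue passing from $\G^F$ to $\Gb$.

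\textbf{Non-split semisimple classes.} Pick $t\in\T^{F_w}$ with $w$ minimal in $W[\mathfrak O]$ and $w\neq e$. If $w$ is not cuspidal, Lemma \ref{lem:f4-non-cuspidal} applies. If $w$ is cuspidal, Lemma \ref{lem:f4-cuspidal} applies. In both cases $\mathfrak O$, and hence $\Oc\simeq\Oc^{\Gb}_{\pi(s)}$, is of type C, D, or F.

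The main thing to verify is the hypothesis of this section that $\G^F\notin\mathcal F$. This holds automatically, since no group of type $F_4$ appears in $\mathcal F$. Beyond that, the only care needed is that the cited earlier theorems impose no extra restriction on $q$ for type $F_4$. In particular, when $q=2$ the lemmas above already handle $\Sp_4(2)$ through Lemma \ref{lem:S-b2}, so no additional case arises.
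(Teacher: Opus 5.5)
Your proposal is correct and follows the paper's proof: unipotent, mixed and split semisimple classes are handled by \cite[Main Theorem]{ACG-IV}, \cite[Theorem 1.1]{ACG-V} and \cite{ACG-VII} (the paper cites Theorem I there where you cite Theorem 4.1; either works), and non-split semisimple classes by Lemmata \ref{lem:f4-non-cuspidal} and \ref{lem:f4-cuspidal}. One small wording slip: $\mathcal F$ does contain the Ree group ${}^2F_4(2)$, but no untwisted $F_4(q)$, so your conclusion $\G^F\notin\mathcal F$ still holds.
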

\begin{proof}
For unipotent, mixed, and split semisimple classes this is \cite[Main Theorem]{ACG-IV}, \cite[Theorem 1.1]{ACG-V}, \cite[Theorem I]{ACG-VII}, respectively. The remaining classes are covered by Lemmata \ref{lem:f4-non-cuspidal} and \ref{lem:f4-cuspidal}.
\end{proof}

\subsection{Semisimple classes in \texorpdfstring{$G_2(q)$}{}} \label{subsec:G2}

\

In this Subsection  $\G$ is of type $G_2$ so $\bZ=e$ and the highest root is $\alpha_0=3\alpha_1+2\alpha_2$. Recall that $\Gb$ is simple if and only if $q>2$. The Weyl group $W$ is isomorphic to the dihedral group of order $12$ and the action of the simple reflections on $t$ is
\begin{align}\label{eq:g2-reflections}
s_1\cdot t&= \alpha_1^\vee(\zeta_1^{-1}\zeta_2)\alpha_2^\vee(\zeta_2),&s_2\cdot t&= \alpha_1^\vee(\zeta_1)\alpha_2^\vee(\zeta^{-1}_2\zeta_1^3).
\end{align}

The table below collects a representative of each conjugacy class in $W$ and its characteristic polynomial.

\medbreak
\begin{table}[ht]
\begin{center}
\begin{tabular}{|c|c|c|c|c|c|}
\hline
$\id$&$s_1$&$s_2$&$s_1s_2$&$(s_1s_2)^2$&$w_0=-\id$ \\ 
\hline
$\varphi_1(X)^2$&$\varphi_1(X)\varphi_2(X)$&$\varphi_1(X)\varphi_2(X)$&$\varphi_6(X)$&$\varphi_3(X)$&$\varphi_2^2(X)$\\
\hline\end{tabular}
\end{center}
\end{table}

\begin{lema}\label{lem:g2-non-cuspidal} If  $q>2$ and $w$ is not cuspidal, 
then $\Oc$ is of type C, D, or F.
\end{lema}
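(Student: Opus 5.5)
Since $e\notin\WO$ for a non-split class (the split case being covered by \cite[Theorem 4.1]{ACG-VII}), a minimal non-cuspidal $w$ has $J_w=\{s_1\}$ or $J_w=\{s_2\}$; up to $W$-conjugacy we may take $w=s_1$ or $w=s_2$. We handle the two cases symmetrically, mimicking the treatment of $J_w$ of type $A_1$ and $\widetilde{A}_1$ in Lemma \ref{lem:f4-non-cuspidal}. Throughout, $\bZ=e$, $w_0=-\id$ centralises $w$, and Lemma \ref{lem:Jw} gives $\alpha_i(t)\neq1$ for the simple root $\alpha_i$ with $s_i=w$.

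For $w=s_1$ we take $\Me=\Le_{\{s_1\}}$, so $M\simeq\SL_2(q)$ and \ref{S} holds by Lemma \ref{lem:perfect-S} when $q>3$. Condition \ref{R} follows from Lemma \ref{lem:Jw}. Here $\Cent_{W_{\Me}}(w)=\langle s_1\rangle$. The centraliser $\Cent_W(s_1)$ is $\{e,s_1,w_0,w_0s_1\}$, and $w_0s_1$ is the reflection in the root orthogonal to $\alpha_1$, namely $\alpha_1+2\alpha_2$ (long). Take $\sigma=s_{\alpha_1+2\alpha_2}$. It normalises $\Phi_{\Me}$, so \ref{W2} follows from Remark \ref{obs:M-in-LJ}.

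For \ref{W1} we must rule out $\sigma\cdot t\in\{t,s_1\cdot t\}$, which by \ref{W} reduces to $w_0\cdot t=t^{-1}\notin\{t,s_1\cdot t\}$.
\begin{itemize}
\item If $t^{-1}=t$, then $t^2=e$. Since $q$ is odd, $t\in\T^F$, contradicting $e\notin\WO$.
\item If $t^{-1}=s_1\cdot t$, then by \eqref{eq:g2-reflections} the $\alpha_2^\vee$-coefficient gives $\zeta_2^2=1$. Then $t^q=s_1\cdot t^{-1}\cdots$, and the condition $t\in\T^{F_{s_1}}$ gives $\zeta_2\in\F_q$. Consequently $t$ lies in a subgroup $\langle\U_{\pm\alpha}\rangle^{F_w}$ of type $A_2$ or $A_1$ coming from the long roots (which form an $A_2$ subsystem $\Phi_L$, with $\Me_L\simeq\SL_3$ simply connected). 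Equivalently, we switch to $\Me=$ the long-root $A_2$ subsystem, which is $W$-stable, so $M\simeq\SL_3(q)$. Here \ref{S} holds for $q>2$, and \ref{R} holds by Lemma \ref{lem:non-centrality} or by a root-value check.
\end{itemize}

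Similarly, for $w=s_2$ we use the short-root side, or directly $\Phi_{\Me}=\Phi_L$ of type $A_2$, on which $s_2$ acts inside $W_{\Me}$ or as a graph automorphism, giving $\SL_3(q)$ or $\SU_3(q)$. We then check \ref{W} via Lemma \ref{lem:index}-type counting, $|\Cent_W(w)|=4$ against $|\Cent_{W_{\Me}}(w)|$, or with $\sigma=w_0$.

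The main obstacle will be the small-$q$ cases $q=3,4$, where $\SL_2(q)$ fails \ref{S} or the $A_2$ choice fails \ref{W}. This leaves residual elements with $\zeta_i=\pm1$. For these I would first try placing $t$ in the subgroup $\SL_3(q)$ or $\SU_3(q)$ generated by the long roots and invoking \cite[Theorem 1.1]{ACG-III} or Lemma \ref{lema:normal-simple} (hence the later Lemma on normal $\PSU$), via Remark \ref{obs:zeta=1}. If some residual class escapes, I would treat it by explicit computation in $G_2(3)$ or $G_2(4)$.
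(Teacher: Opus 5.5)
There is a genuine gap in the case $w=s_1$, in the branch where the short Levi fails.

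Comparing $\alpha_1^\vee$-coefficients in \eqref{eq:g2-reflections}, the equality $t^{-1}=s_1\cdot t$ is equivalent to $\zeta_2=1$, i.e.\ $t=\alpha_1^\vee(\zeta_1)$. In this branch you switch to the long-root subsystem $\Phi_L$ of type $A_2$ and assert \ref{R}. Incidentally, $s_1\notin W_{\Me}$, since $\Cent_{W_{\Me}}(s_1)=\langle s_0\rangle$, so $M\simeq\SU_3(q)$ rather than $\SL_3(q)$; this is harmless for \ref{S}.

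The real problem is \ref{R}. On the long roots, $t=\alpha_1^\vee(\zeta_1)$ takes the values $\alpha_2(t)=\zeta_1^{-3}$, $(3\alpha_1+\alpha_2)(t)=\zeta_1^{3}$ and $\alpha_0(t)=1$.
\begin{itemize}
\item If $\zeta_1^3\neq1$, your switch does work: \ref{R} follows from $\alpha_2(t)\neq1$, and \ref{W} holds with $\sigma=w_0$ because $s_0\cdot t=t\neq t^{-1}$. You did not check \ref{W} here.
\item Now take $\zeta_1=\omega$ a primitive cube root of unity. Then $t\in\T^{F_{s_1}}$ exactly when $q\equiv2\bmod3$, $\alpha_1(t)=\omega^2\neq1$, and $t$ is non-split because $3\nmid|\T^F|=(q-1)^2$. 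So $s_1$ is minimal in $\WO$. This $t$ centralises every long root subgroup, i.e.\ it is central in $[\Me,\Me]\simeq\SL_3(\ku)$, and \ref{R} fails.
\end{itemize}

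No choice of $\Me$ rescues Theorem \ref{thm:potente-revised} for this $t$. The non-empty $s_1$-stable closed subsystems are:
\begin{itemize}
\item $\{\pm\alpha_1\}$, which fails \ref{W1}, since $\Cent_W(s_1)\cdot t=\{t,t^{-1}\}=\langle s_1\rangle\cdot t$;
\item $\{\pm\alpha_0\}$ and $\Phi_L$, which fail \ref{R};
\item $\{\pm\alpha_1,\pm\alpha_0\}$, which fails \ref{S};
\item $\Phi$, which fails \ref{W}.
\end{itemize}
Your fallbacks are void as well. The image of $t$ in $\PSU_3(q)$ is trivial, so neither \cite[Theorem 1.1]{ACG-III} nor Lemma \ref{lema:normal-simple} gives anything. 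Lemma \ref{lema:real} would only show that $\Oc$ falls down, which is weaker than the statement. Note also that this is an infinite family, one class for each $q\equiv2\bmod3$. It is not the small-$q$, $\zeta_i=\pm1$ residue you anticipated; for $q=3,4$ it does not even occur.

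The missing idea is a direct type C witness, which is how the paper treats the whole subcase $w=s_1$, $\zeta_2=1$.
\begin{itemize}
\item Let $V\coloneqq\langle\U_{2\alpha_1+\alpha_2},\U_{\alpha_1+\alpha_2}\rangle^{F_{s_1}}$. Then $t$ acts non-trivially on $V$, since $(2\alpha_1+\alpha_2)(t)=\zeta_1$ and $(\alpha_1+\alpha_2)(t)=\zeta_1^{-1}$.
\item Set $H\coloneqq\langle t,V\rangle$. Then $\Oc_t^H\subseteq tV$ and $\Oc_{t^{-1}}^H\subseteq t^{-1}V$ are disjoint (as $t^2\notin V$) and have more than two elements.
\item Moreover $\Oc_t^H$ generates $H$, and $t^{-1}=w_0\cdot t\in\Oc$.
\item Choosing $r\in\Oc_t^H$ not commuting with $t$, and $s=t^{-1}$, gives type C.
\end{itemize}

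The rest of your plan is close to the paper's.
\begin{itemize}
\item For $w=s_2$, the right tool is $\Phi_L$ with $M\simeq\SL_3(q)$ and $\sigma=w_0$. This fails only when $\zeta_1=1$, a case settled inside $\SL_3(q)$ by \cite[Theorem 1.1]{ACG-III}. Lemma \ref{lem:index} does not apply there, since $\Phi_{\Me}\not\subseteq\Phi_{J_w}$ and $t$ need not be regular.
\item For $w=s_1$ with $\zeta_2\neq1$, the paper also uses $\Phi_L$, getting \ref{R} from $\alpha_0(t)=\zeta_2\neq1$. This sidesteps the failure of \ref{S} for $\SL_2(3)$ at $q=3$.
\end{itemize}

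Minor slips: $\alpha_1+2\alpha_2$ is not a root in this numbering; the root orthogonal to $\alpha_1$ is $\alpha_0=3\alpha_1+2\alpha_2$. Also, $q$ need not be odd when you exclude $t^2=e$.
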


\begin{proof}By assumption $w=s_\alpha$ for some $\alpha\in\Delta$. Let $\Phi_\Me$ be the root subsystem of type $A_2$ in $\Phi$ consisting of all long roots. Then $[\Me,\Me]$ is simply-connected and \ref{S} holds by Lemma \ref{lem:perfect-S} because $q>2$. 

\medbreak
Assume first that $\alpha=\alpha_2$. Then, \ref{R} holds by Lemmata \ref{lem:non-centrality} and Lemma \ref{lem:Jw}, because $\Cent_{W_{\Me}}=\langle s_2\rangle$, $\Cent_{W}(w)=\langle w_0,s_2\rangle$. By minimality of $w$, condition \ref{W} fails for $\Me$ if and only if $w_0\cdot t=s_2\cdot t$, that is, if and only if $\zeta_1=1$. In this case, $t=\alpha_2^\vee(\zeta_2)\in M\simeq \SL_3(q)$, and it is not cuspidal therein. Then,
the statement follows from \cite[Theorem 1.1]{ACG-III}.

\medbreak
Assume now that $\alpha=\alpha_1$ and that $\zeta_2\neq1$. Then $\alpha_0(t)=\zeta_2\neq1$ so \ref{R} follows from Lemma \ref{lem:non-centrality}.
We have 
\begin{align*}
\Cent_{W_{\Me}}(s_1) &=\langle s_0\rangle && \text{and}&
\Cent_W(s_1) &= \langle s_1,s_{0}\rangle.
\end{align*}
Then $w_0\cdot t\neq t$ because $e\not\in\WO$ and $w_0\cdot t\neq s_{0}\cdot t$ because $w_0=s_0s_1$ and 
$s_1\cdot t\neq t$ by Lemma \ref{lem:Jw}. 
Therefore Remark \ref{obs:w0} \ref{item:w0-uno} applies.

\medbreak
Let now $\alpha=\alpha_1$ and $t=\alpha_1^\vee(\zeta_1)$. Then $t^2\neq 1$ by minimality of $w$, 
\begin{align*}
(2\alpha_1+\alpha_2)(t) &=\zeta_1\neq 1& &\text{and} 
& (\alpha_1+\alpha_2)(t)&=\zeta^{-1}_1\neq 1.
\end{align*}
Consider the $F_{s_1}$-stable group $\Vu\coloneqq \langle\U_{2\alpha_1+\alpha_2},\U_{\alpha_1+\alpha_2}\rangle$. 
Then 
\begin{align*}
\Vu &= \begin{cases}
&\U_{\alpha_1+\alpha_2}\U_{2\alpha_1+\alpha_2}, \text{ if }p=3,
\\
&\U_{\alpha_1+\alpha_2}\U_{2\alpha_1+\alpha_2}\U_{\alpha_0}, \text{ if }p \neq 3,
\end{cases}
\end{align*}
because of the relations  \cite[(3.10)]{ree-g2}. 
By \cite[Proposition 23.8]{malle-testerman}, the subgroup $V\coloneqq \Vu^{F_{s_1}}$ is:

\begin{itemize}[leftmargin=*]
\item when $p=3$, an abelian group of order $q^2$ whose elements are of the form 
$x_{\alpha_1+\alpha_2}(\eta_1)x_{2\alpha_1+\alpha_2}(\eta_2)$
for suitable $\eta_1,\eta_2\in\overline{\F_q}$; 

\medbreak
\item when  $p\neq 3$, a non-abelian group of order $q^3$ whose elements have the form 
$x_{\alpha_1+\alpha_2}(\eta_1)x_{2\alpha_1+\alpha_2}(\eta_2)x_{\alpha_0}(\eta_3)$
for suitable $\eta_1,\eta_2,\eta_3\in\overline{\F_q}$.
\end{itemize}

\medbreak
Let $H\coloneqq \langle t, V\rangle$. Then 
\begin{align*}
\Oc_{t^{\pm1}}^H &\subseteq t^{\pm1}V,& &\text{so}& 
\Oc_t^H\cap\Oc_{t^{-1}}^H& =\emptyset; &&\text{also,}
&|\Oc_{t^{\pm1}}^H|&=|\Oc_{t^{\pm1}}^V|\geq p>2.
\end{align*}
In addition, $t^{-1}(v\trid t)\in\langle \Oc_t^V\rangle$ for any $v\in V$, and such elements generate $V$, so $\langle \Oc_t^H\rangle=H$. Since $H$ is non-abelian, there is $r\in\Oc_t^H$ such that $r\trid t\neq t$. Taking $s\coloneqq t^{-1}$, we conclude that $\Oc$ is of type C
by definition.
\end{proof}

\begin{lema}\label{lem:g2-cuspidal}Assume that $q>2$. If $w$ is cuspidal, then $\Oc$ is of type C.
\end{lema}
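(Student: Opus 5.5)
Since $\bZ=e$ for $G_2$ and $w$ is cuspidal, I will apply Corollary \ref{cor:cuspidal-Ze}. By the table of $W$-classes, the cuspidal classes are those of $s_1s_2$ (order $6$, $c_w=\varphi_6$), of $(s_1s_2)^2$ (order $3$, $c_w=\varphi_3$) and of $w_0=-\id$. In each case $t$ is regular, so \ref{R} and \ref{W2} hold automatically. It then suffices to exhibit a $w$-stable root subsystem $\Phi_{\Me}$ for which \ref{S} holds and $\Cent_{W_{\Me}}(w)\subsetneq\Cent_W(w)$.

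The natural choice is $\Phi_{\Me}$ equal to the subsystem of type $A_2$ formed by all long roots. It is $W$-stable, hence $w$-stable, and $[\Me,\Me]\simeq\SL_3(\ku)$ is simply connected. Here $W_{\Me}\simeq\mathbb S_3$ is the index-two subgroup $\langle s_2,s_0\rangle$, and $W=W_{\Me}\times\{\pm\id\}$. Two of the cuspidal classes act on $\Phi_{\Me}$ from inside $W_{\Me}$ up to sign:
\begin{itemize}
\item $(s_1s_2)^2$ lies in $W_{\Me}$ and is a $3$-cycle there, so $F_w$ acts on $[\Me,\Me]$ as a Chevalley (inner-twisted) endomorphism and $M\simeq\SL_3(q)$;
\item $s_1s_2=-(s_1s_2)^{-2}$ and $w_0=-\id$ act on $\Phi_{\Me}$ through the outer graph automorphism composed with an element of $W_{\Me}$, so $M\simeq\SU_3(q)$.
\end{itemize}
For $q>2$ both $\SL_3(q)$ and $\SU_3(q)$ are quasi-simple. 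For $\SL_3(q)$ this is Lemma \ref{lem:perfect-S}. For $\SU_3(q)$ with $q>2$ it follows from \cite[Proposition 24.21, Theorem 24.17]{malle-testerman}, since $\SU_3(2)\in\Fc$ is the only exception. This gives \ref{S}, and since $\bZ=e$, $\pi$ is the identity.

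For the index condition, $w_0=-\id$ is central in $W$ and does not lie in $W_{\Me}$, because $W_{\Me}\simeq\mathbb S_3$ contains no $-\id$ on the span of $\Phi$. Hence $w_0\in\Cent_W(w)\setminus\Cent_{W_{\Me}}(w)$ for every $w$, and \eqref{eq:cuspidal-Ze} holds. For $w=w_0$ itself, $\Cent_W(w_0)=W$ has order $12$, which is larger than $|W_{\Me}|=6$, so the inclusion is again strict. Corollary \ref{cor:cuspidal-Ze} then shows that $\Oc$ is of type C.

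The main point to check carefully is \ref{S} in the unitary case, namely that $F_w$ restricted to $[\Me,\Me]$ really is a Steinberg endomorphism with fixed points $\SU_3(q)$ rather than some smaller group. This holds because $w$ induces $-1$ times an element of $W_{\Me}$ on the long roots, and $-1$ equals the graph automorphism of $A_2$ composed with the longest element of $W_{\Me}$. It also explains why the hypothesis $q>2$ is needed.
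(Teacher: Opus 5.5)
Your proof is correct and follows the paper's argument: the long-root $A_2$ subsystem, Corollary \ref{cor:cuspidal-Ze}, and the element $w_0=-\id\in\Cent_W(w)\setminus W_{\Me}$. You also treat \ref{S} explicitly, separating $M\simeq\SL_3(q)$ for $w=(s_1s_2)^2$ from $M\simeq\SU_3(q)$ for $w\in\{s_1s_2,w_0\}$; this fills in a step the paper compresses into a citation of Lemma \ref{lem:perfect-S}, whose Levi-subgroup clause does not literally apply to this non-Levi $\Me$.
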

\begin{proof}Here $w$ is either $s_1s_2$, or $(s_1s_2)^2$, or $w_0$. Let $\Phi_\Me$ be the root subsystem of type $A_2$ consisting of all long roots  in $\Phi$. Then $[\Me,\Me]$ is simply-connected and \ref{S} follows from Lemma \ref{lem:perfect-S} because $q>2$. In addition,  $w_0\in \Cent_W(w)\setminus {W_{\Me}}$, so Corollary \ref{cor:cuspidal-Ze} applies.
\end{proof}

We are now in a position to state a general result for $\Gb$ of type $G_2$. 
\begin{theorem}\label{thm:g2-collapses}
Every non-trivial class in a simple Chevalley group of type $G_2$ is of type C, D, or F. 
\end{theorem}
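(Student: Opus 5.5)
The plan is to follow the template of Theorems \ref{thm:e6-collapses}, \ref{thm:e7-collapses} and \ref{thm:f4-collapses}: split the non-trivial conjugacy classes of $\Gb=G_2(q)$, $q>2$, according to the Jordan decomposition of a representative, and quote the appropriate result for each piece. Recall that $\bZ=e$ for $G_2$, so $\Gb=\G^F$ and $\pi$ is the identity.

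\textbf{Unipotent and mixed classes.} Unipotent classes are handled by \cite[Main Theorem]{ACG-IV}. Mixed classes (neither semisimple nor unipotent) are handled by \cite[Theorem 1.1]{ACG-V}. The only thing to check here is that $G_2(q)$ with $q>2$ is not among the exceptions left open in those papers. If some small-$q$ case, for instance $q=3$ or $q=4$, turns out to be excluded there, I would treat it with the subgroup method of Remark \ref{obs:zeta=1}. Concretely, I would place the element inside a subsystem subgroup such as $\SL_3(q)$ (long roots) or $\SU_3(q)$, where the corresponding class is already known to be of type C, D or F.

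\textbf{Semisimple classes.} Let $\Oc$ be a non-trivial semisimple class and choose a minimal $w\in\WO$.
\begin{itemize}
\item If $w=e$, the class meets the split torus. Then \cite[Theorem 4.1]{ACG-VII} (equivalently \cite[Theorem I]{ACG-VII}) applies, since $\Gb\neq\PSL_{n+1}(q)$.
\item If $w\neq e$ is not cuspidal, then up to conjugacy $w=s_1$ or $w=s_2$, and Lemma \ref{lem:g2-non-cuspidal} gives type C, D or F.
\item If $w$ is cuspidal, then $w\in\{s_1s_2,(s_1s_2)^2,w_0\}$ up to conjugacy, and Lemma \ref{lem:g2-cuspidal} gives type C.
\end{itemize}
These three cases exhaust the possibilities, because every $F$-stable semisimple class meets some $\T_w^F$.

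\textbf{Expected obstacle.} The main risk is bookkeeping rather than new mathematics. I need to confirm that the cited results from \cite{ACG-IV,ACG-V,ACG-VII} cover $G_2(q)$ for every $q>2$ in both characteristics $2$ and $3$. Special care is needed for $p=3$, where $G_2$ has the exceptional graph endomorphism. If a gap appears there, I would first try to embed the problematic element in a long-root $\SL_3(q)$. Failing that, I would apply Lemma \ref{lema:normal-simple} to a normal simple subgroup, as the paper already foreshadows with Lemma \ref{lem:normal-simple-psu}.
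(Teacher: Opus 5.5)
Your proposal is correct and matches the paper's proof: unipotent and mixed classes via \cite[Main Theorem]{ACG-IV} and \cite[Theorem 1.1]{ACG-V}, split semisimple classes via \cite[Theorem I]{ACG-VII}, and the remaining semisimple classes via Lemmata \ref{lem:g2-non-cuspidal} and \ref{lem:g2-cuspidal}. The contingency plans for small $q$ or for $p=3$ are not needed, since the paper cites these results for all $G_2(q)$ with $q>2$ without exceptions, and the exceptional graph endomorphism at $p=3$ only gives the Ree groups, which are not Chevalley groups.
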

\begin{proof}
For unipotent, mixed, and split semisimple classes this is \cite[Main Theorem]{ACG-IV}, \cite[Theorem 1.1]{ACG-V}, \cite[Theorem I]{ACG-VII}, respectively. The remaining classes are covered by Lemmata \ref{lem:g2-non-cuspidal} and \ref{lem:g2-cuspidal}.
\end{proof}

\medbreak 
We end this subsection dealing with  the non simple group $G_2(2)$, for future application to recursive arguments when addressing  Steinberg groups.

\begin{lema}\label{lem:normal-simple-psu}
Let $G={\rm Aut}(\PSU_3(3))$ and let $r$ be an element of order $7$ in $\PSU_3(3)=\SU_3(3)$.  Then $\Oc_r^G$ is of type C.
\end{lema}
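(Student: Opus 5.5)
The plan is to apply Lemma \ref{lema:normal-simple} with $N=\PSU_3(3)$, which is normal and simple non-abelian in $G=\Aut(\PSU_3(3))\simeq G_2(2)$; recall $[G:N]=2$. It then suffices to show that $\Oc_r^G\cap N\neq\Oc_r^N$. Since $N\lhd G$, the set $\Oc_r^G\cap N=\Oc_r^G$ is a single $G$-class contained in $N$. So the claim reduces to showing that the $G$-class of $r$ is the union of two distinct $N$-classes. Equivalently, $\Cent_G(r)=\Cent_N(r)$, so that $|\Oc_r^G|=2|\Oc_r^N|$.

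The key step is the computation of the relevant centralizers. We have $|N|=6048=2^5\cdot 3^3\cdot 7$, so a Sylow $7$-subgroup $P=\langle r\rangle$ has order $7$. I will use the known facts that $N$ has exactly two classes of elements of order $7$, namely $7A$ and $7B$ in ATLAS notation, and that $\Cent_N(r)=\langle r\rangle$. This can be checked directly as follows. Elements of order $7$ in $\SU_3(3)$ lie in a non-split torus of order $q^2-q+1=7$, whose normalizer in $N$ is $7\rtimes 3$. Hence $r$ is conjugate in $N$ to $r^2$ and $r^4$, but not to $r^{-1}$, which gives exactly two classes $\{r,r^2,r^4\}$ and $\{r^3,r^5,r^6\}$. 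On the other hand, the graph (Frobenius) automorphism of $\SU_3(3)$, acting by $x\mapsto \bar x^{-T}$ up to conjugation, normalizes a suitable torus $\T_w$ and acts on it by inversion or by a Frobenius power. In $G$, the normalizer of $P$ is $7\rtimes 6$, as one sees from $G_2(2)$ or the ATLAS, where $7A$ and $7B$ fuse. Thus $r^{-1}\in\Oc_r^G\setminus\Oc_r^N$, which is exactly the hypothesis of Lemma \ref{lema:normal-simple}.

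The main obstacle is rigorously justifying the fusion of $r$ with $r^{-1}$ in $G$ without merely quoting the ATLAS. I would handle it with a counting argument. First, $|N_N(P)|=21$ follows from Sylow's theorem: the number of Sylow $7$-subgroups is $|N|/|N_N(P)|$, which must be $\equiv1\bmod 7$; since $7\rtimes 3$ is realised inside $N$, this gives $288$. By the Frattini argument, $G=N\cdot N_G(P)$, so $|N_G(P)|=42$. Next I would show that no element of $N_G(P)\setminus N$ centralizes $r$. Since $\Cent_G(r)$ has order $7$ or $14$, I would exclude $14$ by noting that $G_2(2)$ has no elements of order $14$: the semisimple element $r$ is regular in $G_2$, so its centralizer in $G_2(\overline{\F_2})$ is a torus and contains no involutions. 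Consequently $N_G(P)/P$ is cyclic of order $6$ acting faithfully on $P$, so it contains inversion. Hence $r^{-1}\in\Oc_r^G\setminus\Oc_r^N$, and Lemma \ref{lema:normal-simple} yields that $\Oc_r^G$ is of type C.
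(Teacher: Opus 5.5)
Your proof is correct, but you reach the key hypothesis of Lemma~\ref{lema:normal-simple} by a different route from the paper.

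The paper exhibits the fusing automorphism explicitly. The field automorphism $\Fr_3\in G$ sends $r$ to $r^3$, and $r$, $r^3$ are not conjugate even in $\SL_3(\overline{\F_3})$, because their eigenvalue sets $\{\eta,\eta^2,\eta^4\}$ and $\{\eta^3,\eta^5,\eta^6\}$ differ. So $\Oc_r^G\cap N\neq\Oc_r^N$ follows from an eigenvalue comparison inside the unitary group, with no counting and no use of $G\simeq G_2(2)$.

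You instead move to $G_2(2)$ and use that $r$ is regular semisimple in $G_2(\overline{\F_2})$. Then $\Cent_G(r)$ lies in a maximal torus of a group in characteristic $2$, so it has odd order. This alone finishes the proof. Indeed, $\Cent_N(r)=\Cent_G(r)\cap N$ has index at most $2$ in $\Cent_G(r)$, so odd order forces $\Cent_G(r)=\Cent_N(r)$. Hence $|\Oc_r^G|=2|\Oc_r^N|$, which is exactly the reduction in your first paragraph. Your route gives slightly more: no element of $G\setminus N$ centralizes $r$, and you never need to identify the fusing automorphism. The price is the exceptional isomorphism and some algebraic-group input; the paper's argument is more elementary and stays inside the unitary group.

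Two points need tidying.

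First, once you have odd order, the Sylow/Frattini/inversion chain is superfluous. As written, its first step is also wrong. The congruence $|N|/|N_N(P)|\equiv 1 \bmod 7$ together with $21\mid |N_N(P)|$ also allows $1$, $8$ or $36$ Sylow $7$-subgroups. To get $|N_N(P)|=21$ you must use either of the following:
\begin{itemize}
\item $\Cent_N(P)=P$, so $|N_N(P)|$ divides $42$, and $42$ violates the congruence;
\item the Lie-theoretic fact $N_N(\T_w)/\T_w^{F_w}\simeq\Cent_W(w\theta)$, which has order $3$.
\end{itemize}

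Second, the regularity of $r$ in $G_2$ needs a line of justification. The only maximal torus of $G_2(2)$ whose order is divisible by $7$ has order $q^2+q+1=7$. It corresponds to the cuspidal class of $(s_1s_2)^2$, so $r$ is cuspidal and hence regular. Since $G_2$ is simply connected, $\Cent_{G_2(\overline{\F_2})}(r)$ is then the maximal torus containing $r$. In characteristic $2$ that torus has no elements of order $2$.
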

\begin{proof}
The only  maximal torus in $\PSU_3(3)$ whose order is divisible by $7$, corresponds to the $\theta$-conjugacy class of $w=(123)$, hence it is 
conjugate in $\SL_3(\overline{\F_3})$ to a diagonal matrix of the form $d={\diag}(\eta,\eta^4,\eta^{2})$ for some $\eta\in \F_{3^6}$ such that $\eta^7=1$. The Frobenius endomorphism ${\Fr_3}$ lies in $G$, see \cite[Theorem 36]{yale}, 
and $\Fr_3\trid r = \Fr_3(r) = r^3$ is conjugate in $\SL_3(\overline{\F_3})$ 
to  $d^3={\diag}(\eta^3,\eta^5,\eta^{6})$ and thus, 
it is not conjugate to $r$ in $\SL_3(\overline{\F_3})$. 
Therefore $\Fr_3(r)$ is not conjugate to $r$ in $\PSU_3(3)$, 
giving
\[\Oc_r^G\cap\PSU_3(3)\neq \Oc_r^{\PSU_3(3)}.\]
Since $\PSU_3(3)$ is simple, the class $\Oc_r^G$ is of type C 
by Lemma \ref{lema:normal-simple}. 
 \end{proof}

\begin{prop}\label{prop:g22}
Every non-trivial semisimple class in $G_2(2)$ is of type C, D, or F.
\end{prop}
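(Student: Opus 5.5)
The plan rests on the classical isomorphism $G_2(2)\simeq \operatorname{Aut}(\PSU_3(3))=\PSU_3(3)\rtimes\langle \Fr_3\rangle$, with $[G_2(2),G_2(2)]=\PSU_3(3)$ simple of index $2$. Since $|G_2(2)|=2^6\cdot 3^3\cdot 7$ and $p=2$, the non-trivial semisimple elements of $G_2(2)$ have order $3$, $7$, $9$ or $21$. Every element of odd order lies in the index-$2$ subgroup $N\coloneqq\PSU_3(3)$, so I will work entirely through the normal simple subgroup $N\lhd G$.

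For elements of order $7$, Lemma \ref{lem:normal-simple-psu} already gives type C. For the remaining orders I would again aim at Lemma \ref{lema:normal-simple}: it suffices that $\Oc_r^G\cap N\neq \Oc_r^N$, i.e.\ that the class of $r$ splits into two $N$-classes. The test for this is $|\Cent_G(r)|=|\Cent_N(r)|$. I would read the centralizer orders off the character tables of $\PSU_3(3)$ and $G_2(2)$ (ATLAS), or argue directly with the Frobenius twist as in Lemma \ref{lem:normal-simple-psu}.

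\begin{itemize}
\item \textbf{Order $7$:} this reproduces Lemma \ref{lem:normal-simple-psu}.
\item \textbf{Order $9$:} $\PSU_3(3)$ has the classes $9A$, $9B$ and the Frobenius fuses them (the elements are $\PSU$-regular-unipotent-like, with semisimple part $3$-power). Careful: with $p=2$, elements of order $3$ and $9$ are semisimple in $G_2(2)$, so the relevant ATLAS classes are $3A$, $3B$, $9A$, $9B$ of $U_3(3)$, where $9A$ and $9B$ fuse in $U_3(3){:}2$. Hence order $9$ is handled by Lemma \ref{lema:normal-simple}.
\item \textbf{Order $7$ (ATLAS check):} the classes $7A$, $7B$ fuse, consistent with the lemma.
\item \textbf{Order $21$:} there are no elements of order $21$, since $21$ does not occur among the element orders.
\item \textbf{Order $3$:} the classes $3A$ and $3B$ do \emph{not} fuse, so a different argument is needed.
\end{itemize}

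For the order-$3$ classes I would find an explicit type C configuration inside a subgroup. The first attempt is to use type D or type C via a solvable subgroup, namely the Borel-type subgroup $3^{1+2}{:}8$ of $\PSU_3(3)$ or $\mathbb S_3$-like subgroups. One could also realize such a class inside $\PSL_2(7)\le \PSU_3(3)$, whose order-$3$ class is known to be of type D, and propagate to the over-rack. Alternatively, the order-$3$ elements of $G_2(2)$ come from the maximal torus pictures $\T^{F_w}$ with $q=2$. Here one can imitate Lemma \ref{lem:g2-non-cuspidal}: take $t$ of order $3$ in a torus for $w=s_1$ or $s_2$ (or a split torus, which is trivial when $q=2$), and a $F_w$-stable unipotent group $V$ normalized by $t$. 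Then use $H=\langle t,V\rangle$ with $r=t$ and $s=t^{-1}$. Since $t$ and $t^{-1}$ are $G$-conjugate via $w_0$, the sets $tV$ and $t^{-1}V$ are disjoint, and $|\Oc_t^V|\ge 4>2$. This is the same argument as in Lemma \ref{lem:g2-non-cuspidal}, and it works for any $q$ provided a suitable non-abelian $V$ normalized by $t$ with $t$ acting non-trivially is available.

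The main obstacle is exactly this order-$3$ case. One must check, for each of the two classes $3A$ and $3B$, that such a $V$ exists: a unipotent group normalized but not centralized by $t$ and generated by the conjugates of $t$ in $H$, with $t$ conjugate to $t^{-1}$ in $G$. I would verify this by explicit root-subgroup computations using the relations in \cite{ree-g2}, falling back on a type D check $(rs)^2\neq(sr)^2$ in small subgroups if needed.
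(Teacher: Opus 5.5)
Your reduction is sound: every element of odd order lies in $N=\PSU_3(3)$, and the order-$7$ class is of type C by Lemma \ref{lem:normal-simple-psu}. You also see correctly that the two $N$-classes of elements of order $3$ are not fused in $G_2(2)$, so Lemma \ref{lema:normal-simple} is unavailable for them.

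The paragraph on order $9$ is wrong. $\PSU_3(3)$ has no elements of order $9$: its Sylow $3$-subgroup $3^{1+2}$ has exponent $3$, since a regular unipotent $u$ of $\SL_3$ in characteristic $3$ satisfies $(u-1)^3=0$, whence $u^3=1$. Elements outside $N$ have even order. So the non-trivial semisimple elements of $G_2(2)$ have order $3$ or $7$ only, and there are no classes $9A$, $9B$ to fuse. Since this case is vacuous it does no logical harm, but it should be removed.

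The genuine gap is the order-$3$ case. You call it the main obstacle and then leave it open, listing three strategies without carrying any of them out. Two of them need more care than you indicate. The order-$3$ elements of $\PSL_2(7)\le\PSU_3(3)$ all lie in the class with centralizer of order $18$ in $G_2(2)$ (if they lay in the other class, they would fix all $36$ cosets of $\PSL_2(7)$). So that route cannot reach the class with centralizer $\SU_3(2)$. Also, the bound $|\Oc_t^V|\geq p$ used in Lemma \ref{lem:g2-non-cuspidal} gives only $2$ when $p=2$, so your claim $|\Oc_t^V|\geq 4$ needs an actual computation.

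The paper closes this case by citation. In $\PSU_3(3)$ the elements of order $3$ are unipotent for the defining characteristic $3$, and their classes are of type C, D or F by \cite[Proposition 5.1]{ACG-IV}. These $N$-classes are subracks of (indeed equal to) the $G_2(2)$-classes, so the property passes up.

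Your torus strategy can also be completed by hand with $q=2$:

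\emph{Class with centralizer $\SU_3(2)$.} Take $t=\alpha_1^\vee(\zeta)\in\T^{F_{s_1}}$ with $\zeta^3=1\neq\zeta$; all long roots are trivial on $t$. Let $V=\langle\U_{\alpha_1+\alpha_2},\U_{2\alpha_1+\alpha_2}\rangle^{F_{s_1}}$. It has order $8$ and is non-abelian, because the structure constant $3$ is non-zero in characteristic $2$. Moreover $\Cent_V(t)=\U_{\alpha_0}^{F_{s_1}}$ has order $2$. Hence $V\simeq Q_8$, $H=V\langle t\rangle\simeq\SL_2(3)$ and $|\Oc_t^H|=4$.

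\emph{Class with centralizer of order $18$.} Take $t=\alpha_2^\vee(\zeta)\in\T^{F_{s_2}}$; only $\pm(2\alpha_1+\alpha_2)$ vanish on $t$. Let $V=(\U_{3\alpha_1+\alpha_2}\U_{3\alpha_1+2\alpha_2})^{F_{s_2}}$, elementary abelian of order $4$ with $\Cent_V(t)=e$. Then $H\simeq\mathbb A_4$ and $\Oc_t^H=tV$.

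In both cases $\langle\Oc_t^H\rangle=H$, and $t$ is conjugate to $t^{-1}$ because $w_0\in\Cent_W(s_i)$. Choosing $r\in\Oc_t^H$ not commuting with $t$ and $s=t^{-1}$ gives type C.
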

\begin{proof}Recall that $G_2(2)\simeq{\rm Aut}(\PSU_3(3))$. We identify $\PSU_3(3)$ with the normal subgroup of inner automorphisms. By \cite[Table 4]{MaximalTori}  non-trivial semisimple elements have order $3$ or $7$. The automorphisms of twisted groups of Lie type are compositions of inner, diagonal, and field (i.e., Frobenius) automorphisms, where in our situation a diagonal morphism is conjugation by the class of a diagonal matrix in $\mathbf{PGU}_3(3)$, \cite[Theorem 36]{yale}. Frobenius automorphisms normalize diagonal automorphisms, and have order $2$ in $G_2(2)$ because $\PSU_3(3)\simeq \SU_3(3)\leq \GL_3(9)$. 
In addition, diagonal elements of $\mathbf{PGU}_3(3)$ have order a power of $2$ because the diagonal entries lie in $\F_{9}^\times$. Thus the elements of order $3$ and $7$ lie in $\PSU_3(3)$. The classes of elements of order $3$ in $\PSU_3(3)$ are of type C, D, or F by \cite[Proposition 5.1]{ACG-IV}. Those of elements of order $7$ are of type C by Lemma \ref{lem:normal-simple-psu}. 
\end{proof}

\section{Kthulhu classes that collapse by the  criterion of
type \texorpdfstring{$\tipo$}{}}\label{sec:notC-Omega}

In this Section we exhibit conjugacy classes in simple groups of Lie type that collapse by virtue of the new criterion $\tipo$ and could not be handled with the previous techniques. On the other hand, in Remark \ref{obs:SLp-irreducible} we exhibit a family classes that cannot be addressed even with the new method. 
We retain notation from Sections \ref{sec:notation-M} and \ref{sec:typeC}.

\medbreak
\subsection{The groups \texorpdfstring{$\Pom^+_{2n+1}(q)$}{}} 

\ 

We first address the semisimple classes in $\Pom^+_{2n+1}(q)$, 
$n\geq 2$ that are not covered by Theorem \ref{thm:bn-typeC}. 

\medbreak
The class $\Oc$ for $t$ as in \eqref{eq:bn-w=1-even} is kthulhu 
for $n=2$ and $p=3,5$, in virtue of the isomorphism 
$\Pom^+_5(q)\simeq \PSp_4(q)$, see \cite[Table 2]{ACG-VII}. 

\begin{lema}\label{lem:bn-discarded-omega} 
Let $\Gb=\Pom^+_{2n+1}(q)$ with $n\geq 2$ and let $t\in\T^{F_w}$ be as in \eqref{eq:bn-discarded-case1} with $q\neq3$, \eqref{eq:bn-discarded-case2}, \eqref{eq:bn-w=1-odd}, or \eqref{eq:bn-w=1-even} with $q\neq 3$. 
Then, $\mathfrak O$ and $\Oc$ are of type $\tipo$.
\end{lema}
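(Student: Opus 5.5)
Our plan is to use Corollary \ref{cor:normal-abelian-order}: for each listed $t$ we produce an abelian $\varGamma\leqslant G_w$ (or in $\pi(G_w)$) normalised by $t$, with $\Cent_\varGamma(t)=e$ and $(\varGamma,\aut)$ not in \eqref{eq:list-affine-simple-racks}. By Proposition \ref{prop:characterization-affine} this gives an affine subrack $\Aff(\varGamma,\aut)\subseteq\Oc_t^{G_w}$. If $\Cent_{\pi(\varGamma)}(\pi(t))$ is also trivial, then the same subrack embeds in $\Oc$, so both $\mathfrak O$ and $\Oc$ are of type $\tipo$. The natural place to find $\varGamma$ is inside $p$-subgroups, namely products of root subgroups $\U_\gamma$ with $\gamma(t)\neq1$. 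Here $t$ acts on $x_\gamma(\eta)$ by $\eta\mapsto\gamma(t)\eta$, so the centraliser is trivial exactly when $\gamma(t)\neq1$ for all roots involved.

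\textbf{Split cases $w=e$.} These are \eqref{eq:bn-w=1-odd} and \eqref{eq:bn-w=1-even}. Here $\pi(t)=\diag(-\id_n,1,-\id_n)$, up to the factor $\alpha_n^\vee(\omega)$ when $q=5$. Take $\varGamma$ to be the $F$-points of a single short root subgroup, or of the abelian product of the short root subgroups $\U_{\varepsilon_i}$, on which $t$ acts by $-1$ (or by $\omega^{\pm1}$). Then $\Cent_\varGamma(t)=e$ and $|\varGamma|=q^k$. For $q=5$ or $7$ we take $k\geq2$ commuting short roots with $\gamma(t)=-1$; the product of the $\U_{\varepsilon_i}$, $i\in\I_n$, is abelian since $\varepsilon_i+\varepsilon_j$ and $\varepsilon_i-\varepsilon_j$ don't arise as commutators of those. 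This gives $|\varGamma|=q^n\notin\{2,3,4,5,7\}$. If $n=1$ were forced we would instead check that $(\F_q,-1)$ or $(\F_5,\omega^{\pm1}\text{-action})$ is not in the list. For instance $(\F_5,4)$ and $(\F_7,6)$ are not in \eqref{eq:list-affine-simple-racks}, but $(\F_5,2)$ and $(\F_5,3)$ are, so for $q=5$ we take two roots to be safe. The case $q=3$ is excluded in the statement precisely because $(\F_3,2)$ is in the list.

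\textbf{Non-split cases \eqref{eq:bn-discarded-case1} ($q\neq3$) and \eqref{eq:bn-discarded-case2}.} We work in $G_w$ with $w=s_n$ or $s_ns_{\alpha_{n-1}+\alpha_n}$. We take $\varGamma=\langle\U_\gamma:\gamma\in\Psi\rangle^{F_w}$ for a $w$-stable set $\Psi$ of pairwise commuting long roots orthogonal to $\alpha_n$. Concretely, we take $\Psi$ spanned by the $\varepsilon_i\pm\varepsilon_j$ with $i,j\le n-1$, inside a $w$-stable abelian unipotent radical, where $w$ acts on these roots trivially. On $\Psi$, $\gamma(t)$ is a product of $\pm1$'s. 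We choose $\gamma$ with $\gamma(t)=-1$, which is possible since the $\zeta_{2i-1}=-1$ pattern gives $\alpha_{2i-1}(t)=-1$-type values. This yields $\varGamma\simeq\F_q^k$ with $t$ acting as $-1$. Then $|\varGamma|\ge q\ge7$ for \eqref{eq:bn-discarded-case1}, since $q\equiv3\bmod4$ and $q\neq3$, hence $q\geq7$. For $q=7$ we require $k\ge2$, because $(\F_7,6)$ is fine but safety costs nothing. For \eqref{eq:bn-discarded-case2}, with $q=3$, we need $k\ge2$, giving order $9$ and avoiding $(\F_3,2)$. When $n=3$ (so $d=1$) only few such roots exist. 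In that case we would instead use $\U_{\alpha_0}$-type long roots together with $\alpha_1$, or the $F_w$-points of $\U_{\alpha_{n-1}}\U_{\alpha_{n-1}+2\alpha_n}$, a group of order $q^2$ swapped by $w$, checking that $\gamma(t)\neq1$ for the relevant roots via the given $\zeta$'s.

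\textbf{Main obstacle.} The delicate point is the passage to $\Oc$: we must ensure $\Cent_{\pi(\varGamma)}(\pi(t))=e$, i.e.\ no $x\in\varGamma$ satisfies $t\trid x\in\bZ x$. This is automatic because $\varGamma$ is a $p$-group with $p$ odd, while $\bZ$ has order $2$. A second delicate point is small rank ($n=2,3$), where the available commuting roots with $\gamma(t)\neq1$ may be scarce. There we would first try the two-root group above, then fall back on a dihedral-type $\varGamma$ as in Example \ref{exa:SL2-unipotent}.
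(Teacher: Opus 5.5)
Your framework matches the paper's: Corollary \ref{cor:normal-abelian-order} applied to an abelian $p$-group of root elements normalized by $t$, plus $\varGamma\cap\bZ=e$ because $p$ is odd. However, the concrete groups you choose do not work, because you never compute how $t$ acts on root subgroups. Write $\varepsilon_i=\alpha_i+\dots+\alpha_n$ for the short roots. Then $\varepsilon_i(t)=\zeta_i\zeta_{i-1}^{-1}$ for $i<n$ and $\varepsilon_n(t)=\zeta_{n-1}^{-1}\zeta_n^2$. In each of \eqref{eq:bn-discarded-case1}, \eqref{eq:bn-w=1-odd}, \eqref{eq:bn-w=1-even} this gives $\varepsilon_i(t)=-1$ for all $i\in\I_n$. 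This includes $\zeta_n=\omega\in\F_5^\times$ or $\zeta_n^2=-1$, since then $\zeta_n^2=-1$ and $\zeta_{n-1}=1$. Hence every long root $\pm\varepsilon_i\pm\varepsilon_j$ satisfies $\gamma(t)=1$, so $t$ centralizes all long root subgroups. Your claim that $\alpha_{2i-1}(t)=-1$ is false: $\alpha_{2i-1}(t)=\varepsilon_{2i-1}(t)\varepsilon_{2i}(t)^{-1}=1$.

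So in case \eqref{eq:bn-discarded-case1}, your $\varGamma$ generated by long root subgroups orthogonal to $\alpha_n$ has $\Cent_\varGamma(t)=\varGamma$, and no choice of long roots can fix this. In case \eqref{eq:bn-discarded-case2}, the long roots $\varepsilon_i\pm\varepsilon_j$ with $i,j\le n-2$ are again centralized. The only long roots with $\gamma(t)\neq1$ involve $\varepsilon_{n-1}$ (where $\varepsilon_{n-1}(t)=\omega^2$), and $w=s_{\varepsilon_n}s_{\varepsilon_{n-1}}$ does not act trivially on them. Your $n=3$ fallback $\U_{\alpha_{n-1}}\U_{\alpha_{n-1}+2\alpha_n}$ is not even $F_w$-stable for this $w$, which sends $\alpha_{n-1}$ to $-\alpha_{n-1}$. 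That pair is swapped by $s_n$ alone, but for $w=s_n$ both roots are centralized by $t$.

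The split case also rests on a false claim. For odd $p$ the product of the $\U_{\varepsilon_i}$ is not abelian, since $\varepsilon_i+\varepsilon_j$ is a long root and $[x_{\varepsilon_i}(a),x_{\varepsilon_j}(b)]=x_{\varepsilon_i+\varepsilon_j}(\pm 2ab)$. In fact no two non-opposite short root subgroups of $B_n$ commute. The repair, which is exactly the paper's proof, is simpler than what you attempt.

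For $w=e$ or $w=s_n$ (so $q>3$), take $\varGamma=\U_{\varepsilon_1}^{F_w}$. The root $\varepsilon_1$ is fixed by $s_n$, $|\varGamma|=q$, and $t$ acts by inversion. So $t\varGamma\simeq(\F_q,-1)$, which is not in \eqref{eq:list-affine-simple-racks} for $q>3$. One root suffices, and your worry about an $\omega^{\pm1}$-action is void.

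For \eqref{eq:bn-discarded-case2}, a single short root would only give $(\F_3,2)$, which is in the list. Instead take $\gamma=\alpha_1+\dots+\alpha_{n-2}=\varepsilon_1-\varepsilon_{n-1}$ and $w\gamma=\varepsilon_1+\varepsilon_{n-1}$. They commute because $2\varepsilon_1\notin\Phi$, and $\gamma(t)=-\omega^{-2}$ and $(w\gamma)(t)=-\omega^{2}$ are both $\neq1$. Thus $(\U_\gamma\U_{w\gamma})^{F_w}$ is abelian of order $9$ with trivial centralizer of $t$.
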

\begin{proof}
For all choices of $t$ we verify the hypotheses of Corollary \ref{cor:normal-abelian-order}.
In detail, we construct an $F_w$-stable elementary abelian $p$-subgroup $V$ of $G_w$ normalized by $t$, such that $\Cent_V(t)=e$ and show that the affine subrack $\Oc_t^V=tV$ does not belong to the list \eqref{eq:list-affine-simple-racks}. 
Since $V\cap{\rm Ker}(\pi)=e$ and $v\in \Cent_V(t)$ if and only if $\pi(v)\in \Cent_{\pi(V)}(\pi(t))$, the same argument applies to $\pi(tV)$. 

\medbreak
If $w=e$ or $w=s_n$, then $q>3$. We take $V=\U^{F_w}_{\alpha_1+\cdots+\alpha_n}$. Since $(\alpha_1+\cdots+\alpha_n)(t)=-1$, the conjugation action of $t$ on $V$ is inversion and $\Cent_V(t)=e$, hence $\left|\U_{\alpha_1+\cdots+\alpha_n}^{F_w}\right|=q$. Therefore the affine rack $tV$ is isomorphic to $(\F_q,q-1)$, which does not belong to the list \eqref{eq:list-affine-simple-racks} since $q>3$.

\medbreak
If $w=s_ns_{\alpha_{n-1}+\alpha_n}$, then $q=3$. We consider 
\begin{align*}V=\langle \U_{\alpha_1+\cdots+\alpha_{n-2}},\U_{w(\alpha_1+\cdots+\alpha_{n-2})}\rangle^{F_w}=(\U_{\alpha_1+\cdots+\alpha_{n-2}}\U_{w(\alpha_1+\cdots+\alpha_{n-2})})^{F_w},\end{align*}
which is an abelian group of order $9$, cf. \cite[Proposition 23.8]{malle-testerman}. 
Since 
\begin{align*}(\alpha_1+\cdots+\alpha_{n-2})(t)=-\omega^{-2}\neq1,\mbox{ and }w(\alpha_1+\cdots+\alpha_{n-2})(t)=-\omega^{2}\neq1\end{align*} there holds $\Cent_V(t)=e$. By size reasons the affine rack $tV$ does not belong to the list \eqref{eq:list-affine-simple-racks}. 
\end{proof}

\begin{lema}\label{lem:bn-discarded-omega3} 
Let $\Gb=\Pom^+_{2n+1}(3)$ with $n\geq 2$ and let $t\in\T^{F_w}$ be as in \eqref{eq:bn-discarded-case1} or \eqref{eq:bn-w=1-even}. 
Then, $\mathfrak O$ and $\Oc$ are of type $\tipo$.
\end{lema}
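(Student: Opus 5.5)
The plan is to apply Corollary \ref{cor:solvable-order} instead of Corollary \ref{cor:normal-abelian-order}. For $q=3$ the construction in the proof of Lemma \ref{lem:bn-discarded-omega} breaks down. With $V=\U^{F_w}_{\alpha_1+\cdots+\alpha_n}$ one gets the affine rack $(\F_3,2)$, which is in the list \eqref{eq:list-affine-simple-racks}. Taking two commuting root subgroups on which $t$ acts by inversion also fails. In the usual coordinates of $\mathbf{SO}_{2n+1}$, the element $\pi(t)$ is $\diag(-\id_n,1,-\id_n)$ (see the Remark after Theorem \ref{thm:bn-typeC}). The short roots $e_i$ are then exactly the roots with value $-1$ on $t$, and the long roots $e_i\pm e_j$ take value $1$. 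Since $e_i+e_j$ is a root, the subgroups $\U_{e_i},\U_{e_j}$ do not commute. So I would instead work with a small non-abelian unipotent group normalized by $t$.

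\textbf{Construction.} Fix two indices $i\neq j$ such that the short roots $e_i,e_j$ and the long root $e_i+e_j$ are all fixed by $w$.
\begin{itemize}
\item For $w=e$, as in \eqref{eq:bn-w=1-even} with $n=2d\geq 2$, take $i=1$, $j=2$.
\item For $w=s_n$, as in \eqref{eq:bn-discarded-case1}, we have $n=2d+1\geq 3$, and $s_n$ is the reflection in the short root $e_n$. Take $i,j<n$; these roots are orthogonal to $e_n$, hence fixed by $w$.
\end{itemize}
Set
\[V\coloneqq (\U_{e_i}\U_{e_j}\U_{e_i+e_j})^{F_w}.\]
By \cite[Proposition 23.8]{malle-testerman} and the commutator relations, this is a group of order $27$ of nilpotency class $2$, with $[\U_{e_i},\U_{e_j}]\subseteq \U_{e_i+e_j}$. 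The element $t$ normalizes $V$. It acts by inversion on the two short root subgroups, since $e_i(t)=e_j(t)=-1$, and it centralizes $\U_{e_i+e_j}$. The first thing to verify is that $e_i(t)=e_j(t)=-1$. This follows from the coroot expressions of $t$ in \eqref{eq:bn-discarded-case1} and \eqref{eq:bn-w=1-even}, exactly as $(\alpha_1+\cdots+\alpha_n)(t)=-1$ was used in Lemma \ref{lem:bn-discarded-omega}.

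\textbf{Applying the corollary.} Let $\mathbf S\coloneqq\langle t,V\rangle=\langle t\rangle V$, which is solvable. Three things need checking.
\begin{itemize}
\item \emph{Size of the class.} The $V$-orbit of $t$ is $\{v t v^{-1}\}=\{[v,t]\,t\}$. Hence $|\Oc_t^{V}|=[V:\Cent_V(t)]=27/3=9$. Conjugation by $t$ itself adds nothing, so $|\Oc_t^{\mathbf S}|=9\notin\{3,\dots,7\}$.
\item \emph{Generation.} The subgroup $\langle\Oc_t^{\mathbf S}\rangle$ contains $t$. It also contains the elements $[v,t]$. Modulo the centre these are $x_{e_i}(2a)x_{e_j}(2b)$, so they cover $V/\U_{e_i+e_j}^{F_w}$. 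Their commutators give $\U^{F_w}_{e_i+e_j}$, because $p=3\neq 2$ and the structure constant is nonzero. Hence $\langle\Oc_t^{\mathbf S}\rangle=\mathbf S$.
\item \emph{Non-centrality.} The element $t$ is not in $Z(\mathbf S)$, since it inverts $\U_{e_i}$.
\end{itemize}
Corollary \ref{cor:solvable-order} then gives that $\mathfrak O$ is of type $\tipo$.

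\textbf{Passing to $\Oc$.} Since $V\cap\ker\pi=e$ and $V$ is a $3$-group, the restriction of $\pi$ to $\mathbf S$ is injective on $V$. The same argument therefore works inside $\pi(\mathbf S)$ and shows that $\Oc$ is of type $\tipo$.

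\textbf{Main obstacle.} The step needing care is the generation claim together with the exact size $9$. Concretely, one must check that the relevant Chevalley commutator constant is nonzero mod $3$, and that $F_w$ fixes the whole group $V$ in the $w=s_n$ case. If $F_w$ were to twist $x_{e_i+e_j}$ by a sign, I would replace $V$ by its $F_w$-fixed points computed directly; it still has order $27$, since all three roots are $w$-fixed.
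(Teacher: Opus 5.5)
Your proposal is correct and is essentially the paper's proof: the paper takes $\gamma_1=\alpha_1+\cdots+\alpha_n=e_1$ and $\gamma_2=\alpha_2+\cdots+\alpha_n=e_2$, forms $\mathbf S=\langle t,\U_{\gamma_1},\U_{\gamma_2}\rangle^{F_w}$ (your $\langle t\rangle V$ with $i=1$, $j=2$), gets $\Cent_{\mathbf S}(t)=\langle t,\U^F_{\gamma_1+\gamma_2}\rangle$ and hence $|\Oc_t^{\mathbf S}|=9$, checks $\langle\Oc_t^{\mathbf S}\rangle=\mathbf S$, and applies Corollary \ref{cor:solvable-order}. Your passage to $\Oc$ through injectivity of $\pi$ on $tV\supseteq\Oc_t^{\mathbf S}$ is slightly more accurate than the paper's claim that $\pi$ is injective on all of $\mathbf S$: in case \eqref{eq:bn-discarded-case1} one has $t^2=\alpha_n^\vee(-1)\in\ker\pi$, so that claim fails, although the conclusion is unaffected.
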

\begin{proof}
We show that the hypothesis of Corollary \ref{cor:solvable-order} holds.
Let 
\begin{align*}
\gamma_1 &\coloneqq \alpha_1+\cdots+\alpha_n & &\text{and}& 
\gamma_2 &\coloneqq \alpha_2+\cdots+\alpha_n.
\end{align*}
 We consider the solvable subgroup $\mathbf{S}\coloneqq \langle t, \U_{\gamma_1},\,\U_{\gamma_{2}}\rangle^{F_w}\subset \T\U\cap G_w$. In \eqref{eq:bn-discarded-case1} we have $n\geq 3$, so we may choose a representative of $w=s_n$ in $N_{\G}(\T)$ acting trivially on $\U_{\gamma_1}$ and $\U_{\gamma_2}$. Hence, for both choices of $t$ we have $\U^{F_w}_{\gamma_1}=\U^F_{\gamma_1}$ and $\U^{F_w}_{\gamma_2}=\U^{F}_{\gamma_2}$. 
The multiplication induces a bijection of sets 
\begin{align*}\mathbf{S} \simeq \langle t\rangle \times \U^F_{\gamma_1} \times \U^F_{\gamma_2} \times \U_{\gamma_1+\gamma_2}^F,\end{align*} 
so $|{\mathbf{S}}|=54$. As $\gamma_1(t)=\gamma_2(t)=-1$, 
we have $\Cent_{\mathbf{S}}(t)=\langle t,\U_{\gamma_1+\gamma_2}^F\rangle$ so $|\Oc_t^{\mathbf{S}}|=9$. Also, $\U_{\gamma_i}^F\trid t=t\U^F_{\gamma_i}$ for $i=1,2$, so 
\begin{align*}{\mathbf{S}}=\langle  t\U^F_{\gamma_1}, t\U^F_{\gamma_2}\rangle \subseteq \langle \Oc_t^{\mathbf{S}}\rangle\subseteq {\mathbf{S}},\end{align*} giving the statement for $\mathfrak O$. The statement for $\Oc$ follows observing that the restriction of $\pi$ to ${\mathbf{S}}$ is injective. 
\end{proof}

We are now in a position to give a general statement for $\Pom_{2n+1}^+(q)$, for $q$ odd. 
If $q$ is even, then $\Pom_{2n+1}^+(q)\simeq \PSp_{2n}(q)$, to be treated in Section \ref{sec:abelian-techniques}.

\begin{theorem}\label{thm:bn-completo} Every non-trivial conjugacy class in $\Pom_{2n+1}^+(q)$ collapses for any odd $q$ and any $n\geq 2$. \qed
\end{theorem}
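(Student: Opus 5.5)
By the second equivalent formulation of collapse, it suffices to show that $\dim\toba(V)=\infty$ for every irreducible $V\in\yd{\ku\Gb}$, where $\Gb=\Pom^+_{2n+1}(q)$ with $q$ odd and $n\geq 2$. The support of such a $V$ is a non-trivial conjugacy class $\Oc$ of $\Gb$. If $\Oc$ collapses as a rack, then $\dim\toba(V)=\infty$, since $V$ realises $(\ku\Oc,c^{\bq})$ for some cocycle $\bq$ and some finite group. So I will show that every non-trivial class collapses, and I will split according to the Jordan decomposition of a representative.

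\emph{Unipotent and mixed classes.} I will invoke the results of the series \cite{ACG-I,ACG-II,ACG-IV,ACG-V} for type $B_n$. These give type C, D or F for all non-semisimple classes up to a short list of exceptions. The exceptions that survive must be handled separately, presumably by citing \cite{ACG-V} or \cite{ACG-VII}, where mixed classes in orthogonal groups are completed.

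\emph{Semisimple classes.} When $n=2$, $\Pom^+_5(q)\simeq\PSp_4(q)$, so I use \cite[Theorem I]{ACG-VII}. This leaves only the split involution for $q\in\{3,5,7\}$, which is exactly \eqref{eq:bn-w=1-even} with $n=2$. For $n\geq 3$ I write $\Oc\simeq\Oc_{\pi(t)}^{\pi(G_w)}$ with $t\in\T^{F_w}$ and $w$ minimal in $\WO$. Theorem \ref{thm:bn-typeC} then gives type C, D or F except in the cases \eqref{eq:bn-discarded-case1} with $w=s_n$, \eqref{eq:bn-discarded-case2}, \eqref{eq:bn-w=1-odd} and \eqref{eq:bn-w=1-even}; in those cases Theorem \ref{thm:CDF} applies.

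\emph{Remaining cases.} Each exceptional class (including the $n=2$ split involution) is of type $\tipo$:
\begin{itemize}
\item by Lemma \ref{lem:bn-discarded-omega} when $q\neq 3$, and also for \eqref{eq:bn-discarded-case2} and \eqref{eq:bn-w=1-odd};
\item by Lemma \ref{lem:bn-discarded-omega3} when $q=3$ in \eqref{eq:bn-discarded-case1} and \eqref{eq:bn-w=1-even}.
\end{itemize}
Theorem \ref{thm:typeZ-1} then shows that these classes collapse. The case-by-case check for type $\tipo$ must cover $n=2$ as well; both lemmas are stated for $n\geq 2$, so this is fine, though the condition $n\geq 3$ in Lemma \ref{lem:bn-discarded-omega3} for \eqref{eq:bn-discarded-case1} only involves odd $n$.

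\emph{Main obstacle.} The delicate part is bookkeeping: the exceptional lists in Theorem \ref{thm:bn-typeC} must be matched exactly with the hypotheses of the two type-$\tipo$ lemmas, including the parity and $q$ restrictions. In addition, one must confirm that the non-semisimple exceptions from earlier papers are genuinely closed for odd $q$. If a non-semisimple exception remained open, I would try the type-$\tipo$ criterion, through Corollary \ref{cor:solvable-order}, on a solvable subgroup $\langle x,\U_\gamma^F,\dots\rangle$.
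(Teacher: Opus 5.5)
Your semisimple argument is the paper's: \cite[Theorem I]{ACG-VII} for $n=2$, Theorem \ref{thm:bn-typeC} for $n\geq 3$, and Lemmata \ref{lem:bn-discarded-omega}, \ref{lem:bn-discarded-omega3} together with Theorem \ref{thm:typeZ-1} for the leftover classes. Your matching of the exceptional list against the hypotheses of these two lemmas is correct, including at $n=2$. (There is a wording slip: Theorem \ref{thm:CDF} applies \emph{outside} the exceptional cases, not in them.)

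The step you leave open is the non-semisimple one. The paper closes it by citing \cite[Main Theorem]{ACG-IV} for unipotent classes and \cite[Theorem 1.1]{ACG-V} for mixed classes, and you should commit to these instead of writing ``presumably''. Your suspicion that an exception survives is nevertheless justified at $n=2$. Under $\Pom^+_5(q)\simeq\PSp_4(q)$, take the unipotent class with partition $(2,1^2)$ in $\PSp_4(q)$, with $q$ odd and not a square. It is kthulhu (see the proof of Theorem \ref{thm:cn-completo}), so no citation can make it of type C, D or F.

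Your type-$\tipo$ fallback via Corollary \ref{cor:solvable-order} is speculative for this class. The paper gives no such argument, and it shows that the analogous unipotent classes of $\PSL_2(q)$, $q>3$ odd, are not of type $\tipo$. What is available is that this class \emph{falls down}, by \cite[Lemma 1.9]{AF} if $p>3$ and by \cite[Lemma 6.1]{ACG-V} if $p=3$, as in the proof of Theorem \ref{thm:cn-completo}.

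This suffices for the group-level statement you reduced to in your first paragraph, but it does not give rack-level collapse of that class. So for $n=2$ you must replace the intermediate claim ``every non-trivial class collapses'' by ``falls down'' for this one class. The paper's one-line proof does not make this distinction explicit either.
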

\begin{proof}
Unipotent and mixed classes are covered by \cite[Main Theorem]{ACG-IV}, \cite[Theorem 1.1]{ACG-V}, respectively. Semisimple classes are covered by \cite[Theorem I]{ACG-VII}, Theorem  \ref{thm:bn-typeC}, and
Lemmata \ref{lem:bn-discarded-omega} and \ref{lem:bn-discarded-omega3}.\end{proof}

\subsection{The groups \texorpdfstring{$\Pom^+_{2n}(q)$}{}}

\

We address the semisimple classes in $\Pom^+_{2n}(q)$, $n\geq 4$ that are not covered by Theorem \ref{thm:dn-one-exception}.

\begin{lema}\label{lem:d4-exception}
Let $\Gb=\Pom^+_{4}(q)$ be of type $D_4$, and let  \begin{align*}t=\alpha^\vee_1(\omega_1)\alpha^\vee_3(\omega_3)\alpha^\vee_4(\omega_4)\in \T^{F_{w}}\end{align*} with  $w=s_1s_3s_4$ and $J_w$ minimal. If $q=2$ or $t$ is as in \eqref{eq:exception-d4}, then $\mathfrak O$ and $\Oc$ are of type $\tipo$. 
\end{lema}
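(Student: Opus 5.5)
We follow the scheme of Lemmata \ref{lem:bn-discarded-omega} and \ref{lem:bn-discarded-omega3}. The aim is to find a solvable subgroup normalized by $t$ whose conjugacy class of $t$ is a solvable subrack not in \eqref{eq:list-affine-simple-racks}. The plan is to apply Corollary \ref{cor:normal-abelian-order}: we construct an $F_w$-stable abelian $p$-subgroup $V\leqslant G_w$, normalized by $t$, with $\Cent_V(t)=e$ and $|V|\notin\{2,3,4,5,7\}$. Since $V$ is a $p$-group and $\ker\pi=\bZ$ is a $2$-group (or trivial when $q=2$), $\pi$ is injective on $\langle t\rangle V$, so the conclusion passes from $\mathfrak O$ to $\Oc$.

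The natural candidate uses the $w$-orbit of the root $\alpha_2$. Here $w=s_1s_3s_4$ acts on the roots orthogonally to the $A_1$-components, and $w\alpha_2=\alpha_1+\alpha_2+\alpha_3+\alpha_4$, $w^2\alpha_2=\alpha_2$. So $\{\alpha_2,w\alpha_2\}$ is an $F_w$-stable pair of positive roots. Their sum is not a root, while $\alpha_2+w\alpha_2=\alpha_0+\dots$ requires checking; indeed $\alpha_2+(\alpha_1+\alpha_2+\alpha_3+\alpha_4)=\alpha_0$ is a root. So the group $\Vu=\langle\U_{\alpha_2},\U_{w\alpha_2}\rangle=\U_{\alpha_2}\U_{w\alpha_2}\U_{\alpha_0}$ is a Heisenberg-type group, and we cannot use it directly as an abelian $V$. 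Two options follow.

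\textbf{Option (a).} Take $V=\U_{\alpha_0}^{F_w}$, which has order $q$, with $\alpha_0(t)$ computed from $t$. This fails for $q=2,3$ by size. It also fails whenever $\alpha_0(t)=\omega_1\omega_3\omega_4\cdot(\text{stuff})$ with the $\alpha_2$-coefficient $1$, in which case $\alpha_0(t)=1$ (coefficient of $\alpha_2$ in $t$ is $1$, and $\langle\alpha_0,\alpha_i^\vee\rangle=0$ for $i\ne2$). So $t$ centralizes $\U_{\alpha_0}$.

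\textbf{Option (b).} Use Corollary \ref{cor:solvable-order} with $\mathbf S=\langle t,\Vu^{F_w}\rangle$. Then $\Vu^{F_w}$ has order $q^3$ (as in Lemma \ref{lem:g2-non-cuspidal}), with $\U_{\alpha_2}\U_{w\alpha_2}$ fixed part of order $q^2$. We compute $\alpha_2(t)=\omega_1^{-1}\omega_3^{-1}\omega_4^{-1}$, which is $\neq1$ in both situations: for $q=2$ it is a product of three generators of $\F_4^\times$, and in \eqref{eq:exception-d4} it is a product of three square roots of $-1$. The same holds for $w\alpha_2$. Hence conjugation by $t$ has no nontrivial fixed points modulo $\U_{\alpha_0}$, while it centralizes $\U_{\alpha_0}^{F_w}$. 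So $\Cent_{\mathbf S}(t)=\langle t\rangle\U_{\alpha_0}^{F_w}$, and therefore $|\Oc_t^{\mathbf S}|=q^2$, which equals $4$ for $q=2$ and lies in $\{9,49,\dots\}$ for $q\equiv3\bmod4$.

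Generation $\langle\Oc_t^{\mathbf S}\rangle=\mathbf S$ follows as in Lemma \ref{lem:bn-discarded-omega3}. The elements $t^{-1}(u\trid t)$ run over $\U_{\alpha_2}^{F_w}\U_{w\alpha_2}^{F_w}$ modulo the center, these generate $\Vu^{F_w}$, and $t$ is not central. For $q\equiv3\bmod 4$, $q^2\ge9$ is outside $\{3,\dots,7\}$ and we conclude. The main obstacle is $q=2$, where the orbit has size $4$, a possible value of $\Oc^4_2,\Oc^4_4$ or $(\F_4,\omega)$. There we would instead consider the abelian quotient action. Alternatively, we would take the bigger orbit of $t$ in $\langle t,\U_{\alpha_2}^{F_w}\U_{w\alpha_2}^{F_w}\U_{\alpha_0}^{F_w},\U_{\beta}\rangle$ for a further $t$-moved root $\beta$ (e.g. using $\alpha_1+\alpha_2$ and its $w$-orbit), producing a solvable subrack of size $\ge8$. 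Failing that, we would use the order of $\Cent_{\mathbf S}(t)$ in the full Borel $\B^{F_w}$: all positive roots other than $\alpha_0$ are moved by $t$, which gives $|\Oc_t^{\B^{F_w}\cap\langle t,\U\rangle}|=2^{11}$, and $\langle t\rangle\U^{F_w}$ is solvable and generated by that orbit.
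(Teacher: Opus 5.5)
Your Option (b) is the paper's argument for $q\equiv 3\bmod 4$, and it is correct there, since a product of three square roots of $-1$ is never $1$. The steps are: $\mathbf S=\langle t\rangle\ltimes V^+$ with $V^+=\langle\U_{\alpha_2},\U_{w\alpha_2}\rangle^{F_w}$, then $\Cent_{\mathbf S}(t)=\langle t\rangle\U_{\alpha_0}^{F_w}$, then an orbit of size $q^2\ge 9$ generating $\mathbf S$, then Corollary \ref{cor:solvable-order}. The gap is the case $q=2$, which your proposal does not prove.

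\textbf{The case $q=2$ in your proposal.}
\begin{enumerate}
\item Your reason for $\alpha_2(t)\neq 1$ is wrong there: three generators of $\F_4^\times$ can multiply to $1$. Take $\omega_1=\omega_3=\omega_4$; by Remark \ref{obs:d4-exception} this represents the same class. You must first normalize, e.g.\ $t=\alpha_1^\vee(\omega)\alpha_3^\vee(\omega)\alpha_4^\vee(\omega^{-1})$.
\item Even after normalizing, Option (b) cannot work, and the obstruction is real, not just numerical. Here $\langle t\rangle\ltimes V^+\simeq \SL_2(3)$, and the orbit of $t$ is isomorphic to $(\F_4,\omega)$, which is on the list \eqref{eq:list-affine-simple-racks}.
\item The ``abelian quotient action'' fallback contains no argument.
\item The Borel fallback fails on two counts. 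First, $\U$ is not $F_w$-stable: $w=s_1s_3s_4$ sends $\alpha_1,\alpha_3,\alpha_4$ to negative roots, so only the nine positive roots with nonzero $\alpha_2$-coefficient contribute to $\U\cap G_w$. Second, $t$ does not move every positive root other than $\alpha_0$. Write $\omega_j=\omega^{e_j}$. Then $\alpha_2+\sum_{j\in I}\alpha_j$ takes the value $\omega^{\sum_j\pm e_j}$, with sign $+$ exactly when $j\in I$. So for every representative exactly one pair $\{\gamma,w\gamma\}$ of such roots is fixed by $t$; for the normalized $t$ these are $\alpha_2+\alpha_4$ and $\alpha_1+\alpha_2+\alpha_3$. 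Hence the count $2^{11}$ has no basis.
\end{enumerate}

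\textbf{How $q=2$ can be closed.} The paper also uses the opposite root groups. The group $H=\langle\U_{\pm\alpha_2},\U_{\pm w\alpha_2}\rangle^{F_w}\simeq\SU_3(2)$ is solvable and normalized by $t$. The orbits $\Oc_t^{V^{\pm}}$, with $V^{\pm}\simeq Q_8$, each have size $4$, meet only in $t$, and together generate $\langle t,H\rangle$. So $|\Oc_t^{\langle t,H\rangle}|\ge 7$, and since this size divides $|\SU_3(2)|=2^3\cdot 3^3$, it is at least $8$.

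Your second fallback can also be completed, but only with the normalization above and checks you left out:
\begin{enumerate}
\item The set $\Psi=\{\alpha_2,\,w\alpha_2,\,\alpha_1+\alpha_2,\,w(\alpha_1+\alpha_2)=\alpha_2+\alpha_3+\alpha_4,\,\alpha_0\}$ is closed and $w$-stable.
\item The element $t$ moves every root of $\Psi$ except $\alpha_0$.
\item Hence $P=\bigl(\prod_{\gamma\in\Psi}\U_\gamma\bigr)^{F_w}$ has order $2^5$ and $\Cent_P(t)=\U_{\alpha_0}^{F_w}$, so the orbit of $t$ has size $16$.
\item This orbit generates $\langle t\rangle\ltimes P$. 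The commutators $[v,t]$, $v\in P$, cover the elementary abelian quotient $P/\U_{\alpha_0}^{F_w}$, because $t$ acts on it without fixed points. Since $\U_{\alpha_0}^{F_w}$ contains the Frattini subgroup of $P$, they generate $P$.
\end{enumerate}
As submitted, this is only one of several tentative options, so the $q=2$ half of the lemma remains unproved.
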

\begin{proof}Without loss of generality
$t=\alpha^\vee_1(\omega)\alpha^\vee_3(\omega)\alpha^\vee_4(\omega^{-1})$ with $\omega$ a primitive $3$-rd root of $1$ if $q=2$, and a primitive $4$-th root of $1$ if $q\equiv3\mod4$, see Remark \ref{obs:d4-exception}. 
In all cases $\alpha_2(t)=\omega^{-1}\neq1$ and $(w\alpha_2)(t)=\alpha_2(t^{-1})=\omega\neq 1$ so $(\alpha+w\alpha)(t)=\alpha_0(t)=1$. 

\medbreak

Let $q=2$. In this case $\bZ=e$ and $\mathfrak O=\Oc$. We consider the $w$-stable subgroup $\Ha=\langle \U_{\pm\alpha_2},\U_{\pm w\alpha_2}\rangle$ of $\G$. Its root system is of type $A_2$. A direct calculation shows that 
\begin{align*}
\alpha_2^\vee(\omega)\alpha_{w\alpha_2}^\vee(\omega^2) &=
\alpha_1^\vee(\omega^{-1})\alpha_3^\vee(\omega^{-1})\alpha_4^\vee(\omega^{-1})
\in Z(\Ha)
\end{align*} 
so $\Ha\simeq\SL_3(\ku)$ and $H\coloneqq \Ha^{F_w}\simeq\SU_3(2)$, a solvable group.  
The coefficients of $\alpha_3^\vee$ and $\alpha_4^\vee$ 
in the expression of $t$ are distinct, so 
$t\not\in H$. However, $t$ normalizes $H$, so $\widetilde{H}\coloneqq \langle t, H\rangle\simeq \langle t\rangle \ltimes H$ is a solvable group. Let 
\begin{align*}
V^+ &\coloneqq \langle \U_{\alpha_2},\U_{w\alpha_2}\rangle^{F_w} &  
&\text{and} & V^- &\coloneqq \langle \U_{-\alpha_2},\U_{-w\alpha_2}\rangle^{F_w}.
\end{align*}
These groups, described in \cite[Example 23.10]{ACG-VII}, are both isomorphic to the quaternion group $Q_8$. Then $Z(V^{\pm})=\U_{\pm\alpha_0}^{F_w}$  and $\Cent_{V^{\pm}}(t)=Z(V^{\pm})$. Hence, $\langle \Oc_t^{V^\pm}\rangle= \langle t,V^{\pm}\rangle$ and 
\begin{align*}\widetilde H= \langle t, V, V^-\rangle=\langle \Oc_t^V,\Oc_t^{V^-}\rangle\leq \langle \Oc_{t}^{\widetilde H}\rangle\leq \widetilde H.\end{align*} 
In addition, $\left|\Oc_t^{\widetilde H}\right|\geq |\Oc_t^{V^+}\cup \Oc_{t}^{V^-}|=4+4-|\Oc_t^{V^+}\cap \Oc_{t}^{V^-}|=7$. Since  $\left|\Oc_t^{\widetilde H}\right|$ divides $|\SU_3(2)|=2^3\cdot3^3$, we have $\left|\Oc_t^{\widetilde H}\right|\geq 8$. Corollary \ref{cor:solvable-order} applies.

\medbreak
Let now $q\neq 2$, retaining notation from the case $q=2$. We consider the $w$-stable solvable subgroup  
\begin{align*}
{\mathbf{S}} &\coloneqq \langle t,\, V^+\rangle \simeq \langle t\rangle \ltimes V^+ \leq G_w.
\end{align*}

By \cite[Example 23.10]{ACG-VII} it has order $q^3(q+1)$, and 
\[|\Cent_{\mathbf{S}}(t)|=|\langle t, \U_{\alpha_2+w\alpha_2}^{F_w}\rangle|=q(q+1),\] 
whence $|\Oc_t^{\mathbf{S}}|=q^2$. In addition, $\langle \Oc_t^{\mathbf{S}}\rangle$ contains $t$ and $q^2$ elements of the form $x_{\alpha_2}(\xi)x_{w\alpha_2}(\eta)x_{\alpha_2+w\alpha_2}(\zeta)$ with $\xi,\eta,\zeta$ in $\overline{\F_{q}}$ and the pairs $(\xi,\eta)$ all distinct. By Chevalley commutator formula, $\langle \Oc_t^{\mathbf{S}}\rangle$ also contains $\U_{\alpha_2+w\alpha_2}^{F_w}$, so ${\mathbf{S}}=\langle \Oc_t^{\mathbf{S}}\rangle$. Corollary \ref{cor:solvable-order} gives the statement for $\mathfrak O$.  The statement for $\Oc$ follows because the restriction of $\pi$ to ${\mathbf{S}}$   is injective.
\end{proof}

 We are  in a position to state a general result for $\Pom^+_{2n}(q)$. 
\begin{theorem}\label{thm:dn-completo} Every non-trivial conjugacy class in $\Pom_{2n}^+(q)$ for any $n\geq 4$ and any  $q$ collapses. \qed
\end{theorem}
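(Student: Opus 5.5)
The plan is to split $\Oc$ according to the Jordan decomposition of a representative and assemble results already available. If $\Oc$ is the class of a unipotent element, it is of type C, D or F by \cite[Main Theorem]{ACG-IV}. If $\Oc$ is the class of a mixed element, the same conclusion holds by \cite[Theorem 1.1]{ACG-V}. In both cases Theorem \ref{thm:CDF} then shows that $\Oc$ collapses.

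For the remaining case, let $\Oc$ be a non-trivial semisimple class with representative $\pi(t)$, where $t\in\T^{F_w}$ and $w$ is minimal in $\WO$. The argument separates according to whether the split torus meets the class.

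\begin{itemize}
\item If $e\in\WO$, the class is split semisimple and \cite[Theorem 4.1]{ACG-VII} covers it. The exceptional cases listed in that reference concern type $B$ and $\PSp_4$ only, so they do not arise for $D_n$.
\item If $e\notin\WO$ and $\{s_{n-1},s_n\}\not\subseteq J_w$, then \cite[Theorem 6.2]{ACG-VII} applies, possibly after composing with the graph automorphism that swaps $\alpha_{n-1}$ and $\alpha_n$.
\item If $e\notin\WO$ and $\{s_{n-1},s_n\}\subseteq J_w$, Theorem \ref{thm:dn-one-exception} shows that $\Oc$ is of type C, D or F. The only exceptions are $n=4$, $w=s_1s_3s_4$, with either $q=2$ or $t$ as in \eqref{eq:exception-d4}.
\end{itemize}

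The exceptional classes from Theorem \ref{thm:dn-one-exception} are handled by Lemma \ref{lem:d4-exception}, which shows that $\mathfrak O$ and $\Oc$ are of type $\tipo$. Theorem \ref{thm:typeZ-1} then shows that these classes collapse as well. Every non-trivial class has now been placed in a case that collapses, either through Theorem \ref{thm:CDF} or through Theorem \ref{thm:typeZ-1}, which proves the statement.

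The main point to check is the bookkeeping. The cited results from \cite{ACG-IV,ACG-V,ACG-VII} must cover type $D_n$ for every $q$, including $q=2$ and $q=3$. One must also confirm that the graph-automorphism reduction in the second case of the list preserves both the rack and the type C property; it does, because the automorphism induces a rack isomorphism. Finally, one must confirm that the exceptions of Theorem \ref{thm:dn-one-exception} are exactly the hypotheses of Lemma \ref{lem:d4-exception}. By Remark \ref{obs:d4-exception} each exception is a single conjugacy class, so the lemma handles all of them.
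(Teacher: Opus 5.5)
Your proposal is correct and follows the paper's proof. Unipotent and mixed classes are handled by \cite[Main Theorem]{ACG-IV} and \cite[Theorem 1.1]{ACG-V}, semisimple classes by \cite{ACG-VII} together with Theorem~\ref{thm:dn-one-exception}, and the two remaining $D_4$ exceptions by Lemma~\ref{lem:d4-exception} and Theorem~\ref{thm:typeZ-1}. The only difference is cosmetic: you treat the case $\{s_{n-1},s_n\}\not\subseteq J_w$ separately, whereas the proof of Theorem~\ref{thm:dn-one-exception} already absorbs it through \cite[Theorem 6.2]{ACG-VII}.
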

\begin{proof}
Unipotent classes and mixed classes are covered by \cite[Main Theorem]{ACG-IV}, \cite[Theorem 1.1]{ACG-V}, respectively. Semisimple classes are covered by \cite[Theorem I]{ACG-VII}, Theorem  \ref{thm:dn-one-exception}, and
Lemma \ref{lem:d4-exception}.\end{proof}

\subsection{Unipotent classes in symplectic groups}

\

We now strengthen \cite[Theorems I, II, III]{ACG-VII} by showing that some of the kthulhu classes in $\PSp_{2n}(q)$ are of type $\tipo$. For unexplained notation we refer to \cite[Section 4.2.1]{ACG-II}. Recall that $q = p^m$.

\begin{lema}\label{lem:kthulhu-Sp-are-tipo}Let $\Gb=\PSp_{2n}(q)$. The following conjugacy classes are of type $\tipo$:
\begin{enumerate}[leftmargin=*,label=\rm{(\roman*)}]
\item\label{item:kthulhu-Sp-are-tipo1} 
The class represented by the image through $\pi$ of a split involution, for $n=2$ and $q=3,5,7$;

\item\label{item:kthulhu-Sp-are-tipo2} 
The unipotent conjugacy classes labeled by $V(2)\oplus W(1)^{n-1}$, for $n\geq 2$, $p=2$, and $m>1$; 

\item\label{item:kthulhu-Sp-are-tipo3} 
The unipotent conjugacy classes labeled by $W(2)$, for $n=2$, $p=2$, and $m> 1$.
\end{enumerate}
\end{lema}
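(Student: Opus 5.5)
In each of the three cases I plan to exhibit a solvable subrack of the class that avoids the exceptions in Definition \ref{def:typeZ}. Where possible I will do this through Corollary \ref{cor:normal-abelian-order}: find an abelian subgroup $\varGamma$ normalized by a representative $t$ with $\Cent_\varGamma(t)=e$ and $|\varGamma|\notin\{2,3,4,5,7\}$, or with $(\varGamma,\aut)$ not in \eqref{eq:list-affine-simple-racks}. Otherwise I will use Corollary \ref{cor:solvable-order}, applied to a solvable subgroup $\mathbf S=\langle\Oc_t^{\mathbf S}\rangle$ whose orbit size avoids $\{3,\dots,7\}$. Throughout, $\pi$ restricted to the chosen $p$-subgroups, or to $\langle t\rangle$ times them, is injective, so it suffices to work in $\Sp_{2n}(q)$.

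\ref{item:kthulhu-Sp-are-tipo1}. This case is the special case $n=2$ of \eqref{eq:bn-w=1-even}, via $\Pom_5(q)\simeq\PSp_4(q)$. For $q=5,7$ it is covered by Lemma \ref{lem:bn-discarded-omega}, and for $q=3$ by Lemma \ref{lem:bn-discarded-omega3}. To be self-contained, I would still take $t=\diag(-1,1,1,-1)$ (in suitable symplectic coordinates, representing the split involution). For $q=5,7$ I take $\varGamma$ to be a root subgroup $\U^F_\gamma$ with $\gamma(t)=-1$. Then $t$ acts by inversion, $\varGamma\simeq\F_q$, and the affine rack is $(\F_q,-1)=(\F_q,q-1)$, which is not in the list. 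For $q=3$ I take the solvable group $\mathbf S=\langle t,\U_{\gamma_1},\U_{\gamma_2}\rangle^F$ with $\gamma_1(t)=\gamma_2(t)=-1$ and $\gamma_1+\gamma_2$ a root. The orbit of $t$ then has size $9$, exactly as in Lemma \ref{lem:bn-discarded-omega3}.

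\ref{item:kthulhu-Sp-are-tipo2} and \ref{item:kthulhu-Sp-are-tipo3}. Here $p=2$ and the classes consist of involutions, so I imitate Example \ref{exa:SL2-unipotent}. For $V(2)\oplus W(1)^{n-1}$ the representative is a transvection, that is, a long root element. It lies in a subgroup $\SL_2(q)=\Sp_2(q)\times\{1\}$ acting on a nondegenerate $2$-plane, where it is a nontrivial involution of $\SL_2(q)$. The non-split torus normalizer there is dihedral of order $2(q+1)$. Taking $t$ a reflection and $\varGamma$ the cyclic group of order $q+1$, inversion has trivial centralizer since $q+1$ is odd. With $m>1$ we have $q+1\geq5$: if $q+1\neq5$ then $q+1\notin\{2,3,4,5,7\}$ since $q+1$ is odd and $q\ne 2,6$; if $q=4$ then $(\F_5,\text{inversion})=(\F_5,4)$ is not in the list. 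For $W(2)$ with $n=2$, I would realize the class as the image of a transvection class under the exceptional graph automorphism of $\Sp_4(2^m)$, which swaps long and short root elements. The graph automorphism is a group automorphism, so it preserves racks and subracks, hence type $\tipo$ transfers from \ref{item:kthulhu-Sp-are-tipo2} with $n=2$. As a backup, I would apply the same dihedral argument directly inside the short-root $\SL_2(q)$.

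The main obstacle is the identification of labels in \ref{item:kthulhu-Sp-are-tipo3}. I need to confirm, using \cite[Section 4.2.1]{ACG-II}, that $W(2)$ is the short root element class, and that the class is stable in the appropriate sense for the graph automorphism argument. If that identification fails, the fallback is to locate $t$ directly in a dihedral subgroup of order $2(q+1)$, for instance inside $\Sp_2(q^2)\le\Sp_4(q)$ or $\Omega^-_4$, and verify $\Cent_\varGamma(t)=e$ there.
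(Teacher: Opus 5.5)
Your proposal is correct and follows the paper's own proof: (i) via $\Pom^+_5(q)\simeq\PSp_4(q)$ and Lemmas \ref{lem:bn-discarded-omega} and \ref{lem:bn-discarded-omega3}; (ii) via the embedding $\SL_2(2^m)\hookrightarrow\Sp_{2n}(2^m)$ of the transvection class combined with Example \ref{exa:SL2-unipotent}; and (iii) by transporting (ii) with the exceptional graph automorphism of $\Sp_4(2^m)$ that swaps long and short root elements. Your additional explicit checks (the root choices with $\gamma(t)=-1$, the orbit of size $9$ for $q=3$, and the case $q=4$ giving $(\F_5,4)$) agree with the computations in those lemmas.
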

\begin{proof}
\ref{item:kthulhu-Sp-are-tipo1} follows from Lemmata \ref{lem:bn-discarded-omega}, \ref{lem:bn-discarded-omega3} because $\Pom_{5}^+(q)\simeq \PSp_{4}(q)$.  

\ref{item:kthulhu-Sp-are-tipo2} follows from Example \ref{exa:SL2-unipotent}, because the 
group embedding 
\begin{align*}
\SL_2(2^m) &\to \Sp_{2n}(2^m)=\PSp_4(2^m),& \left(\begin{smallmatrix}a&b\\
c&d\end{smallmatrix}\right)&\mapsto \left(\begin{smallmatrix}a&0&b\\
0&\id_{2n-2}&0\\
c&0&d\end{smallmatrix}\right),
\end{align*}
induces a rack inclusion from the non-trivial unipotent class in $\SL_2(2^m)$ to the class labeled by $V(2)\oplus W(1)^{n-1}$ in $\PSp_4(2^m)$.

\ref{item:kthulhu-Sp-are-tipo3} The non-trivial graph automorphism of $\Sp_4(2^m)$ interchanging short and long root subgroups \cite[12.1]{carter-simple} interchanges the classes 
labeled by $V(2)\oplus W(1)$ and  $W(2)$. The statement follows from \ref{item:kthulhu-Sp-are-tipo2}.  
\end{proof}

\medbreak

\subsection{Unipotent classes in unitary groups}

\

To underline the range of applicability of the criterion of type $\tipo$, we address another infinite family of khtulhu unipotent conjugacy classes, this time in the Steinberg group $\PSU_{n+1}(2^{m})$.

\begin{prop}\label{prop:kthulhu-SU-are-tipo}
The unipotent conjugacy classes in $\PSU_{n+1}(2^{m})$, where $n\geq 3$ and $m\geq 1$,
corresponding to the partition $(2,1,\ldots)$ are of type $\tipo$. 
\end{prop}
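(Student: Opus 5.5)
The class with partition $(2,1,\ldots)$ consists of transvections, i.e.\ long root elements, and my plan is to imitate Example \ref{exa:SL2-unipotent} and Lemma \ref{lem:kthulhu-Sp-are-tipo}\ref{item:kthulhu-Sp-are-tipo2}. I would exhibit a subgroup of $\SU_{n+1}(2^m)$ isomorphic to $\SL_2(2^m)$ (or to $\SU_3(2^m)$) containing a transvection of $\SU_{n+1}(2^m)$ as a non-trivial involution. Then I would apply Corollary \ref{cor:normal-abelian-order} inside it, and push the resulting affine subrack through $\pi$.

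\textbf{Step 1: the subgroup.} In the Hermitian space $\F_{q^2}^{n+1}$, $q=2^m$, I pick a hyperbolic pair $e,f$ spanning a nondegenerate 2-plane $P$. The group $\SU(P)\simeq\SU_2(q)\simeq\SL_2(q)$ embeds in $\SU_{n+1}(q)$ by acting trivially on $P^\perp$. A non-trivial unipotent element of $\SU(P)$ fixes a hyperplane of $P$ and hence of the whole space. It is therefore a transvection, and its class in $\SU_{n+1}(q)$ has the partition $(2,1,\ldots)$. Since $q$ is even, $Z(\SU_{n+1}(q))$ has odd order and meets $\SU(P)$ trivially. So $\pi$ restricted to $\SU(P)$ is injective, and the rack inclusion survives in $\PSU_{n+1}(q)$. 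The hypothesis $n\ge 3$ is not needed for this step; it just guarantees we are in the range where the class is kthulhu.

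\textbf{Step 2: the affine subrack.} When $m>1$, Example \ref{exa:SL2-unipotent} already gives the result. The involution $t\in\SL_2(q)$ inverts the rotation subgroup $\varGamma$ of order $q+1$ of the dihedral group of order $2(q+1)$. Thus $\Cent_\varGamma(t)=e$, and $(\varGamma,\aut)$ is not in \eqref{eq:list-affine-simple-racks}. Corollary \ref{cor:normal-abelian-order}, together with the fact that type $\tipo$ propagates to over-racks, then finishes this case.

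\textbf{Step 3: the case $m=1$ (main obstacle).} Here $\SL_2(2)\simeq\mathbb S_3$, and the affine rack of order $3$ is $(\kk_3,2)$, which lies in \eqref{eq:list-affine-simple-racks}. So I would instead use a $3$-dimensional nondegenerate subspace and the solvable group $\SU_3(2)\simeq 3^{1+2}\rtimes Q_8$, which contains the $\SU_{n+1}(2)$-transvections. Let $\Delta$ be the class of transvections in $\SU_3(2)$, and let $\mathbf S=\langle\Delta\rangle$. Since $\mathbf S$ is a normal subgroup of $\SU_3(2)$ containing involutions, it should be non-abelian, and possibly all of $\SU_3(2)$ or a large part of it.

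The plan is to check $|\Delta|=9$ in $\SU_3(2)$; I expect this from $(q^3+1)=9$ isotropic points, each carrying a single transvection when $q=2$. Then $\mathbf S$ is solvable with $t\notin Z(\mathbf S)$, $\mathbf S=\langle\Oc_t^{\mathbf S}\rangle$ and $|\Oc_t^{\mathbf S}|=9\notin\{3,\dots,7\}$, so Corollary \ref{cor:solvable-order} applies. The main point to verify is that the transvections form a single $\mathbf S$-orbit of size $9$ and not several smaller orbits. I would check this with the explicit structure of $\SU_3(2)$: the group $3^{1+2}$ acts transitively on the isotropic points, in the same way as in Lemma \ref{lem:d4-exception}. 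Injectivity of $\pi$ on this subgroup follows as in Step 1, since $\SU_3(2)$ sits in $\SU_{n+1}(2)$ trivially on the complement and any central element it contains is scalar on the whole space.
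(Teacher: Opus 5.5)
Your proof is correct. For $m>1$ it is the paper's argument: Example \ref{exa:SL2-unipotent} together with the block embedding of $\SL_2(2^m)$.

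For $m=1$ you take a different route.
\begin{itemize}
\item The paper handles $n=3$ through the exceptional isomorphism $\PSU_4(2)\simeq\PSp_4(3)\simeq\Pom^+_5(3)$. It identifies the transvection class with the class of the split involution by comparing centralizer orders, and then quotes Lemma \ref{lem:bn-discarded-omega3}. It reaches $n\geq 4$ by embedding $\SU_4(2)$.
\item The paper also computes, inside $\PSU_3(2)$, an order-$18$ subgroup $(\F_3,+)\rtimes\s_3$ in which the $9$ involutions form a single generating class. This is essentially your computation, but the proof never uses it for $\PSU_{n+1}(2)$.
\item What makes your uniform argument work is your observation that $\SU_3(2)$, rather than $\PSU_3(2)$, embeds injectively into $\PSU_{n+1}(2)$ for every $n\geq 3$.
\end{itemize}
Your version is uniform in $n$. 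It avoids both the exceptional isomorphism and the matching of classes by centralizer orders. The paper's version needs no new computation beyond a lemma it has already proved.

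The step you flag as the main point can be settled quickly. Two distinct isotropic lines of a nondegenerate Hermitian $3$-space are never orthogonal. So two distinct transvections of $\SU_3(2)$, one per isotropic point, generate a copy of $\SU_2(2)\simeq\s_3$ in which they are conjugate. Hence $\Delta$ is a single $\mathbf S$-orbit, and $t\notin Z(\mathbf S)$. In fact $\mathbf S=3^{1+2}\rtimes\langle t\rangle$ has order $54$, not all of $\SU_3(2)$, and $\Cent_{\mathbf S}(t)=Z(\SU_3(2))\times\langle t\rangle$. So $|\Oc_t^{\mathbf S}|=9$, and Corollary \ref{cor:solvable-order} applies.
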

\begin{proof}
First of all, unipotent classes in $\PSU_{n+1}(q)$ corresponding to the same partition are isomorphic as racks, see \cite[Section 5.1]{ACG-IV}, so it is enough to prove the statement for a class in each group.

\medbreak

If $m>1$ then we use Example \ref{exa:SL2-unipotent} and the rack inclusion stemming from the natural group homomorphism 
\begin{align*}\SL_2(2^m)\simeq\PSL_2(2^m)&\longrightarrow \SU_{n+1}(2^m)\longrightarrow \PSU_{n+1}(2^m),\\ 
\left(\begin{smallmatrix}a&b\\c&d\end{smallmatrix}\right)&\mapsto \left(\begin{smallmatrix}a&&b\\
&\id_{n-1}&\\
c&&d\end{smallmatrix}\right).\end{align*}

\medbreak
For the rest of the proof $m=1$. The group $\PSU_3(2)$ is solvable, has a unique class of involutions and contains a subgroup $\mathbf S$ isomorphic to 
the non-trivial semidirect product $(\F_3,+)\rtimes \s_3$, 
where $\s_3$ acts on $(\F_3,+)$ by the sign character. 
The conjugacy class of $(0,(12))$ contains $(0,\sigma)$ for any transposition $\sigma$, and contains the element
\[(-1,\id)\trid(0,(12))=(-1,(12))(1,\id)=(1,(12)).\]
Hence, $\mathbf S=\langle\Oc_{(0,(12))}^{\mathbf{S}}\rangle$. A direct calculation shows that $\Cent_{\mathbf{S}}(0,(12))=\langle (0,(12))\rangle$, so  $\left|\Oc_{(0,(12))}^{\mathbf{S}}\right|=9$. Therefore the class of involutions in $\PSU_3(2)$ is of type $\tipo$.  

\medbreak

Through the isomorphism $\PSU_4(2)\simeq\PSp_4(3)$ the kthulhu unipotent class labeled by the partition $(2,1,1)$ corresponds to the class of the split involution in $\PSp_4(3)\simeq\Pom^+_5(3)$, as both their centralisers have order $2^6\cdot 3^2$. Hence, it is of type $\tipo$ by Lemma \ref{lem:bn-discarded-omega3}. For  $n\geq4$ we use the rack inclusions stemming from the group inclusions  \begin{align*}\PSU_4(2)\simeq\SU_4(2)&\longrightarrow \SU_{n+1}(2)\longrightarrow\PSU_{n+1}(2)\\ 
\left(\begin{smallmatrix}A&B\\C&D\end{smallmatrix}\right)&\mapsto \left(\begin{smallmatrix}A&&B\\
&\id_{n-3}&\\
C&&D\end{smallmatrix}\right).\end{align*}
\end{proof}

\subsection{Split semisimple classes in Chevalley groups}

\

We can also complete the analysis of split semisimple classes in Chevalley groups. We start by a conjugacy class in $\aco$ which is sober \cite[Remark 5.3]{ACG-VII}.

\begin{lema} \label{lema:12^2}
The conjugacy class of cycle type $(1, 2^2)$ in $\aco$ is of type $\tipo$.
\end{lema}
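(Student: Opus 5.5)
We want a solvable subgroup $\mathbf S\leq \aco$ generated by its intersection with the class of double transpositions, so that Corollary \ref{cor:solvable-order} applies. The orbit sizes $3,4,5,6,7$ are excluded there, so we cannot use the dihedral subgroup $D_{10}$: there $t$ acts by inversion on $\bZ/5$, the orbit has size $5$, and $(\F_5,4)$ is not in the list \eqref{eq:list-affine-simple-racks}. So the affine route is the most direct, and we first verify whether $D_{10}$ already works via Corollary \ref{cor:normal-abelian-order}.

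Take $\varGamma=\langle(12345)\rangle\simeq\bZ/5$ and $t=(25)(34)$. Then $t$ normalizes $\varGamma$ and acts on it by inversion. Since $5$ is odd, $\Cent_\varGamma(t)=e$. Thus conditions \ref{item:characterization-affine1} and \ref{item:characterization-affine3} of Proposition \ref{prop:characterization-affine} hold, with $\aut=-\id=$ multiplication by $d=4$ on $\F_5$. The list \eqref{eq:list-affine-simple-racks} contains only $(\F_5,2)$ and $(\F_5,3)$ for $q=5$. So $(\F_5,4)$ is not in the list, and Corollary \ref{cor:normal-abelian-order} gives type $\tipo$ directly. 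This is the same argument as Example \ref{exa:SL2-unipotent} for $m=2$, transported through $\aco\simeq\SL_2(4)$. Indeed, the class $(1,2^2)$ is exactly the class of involutions of $\aco$, which corresponds to the unipotent class of $\SL_2(4)$.

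The plan therefore has two steps. First, identify the class of $(1,2^2)$ with the involution class. We can either cite $\aco\simeq\SL_2(4)$ and Example \ref{exa:SL2-unipotent}, or argue directly inside $\aco$ with $D_{10}=N_{\aco}(\langle(12345)\rangle)$ as above. Second, check the affine data: $t\varGamma$ is the set of five reflections of $D_{10}$, forming the subrack $\Aff(\F_5,4)$.

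The only delicate point is making sure that $(\F_5,4)$, i.e. inversion on $\F_5$, is genuinely absent from \eqref{eq:list-affine-simple-racks}, and that the affine rack is indecomposable. Indecomposability follows from Lemma \ref{lema:twisted-homogeneous}, because $\id-\aut=2$ is invertible on $\F_5$. With these two checks the proof is essentially immediate.

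If the intended reading of the list were up to rack isomorphism, with $(\F_5,4)$ possibly isomorphic to a listed rack, that cannot happen. Affine racks $\Aff(\F_p,d)$ with different $d$ are non-isomorphic, since $d$ is recovered from the rack structure.
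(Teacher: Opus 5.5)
Your proposal is correct and is exactly the paper's argument: with $t=(25)(34)$ and $\varGamma=\langle(12345)\rangle$, $t$ acts on $\varGamma$ by inversion, $\Cent_\varGamma(t)=e$, and Corollary \ref{cor:normal-abelian-order} applies because $(\F_5,4)$ is not in \eqref{eq:list-affine-simple-racks}. Your detour through Corollary \ref{cor:solvable-order} and your remark on rack isomorphism are unnecessary but harmless.
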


\pf If $t = (25)(34)$, $x = (12345)$ and $\varGamma = \langle x \rangle$, 
then $t x t = x^{-1}$ and $\Cent_{\varGamma} (t) = \{e\}$.
Corollary \ref{cor:normal-abelian-order} implies 
that $\Oc_t^{\aco}$ is of type $\tipo$.
\epf

\begin{theorem}\label{thm:split-semisimple-collapse}
Let $\Gb$ be a simple Chevalley group and let $t\in \T^F\setminus e$.
Then $\Oc$ collapses. 
\end{theorem}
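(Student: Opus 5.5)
The plan is to reduce, via the known results, to the finitely many split semisimple classes that \cite[Theorem 4.1]{ACG-VII} left open, and to close each of them with the type $\tipo$ criterion. By \cite[Theorem 4.1]{ACG-VII}, if $\Gb\neq\PSL_{n+1}(q)$ every non-trivial class meeting $\T^F$ is of type C, D or F, up to a short list of exceptions. For $\PSL_{n+1}(q)$, a split class is not irreducible, since it meets the torus $\T_e$ and $e$ is not a Coxeter element. Hence by \cite[Theorem 1.1]{ACG-III} it collapses when $n\ge 2$, so only $\PSL_2(q)$ remains. For $\PSp_{2n}(q)$ we use \cite[Theorem 7.1]{ACG-VII}. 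For $\Pom_{2n+1}^+(q)$, $\Pom_{2n}^+(q)$, $E_6,E_7,E_8,F_4,G_2$ we invoke Theorems \ref{thm:bn-completo}, \ref{thm:dn-completo}, \ref{thm:e6-collapses}, \ref{thm:e7-collapses}, Proposition \ref{prop:e7-excluded} (split classes are not the excluded regular ones), and Theorems \ref{thm:f4-collapses} and \ref{thm:g2-collapses}.

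The remaining split cases are the following:
\begin{enumerate}[leftmargin=*,label=\rm{(\alph*)}]
\item the split involution in $\PSp_4(q)$, $q=3,5,7$, which is of type $\tipo$ by Lemma \ref{lem:kthulhu-Sp-are-tipo}\ref{item:kthulhu-Sp-are-tipo1};
\item the elements \eqref{eq:bn-w=1-odd} and \eqref{eq:bn-w=1-even} with $w=e$ in $\Pom_{2n+1}^+(q)$, handled by Lemmata \ref{lem:bn-discarded-omega} and \ref{lem:bn-discarded-omega3};
\item the split classes in $\PSL_2(q)$.
\end{enumerate}
For $\PSL_2(q)$ write $t=\pi(\diag(a,a^{-1}))$ with $a\in\F_q^\times$, $a^2\neq1$ (and $t\ne e$ in $\PSL_2$). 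The upper unipotent subgroup $V=\U_{\alpha}^F\simeq(\F_q,+)$ is normalized by $t$, which acts on it by multiplication by $a^2$. Since $a^2\neq1$, we have $\Cent_V(t)=e$, so Proposition \ref{prop:characterization-affine} embeds $\Aff(\F_q,a^2)$ into $\Oc$. Corollary \ref{cor:normal-abelian-order} then gives type $\tipo$ unless $(\F_q,a^2)$ lies in \eqref{eq:list-affine-simple-racks}.

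The main obstacle is this residual list in $\PSL_2(q)$ with small $q$. The pair $(\F_4,\omega)$ corresponds to $\PSL_2(4)\simeq\aco$, where the split torus has order $3$. Via $\PSL_2(5)\simeq\PSL_2(4)$ and $\PSL_2(9)\simeq\mathbb A_6$, I would instead match these classes with classes in alternating groups and use \cite{AFGV-ampa}, or Lemma \ref{lema:12^2} for the class $(1,2^2)$ in $\aco$, which is the split involution class of $\PSL_2(5)$. For $q=5,7$, with $a^2=\pm$ a nonsquare, the square $a^2$ is a square in $\F_q$, so it cannot be $2,3$ in $\F_5$ or $3,5$ in $\F_7$. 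The problematic pairs therefore disappear except for $(\F_5,4)=(\F_5,-1)$, which is not on the list anyway. For $q=3$ there is no non-trivial split element. I would check each small $q$ this way, using an alternative abelian subgroup such as the dihedral rotation subgroup as in Example \ref{exa:SL2-unipotent} if needed.
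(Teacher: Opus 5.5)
Your reduction is fine, and so is your affine argument in the Borel subgroup, which is a genuinely different and more uniform route than the paper's. The paper cites \cite[Theorem 1.1]{ACG-III} for $q\neq 4,5,9$ and treats $q=4,5,9$ through classes of $\aco$ and $\as$. You instead show that for every $q\geq 5$ the subrack $\Aff(\F_q,a^2)$ is off the list \eqref{eq:list-affine-simple-racks}; for $q=5,7$ this holds because $a^2\neq 1$ is a square. So all split classes of $\PSL_2(q)$, $q\geq 5$, are of type $\tipo$.

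\textbf{The gap is $q=4$.} There $\Oc$ is the class of $3$-cycles in $\PSL_2(4)\simeq\aco$, and your affine subrack is $\Aff(\F_4,\omega)$, which is on the list. Neither of your fallbacks shows that this class \emph{collapses}.
\begin{itemize}
\item \emph{Citing \cite{AFGV-ampa} does not work.} The class is not of type D. If two $3$-cycles $r,s$ are not conjugate in $\langle r,s\rangle$, then either they commute, or $\langle r,s\rangle\simeq\mathbb A_4$ with $r,s$ in its two distinct classes of $3$-cycles. In the latter case $rs,sr$ lie in the Klein subgroup, so $(rs)^2=(sr)^2=e$. (If $\langle r,s\rangle=\aco$ they are conjugate.) For such classes \cite{AFGV-ampa,AF} only show that they fall down by abelian techniques; here the class is real of odd order, so Lemma \ref{lema:real} applies. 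Falling down is weaker than collapsing, which is what the theorem asserts.
\item \emph{Another abelian or solvable subgroup cannot help either,} because this class is not of type $\tipo$ at all. A solvable subrack generates a solvable subgroup. The solvable subgroups of $\aco$ generated by $3$-cycles are $C_3$ and $\mathbb A_4$. Inside $\mathbb A_4$ the only non-trivial indecomposable subracks of $3$-cycles are its two conjugacy classes, both isomorphic to $\Aff(\F_4,\omega)$. In particular, no element of order $3$ normalizes the rotation subgroup of a dihedral subgroup of order $10$.
\end{itemize}

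What closes this case is type C; the paper gets it from \cite[Example 2.7]{ACG-VII}. Directly, take $H=\mathbb A_4\leqslant\aco$, $r=(123)$ and $s=(124)$. Then:
\begin{itemize}
\item $rs=(13)(24)\neq(14)(23)=sr$;
\item $\Oc_r^H\neq\Oc_s^H$ are the two $\mathbb A_4$-classes of $3$-cycles, each of size $4$;
\item $H=\langle \Oc_r^H,\Oc_s^H\rangle$.
\end{itemize}
Hence the class is of type C and collapses by Theorem \ref{thm:CDF}. With this addition your proof is complete.
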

\begin{proof}
For $\Gb=\PSL_2(4)  \simeq \aco$, the split semisimple elements have cycle type 
$(1^2, 3)$ and their class is of type C by \cite[Example 2.7]{ACG-VII}.
For $\Gb= \PSL_2(5) \simeq \aco$, the split semisimple elements have 
cycle type $(1, 2^2)$ and their class is of type $\tipo$ by Lemma \ref{lema:12^2}.
For $\Gb= \PSL_2(9) \simeq \as$, the split semisimple elements have cycle type $(1^2, 2^2)$ and their class  is of type C by \cite[Example 2.6]{ACG-VII}.

\medbreak
For $\Gb=\PSL_2(q)$ with $q\neq 4,5,9$ and for $\Gb=\PSL_{n+1}(q)$ for $n\geq 2$ this is contained in \cite[Theorem 1.1]{ACG-III}. If $\Gb\neq \PSL_{n+1}(q)$ and $t$ is not a split involution in $\Pom_{2n+1}^+(q)$ with $n\geq 2$ and $q=3,5,7$ the result was established in \cite[Theorem 4.1]{ACG-VII}. The statement then follows from Lemmata \ref{lem:bn-discarded-omega} and  \ref{lem:bn-discarded-omega3} and the inclusion $ \Pom^+_5(q)\leqslant  \Pom^+_{2n+1}(q)$.
\end{proof}

The criterion of type $\tipo$ does not allow to deal with all kthulhu conjugacy classes, as the following Remark shows.

\begin{obs}\label{obs:SLp-irreducible}Let $n+1$ be an odd prime. The class of any irreducible semisimple element $x$ in $\PSL_{n+1}(q)$ or in  $\SL_{n+1}(q)$ is kthulhu,  \cite[Lemma 5.13]{ACG-VII}. The proof  relies on an analysis of the possible subgroups containing $x$. The same analysis shows that if  a solvable group $H$ contains $x$, then $H$ is contained in the normaliser of the unique maximal torus $T$ containing $x$. Then $\Oc_x^H\subseteq T$ consists of commuting elements. But then $\langle \Oc_x^H\rangle\subseteq T=H$ so $H$ is abelian and $\Oc_x^H=\{x\}$, a trivial subrack. Hence, irreducible semisimple classes in $\PSL_{n+1}(q)$ or  $\SL_{n+1}(q)$  are not of type $\tipo$. However, abelian techniques ensure that these classes fall down, see \cite[Theorem 1.2]{ACG-III}.
\end{obs}

\section{Abelian techniques}\label{sec:abelian-techniques}
Let $G$ be a finite group, $V \in \yd{\ku G}$ irreducible and $\Oc = \supp V$.
Given a subrack $X$ of $\Oc$, the subspace $V_X \coloneqq \oplus_{x \in X} V_x$
is a braided subspace of $V$. If $X$ is trivial, then $V_X$ is of diagonal type
and the knowledge of this class allows to decide 
when $\dim \toba(V_X)$ and a fortiori $\dim \toba(V)$
are infinite; in other words, when $V$ falls down (but not whether 
$\Oc$ collapses).  
These arguments are collectively called \emph{abelian techniques}.

\medbreak
For example, a conjugacy class $\Oc = \Oc_{g}^G$ is \emph{real} if $g$ is
conjugated, but not equal, to $g^{-1}$. In this case it is enough to consider 
$X = \{g, g^{-1}\}$.

\begin{lema}\label{lema:real} \cite[Lemma 2.2]{AZ}
If $\Oc = \Oc_{g}^G$ is real and $g$ has odd order, then $\Oc$ falls down.
\qed
\end{lema}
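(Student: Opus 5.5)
The plan is to realise the real pair $\{g,g^{-1}\}$ inside any $V\in\yd{\ku G}$ with support $\Oc$ as a braided subspace of diagonal type, and then exhibit a point of that subspace whose Nichols algebra is already infinite-dimensional. Since $g$ is conjugate to $g^{-1}$ but $g\neq g^{-1}$, the set $X=\{g,g^{-1}\}\subseteq\Oc$ is a subrack. It is trivial because $g$ and $g^{-1}$ commute. Hence $V_X=V_g\oplus V_{g^{-1}}$ is a braided subspace of $V$, and it is enough to show $\dim\toba(V_X)=\infty$.

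First I analyse the braiding. The centraliser $\Cent_G(g)$ acts on $V_g$ through an irreducible representation $\rho$, and $g$ is central in $\Cent_G(g)$. So $g$ acts on $V_g$ by a scalar $\lambda$, and $\lambda^{N}=1$ with $N=|g|$ odd. Fix $h\in G$ with $hgh^{-1}=g^{-1}$; then $V_{g^{-1}}=h\cdot V_g$. For $v\in V_g$, the element $g^{-1}$ acts on $h\cdot v$ through $h^{-1}g^{-1}h=g$, so it acts by $\lambda$. Likewise $g$ acts on $V_{g^{-1}}$ through $h^{-1}gh=g^{-1}$, i.e. by $\lambda^{-1}$, and $g^{-1}$ acts on $V_g$ by $\lambda^{-1}$.

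Next, pick nonzero $v\in V_g$ and $w=h\cdot v\in V_{g^{-1}}$. The span $\ku v\oplus\ku w$ is a braided subspace of diagonal type with braiding matrix
\[
q_{11}=\lambda,\qquad q_{22}=\lambda,\qquad q_{12}q_{21}=\lambda^{-1}\lambda^{-1}=\lambda^{-2}.
\]
I split into two cases.

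If $\lambda=1$, the diagonal entry $q_{11}=1$ already gives $\dim\toba(\ku v)=\infty$.

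If $\lambda\neq1$, then $\lambda$ has odd order $N'>1$, so $\lambda^{-2}\neq1$ and the generalized Dynkin diagram is connected of rank $2$ with parameters $(\lambda,\lambda^{-2},\lambda)$. To rule out finite dimension I compare with Heckenberger's list of rank-$2$ diagrams \cite{Heckenberger-advances}. The relation $q_{12}q_{21}=q_{11}^{-2}$ matches the Cartan type $A_2$ condition only when $q_{12}q_{21}=q_{11}^{-1}$, which would force $\lambda=1$. The alternative is the pattern $q_{12}q_{21}q_{22}^{2}=1$ with $q_{11}=q_{22}$. This corresponds to the Cartan matrix with $a_{12}=a_{21}=-2$, which is of affine type $A_1^{(1)}$ and not of finite type. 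By Heckenberger's classification, or already by Rosso's/Andruskiewitsch–Schneider's result for Cartan type, $\dim\toba=\infty$.

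The main obstacle is the last step: making sure no exceptional non-Cartan rank-$2$ entry of Heckenberger's table matches $(\lambda,\lambda^{-2},\lambda)$ with $\lambda$ of odd order. I expect this to be handled by noting that those exceptional entries require $q_{ii}=-1$ or a root of unity of even order, or of order $3$ with specific compatible parameters. For order $3$, $(\omega,\omega,\omega)$ is checked directly against the table and is absent. Odd order of $\lambda$ is exactly what excludes the $-1$ cases.
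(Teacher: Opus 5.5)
Your proposal is correct and is essentially the argument behind the cited lemma, as the paper's remark that it suffices to take $X=\{g,g^{-1}\}$ indicates. The braided subspace $\ku v\oplus\ku w$ has $q_{11}=q_{22}=\lambda$ and $q_{12}q_{21}=\lambda^{-2}$, and odd order rules out $\lambda=-1$, the only value for which this subspace has a finite-dimensional Nichols algebra. The step you flag as the main obstacle is unnecessary: for $\lambda\neq\pm1$ the braiding is of Cartan type with the affine matrix $A_1^{(1)}$ (also when $\lambda$ has order $3$, since $0\le -a_{12}<3$ forces $a_{12}=-2$), and this already forces $\dim\toba=\infty$ without any scan of Heckenberger's table; more elementarily, $\alpha_1+\alpha_2$ is a root with $q_{11}q_{12}q_{21}q_{22}=1$.
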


In this Section we use abelian techniques to show that the open cases in the simple groups $\Gb=E_8(q)$ and $\PSp_4(q)$ fall down. 
As well, we discuss the collapse of the groups $\PSL_2(q)$.

\subsection{Nichols algebras over \texorpdfstring{$E_8(q)$}{} 
and \texorpdfstring{$\PSp_{2n}(q)$}{}}
\label{subsec:PSp-E8}

\

\begin{theorem}\label{thm:e8-completo}The group $\Gb=E_8(q)$ collapses.
\end{theorem}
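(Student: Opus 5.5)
By Proposition \ref{prop:e7-excluded}, every non-trivial conjugacy class of $\Gb=E_8(q)$ collapses except possibly the classes of regular semisimple elements lying in a maximal torus $\T_w^F$ with $w$ in the Weyl class $E_8$ or $E_8(a_5)$. We therefore only have to show that these remaining classes fall down, i.e.\ that $\dim\toba(V)=\infty$ for every irreducible $V\in\yd{\ku\Gb}$ supported on such a class. We use the abelian techniques, and the first tool to try is Lemma \ref{lema:real}.

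\emph{Step 1: the classes are real.} In $E_8$ the longest element $w_0$ acts as $-\id$ and is central in $W$, so $w_0\in\Cent_W(w)$ for every $w$, and $w_0\cdot t=t^{-1}$. Take $t\in\T^{F_w}$ representing the class. As in the proof of Theorem \ref{thm:potente-revised}, the identification $\Cent_W(w)\simeq N_{G_w}(\T)/\T^{F_w}$ shows that $t^{-1}=w_0\cdot t$ is $G_w$-conjugate to $t$. Here $\bZ=e$, so $G_w\simeq\Gb$. Moreover $t\neq t^{-1}$: otherwise $t^2=e$, and since $t$ is regular with connected centralizer $\T$, Remark \ref{obs:w0}\ref{item:w0-due} would force $w^2=e$. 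This contradicts the fact that $w$ has order $30$ or $15$. Hence the class is real.

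\emph{Step 2: the order of $t$ is odd.} This is the crux. The relevant tori have known orders. For $w$ in the Coxeter class $E_8$, $|\T^{F_w}|=\varphi_{30}(q)=q^8+q^7-q^5-q^4-q^3+q+1$. For $w$ in class $E_8(a_5)$, $|\T^{F_w}|=\varphi_{15}(q)=q^8-q^7+q^5-q^4+q^3-q+1$. Since $\T^{F_w}$ is cyclic (or at least abelian of this order, cf.\ \cite{MaximalTori}), it suffices to see that both values are odd. If $q$ is even, each polynomial is $\equiv1\pmod 2$. If $q$ is odd, each is a sum of $7$ odd terms, hence odd. So $|t|$ divides an odd number, and Lemma \ref{lema:real} gives that the class falls down.

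Combining these two steps with Proposition \ref{prop:e7-excluded}, every irreducible Yetter--Drinfeld module over $\Gb$ has an infinite-dimensional Nichols algebra, so $\Gb$ collapses. The only delicate point is Step 2, namely making sure that the order formulas $\varphi_{30}(q)$ and $\varphi_{15}(q)$ for these tori are correct. They follow from the characteristic polynomials of $w$ recorded in the remark after Proposition \ref{prop:e7-excluded}, via $|\T^{F_w}|=c_{w}(q)$ up to sign.
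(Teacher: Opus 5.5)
Your proposal is correct and follows the paper's proof essentially verbatim. You reduce via Proposition~\ref{prop:e7-excluded} (equivalently Lemmata~\ref{lem:e8-non-cuspidal}, \ref{lem:e8-cuspidal} together with the cited results for unipotent, mixed and split classes) to regular elements in tori of type $E_8$ or $E_8(a_5)$, observe that these classes are real because $w_0=-\id$ and consist of elements of odd order dividing $\varphi_{30}(q)$ or $\varphi_{15}(q)$, and conclude with Lemma~\ref{lema:real}. Your separate verification that $t\neq t^{-1}$ can be shortened, since it is immediate from $t\neq e$ having odd order.
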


\begin{proof}
Unipotent classes and mixed classes collapse by \cite[Main Theorem]{ACG-IV} 
and \cite[Theorem 1.1]{ACG-V}, respectively. 
Split semisimple classes collapse by \cite[Theorem 4.1]{ACG-VII}. 
Let $s\in \T_w^F\setminus e$, for  $w\in W$. 
If the class of $w$ is not labeled by $E_8$ or $E_8(a_5)$, 
then $\Oc_s^{\Gb}$ collapses by Lemmata \ref{lem:e8-non-cuspidal} 
and \ref{lem:e8-cuspidal}. 

\medbreak
If the class of $w$ is labeled by $E_8$ or $E_8(a_5)$, then $|T_w^F|\in\{\varphi_{15}(q),\varphi_{30}(q)\}$ so $|s|$ divides
$q^8\pm q^7\pm q^5-q^4\pm q^3\pm q+1$, and $s$ is real because $w_0=-\id$. Then $\Oc_s^{\Gb}$ falls down by Lemma \ref{lema:real}.  
\end{proof}

Recall that  
$\PSp_{2n}(q)$ is not simple if $n=q=2$.

\begin{theorem}\label{thm:cn-completo} The group $\Gb=\PSp_{2n}(q)$, $n\geq 2$ and $(n,q)\neq(2,2)$, collapses.
\end{theorem}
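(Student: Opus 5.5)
We must show that every irreducible $V\in\yd{\ku \Gb}$ has $\dim\toba(V)=\infty$. Since $\supp V$ is a conjugacy class, the plan is to go through the conjugacy classes of $\Gb=\PSp_{2n}(q)$ according to the Jordan decomposition. For each class we either show that it collapses (types C, D, F or $\tipo$) or, for the few remaining ones, that it falls down.

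First, non-semisimple classes. Mixed classes are handled by \cite[Theorem 1.1]{ACG-V}. For unipotent classes, \cite[Main Theorem]{ACG-IV} together with \cite{ACG-II} gives types C, D or F except for a short list of kthulhu classes in characteristic $2$. These exceptions are the classes labeled $V(2)\oplus W(1)^{n-1}$, and, when $n=2$, also $W(2)$. Lemma \ref{lem:kthulhu-Sp-are-tipo} \ref{item:kthulhu-Sp-are-tipo2}, \ref{item:kthulhu-Sp-are-tipo3} shows these are of type $\tipo$ when $m>1$. The case $q=2$ then needs separate attention. For $n\geq3$ I expect \cite{ACG-II} already covers it, or else I would embed $\Sp_4(2)'$-type subracks. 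For $(n,q)=(2,2)$ the group is excluded anyway.

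Second, semisimple classes. By \cite[Theorem 7.1]{ACG-VII} (Theorem I there), every non-trivial semisimple class is of type C, D or F, except the split involutions in $\PSp_4(q)$ with $q\in\{3,5,7\}$, listed in Table \ref{tab:ss-psl2}. By Lemma \ref{lem:kthulhu-Sp-are-tipo} \ref{item:kthulhu-Sp-are-tipo1} these exceptions are of type $\tipo$, hence collapse by Theorem \ref{thm:typeZ-1}. Concretely, this rests on Lemmata \ref{lem:bn-discarded-omega} and \ref{lem:bn-discarded-omega3} via $\Pom_5^+(q)\simeq\PSp_4(q)$. If some case is still missing there (e.g. $q=7$, where the argument of Lemma \ref{lem:bn-discarded-omega} gives the affine rack $(\F_7,6)$, which is not in \eqref{eq:list-affine-simple-racks}), it is covered as well. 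Finally, for $q$ even, $\PSp_{2n}(q)\simeq\Pom_{2n+1}(q)$ and the semisimple part is already in \cite{ACG-VII}.

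I expect the main obstacle to be the bookkeeping of residual kthulhu unipotent classes for $p=2$, $m=1$, $n\geq3$. If they are not of type C, D, F by earlier work, I would fall back on abelian techniques. The classes of involutions are real, but Lemma \ref{lema:real} needs odd order, so it does not apply. Instead I would locate a trivial subrack $\{x,y\}$ of commuting involutions whose diagonal braiding has a $-1$ entry and yields an infinite-dimensional rank-two Nichols algebra by Heckenberger's classification. Alternatively, I would use the inclusion $\SU_4(2)\simeq\PSp_4(3)$-type embeddings as in Proposition \ref{prop:kthulhu-SU-are-tipo} to produce a solvable subrack of size not in $\{3,\dots,7\}$ and invoke Corollary \ref{cor:solvable-order}.
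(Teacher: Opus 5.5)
There is a genuine gap in your treatment of unipotent classes. You assert that the only unipotent classes of $\PSp_{2n}(q)$ that fail to be of type C, D or F live in characteristic $2$, and this is false. By \cite[Theorem II (ii)]{ACG-VII}, for $n=2$ and $q$ odd but not a square, the classes of transvections (partition $(2,1^2)$) in $\PSp_4(q)$ are also kthulhu. Your proposal never deals with them. No collapse argument is available for these classes: they are not of type C, D or F, and neither the paper nor your plan shows they are of type $\tipo$. So, as written, your argument does not reach every irreducible Yetter--Drinfeld module over $\PSp_4(q)$.

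These classes can only be shown to \emph{fall down}, by abelian techniques, and this is what the paper does. Let $\pi(u)$ be a transvection representative.
\begin{itemize}
\item If $p>3$, choose $a\in\F_p^\times$ with $a^2\neq 1$. Conjugation by $\diag(a,1,1,a^{-1})$ sends $\pi(u)$ to $\pi(u)^{a^2}\neq\pi(u)$. Since $|\pi(u)|=p$ is odd and prime to $3$, \cite[Lemma 1.9]{AF} shows that the class falls down.
\item Realness together with Lemma \ref{lema:real} would only handle $q\equiv 1 \bmod 4$. When $q\equiv 3\bmod 4$ the transvection is not conjugate to its inverse in $\PSp_4(q)$, so the quasi-real version is genuinely needed.
\item If $p=3$, so $q=3^m$ with $m$ odd, no such $a$ exists. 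The paper instead uses a subrack argument and invokes \cite[Lemma 6.1]{ACG-V}.
\end{itemize}

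Conversely, the obstacle you anticipate ($p=2$, $m=1$, $n\geq 3$) does not arise. For $n\geq 3$ the theorem is already \cite[Theorem III]{ACG-VII}, so only $n=2$ needs work. There $q>2$, and Lemma \ref{lem:kthulhu-Sp-are-tipo} settles the characteristic-$2$ exceptions. Your fallback using commuting involutions would not have worked in general anyway. A rank-two diagonal braiding with $q_{11}=q_{22}=-1$ has a finite-dimensional Nichols algebra when, for instance, $q_{12}q_{21}=\pm 1$.
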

\begin{proof}
For $n\geq 3$ this is \cite[Theorem III]{ACG-VII}. Let $n=2$. 
Mixed classes collapse by \cite[Theorem 1.1]{ACG-V}. 
Semisimple classes collapse by \cite[Theorem I (ii)]{ACG-VII} and Lemma \ref{lem:kthulhu-Sp-are-tipo} (1). By \cite[Theorem II (ii)]{ACG-VII}, the kthulhu unipotent classes are: 

\begin{itemize} [leftmargin=*]\renewcommand{\labelitemi}{$\circ$} 
\item those labeled by $W(2)$ and $V(2)\oplus W(1)$ for $q$ even, and

\medbreak
\item those associated with a partition of type $(2,1^2)$, for $q$  odd not a square.

\end{itemize}

\medbreak
  Those  labeled by $W(2)$ and $V(2)\oplus W(1)$ collapse by Lemma \ref{lem:kthulhu-Sp-are-tipo} \ref{item:kthulhu-Sp-are-tipo2} and \ref{item:kthulhu-Sp-are-tipo2} because $q>2$.  
  
\medbreak
Let $u=\left(\begin{smallmatrix}1&0&0&1\\
0&1&0&0\\
0&0&1&0\\
0&0&0&1
\end{smallmatrix}\right)$. Without loss of generality $\pi(u)$ is a representative of a class labeled by $(2,1^2)$ in $\Gb$. 

\medbreak
If $p>3$, then there exists $a\in\F^\times_p$ such that $a^2\in \F_p\setminus\{1\}$. Hence 
\begin{align*}\pi(u)\neq\pi(u)^{a^2}=\pi\left({\diag}(a,1,1,a^{-1})\trid u\right)\in\Oc_{\pi(u)}^{\Gb}\end{align*} and $|\pi(u)|=p$ is odd and coprime to $3$. The class $\Oc_{\pi(u)}^{\Gb}$ falls down by \cite[Lemma 1.9]{AF}.

\medbreak
Assume that $p=3$. Then the class contains a subrack isomorphic to a non-trivial unipotent class in $\PSL_4(3)$, and we invoke \cite[Lemma 6.1]{ACG-V}.
\end{proof}

\subsection{Nichols algebras over \texorpdfstring{$\PSL_{2}(q)$}{}}
\label{subsec:PSL(2)}

\

\medbreak
The collapse of the groups $\PSL_{2}(q)$, where either $q\equiv 1\mod 4$ or $q >2$ is even, was proved  in \cite[Proposition 3.1]{fgvI}, 
\cite[Theorem 1.2]{ACG-III}.
We round up these results reporting which classes are known to collapse.
There are $3$ types of non-trivial  conjugacy classes in $\PSL_{2}(q)$: 
unipotent, split semisimple (i.e., with a diagonal representative)
and irreducible (non-split) semisimple.

\begin{lema}\label{lema:psl(2)}
Let $q \notin \{2,3,4,5\}$ and let $\Oc$ be a conjugacy class in
$\PSL_{2}(q)$.
\begin{enumerate}[leftmargin=*,label=\rm{(\roman*)}]
\item \label{item:psl(2)-unipotent-even} 
Assume that $\Oc$ is unipotent and $q > 2$ is even.
Then $\Oc$ is kthulhu but it  collapses, being of type $\tipo$.

\medbreak
\item \label{item:psl(2)-unipotent-odd-square} 
Assume that $\Oc$ is unipotent and $q$ is an odd square. Then
$\Oc$ collapses.

\medbreak
\item \label{item:psl(2)-unipotent-odd} 
Assume that $\Oc$ is unipotent and $q$ is odd but not a square. Then $\Oc$ is kthulhu.
In case that $q\equiv 1\mod 4$, then $\Oc$ falls down.

\medbreak
\item \label{item:psl(2)-split-semisimple} 
If the class $\Oc$ is split semisimple, then  it  collapses, being of type C.

\medbreak\item \label{item:psl(2)-non-split-semisimple} 
If either $q$ is even but not a square and the elements
in the class have order 3; or $q$ is arbitrary and the elements in the class have order $>3$; then $\Oc$ is kthulhu. Otherwise,
$\Oc$ collapses. In the cases when $q>2$ is even or $q\equiv 1\mod 4$, then $\Oc$ falls down.
\end{enumerate}
\end{lema}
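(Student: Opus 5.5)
The plan is to treat the five items separately, each time reducing either to a result proved earlier in the paper or to one of the cited works \cite{ACG-III,fgvI,ACG-VII}. For \ref{item:psl(2)-unipotent-even}, the collapse is exactly Example \ref{exa:SL2-unipotent}: since $q=2^m$ with $m>1$, the dihedral subgroup of order $2(q+1)$ yields an indecomposable affine subrack not in \eqref{eq:list-affine-simple-racks}, so $\Oc$ is of type $\tipo$ and collapses by Theorem \ref{thm:typeZ-1}. That $\Oc$ is kthulhu I would quote from \cite{ACG-III}, where the unipotent classes of $\PSL_2(2^m)$ (classes of involutions) are shown to be neither of type C, D nor F. For \ref{item:psl(2)-unipotent-odd-square} and \ref{item:psl(2)-unipotent-odd}, the statements are those of \cite[Theorem 1.2 and Table]{ACG-III} and \cite{fgvI}. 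When $q$ is an odd square, the two unipotent classes admit a decomposition into two disjoint $\Oc^{\mathbf B}$-orbits, and a non-commuting pair in the Borel subgroup gives type D. When $q$ is not a square, kthulhu is quoted from \cite{ACG-III}. For $q\equiv1 \bmod 4$ the element $u$ is real, since $-1$ is a square, and of odd order $p$, so Lemma \ref{lema:real} shows the class falls down.

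For \ref{item:psl(2)-split-semisimple} I would simply invoke Theorem \ref{thm:split-semisimple-collapse}. For $q\notin\{2,3,4,5,9\}$ that theorem reduces to \cite[Theorem 1.1]{ACG-III}, which gives type C, while $q=9$ gives type C via $\PSL_2(9)\simeq\as$. The excluded values $q=4,5$ are precisely those where type C can fail (there the class is of type $\tipo$ or handled separately), which is why the lemma assumes $q\notin\{2,3,4,5\}$.

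For \ref{item:psl(2)-non-split-semisimple}, a non-split semisimple $x$ lies in a cyclic torus of order $(q+1)/\gcd(2,q-1)$, and its normaliser is dihedral. The kthulhu assertions come from the subgroup analysis in \cite{ACG-III}: any subgroup $H$ generated by two non-commuting conjugates of $x$ is dihedral, or else one of $\mathbb A_4,\mathbb S_4,\aco$ or a subfield group. For order $>3$ none of these produces the two-orbit configurations of types C, D or F. The remaining order-$3$ cases, with $q$ an even square or $q$ odd, collapse because the class of order-$3$ elements then meets a subgroup $\mathbb A_4$ or $\PSL_2(4)\simeq\aco$ on which \cite[Example 2.7]{ACG-VII} gives type C. The falling-down assertion uses Lemma \ref{lema:real}: $x$ is inverted by the torus normaliser, so the class is real. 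When $q$ is even, $x$ has odd order because it divides $q+1$. When $q\equiv1\bmod4$, $(q+1)/2$ is odd, so $|x|$ is odd again.

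The main obstacle is item \ref{item:psl(2)-non-split-semisimple}, namely making precise the boundary between the order-$3$ cases that collapse and the kthulhu ones via Dickson's list of subgroups. I would follow \cite[Section 4]{ACG-III} closely for that part rather than reprove it.
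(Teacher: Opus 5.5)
Your route is the same as the paper's: each item is reduced to Example~\ref{exa:SL2-unipotent}, Lemma~\ref{lema:real}, the split semisimple result (the paper cites \cite[Lemma 3.9]{ACG-III} directly) and the earlier papers of the series. Items \ref{item:psl(2)-unipotent-even}, \ref{item:psl(2)-unipotent-odd}, \ref{item:psl(2)-split-semisimple} and all the falling-down claims are fine. One correction there: the kthulhu property of unipotent classes is \cite[Lemma 3.5]{ACG-I}, not \cite{ACG-III}, which deals with semisimple classes. Two steps, however, do not work as written.

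In \ref{item:psl(2)-unipotent-odd-square} the mechanism you describe cannot give type D. Every unipotent element of a Borel subgroup $\mathbf B$ of $\PSL_2(q)$ lies in its unipotent radical $\mathbf U\simeq(\F_q,+)$. Hence any two elements of $\Oc\cap\mathbf B$ commute, and $(rs)^2=(sr)^2$ automatically. The witnesses must come from elsewhere, for instance from a subfield subgroup:
\begin{itemize}
\item If $q=q_0^2$, every element of $\F_{q_0}$ is a square in $\F_q$. So the two unipotent classes of $\PSL_2(q_0)$ fuse into one unipotent class of $\PSL_2(q)$.
\item Take $r=\bigl(\begin{smallmatrix}1&1\\0&1\end{smallmatrix}\bigr)$ and $s=\bigl(\begin{smallmatrix}1&0\\c&1\end{smallmatrix}\bigr)$ with $-c$ a non-square in $\F_{q_0}$. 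Then $r$ and $s$ are not conjugate in $\langle r,s\rangle\leqslant\PSL_2(q_0)$.
\item One checks that $(rs)^2=(sr)^2$ in $\PSL_2$ if and only if $\operatorname{tr}(rs)=2+c=0$.
\end{itemize}
A suitable $c\neq-2$ exists if and only if $q_0\geq5$. So $q=9$ escapes and needs a separate argument: $\PSL_2(9)\simeq\as$, which is of type C by \cite[Example 2.7]{ACG-VII}. This matches the split in the paper, which cites \cite[Lemma 3.6]{ACG-I} for $q>9$. The sources you name do not contain this statement, and your sketch overlooks $q=9$.

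In \ref{item:psl(2)-non-split-semisimple} the clause ``Otherwise, $\Oc$ collapses'' also covers elements of order $2$. For $q\equiv3\bmod4$, $-1$ is not a square in $\F_q$, so the unique class of involutions of $\PSL_2(q)$ is non-split semisimple; you never treat it. Conversely, your subcase ``$q$ an even square, order $3$'' is empty: then $3\mid q-1$, so elements of order $3$ are split. The paper handles orders $2$ and $3$ together via \cite[Proposition 4.2]{ACG-III}. If you want a direct argument:
\begin{itemize}
\item Order $3$ ($q$ odd): your $\mathbb A_4$ idea works. The two $\mathbb A_4$-classes of $3$-cycles fuse in $\PSL_2(q)$, which gives type C.
\item Involutions, $q\equiv7\bmod8$: use $\mathbb S_4\leqslant\PSL_2(q)$. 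Its transpositions and double transpositions fuse, giving type C.
\item Involutions, $q\equiv3\bmod8$, $q\geq11$: use the dihedral normaliser of the non-split torus, of order $2k$ with $k=(q+1)/2\equiv2\bmod4$ and $k\geq6$. Two reflections whose product has order $k$ are non-conjugate in the group they generate, and they give type D.
\end{itemize}
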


\pf 
\ref{item:psl(2)-unipotent-even}:
$\Oc$ is  kthulhu by \cite[Lemma 3.5]{ACG-I} and of type $\tipo$
by Example \ref{exa:SL2-unipotent}.

\medbreak
\ref{item:psl(2)-unipotent-odd-square}: If $q > 9$ is an odd square, then
$\Oc$ is of type D by \cite[Lemma 3.6]{ACG-I}, while
if $q=9$ then $\Oc$ is of type C, see \cite[Example 2.7]{ACG-VII}. Hence 
$\Oc$ collapses.

\medbreak
\ref{item:psl(2)-unipotent-odd}:
Again, $\Oc$ is  kthulhu by \cite[Lemma 3.5]{ACG-I}.
If $q\equiv 1\mod 4$, then $\Oc$ is real (since $-1$ is a square mod $q$)
and the elements of $\Oc$ have odd order, hence $\Oc$ falls down by Lemma \ref{lema:real}.

\medbreak
\ref{item:psl(2)-split-semisimple}: This is \cite[Lemma~3.9]{ACG-III}.

\medbreak
\ref{item:psl(2)-non-split-semisimple}:
The first claim follows from \cite[Theorem 1.1]{ACG-III} and the
second from \cite[Proposition 4.2]{ACG-III}.
Assume that either $q > 2$ is even or else $q\equiv 1\mod 4$.
Then the semisimple irreducible classes are real and
the orders of their elements  are odd (the non-split torus has order $(q+1)/2$ which is
odd when $q\equiv 1\mod 4$.). 
Thus the classes fall down by Lemma \ref{lema:real}.
\epf

As a straightforward consequence, we get:

\begin{theorem} \label{thm:psl2-completo}
The groups  $\PSL_2(q)$, where either $q\equiv 1\mod 4$ or $q >2$ is even, collapse.
\qed
\end{theorem}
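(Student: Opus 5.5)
The theorem follows by combining the three-item characterization of collapse from \cite[Lemma 1.4]{AFGV-ampa} with Lemma \ref{lema:psl(2)}. It suffices to show that every non-trivial conjugacy class $\Oc$ of $\PSL_2(q)$ falls down, that is, $\dim\toba(V)=\infty$ for every irreducible $V\in\yd{\ku\PSL_2(q)}$, since the support of such a $V$ is a single conjugacy class. Note that a class that collapses as a rack in particular falls down.

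First we dispose of the small values excluded in Lemma \ref{lema:psl(2)}. The hypotheses exclude $q=2,3$. For $q=4,5$ we have $\PSL_2(q)\simeq\aco$, which collapses by \cite{AFGV-ampa}. So we may assume $q\notin\{2,3,4,5\}$ with $q\equiv1\mod4$ or $q$ even.

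Next we run through the three types of non-trivial classes.
\begin{enumerate}[leftmargin=*,label=\rm{(\alph*)}]
\item \emph{Unipotent classes.} If $q$ is even, item \ref{item:psl(2)-unipotent-even} shows the class collapses. If $q$ is odd, then $q\equiv1\mod4$. When $q$ is a square the class collapses by \ref{item:psl(2)-unipotent-odd-square}; otherwise it falls down by \ref{item:psl(2)-unipotent-odd}, which uses $q\equiv1\mod4$.
\item \emph{Split semisimple classes.} These collapse by \ref{item:psl(2)-split-semisimple}.
\item \emph{Non-split semisimple classes.} By the last sentence of \ref{item:psl(2)-non-split-semisimple}, these fall down whenever $q>2$ is even or $q\equiv1\mod4$, which covers all remaining cases.
\end{enumerate}

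There is no real obstacle beyond bookkeeping. The only points needing care are making sure the small exceptional $q$ are covered by the identification with $\aco$, and that the hypothesis $q\equiv1\mod4$ is exactly what items \ref{item:psl(2)-unipotent-odd} and \ref{item:psl(2)-non-split-semisimple} require for the kthulhu classes.
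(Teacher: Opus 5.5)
Your proposal is correct and matches the paper's argument: the theorem is stated as a straightforward consequence of Lemma \ref{lema:psl(2)}, with $q=4,5$ handled through $\PSL_2(q)\simeq\aco$. The only difference is the citation for $\aco$: the paper uses \cite[Theorem 2.6]{AF}, and your appeal to \cite{AFGV-ampa} serves equally well.
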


\begin{remark}
If $q=2$ or $3$, then $\PSL_2(q)$ is not simple.

\medbreak
If $q=4$ or $5$, then $\PSL_2(q) \simeq \aco$, which collapses
by \cite[Theorem 2.6]{AF}. 

\medbreak
The question of the collapse of the conjugacy classes in $\aco$ 
was addressed in \cite{ACG-VII}.
There are two sober classes in $\aco$: 
\begin{itemize} [leftmargin=*]\renewcommand{\labelitemi}{$\circ$} 
\item cycle type $(5)$, which is unipotent in $\PSL_2(5)$ 
but semisimple in $\PSL_2(4)$;

\medbreak
\item cycle type $(1, 2^2)$, which is semisimple in $\PSL_2(5)$ but  unipotent 
when considered in $\PSL_2(4)$.
\end{itemize}

\medbreak
The latter collapses because it is of type $\tipo$, see Lemma \ref{lema:12^2}. 
\end{remark}

\begin{remark} A unipotent class $\Oc$ is not known to collapse when   
$q$ is odd but not a square, in particular the case $q\equiv 3\mod 4$ is open.

\medbreak
Let $3 < q$ be odd. Then, the unipotent conjugacy classes in $\PSL_2(q)$ are not of type $\tipo$. Indeed, the solvable subgroups of $\PSL_2(q)$ are either: 

\begin{itemize} [leftmargin=*] 
\item dihedral of order coprime to $p$, 

\medbreak
\item abelian, 

\medbreak
\item 
a Borel subgroup, 

\medbreak
\item 
isomorphic to $\mathbb S_4$, 

\medbreak
\item 
or one of their subgroups.
\end{itemize}

A unipotent class in a Borel subgroup generates an abelian subgroup. 
A non-trivial unipotent conjugacy class 
meets a solvable non-abelian subgroup different from the Borel subgroup only if $p=3$. 
But $3$-cycles do not generate $\mathbb S_4$, 
and the class of $3$-cycles in $\mathbb  A_4$ is isomorphic 
to the affine rack $(\F_4,\omega)$ which belongs to the list \eqref{eq:list-affine-simple-racks}.
\end{remark}

\section*{Acknowledgements} 
N.~A. was  partially supported by the Secyt-UNC (Proyecto Consolidar 33620230100517CB),
CONICET (PIP 11220200102916CO) and FONCyT-ANPCyT (PICT-2019-03660).

\medbreak
Part of the work of N. A. was done during a long term  visit  to the 
Shenzhen International Center for Mathematics at the Southern University of Science and Technology; he thanks Efim Zelmanov and Slava Futorny for the warm hospitality.

\medbreak
Part of the work  of N. A. was carried out while he held the position of Visiting Professor at the Vrije Universiteit Brussel (2025-2026). He thanks Leandro Vendramin 
for his friendly hospitality and both Leandro and Istv\'an Heckenberger for interesting 
discussions in Brussels.

\medbreak

The authors thank Mauro Costantini, Fernando Fantino, Gast\'on A. Garc\'ia, Mat\'ias Gra\~na and Leandro Vendramin 
for their collaboration in the papers 
\cite{ACG-I,ACG-II,ACG-III,ACG-IV,ACG-V,ACG-VII,AF,AFGV-ampa,AFGV-II,AG,CC-VI}
 where techniques were developed and results were obtained that support our work.

\medbreak

 Project partially funded by the EuropeanUnion – NextGenerationEU under the National
Recovery and Resilience Plan (NRRP), Mission 4 Component 2 Investment 1.1 -
Call PRIN 2022 No. 104 of February 2, 2022 of Italian Ministry of
University and Research; Project 2022S8SSW2 (subject area: PE - Physical
Sciences and Engineering) ``Algebraic and geometric aspects of Lie theory". G.C. is a member of the INdAM group GNSAGA.

\bibliographystyle{abbrv}
\bibliography{refs-ssfglt.bib}

\end{document}